\newif\iffinal
\newcommand{\chsb}[1]{#1}
\newcommand{\chmd}[1]{#1}
\begin{document}

\title[Fluctuations for JSQ($d_n$)]{Near Equilibrium Fluctuations for
Supermarket Models with Growing Choices}

\date{}
\subjclass[2010]{Primary: 90B15, 60F17, 90B22, 60C05. }
\keywords{power of choice, join-the-shortest-queue, fluid limits, heavy traffic, Halfin-Whitt, load balancing, diffusion approximations, Skorohod problem, reflected diffusions, functional limit theorems}

\author[Bhamidi]{Shankar Bhamidi}
\author[Budhiraja]{Amarjit Budhiraja}
\author[Dewaskar]{Miheer Dewaskar}
\address{Department of Statistics and Operations Research, 304 Hanes Hall, University of North Carolina, Chapel Hill, NC 27599}
\email{bhamidi@email.unc.edu, budhiraja@email.unc.edu, miheer@live.unc.edu}

\maketitle

\begin{abstract}
   We consider the supermarket model in the usual Markovian setting where jobs arrive at rate $n \lambda_n$ for some $\lambda_n > 0$, with $n$ parallel servers  each processing jobs in its queue at rate 1. An arriving job joins the shortest  among $d_n \le n$ randomly selected service queues.
   We show that when $d_n \to \infty$ and $\lam_n \to \lam \in (0, \infty)$, under natural conditions on the initial queues, the state occupancy process converges in probability, in a suitable path space, to the unique solution of an infinite system of constrained ordinary differential equations parametrized by $\lambda$. Our main interest is in the study of fluctuations of the state process about its near equilibrium state in the critical regime, namely when $\lam_n \to 1$. Previous papers e.g. \cite{mukherjee2018universality} have considered the regime $\frac{d_n}{\sqrt{n}\log n} \to \infty$ while the objective of the current work is to develop diffusion approximations for the state occupancy process that allow for all possible rates of growth of $d_n$. In particular we consider the three canonical regimes
(a) ${d_n}/{\sqrt{n}} \to 0$;
(b) ${d_n}/{\sqrt{n}} \to c\in (0,\infty)$ and,
(c) ${d_n}/{\sqrt{n}} \to \infty$.
In all three regimes we show, by establishing suitable functional limit theorems, that (under conditions on $\lam_n$) fluctuations of the state process about its near equilibrium are of order $n^{-1/2}$
 and are governed asymptotically by a one dimensional Brownian motion. The forms of the limit processes in the three regimes are quite different; in the first case we get a linear diffusion; in the second case we get a diffusion with an exponential drift; and in the third case we obtain a reflected diffusion in a half space. In the special case
 ${d_n}/({\sqrt{n}\log n}) \to \infty$ our work gives alternative proofs for the   universality results established in \cite{mukherjee2018universality}. 
\end{abstract}

\section{Introduction}
\label{sec:intro}
\setstcolor{red}
\setul{}{1.5pt}
In this work we study the asymptotic behavior of a family of randomized load balancing schemes for many server systems.  Consider a processing system with $n$ parallel queues in which each queue's jobs are processed by the associated server at rate $1$. Jobs arrive at rate 
$n\lambda_n$ and join the shortest queue amongst $d_n$ randomly selected queues (without replacement), with $d_n \in [n]:=\set{1,\ldots, n}$. The interarrival times and service times are mutually independent exponential random variables. This queuing system with the above described `join-the-shortest-queue amongst chosen queues' discipline is often denoted as
$JSQ(d_n)$ and  frequently referred to as the supermarket model (cf. \cite{graham2000chaoticity,luczak2005strong,luczak2006maximum,luczak2007asymptotic,martin1999fast,mukherjee2018universality} and references therein). Note that when $d_n=n$ the above description
corresponds to a policy where an incoming job joins the shortest of all queues in the system (see e.g. \cite{eschenfeldt2018join}). The case   $d_n=1$ is the other extreme corresponding to incoming jobs joining a randomly chosen queue in which case the system is  equivalent to one with $n$ independent $M/M/1$ queues with arrival rate $\lambda_n$ and service rate 
$1$. The case $d_n=d$ where $d>1$ is a fixed positive integer is sometimes also referred to as the power-of-$d$ scheme.
The analysis of $JSQ(d_n)$ schemes has been a focus of much recent research motivated by problems from large scale service centers, cloud computing platforms, and data storage and retrieval systems (see. e.g. \cite{cardellini2002state,van2018scalable-big-survey,maguluri2012stochastic,ongaro2011fast,mukhopadhyay2015analysis,gupta2007analysis,altman2011load,bramson2012asymptotic}). The influential works of Mitzenmacher \cite{mitzenmacher2001power,MR1966907} and Vvedenskaya et al.~\cite{vvedenskaya1996queueing} showed by considering a fluid scaling that increasing $d$ from $1$ to $2$ leads to significant improvement in performance in terms of steady-state queue length distributions in that the tails of the asymptotic steady-state distributions decay exponentially when $d=1$ and super-exponentially when $d=2$. 
Limit theorems under a diffusion scaling for the $JSQ(d)$ system, with a fixed $d$, can be found in \cite{eschenfeldt2016supermarket, budhiraja2019diffusion}.
Although $JSQ(d)$ for a fixed $d\ge 2$ leads to significant improvements over $JSQ(1)$, as observed in \cite{gamarnik2016delay,gamarnik2018delay}, no fixed value of $d$ provides the optimal waiting time properties of the join-the-shortest-queue system (i.e. $JSQ(n)$). See the survey \cite{van2018scalable-big-survey} for an overview of the progress in this general area. This motivates the study of asymptotic behavior of a $JSQ(d)$ system in which the number of choices $d$ increase with system size, namely $n$. Such an asymptotic study is the goal of this work.

The paper \cite{mukherjee2018universality} studied the law of large numbers (LLN) behavior of a $JSQ(d_n)$ system, under a standard 
scaling, when $d_n \to \infty$. The precise result of \cite{mukherjee2018universality} is as follows. 
For  $i \in \NN_0:=\set{0,1,2,\ldots}$ and $t \in [0,\infty)$, let $G_{n,i}(t)$ denote the fraction of queues with at least $i$ customers at time $t$ in the $n$-th  system.
 Note that $G_{n,0}(t) = 1$ for all $t\geq 0$. We will call $ \Gn(t) \doteq \set{G_{n,i}(t): i> 0}$  the state occupancy process. This process has
 sample paths in the space of summable nonnegative sequences. More precisely, for $p\ge 1$, let $\lSpace{p}$ be the space of real sequences $\bvec{x}:=(x_1, x_2, \ldots)$
 such that $\|\bvec{x}\|_p \doteq \left(\sum_{i=1}^{\infty}|x_i|^p\right)^{1/p}<\infty$. Let
\begin{equation}
	\label{eqn:l1-down-def}
\ld \doteq \set{\bvec{x}\in \lSpace{1}: x_i \ge x_{i+1} \mbox{ and } x_i \in [0,1] \mbox{ for all } i \in \NN}
\end{equation}
be the space of non-increasing sequences in $\lSpace{1}$ with values in $[0,1]$, equipped with the topology generated by $\norm{\cdot}_1$. 
Note that $\ld$ is a closed subset of $\lSpace{1}$ and hence is a Polish space. 
Then, whenever $\|\Gn(0)\|_1 <\infty$ a.s., it can be shown that $\{\Gn(t):t\geq 0\}$ is a stochastic process with sample paths in 
$\DD([0,\infty): \ld)$ (the space of right continuous functions with left limits from $[0,\infty)$ to $\ld$ equipped with the usual Skorohod topology); see Section \ref{sec:basic-def}.
The paper \cite{mukherjee2018universality}  shows the following two facts under the assumption that $\Gn(0)$ converges in probability to
some $\bvec{r} \in \ld$: 
\begin{enumeratea}
	\item When $d_n=n$ and $\lambda_n \to \lambda \in (0,\infty)$,
$\Gn$ is a tight sequence in $\DD([0,\infty): \ld)$ and every weak limit point satisfies a certain set of ``fluid limit equations" 
(see \cite[Theorem 5]{mukherjee2018universality}, and equations \eqref{eq:limit-ode-explicit}-\eqref{eq:limit-ode-explicit2} in the current work);
\item When $d_n$ is an arbitrary sequence growing to $\infty$ and $\lambda_n \to \lambda \in (0,1)$, then 
the  statements in (a) hold once more for $\Gn$.
\end{enumeratea} 
   The current work begins by revisiting the above LLN results from 
\cite{mukherjee2018universality}. In Theorem \ref{thm:fluid-limit} of this work, we show that, when $\Gn(0)\pconv \bvec{r}$,  for  arbitrary sequences $d_n \to \infty$
and $\lambda_n\to \lambda \in (0, \infty)$,
$\Gn$ converges in probability in $\DD([0,\infty): \ld)$ to a continuous trajectory $g$ in $\ld$ that is characterized as the
{\em unique} solution of an infinite system of constrained ordinary differential equations (ODE) (see \eqref{eq:limit-ode2} in Proposition \ref{lem:uniqsoln}).
Using standard properties of the Skorohod map we observe in Remark \ref{rem:sameasdeb} that a continuous trajectory in $\ld$ solves the fluid limit equations of \cite{mukherjee2018universality} if and only if it solves \eqref{eq:limit-ode2}. This together with Proposition \ref{lem:uniqsoln} proves that the fluid limit equations in \cite{mukherjee2018universality} in fact have a unique solution. In this manner we complete and strengthen the result from \cite{mukherjee2018universality}. Our proof of the LLN result is quite different from the arguments in \cite{mukherjee2018universality}. The latter are based on sophisticated ideas of separation of time scales and weak convergence of measure valued processes from \cite{hunt1994large} to handle the convergence for $d_n=n$, and  certain coupling techniques to treat the general case when $d_n<n$ and $d_n\to \infty$. In contrast, our approach is more direct and uses martingale estimates and well known characterization properties of solutions of Skorohod problems (see e.g. proof of Lemma \ref{lem:flexistence}).

Our main goal in this work is to study diffusion approximations for $\Gn$ in the heavy traffic regime, namely when $\lam_n\to 1$.
In the case when $d_n=n$ ($JSQ(n)$ system), this problem has been studied in \cite{eschenfeldt2016supermarket}. Their basic result is as follows. Suppose $d_n=n$ and $\sqrt{n}(\lam_n-1) \to \beta >0$. Consider the unit vector $\bvec{e}_1 = (1, 0, \ldots)$ in $\lSpace{2}$. Then under conditions on $\Gn(0)$, $\Yn(\cdot) \doteq \sqrt{n}(\Gn(\cdot)-\bvec{e}_1)$ converges
in distribution in $\DD([0,\infty): \lSpace{2})$ to a continuous stochastic process $\Y = (Y_1, Y_2, \ldots)$, described in terms of a one dimensional Brownian motion,  for which $Y_i=0$ for $i > r$ for some $r \in \NN$ (which depends on the conditions assumed on $\Gn(0)$). Specifically, when $r=2$, the pair $Y_1,Y_2$
is given as a two dimensional diffusion in the half space $(-\infty , 0]\times \RR$ with oblique reflection in the direction $(-1, 1)'$ at the boundary $\{0\}\times \RR$. (For the form of the limit in the general case see Corollary \ref{cor:reflection-at-1}).
In \cite{mukherjee2018universality} this result is extended to the case where $d_n<n$ and  $\frac{d_n}{\sqrt{n}\log n} \to \infty$.
Under the same assumptions on the initial condition as in \cite{eschenfeldt2016supermarket}, it is shown in \cite{mukherjee2018universality}  that $\Yn$ converges to the same limit process as for the case $d_n=n$. The proof, as for the LLN result, proceeded by constructing a suitable coupling between a $JSQ(d_n)$ and $JSQ(n)$ system. The paper \cite{mukherjee2018universality}  also argued that
when $\frac{d_n}{\sqrt{n}\log n} \to 0$, the process $\Yn$ cannot be tight and thus in this regime the above diffusion approximation cannot hold.

Our objective in this work is to develop diffusion approximations for $\Gn$ in the critical regime (i.e. when $\lam_n \to 1$ in a suitable manner) that allow for possibly a slower growth of $d_n$ than that permitted by the results in \cite{mukherjee2018universality}.
In fact the results we establish will allow for $d_n \to \infty$ in an arbitrary manner and will recover the results of \cite{mukherjee2018universality} in the special case $\frac{d_n}{\sqrt{n}\log n} \to \infty$ (with \chmd{a} different proof).
In order to motivate the type of limit theorems we seek, we begin by observing that the centering $\bvec{e}_1$ used in the definition of $\Yn$ is a stationary point of the fluid limit given in \eqref{eq:limit-ode2} with $\lam=1$ and thus the results of \cite{eschenfeldt2016supermarket} and \cite{mukherjee2018universality} give information on fluctuations of the state process $\Gn$ about this stationary point.
However $\bvec{e}_1$ is not the only stationary point of \eqref{eq:limit-ode2} (when $\lam=1$) and in fact this ODE has uncountably many fixed points with a typical such point given as $\bvec{f}_k^{\gamma}\doteq \sum_{j=1}^k \bvec{e}_j+ \gamma \bvec{e}_{k+1}$, where 
$\bvec{e}_j$ is the $j$-th unit vector in $\lSpace{2}$ (with $1$ at the $j$-th coordinate and zeroes elsewhere), $k \in \NN$ and $\gamma \in [0, 1)$. All of these  stationary points arise in a natural fashion. Indeed, it turns out that the evolution of the state process $\Gn$ can be described via  the equation (see Remark \ref{rem:vect-equation}) 
\begin{equation*}
    \Gn(t) = \Gn(0) + \int_0^t \brS*{\bvec{a_n}(\Gn(s)) - \bvec{b}(\Gn(s))} ds + \Mn(t),
\end{equation*}
where $\Mn$ is a (infinite dimensional) martingale converging to $0$ in probability (see Lemma \ref{lem:martisnull}) and
 $\bvec{a_n},\bvec{b}$ are certain maps from  $\ld$ to $\lSpace{1}$ (see Remark \ref{rem:vect-equation} for details).
 Thus for large $n$, trajectories of $\Gn$ will be close to solutions of the infinite dimensional ODE
 $$\dot \gnn = \bvec{a_n}(\gnn) - \bvec{b}(\gnn).$$
 This equation has a  unique  stationary point $\Mu_n$ which is introduced in Definition \ref{def:fixed-point}. The fixed point $\Mu_n$ corresponds to the point in the state space $\ld$ at which the inflow rate equals the outflow rate in the $n$-th system and thus it is of interest to explore system behavior in the neighborhood of this point.
 Since $\Gn$ is approximated by $\gnn$ (over any compact time interval), one can loosely interpret $\Mu_n$ as a {\em near fixed point} of the state process $\Gn$. Furthermore, it can be shown (see Remark \ref{rem:onthm1}(\ref{item:cond-thm1})) that, if $d_n\to \infty$ and $\lam_n \to 1$ in a suitable manner,  $\Mu_n$ can converge to any specified fixed point $\bvec{f}_k^{\gamma}$ of \eqref{eq:limit-ode2} and thus every fixed point of \eqref{eq:limit-ode2} arises from $\Mu_n$ in a suitable asymptotic regime. In order to explore fluctuations of $\Gn$ close to different fixed points of \eqref{eq:limit-ode2} it is then natural to study the asymptotic behavior of
 \begin{equation}
 	\label{eq:zn-proc-def}
 	\Zn(t) \doteq \sqrt{n} \brR*{\Gn(t) - \Mun}, \qquad t\geq 0.
 \end{equation}
 We note that in the regime considered in \cite{mukherjee2018universality} where $\frac{d_n}{\sqrt{n}\log n} \to \infty$ and $\sqrt{n}(1-\lam_n) \to \alpha>0$, $\sqrt{n} \brR*{\bvec{e_1} - \Mun } \to \alpha$ and so in this case the asymptotic behavior of $\Zn$
 can be read off from that of $\Yn$ (see Corollary \ref{cor:reflection-at-1} and Remark \ref{rem:dn-large}(v)). However in general $\sqrt{n} \brR*{\bvec{e_1} - \Mun }$ (and more generally,
 $\sqrt{n} \brR*{\bvec{f}_k^{\gamma} - \Mun }$)  may not be bounded and so the asymptotic behavior of $\Zn$ and $\Yn$ may be very different.
 
 In this work we obtain limit theorems for $\Zn$ as $d_n \to \infty$ in an arbitrary fashion and $\lam_n \to 1$ in a suitable manner. Specifically in Theorems \ref{thm:diffusion-limit}, \ref{thm:exponential} and \ref{thm:reflection} we consider the three cases: \\
 \begin{inparaenuma}
 	\item ${d_n}/{\sqrt{n}} \to 0$;
	\item ${d_n}/{\sqrt{n}} \to c\in (0,\infty)$ and,
	\item ${d_n}/{\sqrt{n}} \to \infty$, respectively.
 \end{inparaenuma}
 In all three regimes we consider initial conditions $\Gn(0)$ such that for some $r \in \NN$, $G_{n,m}(0) = \mu_{n,m} + o_p(n^{-1/2})$ for all $m>r$ and in each case (under conditions on $\lam_n$) we obtain a limit process
 driven by a one dimensional Brownian motion with continuous sample paths in $\lSpace{2}$ which has all but finitely many coordinates $0$.
 In particular, when $r=2$ in the second and the third case and $r= k+2$ for some $k \in \NN$ in the first case (and $d_n$, $\lam_n$ depend on $k$ in a suitable fashion), one can describe the limit through a two dimensional diffusion driven by a one dimensional Brownian motion. The form of this two dimensional process in the three regimes is quite different; in the first case we get a linear diffusion (i.e. the drift is of the form $b(y)=Ay$ for, $y \in \RR^2$ and some $2\times 2$ matrix $A$); in the second case we get a diffusion with an exponential drift; and in the third case we obtain a reflected diffusion in the half space $(-\infty, \alpha]\times \RR$ for some $\alpha \ge 0$. 
 
 Although the limit processes in Theorems \ref{thm:diffusion-limit} and \ref{thm:exponential} are quite different from those obtained in \cite{eschenfeldt2016supermarket} and \cite{mukherjee2018universality}, the limit  in Theorem \ref{thm:reflection} has a similar form  (in that it is a reflected diffusion in a half space) as in the above papers. However here as well there are some differences.
 In particular, depending on how $\lam_n$ approaches $1$, the reflection occurs at a different barrier $\alpha \in (0, \infty)$; in fact $\alpha = \infty$ is possible as well in which case there is no reflection. Furthermore, recall that $\Zn$  is defined by centering about $\Mu_n$. In general $\sqrt{n}(\Mu_n - \bvec{e}_1)$ will diverge and thus the process $\Yn$ considered in the above cited papers may not converge in this regime. However, as noted previously, when $d_n$ grows sufficiently fast, namely $\frac{d_n}{\sqrt{n}\log n} \to \infty$  the process $\Yn$ will indeed converge and in that case we recover the result in \cite{mukherjee2018universality} (in fact a slight strengthening in that the drift parameter in Corollary \ref{cor:reflection-at-1} is allowed to be $0$).
In addition Theorem \ref{thm:reflection} also covers the case $\frac{d_n}{\sqrt{n}\log n} \to c \in (0,\infty)$ and situations where
$\lam_n = 1 + O(n^{-1/2})$ (see Remark \ref{rem:dn-large} (\ref{item:dn-boundry})). In such settings, once more both $\Zn$ and $\Yn$   converge and the limit of the latter has the same form as
in \cite{eschenfeldt2016supermarket, mukherjee2018universality}.

As is observed in Remarks \ref{rem:onthm2} and \ref{rem:dn-large}, under conditions of Theorem \ref{thm:exponential} or
Theorem \ref{thm:reflection}, $\Mu_n$ must converge to the fixed point $\bvec{e}_1 = \bvec{f}_1^{0}$. In contrast, Theorem \ref{thm:diffusion-limit} allows for a range of asymptotic behavior for $\Mu_n$. In particular, under the conditions
of the theorem, with suitable $\lam_n, d_n$, $\Mu_n$ can converge to the fixed point $\bvec{f}_k^0$ for an arbitrary $k \in \NN$ (see \cite{brightwell2018supermarket} for a similar observation). In such a setting the first $k-1$ coordinates of the limit process are essentially $0$ (see Theorem \ref{thm:diffusion-limit} for a precise statement) and the $k$-th coordinate is the first one to exhibit stochastic variability. Thus a rather novel asymptotic behavior for the $JSQ(d_n)$ system emerges when $d_n$ approaches $\infty$ at significantly slower rates than those considered in \cite{mukherjee2018universality} and $\lambda_n$ 
approach $1$ in a suitable manner (in relation to $d_n$).

We now  make some comments on the proofs of Theorems \ref{thm:diffusion-limit} - \ref{thm:reflection}. The starting point is a convenient semimartingale representation for the centered state process $\Zn$ in \eqref{eq:scaled-marteq}. In the study of the behavior of the drift term in this decomposition, an important ingredient is an analysis of the asymptotic properties  of the near fixed point $\Mu_n$, and the asymptotic behavior of the function $\beta_n$ (see Definition \ref{def:betan}) in $O(n^{-\frac{1}{2}})$ sized neighborhoods around the coordinates of $\Mu_n$. This behavior, which is different in the three regimes considered above, determines the asymptotics of the drift $\bvec{A_n}(\Zn(s)) - \bvec{b}(\Zn(s))$. Properties of $\Mu_n$ are also key in arguing that,  in all three cases, under our conditions, $(Z_{n, r+1}, \cdots )$  converges to $0$ in probability in $\DD([0,\infty): \lSpace{2})$ (see Lemma \ref{lem:finitedimensional}). The rest of the work is in characterizing the asymptotics of the finite dimensional process $(Z_{n,1}, \ldots , Z_{n,r})$. For this study, the three regimes require different approaches. In particular, Theorem \ref{thm:diffusion-limit} hinges on a detailed understanding of the asymptotic behavior of a tridiagonal matrix function $A_n(s)$ (see e.g. Lemmas \ref{lem:large-neg-eigenval} and \ref{lem:headconv0}); Theorem \ref{thm:exponential} requires an analysis of a stochastic differential equation with an exponential drift term (in particular the drift does not satisfy the usual growth conditions); and Theorem
\ref{thm:reflection} is based on a careful study of excursions of the prelimit processes above the limiting reflecting barrier and properties of Skorohod maps in order to characterize the reflection properties of the limit process.

\subsection{Organization of the paper}

 Section \ref{sec:mainresults_ab} contains all our main results. 
 The remaining Sections starting with Section \ref{sec:basic-def} contain proofs of the main results.   
\subsection{Notation and setup}
\label{subsec:notat}
For $m\geq 1$, let $[m] \doteq \set{1,2,\ldots, m}$. We will denote finite-dimensional vectors in $\R^m$ as $\vec{x}, \vec{y}$, etc. and $\inp{\vec{x}}{\vec{y}}$ will denote the standard inner-product. The standard basis vectors in $\R^m$ will be denoted by $\vec{e_i}$ for $i = 1, 2 \ldots m$. Also, $\norm{\vec{x}} \doteq \inp{\vec{x}}{\vec{x}}$ will denote the usual Euclidean norm.

We will often use bold symbols such as $\bvec{x}:=(x_1, x_2, \ldots)$ to denote a infinite dimensional vector or function. For $p\in \set{1,2,\ldots \infty}$, let $\norm{\cdot}_p$ denote the usual $p$-th norm on the space of infinite sequences and $\lSpace{p} \doteq \set{\bvec{x} \in \R^{\infty} \mid \norm{\bvec{x}}_p < \infty}$. Let \chsb{$\ld$ be as in \eqref{eqn:l1-down-def}}, which is a Polish space under $\norm{\cdot}_1$. For $k \in \nat$, let $\fk = (1, 1, \ldots, 1, 0, 0\ldots) \in \ld$ denote the vector with first $k$ indices equal to $1$, and $\ek = (0, \ldots, 0, 1, 0 \ldots) \in \lSpace{1}$ denote the vector with $1$ in the $k$th coordinate. Finally, for $\bvec{z}=(z_1, z_2, \ldots) \in \R^{\infty}$ and $r \in \nat$, let $\bvec{z_{r+}} \doteq (z_{r+1}, z_{r+2}, \ldots) \in \R^{\infty}$ denote the vector shifted by $r$ steps. Similar notation will be used for functions and processes with values in $\R^{\infty}$.

For a Polish space $\mathbb{S}$ and $T>0$, denote by $\mathbb{C}([0,T]:\mathbb{S})$
(resp.\ $\mathbb{D}([0,T]:\mathbb{S})$) the space of continuous functions
(resp.\ right continuous functions with left limits) from $[0,T]$ to $%
\mathbb{S}$, endowed with the uniform topology (resp.\; Skorokhod topology). 
\cg{Spaces $\mathbb{C}([0,\infty):\mathbb{S}),
\mathbb{D}([0,\infty):\mathbb{S})$ are defined similarly.}
For $f \in \DSpace{T}[\R]$ and $t\leq T$, let $\abs{f}_{*,t} \doteq \sup_{s \in [0,t]} \abs{f(s)}$. Similarly for $\bvec{g} \in \DSpace{T}[\lSpace{p}]$, let $\norm{\bvec{g}}_{p, t} \doteq \sup_{s \in [0,t]} \norm{\bvec{g}(s)}_{p}$.

We will use $\I{cond}$ to denote the indicator function that takes the value 1 if \textit{cond} is true, otherwise it takes the value 0. 
\cg{We will denote by $\id$ the identity map, $\id(t)=t$, on $[0,T]$  or $[0,\infty)$.}

\chsb{We use} $\prob$ and $\E$ to denote the probability and expectation operators, respectively. For $x,y \in \R$, $x \wedge y$ denotes the minimum and $x \vee y$ the maximum of $x$ and $y$ respectively. For any $x \in \R$, $x^+ = x \vee 0$ and $x^- = (-x) \vee 0$. We use $\pconv$ and $\Rightarrow$ to denote convergence in probability and convergence \cg{in distribution} respectively on an appropriate Polish space which will depend on the context. For a sequence of \chsb{real valued} random variables $\big(X_n,\ n\geq 1\big)$, we write $X_n=\op(b_n)$ when $|X_n|/b_n\probc 0$ as $n\rightarrow\infty$.
For non-negative functions $f(\cdot), g(\cdot)$, we write $f(n)=O(g(n))$ when $f(n)/g(n)$ is uniformly bounded, and $f(n)=o(g(n))$ (or $f(n)\ll g(n)$) when $\lim_{n\rightarrow \infty} f(n)/g(n)=0$. We write $f(n) \sim g(n)$ if $f(n)/g(n)\to 1$ as $n\to\infty$.
\cg{We will use the notation $\lam_n \nearrow 1$ to mean that $\lam_n <1$ for every $n$ and $\lam_n \to 1$ as $n\to \infty.$}

\section{Main Results}
\label{sec:mainresults_ab}

Recall the process $\Gn$ from Section \ref{sec:intro}. Our first result gives a law of large numbers (LLN) for the process $\Gn$ as $n\to \infty$.
In order to state this result we begin by recalling the one dimensional Skorohod map (cf. \cite[Section 3.6.C]{karshr}) with a reflecting barrier at $\alpha \in \RR$.
For $\alpha \in \RR$ and $f \in \DSpace*{\infty}[\R]$ with $f(0) \leq \alpha$, define $\Gamma_{\alpha}(f) , \hat \Gamma_{\alpha}(f) \in \DSpace*{\infty}[\R]$ as 
\begin{align}
	\label{eq:SMdef}
    \Gamma_\alpha(f)(t) =  f(t) - \sup_{s \in [0,t]} (f(s)-\alpha)^+, \;\; \hat \Gamma_\alpha(f)(t) = \sup_{s \in [0,t]} (f(s)-\alpha)^+.
\end{align}	
The map $\Gamma_{\alpha}$  (and sometimes the pair $(\Gamma_{\alpha}, \hat \Gamma_{\alpha})$) is referred to as the one-dimensional Skorohod map (with reflection at $\alpha$).
The following wellposedness result,  which is proved in Section \ref{sec:proof-fluid}, will be used to characterize the LLN limit of $\Gn$.
\begin{prop}
	\label{lem:uniqsoln}
	Fix $\bvec{r} \in \ld$. Then there is a unique $(\g,\vbo)\in \CC([0,\infty): \ld \times \lSpace{\infty})$ that solves the
	following system of equations
	\begin{equation}
	  \label{eq:limit-ode2}
	  \begin{aligned}
	     g_{i}(t) &=  \Gamma_1\brR*{r_i - \int_0^\cdot \brR{g_i(s) - g_{i+1}(s)} ds  + v_{i-1}(\cdot)}(t) \quad \forall \, i \geq 1, t \ge 0\\
		 v_i(t) &= \hat \Gamma_1\brR*{r_i - \int_0^\cdot \brR{g_i(s) - g_{i+1}(s)} ds  + v_{i-1}(\cdot)}(t) \quad \forall \, i \geq 1, \;\; v_0(t) = \lambda t, \;\;t \ge 0.
	  \end{aligned} 
	\end{equation}
\end{prop}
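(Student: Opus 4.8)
\emph{Tools and reduction.} My plan rests on the Lipschitz property of the one-dimensional Skorohod map from \eqref{eq:SMdef}: for $f,\tilde f\in\DSpace*{\infty}[\R]$ with $f(0),\tilde f(0)\le\alpha$ one has $\abs{\hat\Gamma_\alpha(f)-\hat\Gamma_\alpha(\tilde f)}_{*,t}\le\abs{f-\tilde f}_{*,t}$ and $\abs{\Gamma_\alpha(f)-\Gamma_\alpha(\tilde f)}_{*,t}\le 2\abs{f-\tilde f}_{*,t}$, together with the identity $\Gamma_\alpha(f)=f-\hat\Gamma_\alpha(f)$ and the complementarity property that $\hat\Gamma_\alpha(f)$ increases only at times when $\Gamma_\alpha(f)=\alpha$. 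It suffices to prove existence and uniqueness on $[0,T]$ for each fixed $T<\infty$, since the resulting solutions are consistent in $T$ and patch together. I would proceed in three stages: (i) an a priori analysis showing that, for \emph{any} solution on $[0,T]$, all but finitely many of the $v_i$ vanish identically and the corresponding tail of $\g$ solves a closed linear system; (ii) uniqueness, via a Gronwall estimate for the finitely many remaining equations; (iii) existence, by solving finite truncations and passing to the limit.

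\emph{Structural step.} Using $\Gamma_1(f)=f-\hat\Gamma_1(f)$ and $v_0=\lambda\id$, every solution of \eqref{eq:limit-ode2} satisfies $g_i(t)=r_i-\int_0^t(g_i(s)-g_{i+1}(s))\,ds+v_{i-1}(t)-v_i(t)$. Summing over $i=1,\dots,N$ and using $v_i\ge0$, $g_{N+1}\ge0$ gives $\sum_{i=1}^N g_i(t)\le\norm{\bvec{r}}_1+\lambda t$, so letting $N\to\infty$, $\norm{\g(t)}_1\le\norm{\bvec{r}}_1+\lambda T=:M$ on $[0,T]$. Since $\g(t)\in\ld$ is non-increasing, $i\,g_i(t)\le g_1(t)+\cdots+g_i(t)\le M$, hence $g_i(t)<1$ for all $t\le T$ whenever $i\ge N_0:=\lceil M\rceil+1$; by complementarity $v_i$ then cannot increase, so $v_i\equiv0$ on $[0,T]$ for every $i\ge N_0$. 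Consequently, for $i\ge N_0+1$, using $r_i\le1$ and $g_i\ge g_{i+1}$ so that the argument of $\Gamma_1$ never exceeds $1$, the equation reduces to the closed linear ``pure-death'' system $\dot g_i=g_{i+1}-g_i$, $g_i(0)=r_i$, whose unique bounded solution $g_i(t)=e^{-t}\sum_{j\ge0}\tfrac{t^j}{j!}r_{i+j}$ is readily checked to lie in $\ld$, to be non-increasing in $i$, and to have a tail $\sum_{i>N}g_i(t)$ that is small uniformly in $t\le T$. In particular $g_{N_0+1}(\cdot)$ is determined by $\bvec{r}$ alone, and it enters the equation for $g_{N_0}$.

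\emph{Uniqueness.} By the structural step, for any solution the tail $(g_i)_{i>N_0}$ is the explicit sequence above, while $(g_1,\dots,g_{N_0})$ and $(v_1,\dots,v_{N_0-1})$ solve a \emph{finite} system of coupled Skorohod equations whose data, $v_0=\lambda\id$ and $g_{N_0+1}$, are now fixed. Given two solutions, subtracting the corresponding equations and applying the Lipschitz bounds yields, with $D(t):=\sum_{i=1}^{N_0}\abs{g_i-g_i'}_{*,t}$, an inequality $D(t)\le C_{N_0}\int_0^t D(s)\,ds$, whence $D\equiv0$ by Gronwall; equality of the $v_i$ then follows. It is precisely the finiteness of the index set here that makes the Skorohod correction terms summable; on the full infinite system the $v_i$ need not decay in $i$, so a direct contraction argument fails and the structural step is indispensable.

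\emph{Existence.} For $N\ge1$, let $(\g^N,\bvec{v}^N)$ solve the $N$-level truncation of \eqref{eq:limit-ode2} obtained by setting $g^N_{N+1}\equiv0$; this is a finite-dimensional system of coupled Skorohod equations, for which a solution exists and is unique locally by Picard iteration (Lipschitz property) and extends to $[0,T]$ once one has the a priori bounds $g^N_i(t)\in[0,1]$ — the upper bound is built into $\Gamma_1$, while non-negativity and the monotonicity $g^N_1\ge g^N_2\ge\cdots$ follow from a comparison argument for the coupled Skorohod dynamics. Summing the equations as in the structural step gives $\norm{\g^N(t)}_1\le\norm{\bvec{r}}_1+\lambda t$ uniformly in $N$, which forces $v^N_i\equiv0$ on $[0,T]$ for every $i$ beyond the fixed index $N_0$ and, via $v^N_i(T)\le v^N_{i-1}(T)\le\cdots\le\lambda T$, uniform bounds on the $v^N_i$. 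Along a subsequence $(\g^N,\bvec{v}^N)$ then converges, and passing to the limit in the Skorohod equations (continuity of $\Gamma_1,\hat\Gamma_1$, dominated convergence in the integrals) yields a solution $(\g,\bvec{v})$ of \eqref{eq:limit-ode2} with $\bvec{v}$ finitely supported in $i$ (hence $\lSpace{\infty}$-valued) and continuous, and, by the uniform tail estimate, with $\g\in\CC([0,T]:\ld)$; the already established uniqueness upgrades subsequential to full convergence. The step I expect to be the main obstacle is handling the infinite-dimensional coupling: one must first exploit the reflection/complementarity condition and the monotonicity built into $\ld$ to reduce to a finite system, and establishing and preserving that monotonicity $g_i\ge g_{i+1}$ — both in the structural step and in the comparison arguments for the truncations — is the other delicate ingredient.
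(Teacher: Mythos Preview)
Your uniqueness argument is essentially the paper's: both first observe (via an a priori $\ell^1$ bound and the monotonicity built into $\ld$) that only finitely many $v_i$ are nonzero on $[0,T]$, treat the tail as a linear ``pure-death'' system, and close the finitely many head coordinates with Gronwall using the Lipschitz property of the Skorohod map. The paper's proof of \emph{existence}, however, takes a quite different route from your truncation-and-compactness scheme: it is probabilistic. Given $\bvec{r}\in\ld$, the paper chooses lattice approximations $\bvec{r}_n\to\bvec{r}$ with $n r_{n,i}\in\NN_0$, runs a JSQ($n$) system with $\lambda_n=\lambda$ started from $\bvec{r}_n$, and invokes the tightness (Lemma~\ref{lem:fltightness}) and limit-point characterization (Lemma~\ref{lem:flexistence}) already established for Theorem~\ref{thm:fluid-limit} to produce a solution of \eqref{eq:limit-ode2} as a weak limit of $\Gn$. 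This has the advantage that the comparison arguments you correctly flag as delicate --- non-negativity and the ordering $g_i^N\ge g_{i+1}^N$ of the truncated iterates, needed to keep them in $\ld$ --- never have to be verified directly: the $\ld$ structure is inherited for free from the stochastic system, and the probabilistic machinery is needed anyway for the LLN. Your approach, by contrast, is self-contained and would establish the proposition without any reference to the stochastic model, at the cost of carrying out those comparison/monotonicity arguments for the coupled Skorohod dynamics in full; this is doable, but it is indeed where the work lies in your scheme.
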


\begin{remark}
\label{rem:reflection}
Using the well known characterization of a one-dimensional Skorohod map, one can \cg{alternatively} characterize $(\g,\vbo)$ as the unique pair in $\CC([0,\infty): \ld \times \lSpace{\infty})$ such that $v_i$ is nondecreasing,
\begin{equation}
  \label{eq:limit-ode1}
  \left. \begin{aligned}
  g_{i}(t) &= r_i - \int_0^t \brR{g_i(s) - g_{i+1}(s)} ds  + v_{i-1}(t) - v_i(t) \\
   v_{i}(t)& \geq 0 \, , g_i(t) \leq 1, \,\int_0^t (1-g_i(s)) dv_{i}(s) = 0
  \end{aligned} \right\} \forall \, i \geq 1
\end{equation}
and $v_0(t) = \lambda t$, for all $t > 0$.
\end{remark}
We can now present the LLN result. The proof is given in Section \ref{sec:proof-fluid}.

\begin{theorem}
\label{thm:fluid-limit}
Let $\bvec{r} \in \ld$. Suppose that $\Gn(0) \pconv \bvec{r}$ in $\ld$, $\lambda_n \to \lambda$ and $d_n \to \infty$, as $n \to \infty$. 
Then $\Gn \to \g$ in probability in $\DD([0,\infty): \ld)$ as $n\to \infty$, where $(\g,\vbo) \in \CC([0,\infty): \ld \times \lSpace{\infty})$
is the unique solution of \eqref{eq:limit-ode2}.
\end{theorem}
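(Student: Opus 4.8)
## Proof proposal for Theorem \ref{thm:fluid-limit}

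The plan is to follow the classical martingale route for density-dependent Markov chains, adapted to the infinite-dimensional constrained setting. First I would write down the semimartingale decomposition of $\Gn$ alluded to in the introduction, namely
$\Gn(t) = \Gn(0) + \int_0^t (\bvec{a_n}(\Gn(s)) - \bvec{b}(\Gn(s)))\,ds + \Mn(t)$,
where each coordinate $G_{n,i}$ increases by $1/n$ at rate $n\lambda_n$ times the probability that an arriving job (choosing the shortest of $d_n$ queues) lands in a queue of height exactly $i-1$, and decreases by $1/n$ at rate $n(G_{n,i} - G_{n,i+1})$ (the number of queues of height exactly $i$). The job-placement probability is a function of $\beta_n$ (Definition \ref{def:betan}) and, because of the ordering constraint $G_{n,i} \le 1$, the correct way to package the downward-push-at-the-boundary is precisely through the one-dimensional Skorohod map $\Gamma_1$; this is what produces the $v_i$ terms. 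I would then show, using the quadratic variation of $\Mn$ and the fact that jumps are of size $1/n$ while rates are $O(n)$, that $\|\Mn\|_{1,T} \pconv 0$ for every $T$ (this is essentially Lemma \ref{lem:martisnull}, which I may cite).

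Next I would establish tightness of $\{\Gn\}$ in $\DD([0,\infty):\ld)$. Because $\ld$ is a closed bounded subset of $\lSpace{1}$ and the coordinates are uniformly bounded in $[0,1]$, tightness in each finite collection of coordinates follows from the drift being uniformly bounded plus the martingale vanishing; the summability tail is controlled by $\|\Gn(0)\|_1 \to \|\bvec{r}\|_1$ together with the conservation-type estimate that $\sum_{i} G_{n,i}(t)$ stays close to $\sum_i r_i$ on compacts (arrivals add mass at rate $\lambda_n$, departures remove it, so the total mass process is itself tight). This gives tightness in $\DD([0,\infty):\ld)$. Then I would identify any subsequential limit: passing to the limit in the prelimit identity, using continuity of $\bvec{a_n} \to \bvec{a}$ (where $a_i(\bvec g)$ reduces to $g_{i-1} - g_i$-type expressions as $d_n \to \infty$, since choosing the shortest of infinitely many queues deterministically fills the lowest non-full level) and of $\bvec b$, together with the continuity and the known characterization of the Skorohod map $\Gamma_1$ (which is Lipschitz in the uniform topology, cf. the discussion around \eqref{eq:SMdef}), one sees that every limit point $(\g,\vbo)$ solves \eqref{eq:limit-ode2}. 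By Proposition \ref{lem:uniqsoln} that solution is unique, hence the full sequence converges; and since the limit is continuous, convergence in the Skorohod topology upgrades to convergence in probability (a constant/deterministic limit). The argument that convergence in distribution to a deterministic limit implies convergence in probability is standard.

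The main obstacle I anticipate is the passage to the limit in the drift term, specifically controlling the job-placement probabilities as $d_n \to \infty$. When $d_n$ grows but stays below $n$, the probability that a tagged arrival (picking $d_n$ queues without replacement) is routed to level $i$ is not simply a function of $G_{n,i}$ and $G_{n,i+1}$ but involves $\beta_n$ and a delicate $d_n$-dependent expression; one must show this converges, uniformly over the relevant compact region of state space and uniformly in time along the trajectory, to the "infinite-choice" routing (all mass flows into the lowest not-yet-saturated level). This requires quantitative estimates of the form $(1 - G_{n,i})^{d_n} \to 0$ or $\to 1$ depending on whether $G_{n,i}$ is bounded away from $1$, together with care near the boundary $G_{n,i} = 1$ — which is exactly why the Skorohod-map formulation is needed to absorb the overshoot. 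A secondary technical point is the infinite-dimensionality: one must control the $\lSpace{1}$-tail of $\Gn(t)$ uniformly on compacts to get genuine $\|\cdot\|_1$-convergence rather than mere coordinatewise convergence, which I would handle via a uniform-integrability / tightness-of-total-mass argument as indicated above. The remaining steps — martingale estimates, Gronwall-type bounds, and invoking the Lipschitz property of $\Gamma_1$ — are routine given the machinery already set up in the paper (e.g. Lemma \ref{lem:flexistence}).
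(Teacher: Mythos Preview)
Your overall architecture --- semimartingale decomposition, martingale term vanishing, tightness in $\DD([0,\infty):\ld)$, subsequential limit identification, then uniqueness from Proposition~\ref{lem:uniqsoln} --- is exactly the paper's. The tightness argument and the invocation of Lemma~\ref{lem:martisnull} are on target.

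Where your description diverges, and where there is a genuine gap, is the limit-point identification. You speak of ``continuity of $\bvec{a_n} \to \bvec{a}$'' and of using the Lipschitz property of $\Gamma_1$ to pass to the limit. But there is no continuous limiting map $\bvec{a}$: as $d_n\to\infty$ one has $\beta_n(x)\to \I{x=1}$ pointwise, so $\bvec{a_n}$ degenerates to a discontinuous function of the state, and the prelimit equation \eqref{eq:martdecom} is \emph{not} written in Skorohod form to begin with. Hence Lipschitz continuity of $\Gamma_1$ cannot be applied directly to carry the prelimit to the limit. The paper's route (Lemma~\ref{lem:flexistence}) is instead to set $V_{n,i}(t)=\lambda_n\int_0^t\beta_n(G_{n,i}(s))\,ds$, note that each $V_{n,i}$ must converge uniformly because every other term in the $i$-th coordinate of \eqref{eq:martdecom} does (recursively in $i$, starting from $V_{n,0}=\lambda_n\,\id$), and then \emph{verify} the characterizing complementarity condition $\int_0^t(1-G_i(s))\,dV_i(s)=0$ for the limit by dominated convergence, using precisely that $\beta_n(x)\to 0$ for every $x<1$. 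In other words, one passes to the limit in the unconstrained equation and afterwards recognizes the result as a Skorohod-problem solution via Remark~\ref{rem:reflection}; the Lipschitz property of $\Gamma_1$ enters only in the uniqueness half (Section~\ref{subsec:uniqflu}), not in the characterization.
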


\begin{remark}
	\label{rem:sameasdeb}
Note that Theorem \ref{thm:fluid-limit} allows $d_n\to \infty$ in an arbitrary manner.  In \cite{mukherjee2018universality}*{Theorem 1} it is shown that, under the assumptions of Theorem \ref{thm:fluid-limit}, $\Gn$ is a tight sequence
  of $\DD([0,\infty): \ld)$ valued random variables and   that every subsequential weak limit $\hat{\g}$ satisfies a system of equations given as
  \begin{align}
    \label{eq:limit-ode-explicit}
    \hat g_i(t) &= r_i - \int_0^t \brR{\hat g_i(s) - \hat g_{i+1}(s)} ds + \int_0^t p_{i-1}(\hat{\bvec{g}}(s))ds \chmd{\quad \text{ for } i \geq 1}
\end{align}
        where
		\begin{align}\label{eq:limit-ode-explicit2}
    p_j(\hat{\g}(s)) & = \begin{cases}
        \lambda - (\lambda -1 + \hat g_{j + 2}(s))^+& \text{ if } j = m(\hat{\g}(s)) - 1\\
        (\lambda -1 + \hat g_{j + 1}(s))^+& \text{ if } j = m(\hat{\g}(s))  > 0 \\
           \lambda & \text{ if } j =  m(\hat{\g}(s)) = 0 \\
           0 & \text { otherwise, }
    \end{cases}
  \end{align}
  and for $\bvec{x} \in \ld$,  $m(\bvec{x}) \doteq \inf \brC{i \mid x_{i+1} < 1 }$. (Note that $m(\Gn(t))$ is the length of the smallest queue at time $t$.) 
  The uniqueness of the above system of equations was not shown in \cite{mukherjee2018universality}.
  
  From  \eqref{eq:limit-ode2}  and the definition in \eqref{eq:SMdef} it follows that each $v_i$ is absolutely continuous and, for a.e. $t$,
  \begin{equation*}
   \frac{d v_i(t)}{dt} = \brR*{\frac{d v_{i-1}(t)}{dt} - g_i(t) + g_{i+1}(t)}^{+} \I{g_i(t) = 1} \label{eq:derivative1}
  \end{equation*}
  for any $i \geq 1$. From this we see that, for a.e. $t$,
  \begin{equation}\label{eq:defnvv}
  \frac{dv_i(t)}{dt} = \begin{cases}
      \lambda & \text{ if } i = 0 \\
      \frac{dv_{i-1}(t)}{dt} & \text{ if } i < m(\g(t))  \text{ and } i \geq 1,\\
  \brR*{\frac{dv_{i-1}(t)}{dt} - 1 + g_{i+1}(t)}^{+} & \text{ if } i = m(\g(t))  \text{ and } i \geq 1,\\
  0          & \text{ if } i > m(\g(t)).
  \end{cases}
  \end{equation}
  and consequently $ p_j({\g}(s)) = \frac{dv_{j}(s)}{ds} - \frac{dv_{j+1}(s)}{ds}$  for a.e. $s$.
  Substituting this back in \eqref{eq:limit-ode1} shows that $\g$ solves the system of equations in \eqref{eq:limit-ode-explicit}.
  Conversely, for any solution $\hat{\g}$ of \eqref{eq:limit-ode-explicit}, defining $\hat{\vbo}$ by the right side of \eqref{eq:defnvv} by replacing
  $\g$ with $\hat{\g}$, we see that $(\hat{\g},\hat{\vbo})$ solves \eqref{eq:limit-ode1}.  From the uniqueness result in Lemma \ref{lem:uniqsoln} it then follows that
  in fact there is only one solution to the system of equations in \eqref{eq:limit-ode-explicit} and this solution equals $\g$ given in \eqref{eq:limit-ode2}.
\end{remark}

Consider now the time asymptotic behavior of $\g$ given in \eqref{eq:limit-ode2}.
        When $\lambda < 1$, $(\lambda, 0, 0 \ldots) \in \lSpace{1}$ is the unique fixed point of \eqref{eq:limit-ode2}, as can be seen by setting the derivative of the right side of \eqref{eq:limit-ode-explicit} to  $0$. In the critical case, i.e. when  $\lambda = 1$, the situation is very different and in fact there are uncountably many 
		fixed points given by $\{ \bvec{f} \in \ld \mid m(\bvec{f}) > 0, f_{m(\bvec{f}) + 2} = 0 \} \subset \ld$, which once more is seen  by checking that the derivative on the right side of \eqref{eq:limit-ode-explicit} is 0 at exactly these points when $\lambda=1$. 
	In this work we are  interested in the fluctuations of $\Gn$ in the critical case when the system starts suitably close to one of the fixed points of 
	\eqref{eq:limit-ode1}. Thus for the remaining section we will assume that $\lambda_n< 1$ for every $n$ and $\lambda_n \to 1$ as $n\to \infty$.
	In order to formulate precisely what we mean by `suitably close to the fixed point' we need some definitions and notation.
	The functions $\beta_n$ in the next definition will play a central role.
\begin{definition}
	\label{def:betan}
	Define the function $\beta_n : [0,1] \to [0,1]$ by
	\begin{equation}
	\label{eq:betan}
        \beta_{n}(x) \doteq \prod_{i=0}^{d_n - 1} \brR*{\frac{x - \frac{i}{n}}{1 - \frac{i}{n}}}^+ 
	\end{equation}
\end{definition}	
The function $\beta_n(\cdot)$ arises when sampling $d_n$ random servers without replacement. Specifically, when $nx \in \nat$, 
$\beta_n(x) = \prob\brR*{ \mathbb{A}_{n, d_n} \subseteq \natpre{n x}}= \binom{n x}{d_n}/\binom{n}{d_n}$, where $\mathbb{A}_{n, d_n}$ is a randomly chosen subset (without replacement) from $\natpre{n}$ of size $d_n$. An alternative is to perform sampling with replacement,  which corresponds to the simpler function $\gamma_n(x) \doteq x^{d_n}$ in place of $\beta_n$.	

We now introduce the notion of a `near fixed point' of $\Gn$.

\begin{definition}
	\label{def:fixed-point}
	For $n \in \nat$, the {\bf near fixed point} $\Mun$ of $\Gn$ is the  vector in $\ld$ given as
	$\Mun = (\mu_{n,1}, \mu_{n,2} \ldots )$ where $\mu_{n,i}$ are defined  recursively as $\mu_{n,1} = \lambda_n $ and $\mu_{n,i+1} = \lambda_n \beta_n\brR*{\mu_{n,i}}$ for $i \geq 1$.
\end{definition}
Using $\beta_n(x) \leq x^{d_n}\le x$ and $\lambda_n < 1$, it is easy to check that $\Mun \in \ld$. 
The reason $\Mun$ is referred to as a near fixed point of $\Gn$ is discussed in Remark \ref{rem:vect-equation}. To study the fluctuations of the process around the near fixed point $\Mun$ we define the centered and scaled process, $\Zn$
as  in \eqref{eq:zn-proc-def}. We now present our three main results on fluctuations which correspond to the three cases ${d_n}/{\sqrt{n}} \to 0$, ${d_n}/{\sqrt{n}} \to c\in (0,\infty)$,
and ${d_n}/{\sqrt{n}} \to \infty$ respectively.

\begin{theorem} 
\label{thm:diffusion-limit}
Suppose that, as $n \to \infty$,    $1 \ll d_n \ll \sqrt{n}$, \cg{$\lam_n\nearrow 1$}, and there is a $k \in \nat$ so that $\mu_{n,k} \to 1$ and $\beta_n'(\mu_{n,k}) \to \alpha \in [0,  \infty)$ as $n \to \infty$.  Further suppose  that $\brC*{\norm{\Zn(0)}_1}_{n \in \nat}$ is tight and that $\Zn(0) \pconv \bvec{z}$ in $\lSpace{2}$, where $\bvec{z_{r+}} = \bvec{0}$ for some $r > k$. 
Then for any  $T \in (0,\infty)$, 
\begin{equation}
    \lim_{M \to \infty}\sup_{n} \prob\brR*{\norm{\Zn}_{2,T} > M} = 0.
    \label{eq:tightness}
\end{equation}
Furthermore, if $k > 1$, then $\sup_{t \in [\epsilon, T]} \abs{Z_{n,i}(t)} \pconv 0$ as $n \to \infty$ for any $T<\infty$,   $0 < \epsilon \leq T$ and $i \in [k-1]$.

Consider the shifted process $\Yn(t) \doteq (\sum_{i=1}^k Z_{i,n}(t), Z_{k+1,n}(t), Z_{k+2,n}(t), \ldots)$ and $\bvec{y} \doteq (\sum_{i=1}^k z_i, z_{k+1}, z_{k+2}, \ldots)$. Then  $\Yn \dconv \Y$ in $\DSpace*{\infty}[\lSpace{2}]$, where $\Y \in \CSpace*{\infty}[\lSpace{2}]$ is the unique pathwise solution to
\begin{equation}
        \label{eq:diffusion-limit1}
        \begin{aligned}
            Y_1(t) &= y_1 -(\alpha + \I{k=1}) \int_0^t Y_1(s) ds + \int_0^t Y_2(s) ds + \sqrt{2}B(t) \\
            Y_2(t) &= y_2 + \alpha \int_0^t Y_1(s) ds - \int_0^t Y_2(s) ds + \int_0^t Y_3(s) ds  \\
            Y_i(t) &= y_i - \int_0^t Y_i(s)ds + \int_0^t Y_{i+1}(s) ds \qquad \text{for } i \in \{3, \ldots, r - k + 1\} \\
            Y_i(t) &= 0 \qquad \text{for } i > r - k + 1,
        \end{aligned}
\end{equation}
and \chsb{$B(\cdot)$} is a one dimensional standard Brownian motion. 
\end{theorem}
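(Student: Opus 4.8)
The plan is a drift/martingale analysis of $\Zn$ in four stages. The starting point is the semimartingale representation \eqref{eq:scaled-marteq}: inserting $\Gn=\Mun+n^{-1/2}\Zn$ into the vector dynamics of Remark~\ref{rem:vect-equation} and cancelling the $O(\sqrt n)$ terms with the fixed point identity $\lambda_n\beta_n(\mu_{n,i-1})=\mu_{n,i}$ yields, for each $i\ge1$ (with $Z_{n,0}\equiv0$),
\begin{equation*}
Z_{n,i}(t)=Z_{n,i}(0)+\int_0^t\!\big(c_{n,i-1}Z_{n,i-1}(s)-(c_{n,i}+1)Z_{n,i}(s)+Z_{n,i+1}(s)+R_{n,i}(s)\big)\,ds+\sqrt n\,M_{n,i}(t),
\end{equation*}
with $c_{n,j}\doteq\lambda_n\beta_n'(\mu_{n,j})$, $R_{n,i}$ the second order Taylor remainder of $\lambda_n\beta_n$ about $\mu_{n,i-1},\mu_{n,i}$, and $\sqrt n\,M_{n,i}$ the scaled compensated counting martingale. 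Since each transition changes exactly one coordinate of $\Gn$, the $\{\sqrt n\,M_{n,i}\}_i$ are mutually orthogonal and $\langle\sqrt n\,M_{n,i}\rangle_t=\int_0^t\!\big(\lambda_n(\beta_n(G_{n,i-1}(s))-\beta_n(G_{n,i}(s)))+(G_{n,i}(s)-G_{n,i+1}(s))\big)\,ds$, which near $\Mun$ is $2(\mu_{n,i}-\mu_{n,i+1})\,t$ to leading order. The analytic inputs are the asymptotics of $\Mun,\beta_n',\beta_n''$ near the $\mu_{n,j}$ (the estimates behind Lemmas~\ref{lem:large-neg-eigenval}--\ref{lem:headconv0}): $\mu_{n,j}\to1$ with $c_{n,j}\to\infty$ of order $d_n$ for $j\le k-1$; $c_{n,k}\to\alpha$; $\mu_{n,j}\to0$ and $c_{n,j}\to0$ for $j>k$; $\mu_{n,j}-\mu_{n,j+1}\to0$ for $j\neq k$ and $\mu_{n,k}-\mu_{n,k+1}\to1$; and $\sup_{s\le T}|R_{n,i}(s)|=O(n^{-1/2}d_n^2\norm{\Zn}_{2,T}^2)$, so that $R_{n,i}/c_{n,i}=O(n^{-1/2}d_n\norm{\Zn}_{2,T}^2)$ is negligible since $d_n\ll\sqrt n$.

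Second I would establish the a priori bound \eqref{eq:tightness} by applying It\^o's formula to a weighted norm $\sum_i w_{n,i}Z_{n,i}(t)^2$, the weights decreasing geometrically across the first $k$ coordinates (to offset the large $c_{n,j}$ there) and bounded beyond: this renders the drift dissipative up to a bounded constant, and then Gronwall's inequality, Doob's inequality for the orthogonal martingales (their weighted summed quadratic variation being bounded uniformly in $n$), the remainder estimate above, and the tightness of $\{\norm{\Zn(0)}_1\}$ give $\sup_n\E[\norm{\Zn}_{2,T}^2\wedge M^2]<\infty$, hence \eqref{eq:tightness}. Next I would show $(Z_{n,r+1},Z_{n,r+2},\dots)\pconv\bvec{0}$ in $\DD([0,\infty):\lSpace2)$ (Lemma~\ref{lem:finitedimensional}): the tail starts near $0$ because $\bvec{z_{r+}}=\bvec{0}$, the coupling of $Z_{n,r+1}$ to $Z_{n,r}$ has coefficient $c_{n,r}\to0$, the tail drift is an asymptotically bounded stable tridiagonal operator, and the tail martingales have vanishing quadratic variation, so a Gronwall estimate on $\norm{(Z_{n,r+1},\dots)}_2$ closes it. This reduces matters to the finite vector $(Z_{n,1},\dots,Z_{n,r})$.

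The core step, and the one I expect to be the main obstacle, is the boundary layer for the head coordinates: for $k>1$, $\sup_{t\in[\epsilon,T]}|Z_{n,i}(t)|\pconv0$ for $i\in[k-1]$. The leading $(k-1)\times(k-1)$ block $B_n$ of the drift matrix (tridiagonal, diagonal $-(c_{n,i}+1)$, subdiagonal $c_{n,i}$, superdiagonal $1$) becomes, after the diagonal similarity that symmetrizes a tridiagonal matrix with positive off diagonal products, a symmetric matrix whose eigenvalues are all $\le-\min_{i\le k-1}c_{n,i}+O(\max_{i\le k-1}\sqrt{c_{n,i}})\to-\infty$ (Lemma~\ref{lem:large-neg-eigenval}); hence $e^{B_nt}$ contracts at a rate tending to $\infty$, and by variation of constants the head coordinates relax past time $\epsilon$ to a quasi-equilibrium forced through the last row by the bounded process $Z_{n,k}$, by the head martingales (variances $\to0$), and by $\bvec R_n$ (negligible by the first step), each contribution being $o_p(1)$ after division by the diverging restoring rates, so $Z_{n,i}\pconv0$ on $[\epsilon,T]$; the $\epsilon$ is needed because $Z_{n,i}(0)\to z_i$ need not vanish. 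The subtle point is that the symmetrizer's condition number grows like $d_n^{(k-2)/2}$, so the bare operator-norm bound on $e^{B_nt}$ is too lossy for large $k$; I would instead argue inductively over $i=1,\dots,k-1$ that $c_{n,i-1}Z_{n,i-1}$ is comparable to $Z_{n,i}$, which cancels the $+1$ in the $i$-th diagonal so that the effective $i$-th restoring rate stays of order $d_n$ and each $Z_{n,i}$, $i\in[k-1]$, is pushed to $0$ individually (Lemma~\ref{lem:headconv0}).

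Finally I would pass to the reduced diffusion. Writing $W_n=\sum_{i=1}^kZ_{n,i}=Y_{n,1}$ and summing the representation of the first step over $i=1,\dots,k$, the large coefficients $c_{n,1},\dots,c_{n,k-1}$ telescope away, leaving
\begin{equation*}
Y_{n,1}(t)=Y_{n,1}(0)+\int_0^t\!\big(-Z_{n,1}(s)-c_{n,k}Z_{n,k}(s)+Z_{n,k+1}(s)\big)\,ds+\int_0^t\!\Big(\textstyle\sum_{i=1}^kR_{n,i}(s)\Big)\,ds+\sqrt n\textstyle\sum_{i=1}^kM_{n,i}(t),
\end{equation*}
where the telescoped remainder $\sum_{i\le k}R_{n,i}(s)$ is of order $n^{-1/2}\beta_n''(\mu_{n,k})Z_{n,k}(s)^2=o_p(1)$ since $\beta_n''(\mu_{n,k})=O(d_n)$, while for $j\ge2$, $Y_{n,j}=Z_{n,k+j-1}$ has drift $c_{n,k+j-2}Z_{n,k+j-2}-(c_{n,k+j-1}+1)Z_{n,k+j-1}+Z_{n,k+j}$. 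Using the second and third steps --- $Z_{n,1}\pconv0$ (or $Z_{n,1}=Y_{n,1}$ if $k=1$), $Z_{n,k}=Y_{n,1}-\sum_{i<k}Z_{n,i}=Y_{n,1}+o_p(1)$, $Z_{n,m}=o_p(1)$ for $m>r$, $c_{n,k}\to\alpha$, $c_{n,k+j}\to0$ --- the drift of $\Yn$ converges uniformly on compacts, in probability, to that of \eqref{eq:diffusion-limit1}; the martingale $\sqrt n\sum_{i\le k}M_{n,i}$ has quadratic variation $\to\sum_{i\le k}2(\mu_{n,i}-\mu_{n,i+1})\,t=2(\lambda_n-\mu_{n,k+1})\,t\to2t$ and jumps $O(n^{-1/2})$, hence converges to $\sqrt2\,B$, while all other coordinate martingales have quadratic variation $\to0$. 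An Aldous--Rebolledo tightness argument (from \eqref{eq:tightness} and the bounds just obtained on the characteristics) together with convergence of the characteristics shows that every subsequential limit of $\Yn$ solves the martingale problem for \eqref{eq:diffusion-limit1}; as this is a linear SDE with constant diffusion coefficient it is pathwise, hence weakly, unique, so $\Yn\dconv\Y$ in $\DD([0,\infty):\lSpace2)$, with the driving martingale converging to $\sqrt2\,B$. Together with the third step this yields all the assertions.
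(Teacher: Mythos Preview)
Your plan has the right skeleton, but the a priori tightness step has a genuine gap. Your remainder $R_{n,i}=O(n^{-1/2}d_n^2\norm{\Zn}_{2,T}^2)$ feeds a cubic term into the It\^o formula for any weighted $\ell^2$ norm, and Gronwall cannot close on that without already knowing $\norm{\Zn}_{2,T}$ is bounded; the displayed conclusion $\sup_n\E[\norm{\Zn}_{2,T}^2\wedge M^2]<\infty$ is trivially true and does not yield \eqref{eq:tightness}. The paper avoids this circularity by reversing the order of your first and last steps: it localizes first via $\tau_{n,L}=\inf\{t:\norm{\Zn(t)}_2\ge L-n^{-1/2}\}$ (see \eqref{eq:defntaunm}), proves every convergence statement on $[0,T\wedge\tau_{n,L}]$ (where $\norm{\Zn}_2\le L$, so the nonlinear errors are trivially controlled), and only at the very end deduces \eqref{eq:tightness} from the fact that the localized limit $\Y$ is a continuous process, hence a.s.\ bounded on $[0,T]$, whence $\sup_n\prob(\tau_{n,L}\le T)\to0$ as $L\to\infty$; Lemma~\ref{lem:stopping-time-trick} then removes the localization. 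Relatedly, the paper does not use additive Taylor remainders at all: it writes the drift multiplicatively as $t_{n,i}(z)=\beta_n'(\mu_{n,i})(1+\Delta_{n,i}(z))z$ and shows $\sup_{|z|\le L}|\Delta_{n,i}(z)|\to0$ (Lemma~\ref{lem:tapprox}, Remark~\ref{rem:tapprox-final}), so on the localized interval the drift is linear up to a uniformly vanishing multiplicative error and no weighted norm is needed.

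The remaining steps are correct in outline but the paper's execution differs and is somewhat simpler. For the head $(Z_{n,1},\dots,Z_{n,k-1})$ it neither symmetrizes the tridiagonal block nor runs your induction: it applies It\^o to the \emph{unweighted} $\norm{\vec X_n}^2$ and shows directly (Lemma~\ref{lem:large-neg-eigenval}) that $\vec x^{\,t}A_n(s)\vec x/\norm{\vec x}^2\to\infty$ uniformly on $[0,T\wedge\tau_{n,L}]$, by observing that $b_n^{-1}(A_n+A_n^t)\to H$ in Frobenius norm with $b_n=\beta_n'(\mu_{n,1})\to\infty$ and $\vec x^{\,t}H\vec x=x_1^2+\sum_j(x_{j+1}-x_j)^2+x_{k-1}^2>0$; the scalar ODE comparison of Lemma~\ref{lem:negativedrift} then gives Lemma~\ref{lem:headconv0}. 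This bypasses the condition-number obstruction you flagged without needing the coordinate-wise induction. For the limit identification, rather than an Aldous--Rebolledo/martingale-problem argument the paper does a direct comparison: after a Skorokhod coupling making $\sqrt n\,\Mn\to\sqrt2\,B\,\bvec{e}_k$ a.s.\ (Lemma~\ref{lem:martconv}), it subtracts \eqref{eq:diffusion-limit1} from the explicit $\Yn$ equations (Corollary~\ref{cor:finalzneq}, \eqref{eq:yn1}--\eqref{eq:yni}) and closes by Gronwall on $\sum_i|Y_{n,i}-Y_i|$ over $[0,T\wedge\tau_{n,L}]$, using Corollary~\ref{cor:znintconv0} to kill $\int_0^t|Z_{n,i}|\,ds$ for $i<k$. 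Since the limit SDE is linear, this is shorter than the weak-convergence route you propose, though yours would also work once the localization is in place.
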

\begin{remark}\label{rem:onthm1}$\,$
	\begin{enumerate}[(i)]
		\item Note that the convergence $\sup_{t \in [\epsilon, T]} \abs{Z_{n,i}(t)} \pconv 0$ as $n \to \infty$ for any  $0 < \epsilon \leq T$ is equivalent to the statement that $Z_{n,i} \to 0$ in probability in $\DD((0,T]:\RR)$ where the latter space is equipped with the topology of uniform convergence on compacts. Note also that, since, \cg{for $i \in [k-1]$, $Z_{n,i}(0)$ may converge in general}  to a non-zero limit, the above convergence to $0$ cannot be strengthened
		to a convergence in probability in $\DD([0,T]:\RR)$.
            \item \label{item:mun-conv-fk} By Corollary \ref{cor:betaprimemun} in Section \ref{sec:technical-estimates}, when $\mu_{n,k}$ is away from $0$,
 \begin{equation*}
 \label{eq:muk1}
     \beta_n'(\mu_{n,k}) = (1+o(1)) \frac{d_n \mu_{n,k+1}}{\lambda_n \mu_{n,k}}
 \end{equation*}
 as $n \to \infty$.
 Hence the assumptions $d_n \to \infty$, $\lambda_n \to 1$, $\mu_{n,k} \to 1$ and $\beta_n'(\mu_{n, k}) \to \alpha < \infty$ in Theorem \ref{thm:diffusion-limit} say that $\mu_{n,k+1} \to 0$. Since $\mu_{n,k} \to 1$, this in fact shows that $\Mun \to \fk$ in $\ld$, where recall that $\fk$ is one of the fixed points of the fluid-limit \eqref{eq:limit-ode2} when $\lambda = 1$. The fact that the  convergence happens in $\ld$ can be seen on observing that if $\mu_{n,k+1} \leq \epsilon$ then, by \eqref{eq:gammageqbeta}, $\mu_{n, k + 1 + i} \leq \epsilon^{d_n^i}$. This convergence, along with \eqref{eq:tightness} shows that most queues will be of length $k$ on any fixed interval $[0,T]$. We also note that in general $\sqrt{n}(\Mu_n- \fk)$ will diverge, and thus $\sqrt{n}(\Gn -\fk)$ will typically not be tight, in this regime.
 
 \item In the special case when the system starts sufficiently close to  the near fixed point $\Mu_n$ so that \cg{$z_i=0$ for $i>k+1$}, the limit process $\Y$ simplifies to
 an essentially two dimensional process given as, $Y_i(t) = 0$ for $i > 2$, and
 \begin{equation*}
         \label{eq:diffusion-limit1zero}
         \begin{aligned}
             Y_1(t) &=  \cg{y_1}-(\alpha + \I{k=1}) \int_0^t Y_1(s) ds + \int_0^t Y_2(s) ds + \sqrt{2}B(t) \\
             Y_2(t) &=  \cg{y_2 +} \alpha \int_0^t Y_1(s) ds - \int_0^t Y_2(s) ds   
           \end{aligned}
 \end{equation*}
 \item \label{item:cond-thm1} The convergence behavior of $\Zn$ is governed by the sequence of parameters $(d_n, \lambda_n)$. In Corollary \ref{cor:diffusionconditions} from Section \ref{sec:technical-estimates}, we show that if $1 \ll d_n^{k+1} \ll n$ and $1 - \lambda_n = \frac{\xi_n + \log d_n}{d^{k}_n}$ with $\xi_n \to -\log(\alpha) \in (-\infty, \infty]$ and $\frac{\xi_n^2}{d_n} \to 0$, then the conditions $\mu_{n,k} \to 1$ and $\beta_n'(\mu_{n,k}) \to \alpha \in [0,\infty)$ of Theorem \ref{thm:diffusion-limit} are satisfied. Using this fact we make the following observations. For simplicity, consider $\bvec{z}=0$.
	\begin{enumerate}[(a)]
		\item Suppose that $d_n=\log n$, $1- \lambda_n = \frac{\log \log n}{(\log n)^k}$. In this case the assumptions of Theorem \ref{thm:diffusion-limit}
		are satisfied and one essentially sees non-zero fluctuations only in the $k$-th and $k+1$-th coordinates. 
		Note that as $k$ becomes large, the traffic intensity increases and one sees more and more coordinates of the near fixed point approach $1$.
		\item With the same $d_n$ as in (a) but a somewhat lower traffic intensity given as $1- \lambda_n = \frac{(\log n)^{1/2-\epsilon}}{(\log n)^k}$ for some
		$\epsilon \in (0, 1/2)$, one sees that condition of the theorem are satisfied with $\alpha=0$ (i.e. $\beta_n'(\mu_{n,k}) \to 0$). Thus
		the limit process $\Y$, in the case $k>1$, simplifies to $Y_i=0$ for $i>1$ and $Y_1(t) = \sqrt{2} B(t)$. When $k=1$, $Z_1 = Y_1$ is instead given as the following Ornstein-Uhlenbeck(OU) process 
                \begin{equation}
                    \label{eq:OUprocess}
                    Z_1(t) = - \int_0^t Z_1(s) ds + \sqrt{2}B(t).
                \end{equation}
		\item With higher values of $d_n$,  using Theorem \ref{thm:diffusion-limit}, one can analyze fluctuations for systems with higher traffic intensity. For example, suppose that $d_n = \frac{\sqrt{n}}{\log n}$. Then the conditions of the theorem are satisfied with $k=1$ and 
                    $1-\lambda_n \sim (\log n)^2/\sqrt{n}$. In fact in this case $\alpha=0$  and the limit process is described by the one dimensional OU process \eqref{eq:OUprocess}. With a slightly higher traffic intensity given as $1-\lambda_n = ((\log n)^2 -2 \log n \log\log n))/2\sqrt{n}$ one obtains a two dimensional limit diffusion.
                \item The theorem allows for traffic intensity in the Halfin-Whitt scaling regime (i.e. {$\sqrt{n}(1 - \lambda_n) \to \beta > 0$})  as well.
		Specifically, for $k \geq 2$, if $d_n = (\sqrt{n} \log n)^{\frac{1}{k}}$ and  $(1 - \lambda_n) = \frac{\beta + o(1)}{\sqrt{n}}$ for some $\beta>\beta_{0}=1/2k$, the conditions of the theorem are satisfied with $\alpha=0$.
		  With slightly higher traffic intensity
		 (e.g. $\beta + o(1)$ replaced by $\beta_0+ (\frac{1}{k} \log \log n - \log\alpha)/\log n$) conditions of the theorem are met with a non-zero $\alpha$.
             \item \label{item:all-fixed-points} Recall that a  fixed point of \eqref{eq:limit-ode2} when $\lambda=1$ takes the form $\bvec{f}_k^{\gamma}\doteq \bvec{f}_k+ \gamma \bvec{e}_k$, where $k \in \NN$ and $\gamma \in [0, 1)$. Although Theorem \ref{thm:diffusion-limit} only considers settings where the near fixed point $\Mu_n$ converges to 
                 $\bvec{f}_k^{0} = \bvec{f}_k$ for some $k$, it is possible to give conditions under which $\Mu_n$ converges to a different fixed point. Specifically, suppose that $1 \ll d_n^{k+1} \ll n$ and $1 - \lambda_n = \frac{a}{d^{k}_n}$  for some $a>0$. Then it can be checked using Lemma \ref{lem:fixedpointapprox} that $\Mu_n \to \bvec{f}_k^{\gamma}$ with $\gamma=e^{-a}$.
				 	\end{enumerate}
             \item  Suppose for some $a \in (0, \frac{1}{2})$, $d_n = n^{a + o(1)}$  and $\lambda_n$ is taken as in Remark \ref{rem:onthm1} (\ref{item:cond-thm1}) with $k \in \nat$ such that $a(k+1) < 1$. By Theorem \ref{thm:diffusion-limit},
			 all but $O(\sqrt{n})$ queues will have length $k$ over bounded times. This result is analogous to \cite{brightwell2018supermarket}*{Theorem 1.1} which considers, for such choice of $d_n, \lam_n$, the behavior of queues in equilibrium in a setting where $d_n$ queues are sampled with replacement (instead of without replacement as  in the current work). 
			 In fact, for this scenario, \cite{brightwell2018supermarket}*{Theorem 1.1} is able to show a stronger result which says that with high probability, as $n \to \infty$, most of the queues in equilibrium will have length $k$ and that there will be no larger queues.

\end{enumerate}	
\end{remark}

The next theorem describes the fluctuations of $\Zn$ when $d_n$ is of order $\sqrt{n}$.
\begin{theorem}
	\label{thm:exponential}
    Suppose that $\frac{d_n}{\sqrt{n}} \to c \in (0, \infty)$ and $\lambda_n =  1 - \brR*{\frac{\log d_n}{d_n} + \frac{\alpha_n}{\sqrt{n}}}$ with $\alpha_n \to \alpha \in (-\infty, \infty]$ and $\alpha_n = o(n^{1/4})$. Then,  $\Mun \to \bvec{f_1}$ in $\ld$.
    \cg{Suppose further that} $\brC*{\norm{\Zn(0)}_1}_{n \in \nat}$ is tight and $\Zn(0) \pconv \bvec{z}$ in $\lSpace{2}$ with $\bvec{z_{r+}} = 0$ for some  $r \geq 2$.  Then, as $n \to \infty$, $\Zn \dconv \bvec{Z}$ in $\DSpace*{\infty}[\lSpace{2}]$, where $\bvec{Z}$ is the unique pathwise solution to:
	\begin{align*}
            Z_1(t) &= z_1 - \int_0^t \brR{Z_1(s) - Z_2(s)} ds  - (ce^{c\alpha})^{-1}\int_0^t \brR*{e^{cZ_1(s)} - 1} ds + \sqrt{2} B(t),\\
            Z_2(t) &= z_2 - \int_0^t \brR{Z_2(s) - Z_3(s)} ds + (ce^{c\alpha})^{-1}\int_0^t \brR*{e^{cZ_1(s)} - 1} ds,\\
	Z_i (t) &= z_i - \int_0^t \brR{Z_i(s) - Z_{i+1}(s)} ds \qquad \text{ for each } i \in \set{3 \ldots r},\\
	Z_i(t)  &= 0 \qquad \text{for each } i > r,  
	\end{align*}
        \cg{and} $B$ is standard Brownian motion.
\end{theorem}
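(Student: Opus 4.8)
The plan is to proceed in four steps: (i) pin down the asymptotics of the near fixed point $\Mun$; (ii) read off the limiting drift and diffusion coefficients from the semimartingale representation \eqref{eq:scaled-marteq} of $\Zn$; (iii) establish a compact containment estimate for $\Zn$ despite the non-standard exponential drift; and (iv) conclude via tightness, identification of subsequential limits, and well-posedness of the limit system. For (i): write $\delta_n \doteq 1-\lambda_n = \frac{\log d_n}{d_n} + \frac{\alpha_n}{\sqrt n}$ and expand $\log\beta_n(\mu_{n,1}) = \sum_{i=0}^{d_n-1}\log\bigl(1 - \frac{\delta_n}{1 - i/n}\bigr)$. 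The hypotheses $d_n/\sqrt n \to c$ and $\alpha_n = o(n^{1/4})$ are exactly what force the second order contributions $\frac{d_n^2\delta_n}{n}$ and $d_n\delta_n^2$ to vanish, giving $\log\beta_n(\mu_{n,1}) = -\log d_n - \frac{\alpha_n d_n}{\sqrt n} + o(1)$; this is among the $\beta_n$-estimates in Section~\ref{sec:technical-estimates} (cf.\ Lemma~\ref{lem:fixedpointapprox}, Corollary~\ref{cor:betaprimemun}). Hence $\sqrt n\,\mu_{n,2} = \sqrt n\lambda_n\beta_n(\mu_{n,1}) = (1+o(1))\frac{\sqrt n}{d_n}e^{-\alpha_n d_n/\sqrt n} \to \frac1c e^{-c\alpha} = (ce^{c\alpha})^{-1}$ (read as $0$ when $\alpha = \infty$), while $\mu_{n,2} = O(n^{-1/2})$ and $\beta_n(x) \le x^{d_n}$ give $\mu_{n,j} \le \mu_{n,j-1}^{d_n} = O(n^{-d_n/2})$ for $j \ge 3$; thus $\norm{\Mun - \bvec{f_1}}_1 = \delta_n + \mu_{n,2} + \sum_{j\ge 3}\mu_{n,j} \to 0$, i.e.\ $\Mun\to\bvec{f_1}$ in $\ld$.

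\emph{Limiting coefficients.} From \eqref{eq:scaled-marteq}, $Z_{n,i}(t) = Z_{n,i}(0) + \int_0^t b_{n,i}(\Zn(s))\,ds + \sqrt n\,M_{n,i}(t)$ where, with $G_{n,j} = \mu_{n,j} + Z_{n,j}/\sqrt n$ and the fixed-point identities of Definition~\ref{def:fixed-point} ($G_{n,0}\equiv 1$),
\[
b_{n,i}(\Zn) = \sqrt n\lambda_n\bigl(\beta_n(G_{n,i-1}) - \beta_n(\mu_{n,i-1})\bigr) - \sqrt n\lambda_n\bigl(\beta_n(G_{n,i}) - \beta_n(\mu_{n,i})\bigr) - Z_{n,i} + Z_{n,i+1}.
\]
Since $\mu_{n,1}=\lambda_n\to 1$ at rate $\delta_n$, which is of order $1/\sqrt n$, on $\{|Z_{n,1}|\le M\}$ the argument $G_{n,1}$ stays within $O(1/d_n)$ of $1$, so the near-one asymptotics of $\beta_n$ give $\beta_n(\mu_{n,1} + z/\sqrt n) = (1+o(1))\beta_n(\mu_{n,1})e^{cz}$ uniformly for $z$ in compacts; with $\sqrt n\beta_n(\mu_{n,1})\to(ce^{c\alpha})^{-1}$ this yields $\sqrt n\lambda_n(\beta_n(G_{n,1})-\beta_n(\mu_{n,1}))\to(ce^{c\alpha})^{-1}(e^{cZ_1}-1)$. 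For $i\ge 2$ the arguments $\mu_{n,i}+Z_{n,i}/\sqrt n$ are $O(1/d_n)$, so $\beta_n(\cdot)\le(\cdot)^{d_n}$ is super-polynomially small and $\sqrt n\beta_n(\cdot)\to 0$. Hence $b_{n,1}\to -(Z_1-Z_2) - (ce^{c\alpha})^{-1}(e^{cZ_1}-1)$, $b_{n,2}\to (ce^{c\alpha})^{-1}(e^{cZ_1}-1) - (Z_2-Z_3)$, and $b_{n,i}\to -(Z_i-Z_{i+1})$ for $3\le i\le r$. For the martingale part, $G_{n,i}$ jumps by $+1/n$ at rate $n\lambda_n(\beta_n(G_{n,i-1})-\beta_n(G_{n,i}))$ and by $-1/n$ at rate $n(G_{n,i}-G_{n,i+1})$, so $\langle\sqrt n M_{n,i}\rangle_t = \int_0^t[\lambda_n(\beta_n(G_{n,i-1})-\beta_n(G_{n,i})) + (G_{n,i}-G_{n,i+1})]\,ds$, which tends to $2t$ for $i=1$ (as $G_{n,1}\to 1$, $\beta_n(G_{n,1})\to 0$, $G_{n,2}\to 0$) and to $0$ for $i\ge 2$, while $\langle\sqrt n M_{n,i},\sqrt n M_{n,j}\rangle\equiv 0$ for $i\ne j$ since each jump of the occupancy chain alters exactly one coordinate of $\Gn$. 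A martingale functional central limit theorem then gives $\sqrt n M_{n,1}\dconv\sqrt 2\,B$ and $\sqrt n M_{n,i}\dconv 0$ for $i\ge 2$, so the Brownian term enters only the $Z_1$-equation, matching the stated system.

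\emph{Compact containment (the main obstacle).} The genuinely delicate point is that $b_{n,1}$ contains $-\sqrt n\lambda_n\beta_n(G_{n,1})$, which admits no linear-growth bound (it can be of order $\sqrt n$); I would exploit its monotone, restoring nature. On $((d_n-1)/n,1]$ the map $x\mapsto\log\beta_n(x)$ is concave (a sum of concave functions) and $\beta_n(1)=1$, so for $z\ge 0$, $\beta_n(\mu_{n,1}+z/\sqrt n)\ge\beta_n(\mu_{n,1})^{\,1-z/(\sqrt n\delta_n)}$; since $\sqrt n\delta_n\sim\frac{\log d_n}{c}$ and $\beta_n(\mu_{n,1})\sim\frac{e^{-c\alpha}}{d_n}$, the right side is $(1+o(1))\beta_n(\mu_{n,1})e^{cz}$, a bound that stays of order $e^{cz}\beta_n(\mu_{n,1})$ even for $z$ growing slowly. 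Hence, once $Z_{n,1}(s)$ exceeds a large constant $M$, $b_{n,1}(\Zn(s))$ is strongly negative (of order $-e^{cM}$), whereas once $Z_{n,1}(s)<-M$ the term $-Z_{n,1}(s)$ makes it strongly positive; a joint Lyapunov / stopping-time argument, handling $Z_{n,2},\dots,Z_{n,r}$ at the same time (their drifts being linear plus the incoming exponential term, itself controlled by the same bound on $\{|Z_{n,1}|\le M\}$), yields $\lim_{M\to\infty}\sup_n\prob(\norm{\Zn}_{2,T}>M)=0$ in conjunction with the tail estimate. The tail $(Z_{n,r+1},Z_{n,r+2},\dots)$ is controlled exactly as in Lemma~\ref{lem:finitedimensional}: using the tightness of $\{\norm{\Zn(0)}_1\}$ and $\mu_{n,j}=o(n^{-1/2})$ for $j\ge 3$, one gets $\sup_{t\le T}\norm{(Z_{n,r+1}(t),Z_{n,r+2}(t),\dots)}_2\pconv 0$.

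\emph{Tightness, identification and uniqueness.} Compact containment together with the bounds on $b_{n,i}$ and $\langle\sqrt n M_{n,i}\rangle$ valid on the containment event gives Aldous' tightness criterion for $(Z_{n,1},\dots,Z_{n,r})$, hence tightness of $\Zn$ in $\DSpace*{\infty}[\lSpace{2}]$ in view of the tail estimate. Passing to the limit in \eqref{eq:scaled-marteq} along any subsequence, using the uniform-on-compacts convergence of $b_{n,i}$ from the second step and the martingale limit theorem, shows every limit point is a continuous process solving the stated system with $Z_i\equiv 0$ for $i>r$. Finally that system is well-posed: its drift is smooth, hence pathwise uniqueness holds up to explosion, and non-explosion follows from the Lyapunov function $V(\bvec{y}) = \sum_{i=1}^r y_i^2 + e^{cy_1}$, whose only super-linear contributions to $\mathcal{L}V$ come from the exponential drift, with the dominant one, $-c\,e^{-c\alpha}e^{2cy_1}$, negative, so that $\mathcal{L}V\le C(1+V)$. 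Uniqueness then identifies the limit with $\bvec{Z}$, completing the proof that $\Zn\dconv\bvec{Z}$ in $\DSpace*{\infty}[\lSpace{2}]$.
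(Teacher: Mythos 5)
Your overall strategy is sound and captures the right phenomena: the asymptotics of $\Mun$, the exponential drift arising from $\sqrt{n}\,\beta_n$ near $1$, the cancellation of noise for coordinates $\ge 2$, and the role of the restoring drift. However, you take a genuinely different route from the paper at the one step that is actually delicate, and that step is asserted rather than proved.

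\textbf{Where the routes differ.} The paper does not attempt a direct Lyapunov/compact-containment argument on the prelimit $\Zn$. Instead, it first proves (Lemma \ref{lem:exp-uniqueness}) well-posedness of the limit SDE via the change of variables $Y_1=Z_1$, $Y_2=Z_1+Z_2$, which \emph{cancels} the exponential term from the $Y_2$ equation so that the only superlinear term in the Lyapunov computation is $-Y_1 e^{cY_1}\eta_L(Y_1)\le 1+|Y_1|^2$, a much cleaner estimate than trying to absorb cross terms like $y_2 e^{cy_1}$; it then proves (Lemma \ref{lem:exp-conv}) via a single Gronwall argument that $\norm{\Zn-\Z}_{2,T\wedge\tau_{n,L}\wedge\tau_L}\pconv 0$, where $\tau_L$ is the exit time of the \emph{limit} process; and only then (Corollary \ref{cor:stoptime}) bootstraps tightness of $\norm{\Zn}_{2,T}$ from the a.s. boundedness of $\Z$ on compacts. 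This sidesteps any need for a prelimit Lyapunov function, and in particular avoids the circularity hidden in your plan: the tail estimate you cite (Lemma \ref{lem:finitedimensional}) is proved only up to $\tau_{n,L}$, so you cannot use it unconditionally ``in conjunction with'' the compact containment you are still trying to establish.

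\textbf{The gap.} Your compact-containment step is the crux, and you only gesture at it (``a joint Lyapunov / stopping-time argument\ldots yields''). What would actually need to be written down: (i) a Lyapunov computation for the jump process $(Z_{n,1},\ldots,Z_{n,r})$, which in particular needs Itô's formula with jump terms and a bound $\mathcal{L}_n V\le C(1+V)$ uniformly in $n$; (ii) handling of the positive cross term $2Z_{n,2}\cdot(ce^{c\alpha_n})^{-1}(e^{c_nZ_{n,1}}-1)$ in $\mathcal{L}_n V$, whose magnitude $|y_2|e^{cy_1}$ is not dominated by $V=\sum y_i^2+e^{cy_1}$; it can be absorbed into the $-e^{-c\alpha}e^{2cy_1}$ term by a Young-type inequality, but you do not say so, and your stated inequality $\mathcal{L}V\le C(1+V)$ is not obviously true without this; and (iii) a bound on $t_{n,1}$ uniformly \emph{in} the threshold $M$, where the lower concavity bound $\beta_n(\mu_{n,1}+z/\sqrt n)\ge\beta_n(\mu_{n,1})^{1-z/(\sqrt n\delta_n)}$ is correct but gives $\sqrt n\beta_n\gtrsim e^{cz}/d_n\cdot\sqrt n\sim e^{cz}/c$, which is the right order, and you would still need to verify that the $o(1)$ errors from Lemma \ref{lem:t-as-exponential} are uniform in the relevant range. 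None of this is beyond reach, but as written it is a sketch of a harder argument than the one the paper uses. If you adopt the paper's device of bootstrapping through the well-posed limit process (and the $Y_2=Z_1+Z_2$ trick for well-posedness), the exponential nonlinearity never needs to be controlled on the prelimit at all.

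The remaining ingredients you list---the identification of the limiting drift and quadratic variation, the martingale CLT giving $\sqrt n M_{n,1}\dconv\sqrt 2 B$ and $\sqrt n M_{n,i}\dconv 0$ for $i\ge 2$, and the tail control---match the paper's Lemma \ref{lem:t-as-exponential}, Lemma \ref{lem:martconv}, and Lemma \ref{lem:finitedimensional} respectively and are fine.
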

\begin{remark}\label{rem:onthm2}$\,$
	\begin{enumerate}[(i)]
            \item Note that the coefficients in the above system of equations are only locally Lipschitz and have an exponential growth. However since $c$ is positive, the system of equations has a unique pathwise solution \cg{as is shown in Lemma \ref{lem:exp-uniqueness}}. 
		\item Once more, when \cg{$z_i=0$ for all 
		$i>2$}, the system of equations simplifies to a two dimensional system given as $Z_i=0$ for all $i>2$, and
		\begin{align*}
                    Z_1(t) &= \cg{z_1}- \int_0^t \brR{Z_1(s) - Z_2(s)} ds  - (ce^{c\alpha})^{-1}\int_0^t \brR*{e^{cZ_1(s)} - 1} ds + \sqrt{2} B(t),\\
                    Z_2(t) &=  \cg{z_2}- \int_0^t Z_2(s) ds + (ce^{c\alpha})^{-1}\int_0^t \brR*{e^{cZ_1(s)} - 1} ds.
		\end{align*}
		\item In the regime considered in Theorem \ref{thm:exponential}, the near fixed point $\Mu_n$ can converge to only one particular fixed point of \eqref{eq:limit-ode2}, namely $\bvec{f_1}$. As before, the term $\sqrt{n}(\Mu_n-\bvec{f_1})$ may diverge and thus $\sqrt{n}(\Gn(\cdot)-\bvec{f_1})$ will in general not be tight.
		\item Suppose that $d_n = c\sqrt{n}$ for some $c>0$, $\z=0$ and $1- \lambda_n = (\beta + o(1))\log n/\sqrt{n}$ for some $\beta > \beta_0 = 1/2c$.
		Then the assumptions of the above theorem are satisfied with $\alpha= \infty$ and the limit system simplifies to a one dimensional OU process given as
                $Z_i=0$ for all $i>1$, and $Z_1$ satisifes \eqref{eq:OUprocess}.
		If $(\beta + o(1))\log n$ is replaced by $\beta_0 \log n + \gamma$ for some $\gamma \in \RR$, we instead obtain a two dimensional limit system given as
		$Z_i=0$ for all $i>2$, and
				\begin{align*}
                                    Z_1(t) &= - \int_0^t \brR{Z_1(s) - Z_2(s)} ds  - \cg{e^{-c\gamma}}\int_0^t \brR*{e^{cZ_1(s)} - 1} ds + \sqrt{2} B(t),\\
                                    Z_2(t) &=  - \int_0^t Z_2(s) ds + \cg{e^{-c\gamma}}\int_0^t \brR*{e^{cZ_1(s)} - 1} ds.
				\end{align*}
		\end{enumerate}
\end{remark}
Finally we consider the fluctuation behavior when  $d_n \gg \sqrt{n}$. This time the limit system will involve  reflected diffusion processes.	Recall from \eqref{eq:SMdef} the definition of the Skorohod maps  $\Gamma_{\alpha}$ and $\hat \Gamma_{\alpha}$ associated with a reflection barrier at $\alpha \in \RR$. We will extend the definition of these maps to $\alpha = \infty$ by setting
\begin{equation}\label{eq:skorinf}
	\Gamma_{\infty}(f) = f, \;\; \hat \Gamma_{\infty}(f) = 0 \mbox{ for } f \in  \DSpace*{\infty}[\R]. \end{equation}
\begin{theorem}
	\label{thm:reflection}
        Suppose that $\sqrt{n} \ll d_n$ and 
		\begin{equation}\label{eq:cond402}
			\lambda_n =  1 - \brR*{\frac{\log d_n}{d_n} + \frac{\alpha_n}{\sqrt{n}}}, \mbox{ where } \alpha_n \to \alpha \in [0, \infty],  \mbox{ with } \alpha_n^{-} = O\brR*{\sqrt{n}/d_n},  \mbox{ and } \alpha_n=O(n^{1/6}).
			\end{equation}
			Then $\Mun \to \bvec{f_1}$ in $\ld$.
Suppose further that $\brC*{\norm{\Zn(0)}_1}_{n \in \nat}$ is tight and $\Zn(0) \pconv \bvec{z}$ in $\lSpace{2}$ where  $z_1 \leq \alpha$ and $\bvec{z_{r+}} = 0$ for some $r \geq 2$.   Then, as $n \to \infty$, $\Zn \dconv \bvec{Z} \in \DSpace*{\infty}[\lSpace{2}]$, where $(\bvec{Z}, \eta)$
 is a $\lSpace{2}\times\R_+$ valued continuous process given as the unique solution to:
	\begin{equation}\label{eq:z1zi}
	\begin{aligned}
	Z_1(t) &= \Gamma_{\alpha}\left(z_1 - \int_0^{\cdot} \brR{Z_1(s) - Z_2(s)} ds + \sqrt{2} B(\cdot)\right)(t),\\
	Z_2(t) &= z_2 - \int_0^t \brR{Z_2(s) - Z_3(s)} ds + \eta(t),\\
        \eta(t) &= \hat \Gamma_{\alpha}\left(z_1 - \int_0^{\cdot} \brR{Z_1(s) - Z_2(s)} ds + \sqrt{2} B(\cdot)\right)(t),\\
	Z_i (t) &= z_i - \int_0^t \brR{Z_i(s) - Z_{i+1}(s)} ds \qquad \text{ for each } i \in \set{3 \ldots r},\\
	Z_i(t)  &= 0 \qquad \text{for each } i > r, 
	\end{aligned}
	\end{equation}
        and $B$ is a standard Brownian motion.
\end{theorem}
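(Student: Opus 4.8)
\emph{Setup and asymptotics of the near fixed point.} The plan is to work from the scaled semimartingale representation \eqref{eq:scaled-marteq} of $\Zn$, to pin down the asymptotics of the near fixed point $\Mun$ and of $\beta_n$ in $O(n^{-1/2})$-neighbourhoods of its coordinates, to reduce to finitely many coordinates via Lemma \ref{lem:finitedimensional}, and then to identify the limit of $(Z_{n,1},Z_{n,2})$ by analysing the excursions of $Z_{n,1}$ above the level $\alpha$ together with the characterization of the one-dimensional Skorohod map. Set $q_n\doteq d_n/\sqrt n\to\infty$. From Definition \ref{def:fixed-point}, $\mu_{n,1}=\lambda_n\to1$ and $\mu_{n,i+1}=\lambda_n\beta_n(\mu_{n,i})\le\mu_{n,i}^{d_n}$; combining this with the estimates of Section \ref{sec:technical-estimates} (in particular Corollary \ref{cor:betaprimemun} and Lemma \ref{lem:fixedpointapprox}) and the hypotheses in \eqref{eq:cond402} gives $\sqrt n\,\mu_{n,2}=\lambda_n q_n^{-1}e^{-\alpha_n q_n}(1+o(1))$, the condition $\alpha_n^-=O(\sqrt n/d_n)$ ensuring $\sqrt n\,\mu_{n,2}=O(q_n^{-1})\to0$, while $\mu_{n,i}\le\mu_{n,2}^{\,d_n^{i-2}}$ is superpolynomially small for $i\ge3$; hence $\Mun\to\bvec{f_1}$ in $\ld$, which is the first assertion. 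The quantitative input for what follows is the local expansion, valid uniformly for $h$ in compact sets with the convention $e^{-\infty}=0$,
\[
\sqrt n\,\lambda_n\bigl[\beta_n(\mu_{n,1}+h/\sqrt n)-\beta_n(\mu_{n,1})\bigr]=\lambda_n q_n^{-1}\bigl(e^{(h/\lambda_n-\alpha_n)q_n}-e^{-\alpha_n q_n}\bigr)(1+o(1)),
\]
where $\alpha_n=O(n^{1/6})$ and $\alpha_n^-=O(\sqrt n/d_n)$ make the remainders in the relevant Taylor expansions negligible. When $\alpha<\infty$ the right side tends to $0$ uniformly on $\{h\le\alpha-\delta\}$ and to $+\infty$ on $\{h\ge\alpha+\delta\}$ for each $\delta>0$, and when $\alpha=\infty$ it tends to $0$ for every fixed $h$; this one-sided explosion of the rate at which jobs join queues of length $\ge1$, as $Z_{n,1}$ crosses $\alpha$, is the mechanism producing the reflecting barrier at $\alpha$. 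For $i\ge2$, since $\mu_{n,i}$ is superpolynomially small, $\sqrt n\lambda_n[\beta_n(\mu_{n,i}+h/\sqrt n)-\beta_n(\mu_{n,i})]\to0$ uniformly on compacts.

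\emph{Stochastic boundedness and the coordinate equations.} Writing \eqref{eq:scaled-marteq} out coordinatewise, the large term $-\sqrt n\lambda_n[\beta_n(G_{n,1})-\beta_n(\mu_{n,1})]$ in the equation for $Z_{n,1}$ appears with the opposite sign in the equation for $Z_{n,2}$ and is absent from the rest; exploiting this cancellation, the linear feedback $-Z_{n,i}+Z_{n,i+1}$, and the a priori bound $Z_{n,1}\le\sqrt n(1-\lambda_n)$ coming from $G_{n,1}\le1$, a Gronwall-type estimate (of the kind used for the other two theorems) gives $\lim_{M\to\infty}\sup_n\prob(\norm{\Zn}_{2,T}>M)=0$ for every $T$, so by Lemma \ref{lem:finitedimensional} it suffices to prove joint convergence of $(Z_{n,1},\dots,Z_{n,r})$. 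Put $\eta_n(t)\doteq\int_0^t\sqrt n\,\lambda_n[\beta_n(G_{n,1}(s))-\beta_n(\mu_{n,1})]\,ds$; since $\dot\eta_n+\sqrt n\mu_{n,2}=\sqrt n\lambda_n\beta_n(G_{n,1})\ge0$ and $\sqrt n\mu_{n,2}\to0$, $\eta_n$ is asymptotically nondecreasing on $[0,T]$. Then \eqref{eq:scaled-marteq} together with the first paragraph yields, uniformly on $[0,T]$,
\begin{align*}
Z_{n,1}(t)&=Z_{n,1}(0)+\int_0^t\bigl(Z_{n,2}(s)-Z_{n,1}(s)\bigr)\,ds-\eta_n(t)+\sqrt n\,M_{n,1}(t),\\
Z_{n,2}(t)&=Z_{n,2}(0)+\int_0^t\bigl(Z_{n,3}(s)-Z_{n,2}(s)\bigr)\,ds+\eta_n(t)+\op(1),\\
Z_{n,i}(t)&=Z_{n,i}(0)+\int_0^t\bigl(Z_{n,i+1}(s)-Z_{n,i}(s)\bigr)\,ds+\op(1),\qquad 3\le i\le r,
\end{align*}
with $Z_{n,r+1}=\op(1)$, the $\op(1)$'s absorbing the superpolynomially small inflow terms and the martingales $\sqrt n M_{n,i}$, $i\ge2$, whose predictable quadratic variations tend to $0$. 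Finally $\sqrt n M_{n,1}$ has jumps of size $n^{-1/2}$ and $\langle\sqrt n M_{n,1}\rangle_t=\int_0^t\bigl(\lambda_n[1-\beta_n(G_{n,1}(s))]+[G_{n,1}(s)-G_{n,2}(s)]\bigr)\,ds=2\lambda_n t+\op(1)\to2t$, so $\sqrt n M_{n,1}\dconv\sqrt2\,B$ by the martingale functional central limit theorem.

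\emph{Identifying the reflection.} Let $X_n(t)\doteq Z_{n,1}(0)+\int_0^t(Z_{n,2}(s)-Z_{n,1}(s))\,ds+\sqrt n M_{n,1}(t)$, so that $Z_{n,1}=X_n-\eta_n$; then $X_n$ is $C$-tight (its drift part being equicontinuous on $\{\norm{\Zn}_{2,T}\le M\}$ and $\sqrt n M_{n,1}$ being $C$-tight), with limit points of the form $X(t)=z_1+\int_0^t(Z_2(s)-Z_1(s))\,ds+\sqrt2\,B(t)$. Two asymptotic complementarity facts remain: (a) $\sup_{t\le T}(Z_{n,1}(t)-\alpha)^+\pconv0$; and (b) for each $\delta>0$, $\int_0^T\I{Z_{n,1}(s)\le\alpha-\delta}\,\dot\eta_n(s)\,ds\pconv0$. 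Fact (b) is immediate from the first half of the expansion in the first paragraph (the integrand of $\eta_n$ is $\op(1)$ uniformly while $Z_{n,1}\le\alpha-\delta$). Granted (a), (b) and the asymptotic monotonicity of $\eta_n$, the Lipschitz characterization of the one-dimensional Skorohod map (cf.\ \cite{karshr}), applied to $X_n=Z_{n,1}+\eta_n$, gives $\sup_{t\le T}\abs{\eta_n(t)-\hat\Gamma_\alpha(X_n)(t)}\pconv0$ and $\sup_{t\le T}\abs{Z_{n,1}(t)-\Gamma_\alpha(X_n)(t)}\pconv0$, with the conventions \eqref{eq:skorinf} when $\alpha=\infty$. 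Feeding this back, solving the linear recursion for $Z_{n,r},\dots,Z_{n,2}$ and the coupled fixed-point equation for $(X_n,Z_{n,1},\eta_n)$ (all maps Lipschitz, so the limiting system \eqref{eq:z1zi} is pathwise unique by a standard Gronwall argument together with the Lipschitz continuity of $\Gamma_\alpha$), one obtains $\Zn\dconv\bvec Z$ in $\DSpace*{\infty}[\lSpace2]$, where $(\bvec Z,\eta)$ is the unique solution of \eqref{eq:z1zi}.

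\emph{The main obstacle.} The heart of the matter is the upper excursion estimate (a). When $\alpha=\infty$ it is vacuous and the limit reduces to the explicit linear system with $\Gamma_\infty=\id$, $\hat\Gamma_\infty=0$. When $\alpha<\infty$ the hard constraint $Z_{n,1}\le\sqrt n(1-\lambda_n)$ already yields (a) if $d_n\gg\sqrt n\log n$ (since then $\sqrt n(1-\lambda_n)\to\alpha$), but is non-binding in the range $\sqrt n\ll d_n\lesssim\sqrt n\log n$; there one must exploit the exponentially steep drift, and the clean tool is a Lyapunov argument with $\varphi(z)\doteq\exp\bigl((z/\lambda_n-\alpha_n)q_n\bigr)$, chosen so that $\varphi(Z_{n,1})$ is comparable to $q_n$ times the scaled inflow rate $\sqrt n\lambda_n\beta_n(G_{n,1})$. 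A generator computation for $Z_{n,1}$ (using the local expansion of the first paragraph, that the up/down transition rates of $nG_{n,1}$ are $\asymp n\lambda_n$, and that the jumps of $Z_{n,1}$ are $\pm n^{-1/2}$ with $n^{-1/2}q_n=d_n/n\to0$ in this regime) shows that the drift of $\varphi(Z_{n,1})$ is dominated by $-\varphi(Z_{n,1})^2+C q_n^2\,\varphi(Z_{n,1})$, hence is $\le0$ once $\varphi(Z_{n,1})\ge Cq_n^2$; a supermartingale maximal inequality — using the hypothesis $z_1\le\alpha$ (which controls $\varphi(Z_{n,1}(0))$) and the crude bound from the hard constraint for the stopped process — then gives $\sup_{t\le T}\varphi(Z_{n,1}(t))=O_P\bigl(\mathrm{poly}(q_n)+\varphi(Z_{n,1}(0))\bigr)$, whence $\sup_{t\le T}(Z_{n,1}(t)-\alpha)^+\pconv0$. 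Making all of this rigorous requires: keeping track of the $n^{-1/2}$ jumps of $Z_{n,1}$ in the generator computation; a localization loop on $\{\norm{\Zn}_{2,T}\le M\}$, since the drift estimates carry the stated constants only there; uniform control on $[0,T]$ of every $\op(1)$ error in the coordinate equations, which rests on the same $\beta_n$-estimates; and, for the $C$-tightness of $\eta_n$, an estimate on the accumulation of the many short excursions of $Z_{n,1}$ near $\alpha$ (each of length $\op(1)$ with $\op(1)$ contribution to $\eta_n$, by the equation for $Z_{n,1}$ and the quadratic-variation bound for $\sqrt n M_{n,1}$).
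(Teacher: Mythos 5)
Your roadmap — local expansion of $t_{n,1}$ in $O(n^{-1/2})$-neighbourhoods of $\mu_{n,1}$, reduction to finitely many coordinates via Lemma \ref{lem:finitedimensional}, the martingale CLT giving $\sqrt{n}M_{n,1}\dconv\sqrt2\,B$, and identification of the limit through the one-dimensional Skorohod map once facts (a)--(b) are in hand — matches the structure of the paper's proof (Lemmas \ref{lem:reflection-setup}--\ref{lem:as-reflection}, Section \ref{sec:proof-reflection}). But there are two genuine gaps, and both are concentrated at the point you yourself call ``the heart of the matter.''

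First, the logical order between the tightness bound and the excursion estimate (a) is inverted. You claim $\lim_{M}\sup_n\prob(\|\Zn\|_{2,T}>M)=0$ from the cancellation of the $\eta_n$-type term between $Z_{n,1}$ and $Z_{n,2}$ plus the hard constraint $Z_{n,1}\le\sqrt n(1-\lambda_n)$. However, in the range $\sqrt n\ll d_n\lesssim\sqrt n\log n$ one has $\sqrt n(1-\lambda_n)=\sqrt n\log d_n/d_n+\alpha_n\to\infty$, so the hard constraint says nothing; and the cancellation in $Z_{n,1}+Z_{n,2}$ removes $\eta_n$ but not the unbounded linear term $-\int_0^t Z_{n,1}\,ds$, while $Z_{n,2}$ alone carries $+\eta_n$ with no a priori control. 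The paper's Lemma \ref{lem:process-bounded} is proved only after Lemma \ref{lem:never-exceeds}: it is the Skorohod-map form $\tilde Z_{n,1}=\Gamma_{\theta_n}(R_n)$ with $\theta_n\ge0$ (Lemma \ref{lem:as-reflection}, which needs (a) on $[0,\tau_{n,L}]$) that produces the closing estimate $\eta_n=\hat\Gamma_{\theta_n}(R_n)\le|R_n|_{*,t}$ feeding the Gronwall loop. Your plan is repairable by proving (a) localized to $[0,\tau_{n,L}]$ first and extracting tightness from it, but as written the circularity is a real gap.

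Second, the Lyapunov argument you propose for (a) does not close. The supermartingale maximal inequality, with $\varphi(z)=\exp((z/\lambda_n-\alpha_n)q_n)$ and generator drift $\lesssim-\varphi^2+Cq_n^2\varphi$, gives $\sup_{t\le T}\varphi(Z_{n,1}(t))=O_P\!\bigl(\mathrm{poly}(q_n)+\varphi(Z_{n,1}(0))\bigr)$. But $z_1\le\alpha$ only yields $(Z_{n,1}(0)-\alpha_n)^+=o_P(1)$, hence $\varphi(Z_{n,1}(0))=\exp(q_n\cdot o_P(1))$, which is not $O_P(\mathrm{poly}(q_n))$; and your fallback ``crude bound from the hard constraint'' gives $\varphi(Z_{n,1}(0))\le d_n^{1+o(1)}$, so taking logs yields $\sup_t(Z_{n,1}(t)-\alpha_n)^+=O_P(\log d_n/q_n)$, which is \emph{not} $o_P(1)$ precisely in the range $\sqrt n\ll d_n\lesssim\sqrt n\log n$. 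To save this you would need the ``forgetting the initial condition'' feature of the Riccati drift $-\varphi^2$, but the instantaneous variance of $\varphi(Z_{n,1})$ is $\asymp q_n^2\varphi^2$, so the noise scales like $q_n\varphi$ and is not small relative to the drift on the scale relevant to the crash; a clean self-averaging argument is not automatic. The paper's proof of Lemma \ref{lem:never-exceeds} avoids all of this by working with upcrossing/downcrossing stopping times $\sigma_{n,2k-1},\sigma_{n,2k}$ of the levels $\alpha_n+3\epsilon$ and $\alpha_n+2\epsilon$, using Lemma \ref{lem:drift-exceeds} to show the steep drift $\exp(\gamma_n\epsilon)/(2\gamma_n)$ prevents any single excursion from reaching $\alpha_n+6\epsilon$, and a quadratic-variation count to show the number of excursions is $O_P(1)$; this needs only $(Z_{n,1}(0)-\alpha_n)^+\pconv0$, which is exactly the hypothesis. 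You should adopt an excursion-counting argument of this kind (or supply a genuine forgetting-initial-condition estimate) in place of the supermartingale bound.
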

As a corollary to this Theorem, we obtain the specific regime considered in \cite{mukherjee2018universality} (in fact we provide a slight strengthening in that, unlike \cite{mukherjee2018universality}, we allow $\alpha=0$). See Remark \ref{rem:dn-large} \eqref{it:debankur-disc} for further discussion. 
\begin{cor}
    \label{cor:reflection-at-1}
    As $n \to \infty$, suppose that $d_n \gg \sqrt{n}\log n$ and $\sqrt{n}(1-\lambda_n) \to \alpha \in [0, \infty)$, along with $\sqrt{n}(1-\lambda_n)\ge (\sqrt{n}\log n)/d_n$ for large $n$ if $\alpha = 0$. Let $\Yn(\cdot) \doteq \sqrt{n} \brR*{\Gn(\cdot) - \bvec{f_1}}$ and assume that the sequence of random variables $\brC*{\norm{\Yn(0)}_1}$ is tight, and as $n \to \infty$, $\Yn(0) \pconv \bvec{y} \in \lSpace{2}$ with $\bvec{y_{r+}} = 0$ for some $r \geq 2$. Then $\Yn \dconv \Y$ in $\DSpace*{\infty}[\lSpace{2}]$, where $(\Y, \tilde\eta)$ is the $\lSpace{2} \times [0, \infty)$ valued continuous process given by the unique solution to
	\begin{align*}
            Y_1(t) &= \Gamma_0\brR*{y_1  - \alpha \; \mbox{\em{id}}(\cdot) - \int_0^{\chmd{\cdot}} \brR{Y_1(s) - Y_2(s)} ds + \sqrt{2} B(\cdot)}(t) \\
	Y_2(t) &= y_2 - \int_0^t \brR{Y_2(s) - Y_3(s)} ds + \tilde \eta(t),\\
        \tilde \eta(t) &= \hat \Gamma_{0}\left(y_1 - \alpha\; \mbox{\em{id}}(\cdot) - \int_0^{\cdot} \brR{Y_1(s) - Y_2(s)} ds + \sqrt{2} B(\cdot)\right)(t),\\
	Y_i (t) &= y_i - \int_0^t \brR{Y_i(s) - Y_{i+1}(s)} ds \qquad \text{ for each } i \in \set{3 \ldots r},\\
	Y_i(t)  &= 0 \qquad \text{for each } i > r, 
	\end{align*}
        and \chsb{$B$} is a standard Brownian motion.  
\end{cor}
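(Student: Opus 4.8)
The plan is to obtain Corollary \ref{cor:reflection-at-1} from Theorem \ref{thm:reflection} in three moves: first verify that the present hypotheses are a special case of \eqref{eq:cond402}; second compute the deterministic displacement $\sqrt{n}(\Mun-\bvec{f_1})$ and show it converges in $\lSpace{2}$; and third absorb this displacement into the weak limit and rewrite the resulting equations using the translation identity for the one–dimensional Skorohod map. No new probabilistic input beyond Theorem \ref{thm:reflection} is required; the content is entirely in changing the centering from $\Mun$ to $\bvec{f_1}$.

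For the first step, since $d_n\gg\sqrt{n}\log n$ we have $\sqrt{n}\ll d_n$, and since $d_n\le n$ we have $\log d_n\le\log n$, so $\sqrt{n}\,\log d_n/d_n\le\sqrt{n}\log n/d_n\to0$. Hence, setting $\alpha_n\doteq\sqrt{n}(1-\lambda_n)-\sqrt{n}\,\log d_n/d_n$, one can write $\lambda_n=1-(\log d_n/d_n+\alpha_n/\sqrt{n})$ with $\alpha_n\to\alpha\in[0,\infty)$, so $\alpha_n=O(1)=O(n^{1/6})$. If $\alpha>0$ then $\alpha_n>0$ for large $n$; if $\alpha=0$ then the extra assumption $\sqrt{n}(1-\lambda_n)\ge\sqrt{n}\log n/d_n$ forces $\alpha_n\ge\sqrt{n}(\log n-\log d_n)/d_n\ge0$ (again using $d_n\le n$). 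Either way $\alpha_n^-=0=O(\sqrt{n}/d_n)$, so \eqref{eq:cond402} holds, and Theorem \ref{thm:reflection} applies once the initial condition has been checked. This is precisely where the supplementary hypothesis in the boundary case $\alpha=0$ gets consumed, and it is the one point of the argument that is not completely automatic.

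For the second step, $\mu_{n,1}=\lambda_n$, so the first coordinate of $\sqrt{n}(\Mun-\bvec{f_1})$ is $-\sqrt{n}(1-\lambda_n)\to-\alpha$. For the tail, in both cases above $1-\lambda_n\ge\log n/d_n$ for large $n$, hence $\beta_n(\mu_{n,1})\le\lambda_n^{d_n}\le e^{-d_n(1-\lambda_n)}\le 1/n$, so $\mu_{n,2}\le 1/n$; iterating $\mu_{n,i+1}\le\mu_{n,i}^{d_n}$ gives $\mu_{n,i}\le\mu_{n,2}^{d_n^{i-2}}\le n^{-d_n^{i-2}}$ for $i\ge2$, so $\sqrt{n}\sum_{i\ge2}\mu_{n,i}\to0$. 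Consequently $\|\sqrt{n}(\Mun-\bvec{f_1})+\alpha\bvec{e_1}\|_2\to0$ and $\{\|\sqrt{n}(\Mun-\bvec{f_1})\|_1\}$ is bounded. Since $\Zn(t)\doteq\sqrt{n}(\Gn(t)-\Mun)=\Yn(t)-\sqrt{n}(\Mun-\bvec{f_1})$, it follows that $\{\|\Zn(0)\|_1\}$ is tight and $\Zn(0)\pconv\bvec{z}\doteq\bvec{y}+\alpha\bvec{e_1}$ in $\lSpace{2}$; moreover $\Gn(0)\in\ld$ gives $(\Yn(0))_1=\sqrt{n}(G_{n,1}(0)-1)\le0$, hence $y_1\le0$ and $z_1\le\alpha$, while $\bvec{z}_{r+}=\bvec{y}_{r+}=0$. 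All hypotheses of Theorem \ref{thm:reflection} are therefore met, and $\Zn\dconv\bvec{Z}$ in $\DSpace*{\infty}[\lSpace{2}]$, where $(\bvec{Z},\eta)$ is the unique solution of \eqref{eq:z1zi} for this $\bvec{z}$.

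For the third step, $\Yn=\Zn+\sqrt{n}(\Mun-\bvec{f_1})$ and the added deterministic sequence converges in $\lSpace{2}$ to $-\alpha\bvec{e_1}$, so a converging–together argument (continuity of adding a convergent deterministic sequence on $\DSpace*{\infty}[\lSpace{2}]$) yields $\Yn\dconv\bvec{Y}\doteq\bvec{Z}-\alpha\bvec{e_1}$, a continuous $\lSpace{2}$–valued process. It then remains only to identify $\bvec{Y}$ (together with $\tilde\eta\doteq\eta$) with the system in the Corollary: from \eqref{eq:SMdef} one has the elementary identities $\Gamma_\alpha(f)=\Gamma_0(f-\alpha)+\alpha$ and $\hat\Gamma_\alpha(f)=\hat\Gamma_0(f-\alpha)$, and using $Y_1=Z_1-\alpha$, $Y_i=Z_i$ for $i\ge2$, $z_1-\alpha=y_1$, $z_i=y_i$ for $i\ge2$, and $Z_1-Z_2=Y_1-Y_2+\alpha$, substitution into \eqref{eq:z1zi} recombines the $\alpha$–terms into the $-\alpha\,\id(\cdot)$ drift appearing in the Corollary; the remaining lines transcribe verbatim. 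Uniqueness of $(\bvec{Y},\tilde\eta)$ follows from uniqueness of $(\bvec{Z},\eta)$ under the same affine change of variables. In short, the only steps needing care are the $\lSpace{2}$ (not merely coordinatewise) decay of $\sqrt{n}(\Mun-\bvec{f_1})$ beyond the first coordinate and the verification of the side conditions in \eqref{eq:cond402} when $\alpha=0$; everything else is bookkeeping on top of Theorem \ref{thm:reflection}.
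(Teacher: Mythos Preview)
Your proof is correct and follows essentially the same approach as the paper, which derives the corollary from Theorem \ref{thm:reflection} by showing $\sqrt{n}(\Mun-\bvec{f_1})\to-\alpha\bvec{e_1}$ in $\lSpace{1}$ and then shifting the centering (see Remark \ref{rem:dn-large}(iv)--(v)). You supply considerably more detail than the paper does, particularly in verifying \eqref{eq:cond402} when $\alpha=0$ and in spelling out the Skorohod-map translation identity, but the route is identical.
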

\begin{remark}$\,$
	\label{rem:dn-large}
	\begin{enumerate}[(i)]
		\item The existence and uniqueness of solutions to the stochastic integral equations in \eqref{eq:z1zi} follows by standard fixed point arguments on using the Lipschitz property of the map $\Gamma_{\alpha}$ on $\DSpace*{\infty}[\R]$.
		This system of equations can equivalently be written as 
	\begin{equation}\label{eq:z1zialt}	
		\begin{aligned}
		Z_1(t) &= z_1 - \int_0^{t} \brR{Z_1(s) - Z_2(s)} ds + \sqrt{2} B(t) - \eta(t),\\
		Z_2(t) &= z_2 - \int_0^t \brR{Z_2(s) - Z_3(s)} ds + \eta(t),\\
		Z_i (t) &= z_i - \int_0^t \brR{Z_i(s) - Z_{i+1}(s)} ds \qquad \text{ for each } i \in \set{3 \ldots r},\\
		Z_i(t)  &= 0 \qquad \text{for each } i > r, 
		\end{aligned}
	\end{equation}
	where $\eta =0$  when $\alpha = \infty$, and when $\alpha \in \RR$, it satisfies 
	\begin{equation} \label{eq:reflcond}
		\left .\begin{aligned}
		 \eta(0) =0 \mbox{ and } \eta &\mbox{ is a monotonically increasing function.}\\
		Z_1(t) &\leq \alpha \\
		 \int_0^{\infty} (\alpha - Z_1(s)) d\eta(s) &= 0
		\end{aligned}\right\}
	\end{equation}
		\item  The convergence $\Mun \to \bvec{f_1}$ along with tightness of $\brC*{\Zn}_{n \in \nat}$ shows that, under the conditions of Theorems \ref{thm:exponential} or \ref{thm:reflection}, most queues will be of length $1$ on any fixed interval $[0,T]$.
		\item The limit system in Theorem \ref{thm:reflection} simplifies when $z_i=0$ for $i >2$ and is  given as $Z_i=0$ for all $i>2$, and
		\begin{align*}
		Z_1(t) &= z_1 - \int_0^t \brR{Z_1(s) - Z_2(s)} ds + \sqrt{2} B(t) - \eta(t),\\
		Z_2(t) &=  z_2- \int_0^t Z_2(s)  ds + \eta(t),
		\end{align*}
		where $\eta$ is as in the statement of the theorem. 
            \item \label{item:dn-boundry} Suppose that $d_n = \sqrt{n} \log n/2a$ for some $a > 0$ and $1-\lambda_n = \frac{a}{\sqrt{n}}+ 
	\frac{2a(\log \log n + O(1))}{\sqrt{n}\log n}$. Then the assumptions in Theorem \ref{thm:reflection} are satisfied with $\alpha = 0$. 
	In this case the reflection barrier is at $0$, namely $Z_1(t)\le 0$ for all $t$.   
		Also note that since $\sqrt{n}(1-\lambda_n)\to a$, we have that $\mu_{n,1} = \lambda_n \to 1$. Since $d_n/\sqrt{n} \to \infty$, this shows that for $k \ge 2$
                \begin{equation*}
                    \sqrt{n}\mu_{n,2}= \sqrt{n}\lambda_n \beta_{n}(\lambda_{n}) \le \sqrt{n}\lambda_n \lambda_{n}^{d_n} = \sqrt{n}(1 - (1-\lambda_n))^{d_n+1} \to 0.
            \end{equation*}
            Using $\mu_{n,i+1} \leq \mu_{n,i}^{d_n}$, see that $\sqrt{n} (\Mun - \bvec{f_1}) \to -a \bvec{e_1} \in \lSpace{1}$ and \chmd{hence} the fluctuations of $\Gn$ about the fixed point $\bvec{f}_1$ can be characterized as well. Specifically, letting $\Yn(\cdot) = \sqrt{n}(\Gn(\cdot)- \bvec{f}_1) = \Zn(\cdot) + \sqrt{n}(\Mun - \bvec{f_1})$, we see that, under the condition of the above theorem, $\Yn \dconv \Y$ in $\DSpace*{\infty}[\lSpace{2}]$, where $\Y = \Z- a \bvec{e_1}$ and hence, assuming $z_i =0$ for $i>2$, $(\Y, \tilde\eta) \in \CC([0,\infty): \lSpace{2}\times\R_+)$ is the unique solution to \eqref{eq:reflcond} with $(Z_1, \eta, \alpha)$
		replaced with $(Y_1, \tilde \eta, -a)$, and the equations
	\begin{align*}
	Y_1(t) &=  y_1  - at - \int_0^t \brR{Y_1(s) - Y_2(s)} ds + \sqrt{2} B(t) - \tilde \eta(t),\\
	Y_2(t) &=  y_2 - \int_0^t Y_2(s)  ds + \tilde \eta(t),
	\end{align*}
        where   $\bvec{y} = \z - a \bvec{e}_1$ and $B$ is a standard Brownian motion. In particular, the limit $\Y$ takes the same form as in
		\cite{eschenfeldt2016supermarket, mukherjee2018universality}.
    \item \label{it:debankur-disc} 
	Suppose that $d_n \gg \sqrt{n} \log n$. Then it is easy to see that \eqref{eq:cond402} holds with some $\alpha>0$ if and only if 
	$\sqrt{n}(1-\lam_n) \to \alpha>0$.
	This regime was  studied in \cite{mukherjee2018universality}. Using the arguments as in (iv) above, it is easy to check that $\sqrt{n} \brR*{\Mun - \bvec{f_1}} \to -\alpha \bvec{e_1}$ in $\lSpace{1}$ (and hence $\lSpace{2}$). Corollary \ref{cor:reflection-at-1} is  immediate from this and Theorem \ref{thm:reflection}. In particular we recover \cite[Theorem 3]{mukherjee2018universality}. However the proof techniques in the current paper are  different from the stochastic coupling techniques employed in \cite{mukherjee2018universality}.  
    \item Suppose $\sqrt{n} \ll d_n \ll \sqrt{n}\log n$ and that \eqref{eq:cond402} holds with $\alpha<\infty$.
	Then, as observed in \cite{mukherjee2018universality}, in this regime $\Yn$ is not tight. Indeed, it is easy to see that
	$\sqrt{n}(1 - \lam_n) = (\sqrt{n}\log d_n)/d_n + \alpha_n \to \infty$. Nevertheless the process $\Zn$ converges in distribution and the limit process has   a reflecting barrier at $\alpha$, i.e. $Z_{1} \leq \alpha$. In particular, unlike the case $d_n \gg \sqrt{n} \log n$, the barrier  in this case does not come from the constraint $G_{n,1} \leq 1$. 
    \item Theorem \ref{thm:reflection} allows for a slower approach to criticality than $n^{-1/2}$, e.g. $\lam_n$ such that $n^{1/3}(\lam_n-1) \to \gamma>0$.
	In this case $\alpha = \infty$ and there is no reflection. When $z_i = 0$ for all $i \geq 1$, this system reduces to the one dimensional OU process given by \eqref{eq:OUprocess} with $Z_i = 0$ for $i > 1$.
	\end{enumerate}
\end{remark}

\section{Poisson Representation of State Processes}
\label{sec:basic-def}
We now embark on the proofs of the main results. We start with a brief overview of the organization of the proofs. In this Section we describe a specific construction of the state process. Proof of the law of large numbers (Theorem \ref{thm:fluid-limit}) is given in Section \ref{sec:proof-fluid}. Section \ref{sec:technical-estimates} describes fine-scaled (deterministic) properties of the function $\beta_n$ and the near fixed points $\Mu_n$ which play a key technical role in the proofs of our diffusion approximations. Section \ref{sec:general-lemmas} derives preliminary estimates required to prove all the main results for the fluctuations of the state process.  Sections \ref{sec:proof-diffusion}, \ref{sec:proof-thm-exponential} and \ref{sec:proof-reflection} complete the proofs of Theorem \ref{thm:diffusion-limit}, \ref{thm:exponential} and \ref{thm:reflection} respectively.   

We start with a specific construction of the state process through time changed Poisson processes (cf. \cites{time-change-reference,ethkur}).
A similar representation has been used in previous work on $JSQ(d)$ systems (cf. \cites{mukherjee2018universality, eschenfeldt2016supermarket}).
Let $\set{N_{i,+}, N_{i,-}:i\geq 1}$ be a collection of mutually independent rate one Poisson processes given on some probability space $(\Omega, \clf, \prob)$.
Then $\Gn$ has the following (equivalent in distribution) representation.
   For  $i \geq 1$ and $t\ge 0$
\begin{equation}
  \begin{aligned}
	G_{n,i}(t) = G_{n,i}(0) -&\frac{1}{n} N_{i,-}\brR*{n \int_0^t [G_{n, i}(s) - G_{n, i+1}(s)] ds} \\
	&+\frac{1}{n} N_{i,+}\brR*{\lambda_n n \int_0^t [\beta_{n}(G_{n, i-1}(s)) - \beta_{n}(G_{n, i} (s))] ds},
  \end{aligned}   \label{eq:time-change}
\end{equation}
	 where $G_{n,0}(t) = 1$ for all $t \geq 0$.
Denoting
$$
\Scale[0.95]
{A_{n,i}(t) \doteq N_{i,+}\brR*{\lambda_n n \int_0^t [\beta_{n}(G_{n, i-1}(s)) - \beta_{n}(G_{n, i} (s))] ds},\, D_{n,i}(t) \doteq N_{i,-}\brR*{n \int_0^t [G_{n, i}(s) - G_{n, i+1}(s)] ds}},$$
the above evolution equation can be rewritten as
\begin{equation}\label{eq:GAD}
G_{n,i}(t) = G_{n,i}(0) - \frac{1}{n} D_{n,i}(t) + \frac{1}{n} A_{n,i}(t), \; i \in \NN, t\ge 0.\end{equation}
Here $D_{n,i}$ describe events causing a decrease in $G_{n,i}$ owing to completion of service events for jobs in queues of length exactly $i$ whilst $A_{n,i}$ describe events causing an increase in $G_{n,i}$ which only occur if the chosen queue of a new job has exactly $i-1$ individuals; this occurs if amongst the $d_n$ random choices made by this job, all of the chosen queues have load at least $i-1$ but not all have load at least $i$.

Let
$$\tilde \clf^n_t = \sigma \left\{ A^n_i(s), D^n_i(s), s\le t, \; i \ge 1\right\},$$
and let $\clf^n_t$ be the augmentation of $\tilde \clf^n_t$ with $\prob$-null sets. It then follows that, for each $i\ge 1$
\begin{align}
 	M_{n, i, +}(t) &\doteq \frac{1}{n} N_{i, +}\brR*{\lambda_n n \int_0^t \beta_{n}(G_{n, i-1}(s)) - \beta_{n}(G_{n, i} (s)) ds} \nonumber\\
              &\qquad - \lambda_n \int_0^t \beta_{n}(G_{n, i-1}(s)) - \beta_{n}(G_{n, i} (s)) ds \label{eq:martplus}
 	\shortintertext{and}
    M_{n, i, -}(t) &\doteq \frac{1}{n} N_{i, -}\brR*{n \int_0^t G_{n, i}(s) - G_{n, i+1}(s) ds} - \int_0^t \brR*{G_{n, i}(s) - G_{n, i+1}(s)} ds \label{eq:martminus}
\end{align}
 are $\{\clf_t^n\}$-martingales with predictable (cross) quadratic variation processes given, for $t \ge 0$, as
\begin{align*}
	\QV{M_{n,i, +}}_t &= \frac{\lambda_n}{n}  \int_0^t \left(\beta_{n}(G_{n, i-1}(s)) - \beta_{n}(G_{n, i} (s))\right) ds, \;  i\ge 1,\\
	\QV{M_{n,i, -}}_t &= \frac{1}{n}\int_0^t \brR*{G_{n, i}(s) - G_{n, i+1}(s)} ds, \;  i\ge 1,\\
	\CQV{M_{n,i, -}}{M_{n,j, -}}_t &=0, \;\; \CQV{M_{n,i, +}}{M_{n,j, +}}_t, \mbox{ for all } i, j \ge 1, i\neq j \mbox{ and }\\
	 \CQV{M_{n,i, +}}{M_{n,k, -}}_t
	&= 0 \mbox{ for all } i, k \ge 1.
\end{align*}
Using these martingales, the evolution of $\Gn$ can be rewritten as
\begin{equation}
\begin{aligned}	
	\label{eq:martdecom}
 	G_{n, i}(t) &= G_{n, i}(0) - \int_0^t \brR*{ G_{n, i}(s) - G_{n, i+1}(s)} ds \\
 	&\quad + \lambda_n \int_0^t \beta_{n}(G_{n, i-1}(s)) - \beta_{n}(G_{n, i} (s)) ds \; + \; M_{n,i}(t), \; i \ge 1
 	\end{aligned}
\end{equation}
where $M_{n, i}(t) \doteq M_{n,i,+}(t) - M_{n,i,-}(t)$ and
	\begin{equation}
		\QV{M_{n,i}}_t = \frac{1}{n} \left( 	\int_0^t \brR*{G_{n, i}(s) - G_{n, i+1}(s)} ds + \lambda_n \int_0^t \left(\beta_{n}(G_{n, i-1}(s)) - \beta_{n}(G_{n, i} (s))\right) ds \right).
		\label{eq:martQV}
	\end{equation}
We will assume throughout that $\Gn(0) \in \ld$ a.s. Then it follows that, for every $t\ge 0$, $\norm{\Gn(t)}_1 < \infty$ almost surely. 
Indeed, over any time interval $[0,t]$ finitely many jobs enter the system a.s. and denoting by $k(n)$ the number of jobs that arrive over $[0,t]$, we see that $\norm{\Gn(t)}_1 \le \norm{\Gn(0)}_1 + k(n)/n<\infty$ a.s.  Thus $\Gn$ is a stochastic 
process with sample paths in $\DD([0,\infty): \ld)$.   Note that, for any $t>0$, $\norm{\Gn(t)-\Gn(t-)}_1 \leq \frac{1}{n}$. 
\begin{remark}
	\label{rem:vect-equation}
Let $\bvec{a_n},\bvec{b} : \ld \to \lSpace{1}$ be given by 
$$ \bvec{a_n}(\bvec{x})_i \doteq \lambda_n (\beta_n(x_{i-1}) - \beta_n(x_i)),
\;\; \bvec{b}(\bvec{x})_i \doteq x_i - x_{i+1},\;\; \bvec{x} \in \ld, \; i \geq 1,$$
where, by convention, for $\bvec{x} \in \ld$, $x_0=1$. Then \eqref{eq:martdecom} can be rewritten as an evolution equation in $\lSpace{1}$ as,
\begin{equation}
    \Gn(t) = \Gn(0) + \int_0^t \brS*{\bvec{a_n}(\Gn(s)) - \bvec{b}(\Gn(s))} ds + \Mn(t), \label{eq:clean-time-change}
\end{equation}
where $\Mn(t) \doteq \brR{M_{n,i}(t)}_{i \geq 1}$ is a stochastic process with sample paths in $\DD([0,\infty): \lSpace{1})$ and the integral is a Bochner-integral \cite{yosida}. 
Note that  the near fixed point $\Mun$ from Definition \ref{def:fixed-point} satisfies $\bvec{a_n}(\Mun) = \bvec{b}(\Mun)$. It is in fact the unique solution to, 
\begin{equation}
\label{eq:fixedpoint}
\bvec{a_n}(\bvec{x}) = \bvec{b}(\bvec{x}) \qquad \text{for } \bvec{x} \in \ld,
\end{equation}
as is seen by adding up all the coordinates of \eqref{eq:fixedpoint} and using $\bvec{x} \in \lSpace{1}$.
In Lemma \ref{lem:martisnull}  we will see that for any $T>0$, as $n \to \infty$, $\sup_{t \leq T} \norm{\Mn(t)}_2 \pconv 0$. Hence if $\Gn(0) = \Mun$, then by \eqref{eq:clean-time-change}, we expect the process $\Gn(t)$ to stay close to $\Mun$ (over any compact time interval) as $n \to \infty$. 
In this sense $\Mu_n$ can be viewed as a `near fixed point' of $\Gn(\cdot)$ and the terminology in Definition \ref{def:fixed-point} is justified.
Another reason for this terminology comes from the results in Theorems \ref{thm:diffusion-limit}--\ref{thm:reflection} which show that, under conditions, $\Mu_n$ converges to one of the fixed points of
the fluid limit \eqref{eq:limit-ode2} when $\lambda =1$.  
\end{remark}

\section{The Law of Large Numbers}
\label{sec:proof-fluid}
In this section we prove Proposition \ref{lem:uniqsoln} and Theorem \ref{thm:fluid-limit}.

\subsection{Uniqueness of Fluid Limit Equations.}
\label{subsec:uniqflu}
In this subsection we show that there is at most one solution of \eqref{eq:limit-ode2} in $\CC([0,\infty): \ld \times \lSpace{\infty})$.
\cg{Results of Section \ref{sec:tighanlpch} will provide existence of solutions to this equation.}
Suppose $(\bvec{g}, \bvec{v})$ and  $(\bvec{g'}, \bvec{v'})$ are two solutions to \eqref{eq:limit-ode2} in $\CC([0,\infty): \ld \times \lSpace{\infty})$. We will now argue that the two solutions are equal.

      We claim that that $v'_i$ and $v_i$ are non-zero for only finitely many $i$'s. Indeed, since $\bvec{g}, \bvec{g'} \in 
        \CSpace{T}[\ld]$, there is a constant $C \in (0,\infty)$ so that $\sup_{s \leq T} \norm{\bvec{g}(s)}_1 \vee \sup_{s \leq T} \norm{\bvec{g'}(s)}_1  \leq C$.  Since 
		\begin{equation}
			\label{eq:esticoor}
			x_i \leq {\norm{\bvec{x}}_1}/{i} \mbox{ for any } \bvec{x} \in \ld,
			\end{equation}
			 taking $M \doteq \ceil{C+1} \in
        \nat$ shows that $\sup_{s \leq T} g_{i}(s) \vee g'_{i}(s) < 1$ for any $i \geq M$. But then by the equivalent representation of \eqref{eq:limit-ode2} given in
		\eqref{eq:limit-ode1} (in particular the second line), we must have $v_i = v'_{i} = 0$ for any $i \geq M$. This proves the claim.

    Since $v_i = v_i'=0$ for $i \geq M$, the first line of the equivalent formulation in \eqref{eq:limit-ode1} shows that 
	both $\bvec{x} = \bvec{g}$ and $\bvec{x}= \bvec{g'}$ satisfy
	the integral equations 
	\begin{align*} 
            x_{i}(t) = r_i - \int_0^t \brR{x_i(s) - x_{i+1}(s)} ds \quad \text{ for } i \geq M+1 \text{ and } t \in [0,T]. \label{eq:unreflected} 
	\end{align*}
        By standard arguments using Gronwall's lemma \cite[Appendix 5]{ethkur}, we then must have $g_{i} = g'_{i}$ for each $i \geq M + 1$. Indeed, letting
		$z_i(\cdot)\doteq  g_i(\cdot)-g'_i(\cdot)$ for $i \ge M+1$ and $v(t) \doteq \sum_{i=M+1}^{\infty} |z_i(t)|$ for $t\in [0,T]$, we have that
		$$|z_i(t)| \le \int_0^t (|z_i(s)| + |z_{i+1}(s)| ) ds \; \mbox{ for all } i \ge M+1, \mbox{ and } \; t \in [0,T]$$
		and so
		$$v(t) \le 2 \int_0^t v(s) ds , \; t \in [0,T],$$
		which implies that $v(t)=0$ for $t \in [0,T]$.

		 We now show that $g_{i} = g'_{i}$ for $i \leq M$.
  From the definition of the Skorohod map in \eqref{eq:SMdef} we see that for $f_1, f_2 \in \DSpace*{\infty}[\R]$ with $f_i(0) \leq 1$, $i=1,2$, and $t \ge 0$
  \begin{equation*}\label{eq:lippropSM}
  	\norm{\Gamma_1(f_1)-\Gamma_2(f_2)}_{*, t} \le 2 \norm{f_1-f_2}_{*,t}, \; \norm{\hat\Gamma_1(f_1)-\hat\Gamma_2(f_2)}_{*, t} \le  \norm{f_1-f_2}_{*,t}.
  \end{equation*}
  Thus, since $(\bvec{g}, \bvec{v})$ and  $(\bvec{g'}, \bvec{v'})$ solve \eqref{eq:limit-ode2},
        \begin{align}
            \norm{g_i - g'_i}_{*, t} &\leq 2 \brR*{\int_0^t \norm{g_i - g'_i}_{*,s} ds  + \int_0^t \norm{g_{i+1} - g'_{i+1}}_{*,s} ds + \norm{v_{i-1} - v'_{i-1}}_{*,t}} \label{eq:SPlip2}, \text{ and }  \\
                \norm{v_i - v'_i}_{*,t} &\leq \int_0^t \norm{g_i - g'_i}_{*,s} ds  + \int_0^t \norm{g_{i+1} - g'_{i+1}}_{*,s} ds + \norm{v_{i-1} - v'_{i-1}}_{*,t} \label{eq:SPlip1}
        \end{align}
        for any $i \geq 1$. Let $H_t \doteq \max_{i \in \{1, \ldots M\}} \norm{g_i - g_i'}_{*,t}$. Note $g_{M+1} = g'_{M+1}$ and hence $H_t = \max_{i \in \{1, \ldots M+1\}} \norm{g_i - g_i'}_{*,t}$. Then from \eqref{eq:SPlip1}, we have
        \begin{equation}
            \label{eq:vrecbound}
            \norm{v_i - v'_i}_{*,t} \leq 2 \int_0^t H_s ds + \norm{v_{i-1} - v'_{i-1}}_{*,t} \quad \text{for any } i \leq M.
        \end{equation}
        Repeatedly using  \eqref{eq:vrecbound} along with $v_0 = v_0'$ shows that $\norm{v_i - v_i'}_{*,t} \leq 2 i \int_0^t H_s ds$ for any $i \leq M$. Using this bound in \eqref{eq:SPlip2} shows for $1 \leq i \leq M$:
        \begin{equation*}
            \norm{g_i - g'_i}_{*,t} \leq 2 \brR*{2\int_0^t H_s ds + 2(i-1) \int_0^t H_s ds} = 4i \int_0^t H_s ds.
        \end{equation*}
        Hence considering the maximum of $\norm{g_i - g'_i}_{*,t}$ over $1 \leq i \leq M$ we get
        \begin{equation*}
            0 \leq H_t \leq 4 M \int_0^t H_s ds \quad \text{ for each } t \in [0,T].
        \end{equation*}
        Gronwall's Lemma now shows that $H_T = 0$, and hence $g_i = g_i'$ for $i = 1 \ldots M$. Finally, since $v_0 = v_0'$, we see recursively from
		the second equation in \eqref{eq:limit-ode2} that $v_i= v'_i$ for all $i\ge 0$.
\hfill \qed

\subsection{Tightness and Limit Point Characterization}
\label{sec:tighanlpch}
Some of the arguments in this section are similar to  \cite{mukherjee2018universality} however in order to keep the presentation self-contained  we provide details in a concise manner.
The next result establishes the convergence of the martingale term $\Mn$ in the semimartingale decomposition in \eqref{eq:clean-time-change}. 
Throughout this subsection and the next we assume that the conditions of Theorem \ref{thm:fluid-limit} are satisfied, namely, $\Gn(0) \pconv \bvec{r}$ in $\ld$, $\lambda_n \to \lambda$ and $d_n \to \infty$, as $n \to \infty$. 
\begin{lemma}
	\label{lem:martisnull}
	For any $T > 0$, $\sup_{s \leq T}\norm{\Mn(s)}_2 \pconv 0$.
\end{lemma}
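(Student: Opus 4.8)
The plan is to control $\E\big[\sup_{s\le T}\|\Mn(s)\|_2^2\big]$ directly, bounding each coordinate by Doob's inequality and then summing the predictable quadratic variations, where a telescoping cancellation produces an $O(1/n)$ bound.

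Fix $i\ge1$. By \eqref{eq:martplus}--\eqref{eq:martminus}, $M_{n,i}=M_{n,i,+}-M_{n,i,-}$ is an $\{\clf^n_t\}$-martingale with $M_{n,i}(0)=0$ and predictable quadratic variation $\QV{M_{n,i}}_\cdot$ given in \eqref{eq:martQV}; since both integrands appearing in \eqref{eq:martQV} are bounded by $1$, $\E\QV{M_{n,i}}_T\le(1+\lambda_n)T/n<\infty$. A routine localization (by a fundamental sequence of stopping times) together with Fatou's lemma then shows $M_{n,i}$ is in fact a genuine square-integrable martingale on $[0,T]$, so Doob's $L^2$ maximal inequality applies:
\[
\E\Big[\sup_{s\le T}M_{n,i}(s)^2\Big]\le 4\,\E\QV{M_{n,i}}_T .
\]
Since $\sup_{s\le T}\|\Mn(s)\|_2^2=\sup_{s\le T}\sum_{i\ge1}M_{n,i}(s)^2\le\sum_{i\ge1}\sup_{s\le T}M_{n,i}(s)^2$, Tonelli's theorem gives
\[
\E\Big[\sup_{s\le T}\|\Mn(s)\|_2^2\Big]\le 4\sum_{i\ge1}\E\QV{M_{n,i}}_T=4\,\E\Big[\sum_{i\ge1}\QV{M_{n,i}}_T\Big].
\]

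To finish I would evaluate $\sum_{i\ge1}\QV{M_{n,i}}_T$ pathwise. Since $\Gn(s)\in\ld\subset\lSpace1$, we have $G_{n,i}(s)\downarrow0$ as $i\to\infty$, and since $0\le\beta_n(x)\le x$ on $[0,1]$ also $\beta_n(G_{n,i}(s))\to0$; using $G_{n,0}\equiv1$ and $\beta_n(1)=1$ (each factor in \eqref{eq:betan} equals $1$ at $x=1$, as $d_n\le n$), both series telescope:
\[
\sum_{i\ge1}\big(G_{n,i}(s)-G_{n,i+1}(s)\big)=G_{n,1}(s)\le1,\qquad
\sum_{i\ge1}\big(\beta_n(G_{n,i-1}(s))-\beta_n(G_{n,i}(s))\big)=\beta_n(1)=1 .
\]
Hence, from \eqref{eq:martQV}, a.s.
\[
\sum_{i\ge1}\QV{M_{n,i}}_T=\frac1n\Big(\int_0^T G_{n,1}(s)\,ds+\lambda_n\!\int_0^T\!ds\Big)\le\frac{(1+\lambda_n)T}{n}.
\]
Combining the displays, $\E\big[\sup_{s\le T}\|\Mn(s)\|_2^2\big]\le 4(1+\lambda_n)T/n\to0$ as $n\to\infty$, because $\lambda_n\to\lambda<\infty$; Markov's inequality then yields $\sup_{s\le T}\|\Mn(s)\|_2\pconv0$.

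I do not anticipate any genuine difficulty: this is a soft second-moment estimate, and the only place needing a little care is the elementary localization that upgrades the $M_{n,i}$ from local to honest $L^2$-martingales so that Doob's inequality is legitimately available. The essential point is simply the telescoping identity, which turns a finite but otherwise uncontrolled sum of quadratic variations into one of order $1/n$.
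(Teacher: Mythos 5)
Your proof is correct and follows essentially the same argument as the paper: bound $\E\sup_{s\le T}\|\Mn(s)\|_2^2$ by Doob's inequality and the telescoping identity $\sum_i\QV{M_{n,i}}_T\le(1+\lambda_n)T/n$. The only cosmetic difference is that you apply Doob's $L^2$ inequality coordinate-by-coordinate and then sum, whereas the paper applies it once to the $\ell^2$-valued martingale $\Mn$ (i.e. to the submartingale $\|\Mn(\cdot)\|_2$), but both yield the identical final bound $4(1+\lambda_n)T/n$.
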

\begin{proof}
    It suffices to show that for any $T > 0$, $\lim_n \E \sup_{s \leq T} \norm{\Mn(s)}_2^2 = 0$. 
	Applying Doob's maximal inequality we have that
	\begin{equation}
        \E \sup_{s \leq T} \norm{\Mn(s)}_2^2 \leq 4\E \norm{\Mn(T)}_2^2  = 4 \E \sum_{i \geq 1} M_{n,i}(T)^2 \label{eq:mn2sum}. 
		\end{equation}
       Since $\E M_{n,i}^2(T) = \E \QV{M_{n,i}}_T$, using the monotone convergence theorem in \eqref{eq:mn2sum} shows,
	  \begin{align}
        \E \sup_{s \leq T} \norm{\Mn(s)}_2^2   &\le 4 \E \sum_{i \geq 1} \QV{M_{n,i}}_T \leq  4 \frac{T(1+\sup_n \lambda_n)}{n},
        \label{eq:mn2conv0}
	\end{align}
	where the last inequality is from
        \eqref{eq:martQV} on observing that
		 $$\sum_{i=1}^\infty \QV{M_{n,i}}_T \leq \frac{1}{n} \int_0^T G_{n,1}(s) + \frac{\lambda_n}{n} \int_0^T \beta_n(G_{n,0}(t)) \leq \frac{T(1+\lambda_n)}{n}.$$
		 Sending $n \to \infty$ in \eqref{eq:mn2conv0} completes the proof of the lemma.
\end{proof}

The next lemma characterizes compact sets in $\ld$. The proof is standard and can be found for example in \cite{mukherjee2018universality}. 

\begin{prop}
	\label{prop:compactness-l1}
A subset $C \subseteq \ld$ is precompact if and only if the following two conditions hold: 
\begin{enumerate}
    \item (norm-bounded) $\sup_{\bvec{x} \in C} \norm{\bvec{x}}_1 < \infty$, and
        \item (uniformly decaying tails) $\limsup_{M \to \infty} \sup_{\bvec{x} \in C} \sum_{i > M} \abs{x_i} = 0$.
\end{enumerate} 
\end{prop}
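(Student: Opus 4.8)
The plan is to reduce the statement to the classical characterization of precompact (equivalently, totally bounded) subsets of $\lSpace{1}$, exploiting that $\ld$ is a closed subset of $\lSpace{1}$ (as noted after \eqref{eqn:l1-down-def}); in particular $\ld$ is complete, so a subset $C\subseteq\ld$ has compact closure in $\ld$ if and only if it is totally bounded. I would then prove the two implications separately, the key device being approximation of $C$ by its finite truncations $\pi_M(\bvec{x})\doteq(x_1,\ldots,x_M,0,0,\ldots)$.

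For sufficiency, assume (1) holds with $R\doteq\sup_{\bvec{x}\in C}\norm{\bvec{x}}_1<\infty$ and (2) holds. Given $\epsilon>0$, I would use (2) to pick $M$ with $\sup_{\bvec{x}\in C}\sum_{i>M}|x_i|<\epsilon/2$. The set $\pi_M(C)$ is contained in $\set{\bvec{y}\in\lSpace{1}: y_i=0\text{ for }i>M,\ \norm{\bvec{y}}_1\le R}$, a bounded subset of an $M$-dimensional space and hence totally bounded; cover it by $\norm{\cdot}_1$-balls of radius $\epsilon/2$ with finitely many centres $\bvec{y}^{(1)},\ldots,\bvec{y}^{(N)}$. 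Since $\norm{\bvec{x}-\pi_M(\bvec{x})}_1=\sum_{i>M}|x_i|<\epsilon/2$ for every $\bvec{x}\in C$, each such $\bvec{x}$ lies within $\epsilon$ of some $\bvec{y}^{(j)}$, so $C$ is totally bounded in $\lSpace{1}$ and hence precompact in $\ld$.

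For necessity, assume $C$ is precompact in $\ld$. Its closure is compact, hence bounded, which gives (1). For (2), fix $\epsilon>0$, cover the (compact) closure of $C$ by finitely many $\norm{\cdot}_1$-balls of radius $\epsilon/2$ with centres $\bvec{x}^{(1)},\ldots,\bvec{x}^{(N)}\in\ld$, and, since each $\bvec{x}^{(j)}\in\lSpace{1}$, choose $M_j$ with $\sum_{i>M_j}|x^{(j)}_i|<\epsilon/2$; set $M_0\doteq\max_{j\le N} M_j$. Then for any $\bvec{x}\in C$, choosing $j$ with $\norm{\bvec{x}-\bvec{x}^{(j)}}_1<\epsilon/2$ gives, for all $M\ge M_0$, $\sum_{i>M}|x_i|\le\norm{\bvec{x}-\bvec{x}^{(j)}}_1+\sum_{i>M}|x^{(j)}_i|<\epsilon$, which is exactly (2).

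There is no real obstacle here — this is a routine total-boundedness argument — and the only point worth flagging is that (1) by itself is not sufficient (for instance the sequences $n^{-1}(1,\ldots,1,0,0,\ldots)$ with $n$ ones are bounded by $1$ in $\norm{\cdot}_1$ but are not precompact), so condition (2) supplies precisely the uniform control needed to pass from $C$ to the finite-dimensional truncations $\pi_M(C)$; the monotonicity and $[0,1]$-valuedness built into $\ld$ are not used beyond guaranteeing that $\ld$ is Polish.
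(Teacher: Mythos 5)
Your proof is correct and is the standard total-boundedness argument via finite truncations $\pi_M$; the paper does not supply its own proof (it cites \cite{mukherjee2018universality} and calls the argument standard), and your argument is exactly that standard one.
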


\begin{lemma}
    \label{prop:normtight}
	For each $n\in \NN$ there is a  square integrable $\{\clf^n_t\}$-martingale $\{L_n(t)\}$ such that, for any $t\ge 0$,
	\begin{equation*}
	\label{eq:normbound}
        \sup_{s \in [0, t]} \norm{\bvec{G_n}(s)}_1 \leq  \norm{\bvec{G_n}(0)}_1 +  \lambda_n t + L_n(t).
	\end{equation*}
     Furthermore,  $\QV{L_n}_t \leq \frac{\lambda_n t}{n}$, for all $t \geq 0$.
\end{lemma}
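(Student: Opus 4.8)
The plan is to exploit the time-changed Poisson representation \eqref{eq:GAD}, noting that the \emph{total} arrival stream is monotone and has a constant compensator $\lambda_n n$, so that a single martingale built only from the arrival processes $A_{n,i}$ already dominates the running supremum of $\norm{\bvec{G_n}}_1$ (this is what lets us avoid replacing the martingale by its supremum).

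First I would sum \eqref{eq:GAD} over $i\ge 1$. Since over any bounded interval $[0,t]$ only finitely many jobs arrive a.s., for each fixed $t$ we have $A_{n,i}(s)=D_{n,i}(s)=0$ on $[0,t]$ for all large $i$, so the sum is finite and
\[
\norm{\bvec{G_n}(s)}_1 = \norm{\bvec{G_n}(0)}_1 - \frac1n\sum_{i\ge1} D_{n,i}(s) + \frac1n\sum_{i\ge1} A_{n,i}(s), \qquad 0\le s\le t.
\]
Each $D_{n,i}(\cdot)$ is nonnegative and each $A_{n,i}(\cdot)$ is nondecreasing, so for $s\le t$ we get $\norm{\bvec{G_n}(s)}_1 \le \norm{\bvec{G_n}(0)}_1 + \frac1n\sum_{i\ge1} A_{n,i}(t)$; taking the supremum over $s\in[0,t]$ yields
\[
\sup_{s\in[0,t]}\norm{\bvec{G_n}(s)}_1 \le \norm{\bvec{G_n}(0)}_1 + \frac1n\sum_{i\ge1} A_{n,i}(t).
\]
Now define $L_n(t) \doteq \frac1n\sum_{i\ge1} A_{n,i}(t) - \lambda_n t$, so that this bound is exactly the claimed inequality.

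It remains to identify $L_n$ with a square-integrable martingale and compute its quadratic variation. Using the definition of $M_{n,i,+}$ in \eqref{eq:martplus} and the telescoping identity $\sum_{i\ge1}\brR{\beta_n(G_{n,i-1}(s)) - \beta_n(G_{n,i}(s))} = \beta_n(G_{n,0}(s)) = \beta_n(1) = 1$ (valid since $0\le \beta_n(G_{n,i}(s))\le G_{n,i}(s)\to 0$), one gets $L_n(t) = \sum_{i\ge1} M_{n,i,+}(t)$. The $M_{n,i,+}$ are mutually orthogonal martingales with $\sum_{i\ge1}\QV{M_{n,i,+}}_t = \frac{\lambda_n}{n}\int_0^t \beta_n(1)\,ds = \frac{\lambda_n t}{n}$, so the partial sums $\sum_{i=1}^N M_{n,i,+}(t)$ are Cauchy in $L^2$ with $\E\brR{\sum_{i=1}^N M_{n,i,+}(t)}^2 = \sum_{i=1}^N \E\QV{M_{n,i,+}}_t \le \frac{\lambda_n t}{n}$; passing to the $L^2$-limit shows $L_n$ is a square-integrable $\{\clf^n_t\}$-martingale and, by orthogonality, $\QV{L_n}_t = \sum_{i\ge1}\QV{M_{n,i,+}}_t = \frac{\lambda_n t}{n}$, completing the proof.

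The argument is essentially routine; the only points requiring care are the two infinite-sum manipulations — the a.s. finiteness of the number of arrivals legitimizes the first termwise sum, and the orthogonality of the $M_{n,i,+}$ together with $\sum_i\QV{M_{n,i,+}}_t<\infty$ provides the $L^2$-convergence needed to transfer the martingale property and the quadratic-variation formula to the limit. I expect these bookkeeping steps, rather than any substantive difficulty, to be the main thing to get right.
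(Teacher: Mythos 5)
Your proof is correct and constructs exactly the same martingale $L_n$ as the paper; the only difference is how the martingale property of $L_n(t)=\frac{1}{n}\sum_{i\ge1}A_{n,i}(t)-\lambda_n t$ is established. The paper takes a shortcut: it notes that $\norm{\bvec{G_n}(t)}_1$ is the total job count divided by $n$, bounds it by initial jobs plus total arrivals $A_n(t)=\sum_i A_{n,i}(t)$, and then simply invokes the fact that $A_n$ is a rate-$\lambda_n n$ Poisson process, so $L_n=A_n/n-\lambda_n\,\mathrm{id}$ is instantly a square-integrable martingale with $\QV{L_n}_t=\lambda_n t/n$. You instead work entirely inside the time-changed representation \eqref{eq:GAD}, identify $L_n$ with $\sum_{i\ge1}M_{n,i,+}$ via the telescoping $\sum_i(\beta_n(G_{n,i-1})-\beta_n(G_{n,i}))=1$, and deduce the martingale property and quadratic variation from orthogonality of the $M_{n,i,+}$ together with an $L^2$-Cauchy argument. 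Your route is a bit longer but more self-contained (it never needs to assert that the aggregate arrival stream is Poisson, which in the time-changed picture is itself a consequence of the same telescoping identity via the Watanabe characterization); the paper's route is shorter but relies on recognizing that structural fact. Both correctly yield $\QV{L_n}_t=\lambda_n t/n$, which is the claimed bound with equality. One minor point worth noting for completeness: the a.s.\ finiteness of $\sum_i A_{n,i}(t)$ and the vanishing of $\beta_n(G_{n,N}(s))$ as $N\to\infty$ both use that $\Gn(0)\in\ld$ a.s.\ (so the maximal queue length stays finite on bounded intervals), which the paper assumes throughout, so there is no gap.
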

\begin{proof}
	For $i= 1, \ldots, n$,  let $X_i(t)$ denote the number of jobs in  the $i$-th server's queue  at time $t$. Then 
	\begin{align*}
	\norm{\Gn(t)}_1 &= \sum_{j=1}^{\infty} G_{n,j}(t) = \sum_{j=1}^\infty \sum_{i = 1}^n \frac{\I{X_i(t) \geq j}}{n}
	= \frac{1}{n} \sum_{i=1}^n \sum_{j = 1}^\infty \I{X_i(t) \geq j}	= \frac{1}{n} \sum_{i=1}^n X_i(t).
	\end{align*}
        Hence $\norm{\Gn(t)}_1$ is the total number of jobs in the system at time $t$, divided by $n$. 

	Since the total number of jobs  in the system at time  $t$ is bounded above by the sum of number of job arrivals by time $t$ and the initial number of jobs,
 $\sup_{s \in [0,t]} \norm{\Gn(s)} \leq \norm{\Gn(0)}_1 + \frac{A_n(t)}{n}$,
        where $A_n(t)$ is the total number of arrivals to the system by time $t$. Since, $A_n$ is a Poisson process with arrival rate $\lambda_n n$, the result follows on setting $L_n(t) = \frac{A_n(t)}{n} - \lambda_n t$, $t\ge 0$. 
\end{proof}

The estimate in the next lemma will be useful when applying Aldous-Kurtz tightness criteria \cite{ethkur} for proving tightness of $\{\Gn\}$.
\begin{lemma}
    \label{prop:deltaboundfl}
	Fix $n \in \NN$ and $\delta \in (0,\infty)$. Let $\tau$ be a bounded $\{\clf^n_t\}$-stopping time.  Then 
		\begin{align*}
	\E \norm{\bvec{G_n}(\tau + \delta) - \bvec{G_n}(\tau)}_1 \leq (\lambda_n + 1) \delta
	\end{align*}
\end{lemma}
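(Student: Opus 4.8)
The plan is to read off the increment directly from the Poisson representation \eqref{eq:GAD}. Write $A_n \doteq \sum_{i\ge 1} A_{n,i}$ and $D_n \doteq \sum_{i\ge 1} D_{n,i}$ for the total arrival and departure processes of the $n$-th system. Since each $A_{n,i}$ and $D_{n,i}$ is nondecreasing, \eqref{eq:GAD} gives, coordinatewise, $\abs{G_{n,i}(\tau+\delta) - G_{n,i}(\tau)} \le \frac1n\brR*{A_{n,i}(\tau+\delta) - A_{n,i}(\tau)} + \frac1n\brR*{D_{n,i}(\tau+\delta) - D_{n,i}(\tau)}$, and summing over $i \geq 1$,
\[
\norm{\Gn(\tau+\delta) - \Gn(\tau)}_1 \le \tfrac1n\brR*{A_n(\tau+\delta) - A_n(\tau)} + \tfrac1n\brR*{D_n(\tau+\delta) - D_n(\tau)}.
\]
It then suffices to bound the expected increments of $A_n$ and $D_n$ over $[\tau,\tau+\delta]$ by $\lambda_n n\,\delta$ and $n\,\delta$ respectively.

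For the arrivals, recall (as noted below \eqref{eq:time-change}) that each arriving job increments exactly one of the $A_{n,i}$, so $A_n$ is precisely the arrival process of the $n$-server system, a rate $\lambda_n n$ Poisson process adapted to $\{\clf^n_t\}$. Applying the strong Markov property at the bounded stopping time $\tau$ — equivalently, optional sampling to the martingale $A_n(\cdot) - \lambda_n n\,\id(\cdot)$ — gives $\E\brS*{A_n(\tau+\delta) - A_n(\tau)} = \lambda_n n\,\delta$.

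For the departures, summing \eqref{eq:martminus} over $i$ and telescoping shows that $D_n(t) - n\int_0^t G_{n,1}(s)\,ds$ is a local $\{\clf^n_t\}$-martingale; it becomes a genuine (bounded) martingale after localization by $\sigma_m \doteq \inf\set{t : D_n(t) \ge m}$, and $\sigma_m \nearrow \infty$ a.s.\ since $D_n$ does not explode in finite time (over $[0,t]$ the number of departures is at most the initial number of jobs $n\norm{\Gn(0)}_1 < \infty$ plus the a.s.\ finitely many arrivals in $[0,t]$). Optional sampling applied to $D_n(\cdot\wedge\sigma_m) - n\int_0^{\cdot\wedge\sigma_m} G_{n,1}(s)\,ds$ at $\tau$ and $\tau+\delta$, together with $G_{n,1}\le 1$, yields
\[
\E\brS*{D_n((\tau+\delta)\wedge\sigma_m) - D_n(\tau\wedge\sigma_m)} = \E\brS*{n\!\int_{\tau\wedge\sigma_m}^{(\tau+\delta)\wedge\sigma_m}\!\! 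G_{n,1}(s)\,ds} \le n\,\delta .
\]
Since the increments on the left are nonnegative and increase to $D_n(\tau+\delta) - D_n(\tau)$ as $m\to\infty$, monotone convergence gives $\E\brS*{D_n(\tau+\delta) - D_n(\tau)} \le n\,\delta$. Combining the two bounds with the first display gives $\E\norm{\Gn(\tau+\delta) - \Gn(\tau)}_1 \le \lambda_n\delta + \delta = (\lambda_n+1)\delta$.

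The one point that deserves care — and the step I would treat most carefully — is the optional-sampling/integrability argument for the departure process, because $\E\norm{\Gn(0)}_1$ is not assumed finite, so the compensated departure process is only a local martingale; the localization by $\sigma_m$ followed by monotone convergence handles this cleanly. The rest is straightforward bookkeeping with the representation \eqref{eq:GAD}.
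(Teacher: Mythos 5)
Your proof is correct and follows essentially the same route as the paper: bound the increment of $\Gn$ by the increments of total arrivals and departures via \eqref{eq:GAD}, compute the expected increments through the compensators, telescope, and use $\beta_n(1)=1$, $G_{n,1}\le 1$. The only real difference is bookkeeping order: the paper takes per-coordinate expectations using the individual martingales $M_{n,i,\pm}$ (each of which is an $L^2$-martingale, so optional sampling is immediate) and then sums, which sidesteps any integrability question about the aggregate processes; you sum first and then must justify that the aggregate compensated departure process is a martingale, which you handle via localization by $\sigma_m$ and monotone convergence. Your extra observation that $A_n$ is a rate-$\lambda_n n$ Poisson process is true (and used by the paper in the proof of Lemma \ref{prop:normtight}), but here you only need that $A_n(t)-\lambda_n n\,t$ is a martingale, which follows from telescoping the compensators of $A_{n,i}$. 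Both versions are valid; the paper's per-coordinate ordering is slightly cleaner, while your localization argument for $D_n$ is a robust substitute for the $L^2$-summability check that the paper's per-coordinate route implicitly relies on.
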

\begin{proof}
	From \eqref{eq:GAD},   for any $i \in \nat$, 
	\begin{equation}
	\label{eq:diffbound}
		\abs{G_{n,i}(\tau + \delta) - G_{n,i}(\tau)} \leq \frac{1}{n} \brR*{A_{n,i}(\tau + \delta) - A_{n,i}(\tau) + D_{n,i}(\tau + \delta) - D_{n,i}(\tau)}.
	\end{equation}
From \eqref{eq:martplus} and \eqref{eq:martminus} we see that
        \begin{align*}
                \E \frac{1}{n} \brR*{A_{n,i}(\tau + \delta) - A_{n,i}(\tau)} &=  \lambda_n \E \int_{\tau}^{\tau + \delta} \left(\beta_n(G_{n,i-1}(s)) - \beta_n(G_{n,i}(s))\right) ds\\
  \E \frac{1}{n} \brR*{D_{n,i}(\tau + \delta) - D_{n,i}(\tau)} &= \E \int_{\tau}^{\tau + \delta} \left(G_{n,i}(s) - G_{n,i+1}(s)\right) ds.
                 \end{align*}
        Using the above identities in \eqref{eq:diffbound}
        \begin{equation}
	\begin{aligned}
		&\E \abs{G_{n,i}(\tau + \delta) - G_{n,i}(\tau)} \\
		&\leq \lambda_n \E \int_{\tau}^{\tau + \delta} \left(\beta_n(G_{n,i-1}(s)) - \beta_n(G_{n,i}(s))\right) ds +  \E \int_\tau^{\tau + \delta}  \left(G_{n,i}(s) - G_{n,i+1}(s)\right) ds
	\end{aligned}
	\label{eq:diffbound2}
	\end{equation}
	Adding \eqref{eq:diffbound2} over various values of $i \in \nat$, we have
	\begin{equation*}
		\begin{aligned}
                    \E \norm{\Gn(\tau+\delta) - \Gn(\tau)}_1 &\leq \lambda_n \sum_{i=1}^\infty  \E \int_{\tau}^{\tau + \delta} \left(\beta_n(G_{n,i-1}(s)) - \beta_n(G_{n,i}(s))\right) ds\\
                                                             &\quad + \sum_{i=1}^\infty \E \int_\tau^{\tau + \delta}  \left(G_{n,i}(s) - G_{n,i+1}(s)\right) ds\\
			&\leq \E \int_\tau^{\tau + \delta} \left(\lambda_n\beta_n(G_{n,0}(s)) + G_{n,1}(s)\right) ds\\
			&\leq (\lambda_n + 1) \delta. 
 		\end{aligned}
	\end{equation*}	
\end{proof}
The following lemma will be useful in verifying the tightness of $\{\G_n(t)\}$ in $\ld$ for each fixed $t\ge 0$.
\begin{lemma}
    \label{prop:tails}
    \chmd{For every $n, m \in \NN$} there is a square integrable $\{\clf^n_t\}$ martingale $L_{n,m}(\cdot)$ so that, for all $t\ge 0$,
	\begin{equation*}
            \sup_{s \leq t} \sum_{i > m} G_{n,i}(s) \leq \sum_{i > m} G_{n,i}(0) + \frac{\lambda_nt}{m}\norm{\Gn}_{1,t} + L_{n,m}(t)
   \end{equation*}
   and $\QV{L_{n, m}}_t \leq \frac{\lambda_nt}{n m}\norm{\Gn}_{1,t}$.
\end{lemma}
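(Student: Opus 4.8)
The plan is to adapt the argument of Lemma \ref{prop:normtight} by isolating the contribution of the arrivals that get routed to long queues. Summing the Poisson representation \eqref{eq:GAD} over $i>m$ — which for each fixed $\omega$ and $s$ is really a \emph{finite} sum, since a.s.\ the longest queue over $[0,s]$ has length at most $n\norm{\Gn(0)}_1+A_n(s)<\infty$ (so $G_{n,i}(u)=0$ for all $u\le s$ once $i$ exceeds this bound) — I would write
\[
\sum_{i>m} G_{n,i}(s) \;=\; \sum_{i>m} G_{n,i}(0) \;+\; \tfrac1n\mathcal{A}_{n,m}(s) \;-\; \tfrac1n\mathcal{D}_{n,m}(s),\qquad \mathcal{A}_{n,m}\doteq\textstyle\sum_{i>m}A_{n,i},\ \ \mathcal{D}_{n,m}\doteq\textstyle\sum_{i>m}D_{n,i}.
\]
Here $\mathcal{A}_{n,m}$ and $\mathcal{D}_{n,m}$ are nonnegative and nondecreasing (they count, over $[0,s]$, the arrivals routed to a queue of length at least $m$, respectively the service completions at queues of length more than $m$). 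Discarding $-\tfrac1n\mathcal{D}_{n,m}(s)\le 0$ and using monotonicity of $\mathcal{A}_{n,m}$ gives, for $s\le t$, $\sum_{i>m}G_{n,i}(s)\le \sum_{i>m}G_{n,i}(0)+\tfrac1n\mathcal{A}_{n,m}(t)$, hence the same bound for $\sup_{s\le t}\sum_{i>m}G_{n,i}(s)$.

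Next I would compensate $\mathcal{A}_{n,m}$. Rewriting the defining identity after \eqref{eq:time-change} as $\tfrac1n A_{n,i}(t)=\lambda_n\int_0^t(\beta_n(G_{n,i-1}(u))-\beta_n(G_{n,i}(u)))\,du+M_{n,i,+}(t)$ (from \eqref{eq:martplus}), summing over $i>m$, and telescoping (again a finite sum pathwise, with $\beta_n(G_{n,i}(u))=0$ for $i$ large, uniformly on $[0,s]$), I get
\[
\tfrac1n\mathcal{A}_{n,m}(t) \;=\; \lambda_n\int_0^t \beta_n(G_{n,m}(u))\,du \;+\; L_{n,m}(t),\qquad L_{n,m}(t)\doteq \sum_{i>m} M_{n,i,+}(t).
\]
The process $L_{n,m}$ is a square-integrable $\{\clf^n_t\}$-martingale: the partial sums $\sum_{m<i\le K}M_{n,i,+}$ are martingales with orthogonal increments (by the vanishing cross quadratic variations recorded before \eqref{eq:martdecom}), so $\QV{\sum_{m<i\le K}M_{n,i,+}}_t=\sum_{m<i\le K}\QV{M_{n,i,+}}_t$ increases, by \eqref{eq:martQV}, to $\tfrac{\lambda_n}{n}\int_0^t\beta_n(G_{n,m}(u))\,du\le \tfrac{\lambda_n t}{n}$; thus they form an $L^2$-Cauchy sequence whose limit $L_{n,m}$ is a martingale with $\QV{L_{n,m}}_t=\tfrac{\lambda_n}{n}\int_0^t\beta_n(G_{n,m}(u))\,du$.

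Finally I would invoke the elementary bounds $\beta_n(x)\le x^{d_n}\le x$ for $x\in[0,1]$ together with $G_{n,m}(u)\le \norm{\Gn(u)}_1/m$ (inequality \eqref{eq:esticoor}) to obtain $\int_0^t\beta_n(G_{n,m}(u))\,du\le \tfrac{t}{m}\norm{\Gn}_{1,t}$. Combining with the two displays above yields $\sup_{s\le t}\sum_{i>m}G_{n,i}(s)\le \sum_{i>m}G_{n,i}(0)+\tfrac{\lambda_n t}{m}\norm{\Gn}_{1,t}+L_{n,m}(t)$ and $\QV{L_{n,m}}_t\le \tfrac{\lambda_n t}{nm}\norm{\Gn}_{1,t}$, as claimed. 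No step here is genuinely hard; the only points needing a little care are the justification that the sums over $i$ are finite sums pathwise (so that the term-by-term manipulations and telescoping are legitimate) and the $L^2$-limit argument identifying $L_{n,m}$ as a bona fide square-integrable martingale with the stated predictable quadratic variation.
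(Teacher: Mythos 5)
Your proof is correct and takes essentially the same approach as the paper: drop the departure term, isolate the arrival count above level $m$, compensate it, telescope the compensator to $\lambda_n\int_0^t\beta_n(G_{n,m})\,ds$, and bound via $\beta_n(x)\le x$ together with $G_{n,m}\le\norm{\Gn}_1/m$. The only cosmetic difference is that you compensate each $A_{n,i}$ separately and sum the resulting $M_{n,i,+}$, while the paper first sums the Poisson processes into $B_{n,m}$ and then compensates — but both constructions produce the identical martingale $L_{n,m}=\sum_{i>m}M_{n,i,+}$; your extra care about pathwise finiteness of the sums and the $L^2$-Cauchy argument supplies detail the paper leaves implicit.
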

\begin{proof}
	From \eqref{eq:time-change}, for any $i \in \nat$ and $t \geq 0$:
	\begin{equation}
            \label{eq:gntails}
		G_{n,i}(t)  \leq G_{n,i}(0) + \frac{1}{n} N_{+,i}\brR*{n \lambda_n \int_0^t \beta_n(G_{n,i-1}(s)) - \beta_n(G_{n,i}(s)) ds}
	\end{equation}
        Consider the point-process given by 
		$$B_{n,m}(t) \doteq \sum_{i > m} N_{+,i} \brR*{n \lambda_n \int_0^t \beta_n(G_{n,i-1}(s)) - \beta_n(G_{n,i}(s)) ds}.$$
		 Adding over $i > m$ in \eqref{eq:gntails} we get
\begin{equation}\label{eq:bnmt}
	\sup_{s \leq t} \sum_{i > m} G_{n,i}(s) \leq \sum_{i > m} G_{n,i}(0) + \frac{1}{n} B_{n,m}(t)
\end{equation}
It is easy to see that, with 
$$b_{n,m}(t) \doteq n \lambda_n \sum_{i > m} \int_0^t \beta_n(G_{n,i-1}(s)) - \beta_n(G_{n,i}(s)) ds, \; t \ge 0,$$
$\tilde L_{n,m}(t) \doteq B_{n,m}(t) - b_{n,m}(t)$ is a $\clf^n_t$-martingale and
\begin{equation*}
\begin{aligned}
\QV{\tilde L_{n,m}}_t &= b_{n,m}(t) 
	                  = n \lambda_n \int_0^t \beta_n(G_{n,m}(s)) ds \\
					  &\leq n \lambda_n \int_0^t G_{n,m}(s) ds  
                          \leq n \lambda_n t \brR*{\sup_{s \leq t} G_{n,m}(s)} \leq \frac{n \lambda_nt}{m} \norm{\Gn}_{1,t},
\end{aligned}
\end{equation*}
where, for the last inequality we have used \eqref{eq:esticoor}. The lemma now follows on setting $L_{n,m}(t) = \tilde L_{n,m}(t)/n$
and 
and using \eqref{eq:bnmt}.
\end{proof}

Recall that under our assumptions, $\lambda_n \to \lambda$ and $d_n \to \infty$ as $n \to \infty$.
\begin{lemma}
	\label{lem:fltightness}
	Suppose that $\{\Gn(0)\}_{n \geq 1}$ is a tight sequence of  $\ld$ valued random variables. Then for any $T > 0$, $\brC{\Gn}_{n \geq 1}$ is a tight sequence of $\DSpace{T}[\ld]$ valued random variables.
\end{lemma}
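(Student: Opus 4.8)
The plan is to apply the Aldous--Kurtz tightness criterion (cf.\ \cite{ethkur}). Since $\ld$ is Polish, to conclude that $\brC{\Gn}$ is tight in $\DSpace{T}[\ld]$ it suffices to verify (i) the \emph{compact containment} condition: for every $\eta>0$ there is a compact set $K=K_\eta\subseteq\ld$ with $\inf_n\prob\brR*{\Gn(s)\in K\text{ for all }s\in[0,T]}\ge1-\eta$; and (ii) the \emph{Aldous oscillation} condition: for every $\eta>0$ there is $\delta>0$ such that $\sup_n\E\norm{\Gn(\tau+\delta)-\Gn(\tau)}_1\le\eta$ for every bounded $\brC{\clf^n_t}$-stopping time $\tau\le T$. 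Both follow from the estimates already in hand, together with the characterization of compact subsets of $\ld$ in Proposition \ref{prop:compactness-l1} (norm-boundedness plus uniformly decaying tails) and the fact that $\sup_n\lambda_n<\infty$ since $\lambda_n\to\lambda$.

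Condition (ii) is immediate from Lemma \ref{prop:deltaboundfl}, which bounds $\E\norm{\Gn(\tau+\delta)-\Gn(\tau)}_1$ by $(\lambda_n+1)\delta\le(\sup_m\lambda_m+1)\delta$, uniformly over $\tau$ and $n$, and hence is $\le\eta$ once $\delta$ is small enough.

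For condition (i), the plan is to assemble $K_\eta$ using Proposition \ref{prop:compactness-l1}. Lemma \ref{prop:normtight} gives $\sup_{s\le T}\norm{\Gn(s)}_1\le\norm{\Gn(0)}_1+\lambda_nT+L_n(T)$ with $\E L_n(T)^2\le\lambda_nT/n$; since $\brC{\norm{\Gn(0)}_1}$ is tight (because $\brC{\Gn(0)}$ is tight in $\ld$) and the remaining summands form an $L^2$-bounded, hence tight, family, the real-valued family $\brC{\norm{\Gn}_{1,T}}$ is tight. Lemma \ref{prop:tails} then gives, for each $m$,
\begin{equation*}
\sup_{s\le T}\sum_{i>m}G_{n,i}(s)\le\sum_{i>m}G_{n,i}(0)+\frac{\lambda_nT}{m}\norm{\Gn}_{1,T}+L_{n,m}(T),\qquad\QV{L_{n,m}}_T\le\frac{\lambda_nT}{nm}\norm{\Gn}_{1,T}.
\end{equation*}
Here the first term on the right tends to $0$ uniformly in $n$ as $m\to\infty$ (a tight family in $\ld$ has uniformly decaying tails), and the second does likewise because $\brC{\norm{\Gn}_{1,T}}$ is tight; for the martingale term one localizes at $\sigma_R\doteq\inf\brC{s:\norm{\Gn(s)}_1\ge R}$, with $R$ chosen, for the given $\eta$, so that $\sup_n\prob(\sigma_R\le T)<\eta/2$ (possible by tightness of $\brC{\norm{\Gn}_{1,T}}$). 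On $\brC{\sigma_R>T}$ one has $L_{n,m}(T)=L_{n,m}(T\wedge\sigma_R)$, and since the jumps of $\norm{\Gn}_1$ have size at most $1/n$, $\E L_{n,m}(T\wedge\sigma_R)^2\le\frac{\lambda_nT(R+1)}{nm}\to0$ as $m\to\infty$, uniformly in $n$. Thus $\lim_{m\to\infty}\sup_n\prob\brR*{\sup_{s\le T}\sum_{i>m}G_{n,i}(s)>\epsilon}=0$ for every $\epsilon>0$. Taking a norm bound $R_0$ with $\sup_n\prob(\norm{\Gn}_{1,T}>R_0)<\eta/2$ and, for each $k\ge1$, an index $m_k$ with $\sup_n\prob\bigl(\sup_{s\le T}\sum_{i>m_k}G_{n,i}(s)>1/k\bigr)<\eta2^{-k-1}$, the set $K_\eta\doteq\set{\bvec{x}\in\ld:\norm{\bvec{x}}_1\le R_0,\ \sum_{i>m_k}x_i\le1/k\text{ for all }k\ge1}$ is compact by Proposition \ref{prop:compactness-l1} and meets the requirement in (i). Together, (i) and (ii) give the tightness of $\brC{\Gn}$ in $\DSpace{T}[\ld]$.

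The one genuinely delicate point is the uniform-in-$n$ control of the tail martingale term $L_{n,m}(T)$ in step (i): plain tightness of $\brC{\Gn(0)}$ in $\ld$ does not furnish a uniform bound on $\E\norm{\Gn(0)}_1$, so one cannot simply take expectations in $\QV{L_{n,m}}_T\le\frac{\lambda_nT}{nm}\norm{\Gn}_{1,T}$; the localization at $\sigma_R$ is what circumvents this. The rest, notably the packaging of the per-level tail bounds into a single compact set, is routine.
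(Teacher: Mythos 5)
Your proposal is correct and follows the same overall route as the paper: both verify the Aldous--Kurtz criterion, both obtain the oscillation estimate directly from Lemma \ref{prop:deltaboundfl}, and both assemble the compact set from the norm bound of Lemma \ref{prop:normtight} and the tail bound of Lemma \ref{prop:tails} together with the compactness characterization in Proposition \ref{prop:compactness-l1}. The one substantive difference is the point you yourself flag: the paper bounds $\prob\brR*{\norm{L_{n,m_k}}_{*,T} > m_k^{-1/4}}$ by Doob's inequality applied to $\E\QV{L_{n,m_k}}_T$ and then substitutes $\kappa$ for $\norm{\Gn}_{1,T}$, but tightness of $\brC{\Gn(0)}$ in $\ld$ does not give a uniform bound on $\E\norm{\Gn(0)}_1$, so this expectation need not be controlled by $\kappa$; the implicit step is the very localization at $\sigma_R$ that you make explicit, using that the jumps of $\norm{\Gn}_1$ are of size $1/n$ so that $\QV{L_{n,m}}_{\cdot\wedge\sigma_R}$ is bounded by $\frac{\lambda_n T(R+1)}{nm}$. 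Your packaging of the compact set $K_\eta$ is also a little different (direct geometric choice of the $m_k$'s rather than anchoring the tail sums to those of a compact $K_1$), but this is stylistic. In short: same strategy, with a cleaner and fully rigorous treatment of the martingale tail term.
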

\begin{proof}
    To show that $\brC{\Gn}_{n \geq 1}$ is tight it suffices to show that (cf. \cite[Theorem 8.6]{ethkur})
    \begin{enumerate}
        \item For any $t \in [0, T]$ and $\epsilon > 0$, there is a compact set $\Gamma \subseteq \ld$ so that $\inf_{n \in \nat} \prob \brR*{\Gn(t) \in \Gamma} \geq 1 - \epsilon$.
        \item $\lim_{\delta \to 0} \limsup_{n \to \infty} \sup_{\tau \leq T} \E \norm{\Gn(\tau + \delta) - \Gn(\tau)}_1 = 0$, where the innermost supremum is taken over all $\clf^n_t$-stopping times $\tau$ that are bounded by $T-\delta$.
    \end{enumerate}
    The second condition is immediate from Proposition \ref{prop:deltaboundfl}. Now consider (1).
	Fix $\ep>0$. Let $\bar \lam = \sup_{n\ge 1} \lam_n$.
	Since $\Gn(0)$ is tight, there is a compact $K_1 \subset \ld$ such that
	\begin{equation*}
		\label{eq:inittight}
		\prob(\Gn(0) \in K_1) \ge 1-\frac{\ep}{8} \mbox{ for all } n \in \NN.
	\end{equation*}
	From Proposition \ref{prop:compactness-l1} there is a $\kappa_1 \in (0,\infty)$ such that $\sup_{\bvec{x} \in K_1} \norm{\bvec{x}}_1 \le \kappa_1$. From Lemma \ref{prop:normtight} we can find $\kappa_2 \in (0,\infty)$ so that
	\begin{equation*}
		\prob(\bar \lam T +\norm{L_n}_{1,T}> \kappa_2) \le \frac{\ep}{8}.
	\end{equation*}
	Then,  using the above estimates and Lemma \ref{prop:normtight} again, with $\kappa = \kappa_1+\kappa_2$,
	\begin{equation*}
		\prob(\norm{\Gn}_{1,T} \ge \kappa) \le \frac{\ep}{4}.
	\end{equation*}
	Let $m_k \uparrow \infty$ be a sequence such that 
	$4 \frac{\bar \lam T \kappa}{m_k^{1/2}} \le \frac{\ep}{2^{k+2}} \mbox{ for all } k \in \NN.$
	Define
	$$K_2 = \left\{\bvec{y} \in \ld: \norm{\bvec{y}}_1 \le \kappa \mbox{ and for some } \bvec{x} \in K_1,
	\sum_{i>m_k} y_i \le \sum_{i>m_k} x_i + \frac{\bar \lam T \kappa}{m_k} + \frac{1}{m_k^{1/4}}, \forall k \in \NN\right\}.$$
	Since $K_1$ is compact, it is immediate from Proposition \ref{prop:compactness-l1} that $K_2$ is precompact in $\ld$.
	Also, using Lemma \ref{prop:tails}, for any $t \in [0,T]$,
	\begin{align*}
		\prob(\Gn(t) \in K_2^c) &\le \prob(\norm{\Gn}_{1,T} \ge \kappa) + \prob(\Gn(0)\in K_1^c) +
		\prob(\norm{L_{n,m_k}}_{*,T} > \frac{1}{m_k^{1/4}} \mbox{ for some } k \in \NN)\\
		&\le \frac{\ep}{4} + \frac{\ep}{8} + 4 \cg{\kappa}\bar \lam T\sum_{k=1}^\infty m_k^{1/2} \frac{1}{m_k} \leq \ep,
		\end{align*}
		where the second inequality follows from Doob's maximal inequality and from the expression of $\QV{L_{n,m_k}}$ in Lemma \ref{prop:tails} and the third inequality follows from the choice of $\set{m_k}$. This proves (1) and completes the proof of the lemma.
\end{proof}

The following lemma gives a characterization of the limit points of $\Gn$.
\begin{lemma}
\label{lem:flexistence}
\cg{Fix $T \in (0,\infty)$}. Suppose that, along some subsequence $\{n_k\}_{k \geq 1}$, $\Gnk \dconv \G$ in $\DSpace{T}[\ld]$ as $k \to \infty$. Then $\G \in \CSpace{T}[\ld]$ a.s., and  \eqref{eq:limit-ode2} is satisfied with $(g_i, v_i)$ replaced with $(G_i, V_i)$, where $V_i$ are defined recursively using the second equation in \eqref{eq:limit-ode2} with $V_0(t) = \lam t$ for $t\ge 0$.
\end{lemma}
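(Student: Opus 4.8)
The plan is to pass to the limit in the martingale decomposition \eqref{eq:martdecom}, viewing the arrival terms $\lambda_n\int_0^\cdot\beta_n(G_{n,i}(s))\,ds$ as candidate ``pushing'' terms and identifying their limits through the characterization of the one--dimensional Skorohod map at level $1$. For $n\in\NN$ and $i\ge 0$ put $V_{n,i}(t)\doteq\lambda_n\int_0^t\beta_n(G_{n,i}(s))\,ds$, so that $V_{n,0}(t)=\lambda_n t$ (since $G_{n,0}\equiv1$) and \eqref{eq:martdecom} reads
$$G_{n,i}(t)=G_{n,i}(0)-\int_0^t\brR{G_{n,i}(s)-G_{n,i+1}(s)}\,ds+V_{n,i-1}(t)-V_{n,i}(t)+M_{n,i}(t),\qquad i\ge1.$$
Each $V_{n,i}$ is nondecreasing, vanishes at $0$, and is Lipschitz with constant $\bar\lambda\doteq\sup_n\lambda_n<\infty$, hence takes values in a fixed compact subset of $\CSpace{T}[\RR]$; adjoining the convergent sequence $\Gnk$ and the sequence $\Mnk$ (which tends to $\bvec{0}$ by Lemma \ref{lem:martisnull}), the triple $\bigl(\Gnk,(V_{n_k,i})_{i\ge1},\Mnk\bigr)$ is tight, and along a further subsequence it converges in distribution to some $\bigl(\G,(V_i)_{i\ge1},\bvec{0}\bigr)$, with $V_i$ nondecreasing, $V_i(0)=0$, $0\le V_i(t)\le\bar\lambda t$ and $V_0(t)=\lambda t$. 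By the Skorohod representation theorem we may assume all these convergences are almost sure. Since the $\norm{\cdot}_1$--jumps of $\Gn$ are bounded by $1/n$, the standard criterion (cf.\ \cite{ethkur}) gives $\G\in\CSpace{T}[\ld]$ a.s.; consequently the a.s.\ $\DSpace{T}[\ld]$--convergence $\Gnk\to\G$ is uniform on $[0,T]$, $G_{n_k,i}(0)\to r_i$ and $V_{n_k,i}\to V_i$ uniformly. Passing to the limit coordinatewise in the display yields, a.s.\ and for every $i\ge1$,
$$G_i(t)=r_i-\int_0^t\brR{G_i(s)-G_{i+1}(s)}\,ds+V_{i-1}(t)-V_i(t),\qquad 0\le G_i(t)\le1.$$

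It remains to prove the complementarity relation $\int_0^t(1-G_i(s))\,dV_i(s)=0$, which is exactly where $d_n\to\infty$ enters. The key elementary bound is that, since $\beta_n(x)\le x^{d_n}$ on $[0,1]$ (immediate from \eqref{eq:betan}),
$$\sup_{x\in[0,1]}(1-x)\,\beta_n(x)\ \le\ \sup_{x\in[0,1]}(1-x)\,x^{d_n}\ =\ \frac{1}{d_n+1}\Bigl(\frac{d_n}{d_n+1}\Bigr)^{d_n}\ \le\ \frac{1}{d_n+1}\ \longrightarrow\ 0 .$$
Hence $\int_0^t(1-G_{n,i}(s))\,dV_{n,i}(s)=\lambda_n\int_0^t(1-G_{n,i}(s))\,\beta_n(G_{n,i}(s))\,ds\le \bar\lambda\,T/(d_n+1)\to0$. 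Along the subsequence, $V_{n_k,i}\to V_i$ uniformly with $V_i$ continuous and nondecreasing, so the associated Lebesgue--Stieltjes measures converge weakly on $[0,t]$, while $1-G_{n_k,i}\to1-G_i$ uniformly; therefore $\int_0^t(1-G_{n_k,i}(s))\,dV_{n_k,i}(s)\to\int_0^t(1-G_i(s))\,dV_i(s)$. Since the left side tends to $0$ and the right side is nonnegative, we get $\int_0^t(1-G_i(s))\,dV_i(s)=0$ a.s., for all $i$ and all $t\le T$ (a countable intersection of a.s.\ events).

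Finally, for each fixed $i\ge1$ set $f_i\doteq r_i-\int_0^\cdot(G_i(s)-G_{i+1}(s))\,ds+V_{i-1}$, a continuous function with $f_i(0)=r_i\le1$. The relations just established say $G_i=f_i-V_i$ with $V_i$ nondecreasing, $V_i(0)=0$, $G_i\le1$ and $\int_0^t(1-G_i)\,dV_i=0$, which by the well--known characterization of the one--dimensional Skorohod map with barrier $1$ (cf.\ \eqref{eq:SMdef}, \eqref{eq:limit-ode1} and Remark \ref{rem:reflection}) forces $G_i=\Gamma_1(f_i)$ and $V_i=\hat\Gamma_1(f_i)$. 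Since $V_0(t)=\lambda t$, an induction on $i$ shows that the $V_i$ produced here coincide with those defined recursively in the statement via the second equation of \eqref{eq:limit-ode2}; thus $(G_i,V_i)$ solves \eqref{eq:limit-ode2}. As the asserted conclusion is an almost--sure statement depending only on the law of $\G$, it holds for the original weak limit $\G$. The main obstacle is the passage through the reflection: one has to recognize that the arrival terms $V_{n,i}$ behave asymptotically like the pushing terms of a Skorohod problem at level $1$ — quantitatively, $\sup_{x}(1-x)\beta_n(x)\to0$ — and then to justify taking limits in the Stieltjes integral defining the complementarity condition.
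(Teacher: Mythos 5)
Your proof is correct and follows the same overall route as the paper's: pass to the limit in the martingale decomposition \eqref{eq:martdecom}, identify the arrival terms $V_{n,i}$ with the pushing terms of a Skorohod problem at barrier $1$, verify the complementarity relation, and conclude via the characterization in Remark \ref{rem:reflection}. Two local variations are worth noting. First, you obtain convergence of the $V_{n,i}$ by exhibiting a common compact of $\CC([0,T]:\R)$ (each $V_{n,i}$ is Lipschitz with constant $\bar\lambda$ and vanishes at $0$) and passing to a further subsequence; the paper instead reads the convergence of $V_{n,i}$ directly off the semimartingale identity, since every other term there has already been shown to converge, which avoids the extra subsequence but is otherwise equivalent. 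Second, and more substantively, for the complementarity relation you bound the prelimit integral $\int_0^t(1-G_{n,i})\,dV_{n,i}=\lambda_n\int_0^t(1-G_{n,i})\beta_n(G_{n,i})\,ds$ uniformly by $\bar\lambda T/(d_n+1)$ via the elementary estimate $\sup_{x\in[0,1]}(1-x)\beta_n(x)\le(d_n+1)^{-1}$, and then pass this through the weak convergence of the Stieltjes measures; the paper instead freezes the continuous limit factor $1-G_i$ inside the prelimit integral and invokes dominated convergence using $\beta_n(x)\to0$ for $x\in[0,1)$. Both arguments are sound, but yours is a bit cleaner: the quantitative prelimit bound sidesteps the mild care needed in the paper's version to justify that $\beta_n(G_{n,i}(s))\to0$ whenever $G_i(s)<1$.
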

\begin{proof}
    From Lemma \ref{lem:martisnull} we see that $\bvec{M_{n_k}} \pconv 0$, in $\DSpace{T}[\lSpace{2}]$.
	By Skorohod embedding theorem, let us
	assume that $\bvec{G_{n_k}}, \bvec{M_{n_k}}, \bvec{G}$ are all
	defined on the same probability space and 
        \[(\bvec{G_{n_k}},\bvec{M_{n_k}}) \to (\bvec{G},0), \mbox{ a.s. }\] in $\DSpace{T}[\ld\times \lSpace{2}]$. Since the jumps of $\bvec{G_n}$ have size at most $1/n$,  $\bvec{G}$ is continuous and $\norm{\bvec{G}(s) - \bvec{G_{n_k}}(s)}_{1,T} \to
        0$ a.s. Similarly, $\norm{\bvec{M_{n_k}}(s)}_{2,T} \to 0$ almost surely. To simplify notation from now on we will \cg{take $n_k = n$}.

	Let $V_{n, i}(t) \doteq \lambda_n \int_0^t \beta_{n}(G_{n, i}(s)) ds$ for $i \geq
	1$ and $V_{n, 0}(t) \doteq \lambda_n t$. From \eqref{eq:time-change}, for any $i \geq
	1$
	\begin{equation}
	G_{n, i}(t) = G_{n, i}(0) - \int_0^t \brR{G_{n, i}(s) - G_{n, i+1}(s)} ds + V_{n, i-1}(t) - V_{n, i}(t) + M_{n,i}(t).
	 \label{eq:timechange-with-vn}
	\end{equation} 
	For  $i\in \NN$, $\sup_{s \leq T}\abs{G_{n, i}(s) - G_{i}(s)} \leq
	\sup_{s \leq T} \norm{\bvec{G_n}(s) - \bvec{G}(s)}_1 \to 0$ and $\sup_{s
		\leq T} \abs{M_{n, i}(s)} \leq \sup_{s \leq T} \norm{\bvec{M_{n}}(s)}_2 \to 0$, almost surely as $n \to \infty$.
		We now show that, for each $i \in \NN_0$, $V_{n, i}$ converges uniformly (a.s.) to some limit process $V_i$. Clearly this is true for $i=0$
		and in fact $V_0(t) = \lam t$, $t\ge 0$. Proceeding recursively, suppose now that $V_{n, i-1} \to V_{i-1}$ for some $i \ge 1$.
		Then, since all the terms in \eqref{eq:timechange-with-vn}, except $V_{n,i}$, converge uniformly, $V_{n,i}$ must converges uniformly as well to some limit process $V_i$.  Sending $n\to \infty$ in
	\eqref{eq:timechange-with-vn} we get, for every $t \leq T$ and $i
	\geq 1$:
	\begin{equation*}
	\label{eq:limit-with-v}
	G_{i}(t) = G_i(0) - \int_0^t \brR{G_i(s) - G_{i+1}(s)} ds  + V_{i-1}(t) - V_i(t), \mbox{ a.s. }
	\end{equation*}
        This shows the first line in \eqref{eq:limit-ode1} is satisfied with $(g_i, v_i)$ replaced with $(G_i, V_i)$.
		
		We now show that the second line in  \eqref{eq:limit-ode1} is satisfied as well. Since $V_i$ is the limit of $\set{V_{n,i}}$, the following properties hold:
	\begin{enumeratei}
		\item $V_0(t) = \lambda t$ for all $t \in [0,T]$.
		\item $V_i$ is continuous, non-decreasing and $V_i(0) = 0$.
		\item For any $t \in [0,T]$, $\int_0^t (1-G_i(s)) dV_i(s) = 0$. This is a consequence of the following identities: 
		\begin{align*}
		\int_0^t (1-G_i(s)) dV_i(s) &= \lim_n \int_0^t (1-G_i(s)) dV_{n, i}(s) \\
		&= \lim_n \int_0^t \lambda_n (1-G_i(s)) \beta_{n}(G_{n, i}(s)) ds  \\
		&= \lambda \int_0^t \lim_{n \to \infty} (1-G_i(s))\beta_{n}(G_{n, i}(s)) ds  \\
		&= 0 
		\end{align*}
		where the first equality holds since $G_i$ is a continuous and bounded function and $V_{n,i}\to V_i$ uniformly on $[0,T]$; the second equality uses the definition of $V_{n,i}$, the third is from the dominated convergence theorem, and the fourth follows since
		 $\beta_{n}(x) \leq x^{d_n}$, for $x\in [0,1]$ and $d_n \to \infty$, $\beta_{n}(x) \to 0$ for every $x \in [0,1)$.
	\end{enumeratei}
	Thus we have verified that the second line in \eqref{eq:limit-ode1} is satisfied with $(G_i, V_i)$ as well.  The result is now immediate from Remark \ref{rem:reflection}.
\end{proof}

\subsection{Completing the Proof of LLN}
We can now complete the proofs of Proposition \ref{lem:uniqsoln} and Theorem \ref{thm:fluid-limit}.

\begin{proof}[Proof of Proposition \ref{lem:uniqsoln}]
	Fix $\bvec{r} \in \ld$, $\lam>0$ and choose a sequence $\bvec{r}_n \in \ld$ such that $\bvec{r}_n \to \bvec{r}$ in $\ld$ and for each $i$,
	$n r_{n,i} \in \NN_0$. Consider parameters $\lam_n=\lam$, $d_n=n$ and a JSQ($d_n$) system initialized at $\Gn(0)= \bvec{r}_n$.
	From Lemma \ref{lem:flexistence} we have that  there is at least one solution of \eqref{eq:limit-ode2}
	which is given as a limit point of an arbitrary  weakly convergent subsequence of $\Gn$ (such a sequence exists in view of the tightness shown in Lemma \ref{lem:fltightness}). The fact that this equation can have at most one solution was shown in Section \ref{subsec:uniqflu}. The result follows.
\end{proof}

\begin{proof}[Proof of Theorem \ref{thm:fluid-limit}]
	Since $\Gn(0) \pconv \bvec{r}$ in $\ld$, the hypothesis of Lemma \ref{lem:fltightness} is satisfied, and thus the sequence  $\{\Gn\}_{n \geq 1}$ is tight in $\DSpace{T}[\ld]$ for any fixed $T > 0$. The result is now immediate from Lemma \ref{lem:flexistence}
	and unique solvability of \eqref{eq:limit-ode2} shown in Proposition \ref{lem:uniqsoln}.
\end{proof} 
\begin{remark}\label{rem:cgcevnk}
	We note that the proofs of Lemma \ref{lem:flexistence} and Theorem \ref{thm:fluid-limit} also show that, under the conditions of Theorem \ref{thm:fluid-limit}, for each $i \geq 1$, 
\begin{equation*}
\label{eq:local-time-conv}
    \sup_{t \leq T}\abs{\lambda_{n} \int_0^t \beta_{n}(G_{n,i}(s)) ds - v_i(t)} \pconv 0,
\end{equation*}
where $(g_i, v_i)$ is the unique solution of \eqref{eq:limit-ode2}.
\end{remark}

\section{Properties of the Near Fixed Point}
\label{sec:technical-estimates}
In this section we give some important properties of the near fixed point $\Mu_n$ that will be needed in the proofs of fluctuation theorems. Since $\Mu_n$ is defined in terms of the function $\beta_n$, we begin by giving some results on the asymptotic behavior of
$\beta_n$ and its derivatives. Proofs follow via elementary algebra and Taylor's approximation and can be found in \chsb{Appendix \ref{app:proof-tech-est}}.  Roughly speaking, these results control the 
 error between sampling with and without replacement of $d_n$ servers from a collection of $n$ servers.  
 We first note that the function $\beta_n$ is differentiable on $(0, 1)\setminus\{\frac{d_n-1}{n}\}$
 and the derivative is given as 
	\begin{equation}
	\label{eq:betanprime}
        \beta_n'(x) = \sum_{j=0}^{d_n - 1} (1 - j/n)^{-1} \prod_{\substack{i=0 \\ i \neq j}}^{d_n-1} \frac{x - i/n}{1 - i/n}
		\mbox{ for } x \in (\frac{d_n-1}{n}, 1] \mbox{ and } \beta_n'(x) = 0 \mbox{ for } x \in (0,\frac{d_n-1}{n}).
	\end{equation}
As a convention, we set $\beta_n'(x) = 0$ for $x = \frac{d_n-1}{n}$.

Note that $f(t) = \frac{a+t}{b+t}$ is an increasing function of $t$ on $(-b, \infty)$ when $b > a$. Using this fact in \eqref{eq:betan}  shows that, when  $d_n \le n$,
	\begin{equation}
	\label{eq:gammageqbeta}
		0 \leq \beta_n(x) \leq x^{d_n} \doteq \gamma_n(x), \; x \in [0, 1].
	\end{equation}
	 Using the same fact in \eqref{eq:betanprime} shows that, for $d_n <n$,
	\begin{equation}
	\label{eq:betanprimeub}
        0 \leq \beta_n'(x) \leq \frac{d_n x^{d_n-1}}{1- \frac{d_n}{n}},  x \in (0,1).
	\end{equation}

The following lemma estimates the ratio between $\beta_n$ and $\gamma_n$ and its derivatives. 
\begin{lemma}
	\label{lem:betan-approx}
	Assume $\frac{d_n}{n} \to 0$. Then for any $\epsilon \in (0,1)$, as $n \to \infty$,
	\begin{equation}
		\sup_{x \in [\epsilon, 1]} \abs{\frac{\beta'_n(x)/\beta_n(x)}{\gamma'_n(x)/\gamma_n(x)} - 1} \to 0. \label{eq:betan-ratio-approx}
	\end{equation}
	Furthermore, if $\frac{d_n}{\sqrt{n}} \to 0$, then
	\begin{align}
		\sup_{x \in [\epsilon,1]} \abs{\frac{\beta_n(x)}{\gamma_n(x)} - 1} \to 0 \; \mbox{ and } \;
		\sup_{x \in [\epsilon,1]} \abs{\frac{\beta'_n(x)}{\gamma'_n(x)} - 1} \to 0. \label{eq:betan-prime-approx}
	\end{align}
\end{lemma}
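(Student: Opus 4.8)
The plan is to reduce everything to two elementary facts. First, since $d_n/n \to 0$, for all $n$ large enough that $d_n/n < \epsilon$ one has $x - i/n \ge \epsilon - d_n/n \ge \epsilon/2 > 0$ for every $x \in [\epsilon,1]$ and every $0 \le i \le d_n - 1$; thus on $[\epsilon,1]$ the truncation $(\cdot)^+$ in \eqref{eq:betan} is vacuous, $\beta_n$ is differentiable, and by \eqref{eq:betanprime} the logarithmic derivatives take the clean form
\[
\frac{\beta_n'(x)}{\beta_n(x)} = \sum_{i=0}^{d_n-1}\frac{1}{x - i/n}, \qquad \frac{\gamma_n'(x)}{\gamma_n(x)} = \frac{d_n}{x}.
\]
Second, setting $u_{n,i}(x) \doteq i/(nx)$, the above lower bound gives $0 \le u_{n,i}(x) \le d_n/(n\epsilon)$ uniformly in $i \le d_n-1$ and $x \in [\epsilon,1]$, so $u_{n,i}(x) \to 0$ uniformly and in particular $u_{n,i}(x) \le 1/2$ for all large $n$. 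All estimates below are then uniform in $x \in [\epsilon,1]$.

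For \eqref{eq:betan-ratio-approx} I would write
\[
\frac{\beta_n'(x)/\beta_n(x)}{\gamma_n'(x)/\gamma_n(x)} = \frac{1}{d_n}\sum_{i=0}^{d_n-1}\frac{1}{1 - u_{n,i}(x)},
\]
subtract $1 = \frac{1}{d_n}\sum_{i=0}^{d_n-1} 1$, and use the elementary bound $|(1-u)^{-1} - 1| \le 2u$ valid for $0 \le u \le 1/2$ to obtain
\[
\left|\frac{\beta_n'(x)/\beta_n(x)}{\gamma_n'(x)/\gamma_n(x)} - 1\right| \le \frac{2}{d_n}\sum_{i=0}^{d_n-1} u_{n,i}(x) \le \frac{d_n}{n\epsilon} \to 0,
\]
which only uses $d_n/n \to 0$. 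This proves \eqref{eq:betan-ratio-approx}.

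For \eqref{eq:betan-prime-approx} I would first treat $\beta_n/\gamma_n$ by taking logarithms: on $[\epsilon,1]$,
\[
\log\frac{\beta_n(x)}{\gamma_n(x)} = \sum_{i=0}^{d_n-1}\Big[\log\big(1 - u_{n,i}(x)\big) - \log\big(1 - i/n\big)\Big].
\]
Using $|\log(1-u) + u| \le u^2$ for $0 \le u \le 1/2$, the linear terms sum to $\frac{1}{n}\big(1 - \frac1x\big)\sum_{i=0}^{d_n-1} i = \frac{d_n(d_n-1)}{2n}\big(1 - \frac1x\big)$, which is bounded in absolute value by $\frac{d_n^2}{2n\epsilon}$, while the quadratic remainder is bounded by $\sum_{i=0}^{d_n-1}\big(u_{n,i}(x)^2 + (i/n)^2\big) \le \frac{2}{\epsilon^2}\sum_{i=0}^{d_n-1}(i/n)^2 \le \frac{d_n^3}{\epsilon^2 n^2}$. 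Under $d_n/\sqrt n \to 0$ both $d_n^2/n$ and $d_n^3/n^2$ tend to $0$, so $\sup_{x\in[\epsilon,1]}\big|\log(\beta_n(x)/\gamma_n(x))\big| \to 0$, hence $\sup_{x\in[\epsilon,1]}|\beta_n(x)/\gamma_n(x) - 1| \to 0$. Finally, writing
\[
\frac{\beta_n'(x)}{\gamma_n'(x)} = \frac{\beta_n'(x)/\beta_n(x)}{\gamma_n'(x)/\gamma_n(x)} \cdot \frac{\beta_n(x)}{\gamma_n(x)},
\]
the first factor tends to $1$ uniformly by \eqref{eq:betan-ratio-approx} and the second by what was just shown, so the product tends to $1$ uniformly on $[\epsilon,1]$.

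The computations are all routine; the only point requiring care — and the closest thing to an obstacle — is keeping every bound uniform in $x \in [\epsilon,1]$ (which is exactly why the factor $1/\epsilon$ or $1/\epsilon^2$ appears) and correctly tracking orders: the logarithmic-derivative ratio deviates from $1$ only at order $d_n/n$, so $d_n/n \to 0$ suffices for \eqref{eq:betan-ratio-approx}, whereas the ratio $\beta_n/\gamma_n$ itself carries a genuine $\exp\big(O(d_n^2/n)\big)$ factor, so the stronger hypothesis $d_n/\sqrt n \to 0$ is actually needed for \eqref{eq:betan-prime-approx}.
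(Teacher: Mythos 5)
Your proof is correct and takes essentially the same approach as the paper: the paper writes $\Delta_n(x) = \log\beta_n(x) - \log\gamma_n(x)$ and bounds $\Delta_n'(x)$ (which is algebraically the same as your direct computation of the log-derivative ratio, since $\Delta_n' = \frac{\gamma_n'}{\gamma_n}\bigl(\frac{\beta_n'/\beta_n}{\gamma_n'/\gamma_n}-1\bigr)$), and for the second claim both arguments estimate $\log(\beta_n/\gamma_n)$ via elementary Taylor bounds on $\log(1+h)$. The only cosmetic difference is that you split the linear and quadratic parts of the log-expansion (getting $O(d_n^2/n) + O(d_n^3/n^2)$) while the paper applies $|\log(1+h)| \le 2|h|$ directly to get a single $O(d_n^2/n)$ bound; the conclusion and the hypotheses used are identical.
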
	

\begin{corollary}
    \label{cor:betanbounds}
    Assume $d_n \ll n$. Then for any $\epsilon \in (0,1)$
    \begin{equation*}
        \sup_{x\in [\epsilon, 1]} \abs{\log \beta_n(x) - \log \gamma_n(x)} = O\brR*{\frac{d_n^2}{ n}}.
    \end{equation*}
\end{corollary}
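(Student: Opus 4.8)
The plan is a short, direct computation via Taylor expansion; no input beyond elementary algebra is needed. First I would record that the hypothesis $d_n \ll n$ forces $(d_n-1)/n < \epsilon$ for all large $n$, so that for every $x \in [\epsilon,1]$ and every $i \in \{0,\dots,d_n-1\}$ the factor $x - i/n$ in \eqref{eq:betan} is strictly positive; consequently $\beta_n(x) > 0$ on $[\epsilon,1]$ and $\log\beta_n(x)$ is finite, and the asserted estimate is to be read as a statement for all large $n$ (for the finitely many $n$ with $(d_n-1)/n \ge \epsilon$ the left-hand side can be $+\infty$, since $\beta_n(x)=0$ for some $x\ge\epsilon$).

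Next, for such $n$ and $x \in [\epsilon,1]$, using $\gamma_n(x) = x^{d_n}$ and grouping the $\log x$ terms, I would write
\[
\log\beta_n(x) - \log\gamma_n(x) \;=\; \sum_{i=0}^{d_n-1}\Bigl[\log\Bigl(1 - \tfrac{i}{nx}\Bigr) - \log\Bigl(1 - \tfrac{i}{n}\Bigr)\Bigr].
\]
Since $\tfrac{i}{nx} \le \tfrac{d_n-1}{n\epsilon} \to 0$ and $\tfrac{i}{n} \le \tfrac{d_n-1}{n} \to 0$ uniformly in $i$ and $x$, for large $n$ all the arguments lie in $[0,\tfrac12]$, and I would apply the elementary bound $|\log(1-u)+u| \le u^2$ (valid for $u\in[0,\tfrac12]$) to each of the two logarithms. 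This yields, uniformly in $x \in [\epsilon,1]$,
\[
\log\Bigl(1 - \tfrac{i}{nx}\Bigr) - \log\Bigl(1 - \tfrac{i}{n}\Bigr) = \frac{i}{n}\Bigl(1 - \frac1x\Bigr) + R_{n,i}(x), \qquad |R_{n,i}(x)| \le \frac{i^2}{n^2x^2} + \frac{i^2}{n^2} \le \frac{2 i^2}{n^2\epsilon^2}.
\]

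Summing over $i$ finishes the argument: the main term contributes $\tfrac{(d_n-1)d_n}{2n}\bigl(1-\tfrac1x\bigr)$, and since $|1-\tfrac1x| \le \tfrac1\epsilon - 1$ for $x \in [\epsilon,1]$ this is $O(d_n^2/n)$ uniformly in $x$; the remainder contributes at most $\tfrac{2}{n^2\epsilon^2}\sum_{i=0}^{d_n-1} i^2 = O\bigl(d_n^3/(n^2\epsilon^2)\bigr)$, which is $o(d_n^2/n)$ because $d_n/n \to 0$. Adding the two estimates gives $\sup_{x\in[\epsilon,1]}|\log\beta_n(x)-\log\gamma_n(x)| = O(d_n^2/n)$, with the implied constant depending only on $\epsilon$. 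There is essentially no obstacle here beyond bookkeeping; the only points requiring care are the asymptotic reading of the claim (needed because of the clipping $(\cdot)^+$ in $\beta_n$) and the uniformity in $x$, and the latter is automatic since every estimate above was made uniformly in $x$ and $i$. (This is the logarithmic refinement of the ratio estimate \eqref{eq:betan-prime-approx} in Lemma~\ref{lem:betan-approx}, now under the weaker hypothesis $d_n\ll n$; the self-contained Taylor argument above is the shortest route.)
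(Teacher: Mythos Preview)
Your proof is correct and essentially parallels the paper's. The paper's proof simply points back to the estimate \eqref{eq:deltabound}, derived in the proof of Lemma~\ref{lem:betan-approx}: there the two logarithms are first combined into $\log\bigl(1-\tfrac{i}{n}\cdot\tfrac{1/x-1}{1-i/n}\bigr)$ (equation \eqref{eq:diff}) and then bounded in one step via $|\log(1+h)|\le 2|h|$, yielding $|\Delta_n(x)|\le C d_n^2/n$ directly. You instead expand each logarithm separately to second order and split into a ``main term'' $\tfrac{(d_n-1)d_n}{2n}(1-1/x)$ plus an $O(d_n^3/n^2)$ remainder; this is slightly more bookkeeping but gives the same bound, and in fact displays the leading-order behavior explicitly. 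Both are the same Taylor-expansion argument with different packaging.
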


Recall the near fixed points $\Mun = \brR*{\mu_{n,i}}_{i \geq 1}$ introduced in Definition \ref{def:fixed-point}.

\begin{corollary}
    \label{cor:betaprimemun}
    Suppose that $d_n \ll n$. Let $i \in \NN$ be such that  $\liminf_n \mu_{n,i} > 0$. Then
    \begin{equation*}
        \label{eq:betaprimemun}
    \lim_{n\to \infty} \frac{\lambda_n\mu_{n,i}\beta_n'(\mu_{n,i})}{d_n \mu_{n,i+1}} = 1.
    \end{equation*}
\end{corollary}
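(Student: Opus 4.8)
The plan is to deduce this directly from the ratio estimate \eqref{eq:betan-ratio-approx} in Lemma \ref{lem:betan-approx}, which says exactly that $\beta_n$ behaves like $\gamma_n(x) = x^{d_n}$ at the level of logarithmic derivatives, uniformly on any $[\epsilon,1]$. First I would use the recursion $\mu_{n,i+1} = \lambda_n \beta_n(\mu_{n,i})$ from Definition \ref{def:fixed-point} to cancel the factor $\lambda_n$ and rewrite the quantity of interest as
\[
\frac{\lambda_n \mu_{n,i}\beta_n'(\mu_{n,i})}{d_n \mu_{n,i+1}} = \frac{\mu_{n,i}\beta_n'(\mu_{n,i})}{d_n\beta_n(\mu_{n,i})},
\]
so it suffices to show the right-hand side converges to $1$.

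Next I would fix the domain: the hypothesis $\liminf_n \mu_{n,i} > 0$ together with $\mu_{n,i} \le 1$ (valid since $\Mun \in \ld$) yields an $\epsilon \in (0,1)$ with $\mu_{n,i} \in [\epsilon,1]$ for all large $n$; and since $d_n \ll n$ we have $(d_n-1)/n < \epsilon$ eventually, so by \eqref{eq:betan} $\beta_n(\mu_{n,i}) > 0$ and all quotients below are well-defined. The key elementary fact is the exact identity $x\gamma_n'(x)/(d_n\gamma_n(x)) = 1$ for every $x \in (0,1]$, immediate from $\gamma_n(x) = x^{d_n}$. Writing
\[
\frac{\mu_{n,i}\beta_n'(\mu_{n,i})}{d_n\beta_n(\mu_{n,i})} = \frac{\beta_n'(\mu_{n,i})/\beta_n(\mu_{n,i})}{\gamma_n'(\mu_{n,i})/\gamma_n(\mu_{n,i})} \cdot \frac{\mu_{n,i}\gamma_n'(\mu_{n,i})}{d_n\gamma_n(\mu_{n,i})},
\]
the second factor equals $1$ identically, while the first factor is bounded in absolute value, after subtracting $1$, by $\sup_{x\in[\epsilon,1]}|(\beta_n'(x)/\beta_n(x))/(\gamma_n'(x)/\gamma_n(x)) - 1|$, which tends to $0$ by \eqref{eq:betan-ratio-approx} (using $d_n \ll n$). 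Combining these gives the claim.

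I do not expect a substantial obstacle: the statement is essentially a restatement of Lemma \ref{lem:betan-approx} after the cosmetic substitution $\mu_{n,i+1} = \lambda_n\beta_n(\mu_{n,i})$. The only points requiring care are (a) that $\mu_{n,i}$ is bounded away from $0$, so that the uniform estimate on $[\epsilon,1]$ is applicable — this is precisely the hypothesis — and (b) that $\beta_n(\mu_{n,i}) > 0$ so the ratios make sense, which follows from $d_n \ll n$. The genuinely nontrivial work, the comparison of sampling with and without replacement, was already carried out in the proof of Lemma \ref{lem:betan-approx}.
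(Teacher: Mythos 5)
Your proposal is correct and follows the same route as the paper: bound $\mu_{n,i}$ into $[\epsilon,1]$, apply the logarithmic-derivative estimate \eqref{eq:betan-ratio-approx} of Lemma \ref{lem:betan-approx}, and use the recursion $\mu_{n,i+1}=\lambda_n\beta_n(\mu_{n,i})$ together with $\gamma_n'(x)/\gamma_n(x)=d_n/x$. The only cosmetic difference is that you explicitly note $\beta_n(\mu_{n,i})>0$ from $d_n\ll n$, which the paper leaves implicit.
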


\begin{lemma}
    \label{lem:fixedpointapprox}
    Assume $d_n \ll n$ and fix $\ep \in (0,1)$. Then there is a $C\in (0,\infty)$ and $n_0 \in \NN$ such that, if for some
	$k\in \NN$ and $n_1 \in \NN$, $\mu_{n,k} \ge \ep$ for all $n \ge n_1$, then for all $n \ge n_1\vee n_0$
    \begin{equation*}
        \left|\log \mu_{n,k+1} - \brR*{\log \lambda_n } \brR*{\sum_{i=0}^k d_n^i}\right| \le \frac{C}{ n}\sum_{i=1}^{k} d_n^{i+1}.
        \label{eq:muapprox}
    \end{equation*}
\end{lemma}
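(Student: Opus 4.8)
The plan is to iterate the defining recursion $\mu_{n,j+1} = \lambda_n \beta_n(\mu_{n,j})$ after taking logarithms, using Corollary \ref{cor:betanbounds} to replace $\log\beta_n(\cdot)$ by $d_n\log(\cdot)$ with a per-step error of size $O(d_n^2/n)$. First I would record the elementary monotonicity $\mu_{n,j+1} = \lambda_n\beta_n(\mu_{n,j}) \le \lambda_n \mu_{n,j}^{d_n} \le \mu_{n,j}$ (using $\beta_n(x)\le x^{d_n}\le x$ on $[0,1]$ together with $\lambda_n\le 1$), so that the hypothesis $\mu_{n,k}\ge\ep$ automatically gives $\mu_{n,j}\ge\ep$ for every $j\in\{1,\dots,k\}$; in particular all $\mu_{n,j}$ with $j\le k+1$ are strictly positive for $n$ large, so the logarithms below are well defined.

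Next, I apply Corollary \ref{cor:betanbounds} with this fixed $\ep$: there exist $C\in(0,\infty)$ and $n_0\in\NN$, depending only on $\ep$ and the sequence $d_n$ (and crucially \emph{not} on $k$ or $n_1$), such that for all $n\ge n_0$ one has $\sup_{x\in[\ep,1]}\abs{\log\beta_n(x) - d_n\log x}\le C d_n^2/n$, where we have used $\log\gamma_n(x) = d_n\log x$. Writing $a_{n,j}\doteq\log\mu_{n,j}$ and $e_{n,j}\doteq\log\beta_n(\mu_{n,j}) - d_n\log\mu_{n,j}$, the recursion becomes, for $n\ge n_0\vee n_1$ and $1\le j\le k$,
\[
a_{n,j+1} = \log\lambda_n + d_n\, a_{n,j} + e_{n,j}, \qquad \abs{e_{n,j}}\le C d_n^2/n, \qquad a_{n,1} = \log\lambda_n .
\]
A straightforward induction on $k$ (equivalently, unrolling the recursion) then yields
\[
a_{n,k+1} = (\log\lambda_n)\sum_{i=0}^{k} d_n^i \; + \; \sum_{j=1}^{k} d_n^{\,k-j}\, e_{n,j}.
\]

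Finally I would estimate the error term: $\bigl|\sum_{j=1}^k d_n^{\,k-j} e_{n,j}\bigr| \le (C d_n^2/n)\sum_{j=1}^k d_n^{\,k-j} = (C/n)\sum_{i=0}^{k-1} d_n^{\,i+2} = (C/n)\sum_{i=1}^{k} d_n^{\,i+1}$, which is exactly the bound in the statement. Since $C$ and $n_0$ were produced by Corollary \ref{cor:betanbounds} and depend only on $\ep$, the lemma follows with this $C$ and $n_0$.

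There is no genuinely hard step here: the entire analytic content — the discrepancy between sampling $d_n$ servers with and without replacement — is already packaged in Corollary \ref{cor:betanbounds}, and the remainder is bookkeeping. The one point that needs a little attention is ensuring that the constants $C,n_0$ obtained are uniform in $k$ and $n_1$, so that the quantifiers in the statement are respected; this holds precisely because the estimate in Corollary \ref{cor:betanbounds} is uniform over $x\in[\ep,1]$ and carries no dependence on $k$, and the geometric structure of the recursion then propagates the per-step error $O(d_n^2/n)$ into the geometric sum $O\bigl(n^{-1}\sum_{i=1}^{k} d_n^{\,i+1}\bigr)$ with no further loss.
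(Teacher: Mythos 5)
Your proof is correct and follows essentially the same route as the paper's: both reduce matters to Corollary \ref{cor:betanbounds}, take logarithms in the recursion $\mu_{n,j+1} = \lambda_n\beta_n(\mu_{n,j})$, observe via monotonicity of $j\mapsto\mu_{n,j}$ that all $\mu_{n,j}$ ($j\le k$) stay above $\epsilon$, and propagate the per-step error of size $O(d_n^2/n)$ through the affine recursion. The only cosmetic difference is that you unroll the recursion into a closed-form geometric sum, whereas the paper carries the error term $\beta_{n,j}$ through an explicit induction and verifies the same bound $|\beta_{n,j+1}|\le d_n|\beta_{n,j}|+Cd_n^2/n$ step by step; the index bookkeeping in your final reindexing from $\sum_{j=1}^k d_n^{k-j}$ to $\sum_{i=1}^k d_n^{i+1}$ checks out.
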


\begin{corollary}
	\label{cor:diffusionconditions}
Suppose that $d_n \to \infty$ and that for some  $k \in \NN$  $d_n^{k+1} \ll n$. Suppose also that $1 - \lambda_n = \frac{\xi_n + \log d_n}{d^{k}_n}$ where $\xi_n \to -\log(\alpha) \in (-\infty, \infty]$ and $\frac{\xi_n^2}{d_n} \to 0$. Then  $\mu_{n,k} \to 1$ and $\beta_n'(\mu_{n,k}) \to \alpha$.
\end{corollary}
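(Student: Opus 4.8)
The plan is to establish the two conclusions in turn, with Lemma~\ref{lem:fixedpointapprox} as the main workhorse and Corollary~\ref{cor:betaprimemun} used only at the end. First I would show $\mu_{n,k}\to 1$ by induction on the coordinate index. The base case $\mu_{n,1}=\lambda_n\to 1$ is immediate: the hypothesis $\xi_n^2/d_n\to 0$ forces $\xi_n=o(d_n^{1/2})$, hence $\xi_n+\log d_n=o(d_n^{1/2})=o(d_n^k)$ for every $k\ge 1$, so $1-\lambda_n=(\xi_n+\log d_n)/d_n^k\to 0$. For the inductive step, suppose $\mu_{n,j}\to 1$ for some $1\le j\le k-1$; then $\mu_{n,j}\ge\tfrac12$ eventually, so Lemma~\ref{lem:fixedpointapprox} applies with $k$ there replaced by $j$ and yields $\log\mu_{n,j+1}=(\log\lambda_n)\sum_{i=0}^j d_n^i+o(1)$, the error being $O(d_n^{j+1}/n)=o(1)$ since $d_n^{k+1}\ll n$. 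Using $\sum_{i=0}^j d_n^i=d_n^j(1+o(1))$ and $\log\lambda_n=-(1-\lambda_n)(1+o(1))$, the leading term equals $-(\xi_n+\log d_n)\,d_n^{j-k}(1+o(1))$, which tends to $0$ because $j-k\le-1$ and $\xi_n+\log d_n=o(d_n^{1/2})$. Hence $\mu_{n,j+1}\to 1$, and the induction gives $\mu_{n,k}\to 1$.

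For the second conclusion, since $\liminf_n\mu_{n,k}=1>0$ I would invoke Corollary~\ref{cor:betaprimemun} with $i=k$ to write $\beta_n'(\mu_{n,k})=(1+o(1))\,d_n\mu_{n,k+1}/(\lambda_n\mu_{n,k})$; as $\lambda_n,\mu_{n,k}\to 1$, it then suffices to prove $d_n\mu_{n,k+1}\to\alpha$, i.e. $\log d_n+\log\mu_{n,k+1}\to\log\alpha$. Applying Lemma~\ref{lem:fixedpointapprox} once more with the given $k$ (now legitimate, since $\mu_{n,k}\to 1$) and again absorbing the $O(d_n^{k+1}/n)$ error, one gets $\log\mu_{n,k+1}=(\log\lambda_n)\sum_{i=0}^k d_n^i+o(1)$. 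Expanding $\log\lambda_n=-(1-\lambda_n)+O((1-\lambda_n)^2)$ and $\sum_{i=0}^k d_n^i=d_n^k+O(d_n^{k-1})$ and multiplying out, the product equals $-(\xi_n+\log d_n)+o(1)$, with correction terms $O((\xi_n+\log d_n)/d_n)$ and $O((\xi_n+\log d_n)^2/d_n^k)$, both $o(1)$ precisely because $\xi_n=o(d_n^{1/2})$ and $\xi_n^2/d_n\to 0$. Therefore $\log d_n+\log\mu_{n,k+1}=-\xi_n+o(1)\to\log\alpha$, so $d_n\mu_{n,k+1}\to\alpha$ (interpreting the boundary case $\xi_n\to\infty$, $\alpha=0$, as $d_n\mu_{n,k+1}\to 0$), and hence $\beta_n'(\mu_{n,k})\to\alpha$.

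The step I expect to require the most care is not any individual estimate but the bookkeeping of error terms showing that the two standing hypotheses are exactly what is needed: $d_n^{k+1}\ll n$ is what kills the sampling-without-replacement correction produced by Lemma~\ref{lem:fixedpointapprox}, while $\xi_n^2/d_n\to 0$ is what kills the quadratic term in the expansion of $\log\lambda_n$ after multiplication by $\sum_{i=0}^k d_n^i\sim d_n^k$. One must also take care that Lemma~\ref{lem:fixedpointapprox} may only be invoked once $\mu_{n,j}$ has been shown to stay bounded away from $0$, which is why the argument proceeds one coordinate at a time, and in the boundary case $\alpha=0$ (equivalently $-\log\alpha=+\infty$), where the assertion about $\beta_n'(\mu_{n,k})$ degenerates to the one-sided statement $\beta_n'(\mu_{n,k})\to 0$.
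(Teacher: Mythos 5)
Your argument follows the paper's proof essentially step for step: the same induction on the coordinate index via Lemma~\ref{lem:fixedpointapprox} to obtain $\mu_{n,k}\to 1$, the same reduction via Corollary~\ref{cor:betaprimemun} to the statement $d_n\mu_{n,k+1}\to\alpha$, and the same expansion of $\log\lambda_n$ and of the geometric sum, with the hypotheses $d_n^{k+1}\ll n$ and $\xi_n^2/d_n\to 0$ used to kill precisely the same two error terms. The only differences are presentational (e.g.\ you write $\sum_{i=0}^k d_n^i=d_n^k+O(d_n^{k-1})$ where the paper writes $d_n^k(1+O(1/d_n))$), so the proposal is correct and not a distinct route.
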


\begin{lemma}
    \label{prop:ratiosto1}
   Suppose that $\lambda_n \nearrow 1$, $d_n \to \infty$ and  $d_n \ll n$.  Suppose also that, for some $k \geq 2$,  $\mu_{n,k} \to 1$ and $\beta_n'(\mu_{n,k}) \to \alpha \in [0, \infty)$ as $n \to \infty$. Then 
$\beta_n'(\mu_{n,1}) \to \infty
$
and for any $i \in [k-1]$
\begin{equation*}
    \frac{\beta_n'(\mu_{n,i})}{\beta_n'(\mu_{n,1})} \to 1.
    \label{eq:betaratiosone}
\end{equation*}
\end{lemma}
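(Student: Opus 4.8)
The plan is to derive everything from Corollary~\ref{cor:betaprimemun}, which asserts that for any index $i$ with $\liminf_n \mu_{n,i}>0$ one has $\lambda_n\mu_{n,i}\beta_n'(\mu_{n,i})/(d_n\mu_{n,i+1})\to 1$. First I would observe that $\mu_{n,i}\to 1$ for every $i\in[k]$. Indeed, the sequence $(\mu_{n,i})_{i\ge 1}$ is nonincreasing in $i$ because $\mu_{n,i+1}=\lambda_n\beta_n(\mu_{n,i})\le \lambda_n\mu_{n,i}\le \mu_{n,i}$ (using $\beta_n(x)\le x$ and $\lambda_n<1$); combining this with $\mu_{n,1}=\lambda_n\to 1$ and the hypothesis $\mu_{n,k}\to 1$, the squeeze $\mu_{n,k}\le \mu_{n,i}\le\mu_{n,1}$ for $i\in[k]$ gives $\mu_{n,i}\to 1$ for all such $i$. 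In particular $\liminf_n\mu_{n,i}>0$ for every $i\in[k]$, so Corollary~\ref{cor:betaprimemun} is applicable at each such $i$.

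Next I would apply the corollary at each $i\in[k-1]$. Since then $i+1\le k$, both $\mu_{n,i}$ and $\mu_{n,i+1}$ tend to $1$, and $\lambda_n\to 1$, so
\[
\beta_n'(\mu_{n,i})=(1+o(1))\,\frac{d_n\,\mu_{n,i+1}}{\lambda_n\,\mu_{n,i}}=(1+o(1))\,d_n,\qquad i\in[k-1].
\]
Taking $i=1$ and using $d_n\to\infty$ gives $\beta_n'(\mu_{n,1})=(1+o(1))d_n\to\infty$, which is the first assertion. For the second assertion, for any $i\in[k-1]$ we divide the asymptotics for $\beta_n'(\mu_{n,i})$ by that for $\beta_n'(\mu_{n,1})$ to obtain $\beta_n'(\mu_{n,i})/\beta_n'(\mu_{n,1})=(1+o(1))/(1+o(1))\to 1$.

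I do not expect a genuine obstacle: the argument is a direct application of Corollary~\ref{cor:betaprimemun}. The only points requiring care are bookkeeping of the index ranges---one needs $i+1\le k$ in order to conclude $\mu_{n,i+1}\to 1$, which is precisely why the statement is restricted to $i\in[k-1]$ with $k\ge 2$---and verifying the hypothesis $\liminf_n\mu_{n,i}>0$ of the corollary, which is handled by the squeeze in the first step. (Note that the hypothesis $\beta_n'(\mu_{n,k})\to\alpha$ is not actually used in this proof, though it is available.)
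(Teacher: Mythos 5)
Your proof is correct and takes essentially the same approach as the paper: establish $\mu_{n,i}\to 1$ for $i\in[k]$ via monotonicity of $i\mapsto\mu_{n,i}$ and then apply Corollary~\ref{cor:betaprimemun} using $\mu_{n,i},\mu_{n,i+1},\lambda_n\to 1$. The paper reaches the conclusion by chaining consecutive ratios $\beta_n'(\mu_{n,j})/\beta_n'(\mu_{n,j+1})\to 1$ rather than writing $\beta_n'(\mu_{n,i})=(1+o(1))d_n$ directly for each $i\in[k-1]$, but this is only a cosmetic difference, and you are also right that the hypothesis $\beta_n'(\mu_{n,k})\to\alpha$ is not used.
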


The following result is along the lines of Lemma \ref{lem:betan-approx}. It allows for  weaker assumptions on $d_n$ but gives an approximation only in a neighborhood of $1$.

\begin{lemma}
	\label{lem:betanapprox-1}
	Suppose that $\frac{d_n}{n^{2/3}} \to 0$, as $n \to \infty$. Let $\{\epsilon_n\}$ be a sequence in $[0,1]$ such that $d_n \epsilon_n^2 \to 0$. Then as $n \to \infty$:
	\begin{align}
		\sup_{x \in [1-\epsilon_n,1]} \abs{\frac{\beta_n(x)}{\gamma_n(x)} - 1} \to 0 ,\label{eq:betan-approx1}\\
		\shortintertext{and}
		\sup_{x \in [1-\epsilon_n,1]} \abs{\frac{\beta'_n(x)}{\gamma'_n(x)} - 1} \to 0. \label{eq:betan-prime-approx1}
	\end{align}
\end{lemma}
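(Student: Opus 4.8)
The plan is to follow the same strategy that underlies Lemma \ref{lem:betan-approx}, namely to compare the exact product $\beta_n(x) = \prod_{i=0}^{d_n-1}\brR*{\frac{x-i/n}{1-i/n}}^{+}$ with the power $\gamma_n(x) = x^{d_n}$ by writing each as the exponential of a sum of logarithms and Taylor-expanding the ratio, but now we must use that $x$ is close to $1$ (i.e.\ $x \ge 1-\epsilon_n$) to control the error terms even though $d_n$ is allowed to grow as fast as $o(n^{2/3})$. First I would write, for $x \in [1-\epsilon_n,1]$ (so that $x > (d_n-1)/n$ for large $n$ since $d_n/n \to 0$ and $\epsilon_n$ is bounded, hence all factors are positive),
\begin{equation*}
  \log\frac{\beta_n(x)}{\gamma_n(x)} = \sum_{i=0}^{d_n-1}\log\brR*{\frac{x-i/n}{x(1-i/n)}} = \sum_{i=0}^{d_n-1}\log\brR*{1 - \frac{i/n}{x}\cdot\frac{1-x}{1-i/n}\cdot\frac{1}{\,1\,}}
\end{equation*}
— more precisely one checks $\frac{x-i/n}{x(1-i/n)} = 1 - \frac{(i/n)(1-x)}{x(1-i/n)}$. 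Writing $\delta_{n,i} \doteq \frac{(i/n)(1-x)}{x(1-i/n)}$, note that $0 \le \delta_{n,i} \le \frac{(i/n)\epsilon_n}{(1-\epsilon_n)(1-d_n/n)} = O(\epsilon_n \cdot i/n)$ uniformly, which is $o(1)$; hence $\log(1-\delta_{n,i}) = -\delta_{n,i} + O(\delta_{n,i}^2)$. Summing, the leading term is $-\sum_{i=0}^{d_n-1}\delta_{n,i}$, which is bounded in absolute value by $O\brR*{\epsilon_n \cdot \frac{1}{n}\sum_{i=0}^{d_n-1} i} = O(\epsilon_n d_n^2/n)$, and the quadratic remainder is $O\brR*{\epsilon_n^2 \cdot \frac{1}{n^2}\sum_{i=0}^{d_n-1} i^2} = O(\epsilon_n^2 d_n^3/n^2)$. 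Under the hypotheses $d_n = o(n^{2/3})$ and $d_n\epsilon_n^2 \to 0$ we have $\epsilon_n d_n^2/n \le (d_n\epsilon_n^2)^{1/2}(d_n^3/n^2)^{1/2} \cdot$ (a bounded factor) $\to 0$, and $\epsilon_n^2 d_n^3/n^2 = (d_n\epsilon_n^2)\cdot(d_n^2/n^2) \to 0$; so $\sup_{x\in[1-\epsilon_n,1]}\abs{\log\frac{\beta_n(x)}{\gamma_n(x)}} \to 0$, which gives \eqref{eq:betan-approx1} upon exponentiating (and noting $x=1$ is trivially handled).

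For the derivative statement \eqref{eq:betan-prime-approx1} I would use the logarithmic-derivative identity, as in the proof of Lemma \ref{lem:betan-approx}: $\frac{\beta_n'(x)}{\beta_n(x)} = \sum_{i=0}^{d_n-1}\frac{1}{x-i/n}$ and $\frac{\gamma_n'(x)}{\gamma_n(x)} = \frac{d_n}{x}$, so
\begin{equation*}
  \frac{\beta_n'(x)/\gamma_n'(x)}{\beta_n(x)/\gamma_n(x)} = \frac{x}{d_n}\sum_{i=0}^{d_n-1}\frac{1}{x-i/n} = \frac{1}{d_n}\sum_{i=0}^{d_n-1}\frac{1}{1 - (i/n)/x}.
\end{equation*}
Since $0 \le (i/n)/x \le (d_n/n)/(1-\epsilon_n) \to 0$ uniformly in $i \le d_n-1$, each summand is $1 + O(i/n)$, so the average is $1 + O\brR*{\frac{1}{d_n}\cdot\frac{1}{n}\sum_{i=0}^{d_n-1} i} = 1 + O(d_n/n) \to 1$. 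Combining this with the already-established $\beta_n(x)/\gamma_n(x) \to 1$ uniformly on $[1-\epsilon_n,1]$ yields $\beta_n'(x)/\gamma_n'(x) \to 1$ uniformly there. One should be careful about the point $x$ where differentiability may fail, i.e.\ $x = (d_n-1)/n$, but for large $n$ this point lies below $1-\epsilon_n$ (as $\epsilon_n \le 1$ and $d_n/n\to 0$), so it is not in the range of interest; alternatively one invokes the convention $\beta_n'((d_n-1)/n)=0$ noted after \eqref{eq:betanprime} and the fact that $\gamma_n'$ is comparably small there.

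I expect the only genuine obstacle to be bookkeeping the error exponents so that the two competing bounds — $\epsilon_n d_n^2/n$ from the linear term and $\epsilon_n^2 d_n^3/n^2$ from the quadratic term — are both shown to vanish under exactly the stated hypotheses $d_n = o(n^{2/3})$ and $d_n\epsilon_n^2 \to 0$; the interpolation $\epsilon_n d_n^2/n = (d_n\epsilon_n^2)^{1/2}\cdot (d_n^3/n^2)^{1/2}$ and the condition $d_n^3/n^2 = (d_n/n^{2/3})^3 \to 0$ are what make the linear term go to zero, and this pairing is the one slightly delicate point. Everything else is routine Taylor estimation of $\log(1-u)$ and of $(1-u)^{-1}$ uniformly over $u \in [0, o(1)]$, exactly parallel to the arguments sketched for Lemma \ref{lem:betan-approx} and Corollaries \ref{cor:betanbounds}--\ref{cor:betaprimemun}, so it would be relegated to Appendix \ref{app:proof-tech-est}.
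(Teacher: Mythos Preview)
Your proposal is correct and follows essentially the same approach as the paper: Taylor-expand the logarithm of $\beta_n/\gamma_n$ to show it is $o(1)$ uniformly on $[1-\epsilon_n,1]$, then deduce the derivative statement from the logarithmic-derivative ratio (which is exactly the first part of Lemma~\ref{lem:betan-approx}). The only cosmetic difference is in the bookkeeping of the first step: the paper expands $\log(1-i/n-(1-x))$ and $\log(1-i/n)$ separately around $0$, obtaining a single error term $O(d_n\delta_n^2)$ with $\delta_n = \epsilon_n + d_n/n$, which vanishes directly since $d_n\delta_n^2 \le 2d_n\epsilon_n^2 + 2d_n^3/n^2 \to 0$; you instead expand $\log\bigl(1-\frac{(i/n)(1-x)}{x(1-i/n)}\bigr)$ and then need the interpolation $\epsilon_n d_n^2/n = (d_n\epsilon_n^2)^{1/2}(d_n^3/n^2)^{1/2}$ to kill the linear term. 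Both routes work under the stated hypotheses; the paper's is marginally cleaner.
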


The next result shows that if $d_n \to \infty$, then the behavior of $\beta_n(x)$  is interesting only when $x$ is sufficiently close to $1$ .
\begin{lemma}
	\label{prop:betan-conv-zero}
	Suppose that $d_n \to \infty$, and let $\epsilon_n \doteq \frac{2\log d_n}{d_n} $. Then as $n \to \infty$
	$\sup_{x \in [0, 1 - \epsilon_n]} \abs{\beta_n(x)}  \to 0$.
	Furthermore, if $\limsup_n \frac{d_n}{n} < 1$ then we also have
	$\sup_{x \in [0, 1 - \epsilon_n]} \abs{\beta_n'(x)}  \to 0$.
\end{lemma}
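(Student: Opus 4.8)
The plan is to deduce both statements directly from the pointwise bounds \eqref{eq:gammageqbeta} and \eqref{eq:betanprimeub} together with the elementary inequality $1-u \le e^{-u}$, valid for all real $u$; no delicate estimate is required.

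For the first assertion, I would use \eqref{eq:gammageqbeta} to bound $\beta_n(x) \le x^{d_n}$ for every $x \in [0,1]$ (recall $d_n \le n$ since $d_n \in [n]$), and then exploit that $x \mapsto x^{d_n}$ is nondecreasing on $[0,1]$. Hence, for all $n$ large enough that $\epsilon_n<1$,
\[
\sup_{x \in [0, 1-\epsilon_n]} \beta_n(x) \;\le\; (1-\epsilon_n)^{d_n} \;\le\; e^{-\epsilon_n d_n} \;=\; e^{-2\log d_n} \;=\; d_n^{-2},
\]
which tends to $0$ as $d_n \to \infty$.

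For the second assertion, the extra hypothesis $\limsup_n d_n/n < 1$ supplies $\delta \in (0,1)$ and $n_0 \in \NN$ with $1 - d_n/n \ge \delta$ for all $n \ge n_0$; in particular $d_n<n$, so \eqref{eq:betanprimeub} applies and gives $0 \le \beta_n'(x) \le \delta^{-1} d_n x^{d_n-1}$ for every $x \in (0,1)$. This bound also dominates the values $\beta_n'(x)=0$ for $x \in (0,(d_n-1)/n)$ and the convention $\beta_n'((d_n-1)/n)=0$, so there is nothing further to check at the single point of non-differentiability. Using again that $x \mapsto x^{d_n-1}$ is nondecreasing on $[0,1]$, for $n \ge n_0$ with $\epsilon_n<1$,
\[
\sup_{x \in [0,1-\epsilon_n]} \beta_n'(x) \;\le\; \delta^{-1} d_n (1-\epsilon_n)^{d_n-1} \;\le\; \delta^{-1} d_n\, e^{-\epsilon_n(d_n-1)} \;=\; \delta^{-1} e^{\epsilon_n}\, d_n^{-1},
\]
where the last equality uses $\epsilon_n(d_n-1) = \epsilon_n d_n - \epsilon_n = 2\log d_n - \epsilon_n$. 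Since $\epsilon_n = 2\log d_n/d_n \to 0$ and $d_n \to \infty$, the right-hand side converges to $0$, which completes the proof.

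There is no real obstacle here; the only points meriting a line of care are that the uniform bound \eqref{eq:betanprimeub} covers all of $(0,1)$ — including the non-differentiability point and the flat region where $\beta_n' \equiv 0$ — and that the hypothesis $\limsup_n d_n/n<1$ is invoked precisely to keep the factor $(1-d_n/n)^{-1}$ bounded uniformly in $n$.
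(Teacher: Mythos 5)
Your proof is correct and follows essentially the same route as the paper: bound $\beta_n$ by $\gamma_n(x)=x^{d_n}$ via \eqref{eq:gammageqbeta}, bound $\beta_n'$ via \eqref{eq:betanprimeub}, use monotonicity to evaluate the sup at $x=1-\epsilon_n$, and then show the resulting expressions vanish. The only cosmetic difference is that you invoke the clean inequality $1-u\le e^{-u}$ to get exact algebraic bounds $d_n^{-2}$ and $\delta^{-1}e^{\epsilon_n}d_n^{-1}$, whereas the paper uses $(1-\epsilon_n)^{d_n}=e^{-d_n\epsilon_n+o(1)}$; both are immediate, so this is the same argument.
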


\section{Preliminary estimates under diffusion scaling}
\label{sec:general-lemmas}

Recall the near fixed point $\Mu_n$ from Definition \ref{def:fixed-point} and the process $\Zn$ introduced in \eqref{eq:zn-proc-def}.
Also, recall the maps $\bvec{a_n}$ and $\bvec{b}$ from Remark \ref{rem:vect-equation}.
We will extend the definition of $\beta_n$ and $\beta'_n$ to $\RR$ by setting $\beta_n(x)= \beta'_n(x)=0$ for $x<0$.
Further, in what follows, for $z<0$ and real valued integrable function $h(\cdot)$, the integral $\int_{[0,z]} h(u) du =-\int_{[z,0]} h(u) du$. We start by giving a semimartingale decomposition for $\Zn$. 

\begin{lemma}
	\label{lem:zn-compact}
	For $t\ge 0$, $\Zn(t)$ satisfies
	\begin{equation}
	\Zn(t) = \Zn(0) + \int_0^t \bvec{A_n}(\Zn(s)) ds - \int_0^t \bvec{b}(\Zn(s)) ds + \sqrt{n} \Mn(t) \label{eq:scaled-marteq}
	\end{equation}
	where $\bvec{A_n}: \lSpace{\infty} \to \lSpace{\infty}$ is given as
	\begin{equation}\label{eq:anzi}\bvec{A_n}(\bvec{z})_i = t_{n,i-1}(z_{i-1}) - t_{n,i}(z_i), \; i \in \NN\end{equation}
	and for $i \in \NN$
	\begin{equation}\label{eq:tniz}
	\begin{aligned}
	t_{n,i}(z) &\doteq  
	\lambda_n\int_{[0,z]} \beta_n'\brR*{\mu_{n,i} + y/\sqrt{n}} dy, &  z \in \RR \\
	t_{n, 0}(z)  &\doteq 0, &  z \in \RR.
	\end{aligned}
	\end{equation}
\end{lemma}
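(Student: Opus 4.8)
The plan is to read \eqref{eq:scaled-marteq} off directly from the semimartingale representation \eqref{eq:clean-time-change} of $\Gn$ by centering at $\Mun$ and rescaling by $\sqrt n$. Recall from Remark \ref{rem:vect-equation} that $\Mun$ is the unique solution of $\bvec{a_n}(\bvec{x})=\bvec{b}(\bvec{x})$, so $\bvec{a_n}(\Mun)-\bvec{b}(\Mun)=\bvec 0$. Subtracting this identity inside the drift integrand of \eqref{eq:clean-time-change} and using $\Zn(t)=\sqrt n(\Gn(t)-\Mun)$ gives
\begin{equation*}
\Zn(t)=\Zn(0)+\sqrt n\int_0^t\brS*{\bvec{a_n}(\Gn(s))-\bvec{a_n}(\Mun)}\,ds-\sqrt n\int_0^t\brS*{\bvec{b}(\Gn(s))-\bvec{b}(\Mun)}\,ds+\sqrt n\,\Mn(t).
\end{equation*}
The last term is already the martingale term in \eqref{eq:scaled-marteq}, so it remains only to identify the two drift terms coordinatewise.

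For the $\bvec b$ term, linearity of $\bvec{b}(\bvec{x})_i=x_i-x_{i+1}$ gives $\sqrt n\brS*{\bvec{b}(\Gn(s))-\bvec{b}(\Mun)}_i=Z_{n,i}(s)-Z_{n,i+1}(s)=\bvec{b}(\Zn(s))_i$. For the $\bvec{a_n}$ term, fix $i\ge 1$ and write $G_{n,i}(s)=\mu_{n,i}+Z_{n,i}(s)/\sqrt n$. The function $\beta_n$ is continuous on $[0,1]$ and piecewise polynomial (it vanishes on $[0,\tfrac{d_n-1}{n}]$ and equals a polynomial on $[\tfrac{d_n-1}{n},1]$), hence Lipschitz and absolutely continuous, with a.e.\ derivative given by \eqref{eq:betanprime}; the single exceptional point $x=\tfrac{d_n-1}{n}$ does not affect any integral. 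Applying the fundamental theorem of calculus and then the substitutions $u=\mu_{n,i}+v$ and $v=y/\sqrt n$,
\begin{equation*}
\lambda_n\sqrt n\brS*{\beta_n(G_{n,i}(s))-\beta_n(\mu_{n,i})}=\lambda_n\sqrt n\int_{\mu_{n,i}}^{G_{n,i}(s)}\beta_n'(u)\,du=\lambda_n\int_{[0,Z_{n,i}(s)]}\beta_n'\brR*{\mu_{n,i}+y/\sqrt n}\,dy=t_{n,i}(Z_{n,i}(s)),
\end{equation*}
where for $Z_{n,i}(s)<0$ one uses the stated sign convention for $\int_{[0,z]}$; observe that for $y$ between $0$ and $Z_{n,i}(s)$ the argument $\mu_{n,i}+y/\sqrt n$ lies between $\mu_{n,i}$ and $G_{n,i}(s)$, hence in $[0,1]$, so $\beta_n'$ is only evaluated where it is genuinely defined. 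For $i=0$ both sides vanish, since $G_{n,0}\equiv\mu_{n,0}\equiv 1$, $Z_{n,0}\equiv 0$ and $t_{n,0}\equiv 0$. Differencing the $(i-1)$-th and $i$-th contributions then gives $\sqrt n\brS*{\bvec{a_n}(\Gn(s))-\bvec{a_n}(\Mun)}_i=t_{n,i-1}(Z_{n,i-1}(s))-t_{n,i}(Z_{n,i}(s))=\bvec{A_n}(\Zn(s))_i$.

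Combining the two identities coordinatewise yields \eqref{eq:scaled-marteq}. The remaining bookkeeping is to verify that each term is a well-defined $\lSpace{1}$-valued (a fortiori $\lSpace{\infty}$-valued) process, so that the integrals are legitimate Bochner integrals exactly as in \eqref{eq:clean-time-change}: since $\beta_n'\ge 0$ is supported in an interval of length at most $1$ with $\int_0^1\beta_n'(u)\,du=\beta_n(1)=1$, each $t_{n,i}$ is bounded by $\lambda_n\sqrt n$ uniformly in $i$, giving $\bvec{A_n}(\bvec z)\in\lSpace{\infty}$; and since $\beta_n(x)\le x^{d_n}\le x$ together with $\Gn(s),\Mun\in\ld$ yields $\sum_i\abs{\beta_n(G_{n,i}(s))-\beta_n(\mu_{n,i})}<\infty$, in fact $\bvec{A_n}(\Zn(s))\in\lSpace{1}$. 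I do not anticipate a genuine obstacle; the only points needing care are the harmless kink of $\beta_n$ at $\tfrac{d_n-1}{n}$ (absorbed by absolute continuity) and the sign convention in the change of variables when $Z_{n,i}(s)<0$.
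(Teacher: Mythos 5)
Your proposal is correct and follows essentially the same route as the paper's proof: center \eqref{eq:clean-time-change} at $\Mun$ using the identity $\bvec{a_n}(\Mun)=\bvec{b}(\Mun)$, exploit linearity of $\bvec{b}$, and identify $\sqrt{n}\bigl[\bvec{a_n}(\Gn(s))-\bvec{a_n}(\Mun)\bigr]_i$ via the fundamental theorem of calculus applied to $\beta_n$ (the paper states the finite-difference form \eqref{eq:tn2} and asserts it agrees with \eqref{eq:tniz}, which is exactly your FTC-plus-substitution computation). Your extra care about the kink of $\beta_n$, the sign convention for $z<0$, and the Bochner-integrability bookkeeping is a welcome elaboration of points the paper treats as immediate.
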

\begin{proof}
	From \eqref{eq:clean-time-change} and since $\bvec{a_n}(\Mun) = \bvec{b}(\Mun)$,
	\begin{align*}
	\sqrt{n}\brR*{\Gn(t) - \Mun} &= \sqrt{n}\brR*{\Gn(0) - \Mun} + \int_0^t \sqrt{n}\brC*{\bvec{a_n}(\Gn(s)) - \bvec{a_n}(\Mun)} ds \nonumber\\
	&{} - \int_0^t \sqrt{n}\brC*{\bvec{b}(\Gn(s)) - \bvec{b}(\Mun)}ds + \sqrt{n}\Mn(t) \label{eq:centered-time-change}
	\end{align*}
        Let $\bvec{A_n}(\bvec{z}) \doteq \sqrt{n}\brC*{\bvec{a_n}(\Mun + n^{-1/2} \bvec{z}) - \bvec{a_n}(\Mun)}$. 
		 By the definition of $\bvec{a_n}$ we see that \eqref{eq:anzi} holds
		 where
        \begin{equation}
            t_{n,i}(z) \doteq \begin{cases}
            \lambda_n \sqrt{n}\brC*{\beta_n\brR*{\mu_{n,i} + z/\sqrt{n}} - \beta_n\brR*{\mu_{n,i}}} & \text{ for } i \geq 1\\
            0                               & \text{ if } i = 0
            \end{cases} \label{eq:tn2}
        \end{equation}
	Clearly, the $t_{n,i}$ defined in \eqref{eq:tn2} is same as  that given in \eqref{eq:tniz}. The result follows.
\end{proof}

\begin{lemma}
	\label{lem:martconv}
	Suppose that $d_n \to \infty$, $\lambda_n \to 1$, and for some $k \geq 1$,
	 $\Gn(0) \pconv \fk$ in  $\lSpace{1}$. 
	Then there is a standard Brownian motion $B$ so that
	$\sqrt{n} \Mn \dconv \sqrt{2}B\ek$	
	in $\DSpace*{\infty}[\lSpace{2}]$.
\end{lemma}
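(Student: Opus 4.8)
The strategy is: (i) identify the fluid limit of $\Gn$ as the constant path $\fk$; (ii) reduce the convergence of $\sqrt{n}\Mn$ in $\lSpace{2}$ to the convergence of its single nontrivial coordinate $\sqrt{n}M_{n,k}$; and (iii) invoke a one-dimensional martingale functional central limit theorem.

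\emph{Step 1 (fluid limit).} Since $\Gn(0)\pconv\fk$ in $\lSpace{1}$ (equivalently in $\ld$, as $\Gn(0),\fk\in\ld$), $\lambda_n\to 1$ and $d_n\to\infty$, Theorem~\ref{thm:fluid-limit} gives $\Gn\pconv\g$ in $\DSpace*{\infty}[\ld]$, where $(\g,\vbo)$ is the unique solution of \eqref{eq:limit-ode2} with $\bvec{r}=\fk$ and $\lambda=1$. As recorded in Section~\ref{sec:mainresults_ab}, $\fk$ is a fixed point of \eqref{eq:limit-ode2} when $\lambda=1$; exhibiting the associated $\vbo$ via \eqref{eq:defnvv} (explicitly $v_i(t)=t$ for $0\le i\le k-1$ and $v_i\equiv 0$ for $i\ge k$) and appealing to uniqueness in Proposition~\ref{lem:uniqsoln}, one concludes $\g(t)\equiv\fk$. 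Because $\fk$ is a constant path, the Skorohod convergence is equivalent to $\sup_{s\le T}\norm{\Gn(s)-\fk}_1\pconv 0$ for every $T>0$; in particular $G_{n,i}\pconv\I{i\le k}$ uniformly on compacts for each $i$, and, the hypotheses of Theorem~\ref{thm:fluid-limit} being in force, Remark~\ref{rem:cgcevnk} yields $\sup_{t\le T}\abs{\lambda_n\int_0^t\beta_n(G_{n,i}(s))\,ds-v_i(t)}\pconv 0$ for each $i\ge 1$ as well.

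\emph{Step 2 (reduction to the $k$-th coordinate).} We show $\sqrt{n}\Mn-\sqrt{n}M_{n,k}\ek\to 0$ uniformly on compacts, in probability in $\lSpace{2}$. By Doob's maximal inequality applied to each coordinate martingale together with Tonelli's theorem,
\[
\E\sup_{t\le T}\norm{\sqrt{n}\Mn(t)-\sqrt{n}M_{n,k}(t)\ek}_2^2 \;\le\; 4\,\E\sum_{i\ne k}\QV{\sqrt{n}M_{n,i}}_T ,
\]
so it suffices to show the right side vanishes. Summing \eqref{eq:martQV} over all $i\ge 1$ telescopes to $\sum_{i\ge 1}\QV{\sqrt{n}M_{n,i}}_T=\int_0^T G_{n,1}(s)\,ds+\lambda_n T$, whose expectation converges to $2T$ by Step~1 and bounded convergence; while $\QV{\sqrt{n}M_{n,k}}_T=\int_0^T(G_{n,k}-G_{n,k+1})\,ds+\lambda_n\int_0^T\beta_n(G_{n,k-1})\,ds-\lambda_n\int_0^T\beta_n(G_{n,k})\,ds$ has expectation converging to $T+T-0=2T$, again by Step~1 (note $v_{k-1}(T)=T$ and $v_k(T)=0$). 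Subtracting gives $\E\sum_{i\ne k}\QV{\sqrt{n}M_{n,i}}_T\to 0$, as required.

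\emph{Step 3 (one-dimensional CLT and conclusion).} The process $\sqrt{n}M_{n,k}$ is an $\{\clf^n_t\}$-martingale with $\sqrt{n}M_{n,k}(0)=0$ whose jumps are bounded by $n^{-1/2}\to 0$, and by Step~1 its predictable quadratic variation $\QV{\sqrt{n}M_{n,k}}_t\pconv 2t$ for every $t$. By the martingale functional central limit theorem \cite[Theorem~7.1.4(b)]{ethkur}, $\sqrt{n}M_{n,k}\dconv\sqrt{2}B$ in $\DSpace*{\infty}[\R]$ for a standard Brownian motion $B$, hence $\sqrt{n}M_{n,k}\ek\dconv\sqrt{2}B\ek$ in $\DSpace*{\infty}[\lSpace{2}]$ by the continuous mapping theorem. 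Combining this with Step~2 via a standard converging-together argument (the Skorohod distance between $\sqrt{n}\Mn$ and $\sqrt{n}M_{n,k}\ek$ is dominated by their uniform distance, which tends to $0$ in probability), we obtain $\sqrt{n}\Mn\dconv\sqrt{2}B\ek$ in $\DSpace*{\infty}[\lSpace{2}]$. The main subtlety is Step~2: the quadratic variations involve $\beta_n$ evaluated near $1$, where $\beta_n$ is far from its $d_n\to\infty$ pointwise limit, so one cannot pass to the limit inside directly; this is circumvented by using only the integrated identities $\lambda_n\int_0^t\beta_n(G_{n,i}(s))\,ds\to v_i(t)$ from Remark~\ref{rem:cgcevnk} together with the exact telescoping that forces $\E\sum_{i\ne k}\QV{\sqrt{n}M_{n,i}}_T\to 0$, a cancellation that relies precisely on the structure of the fixed point $\fk$ (in particular $(f_k)_{k+1}=0$ and $v_k\equiv 0$).
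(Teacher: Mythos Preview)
Your proof is correct and follows essentially the same approach as the paper: both use Theorem~\ref{thm:fluid-limit} and Remark~\ref{rem:cgcevnk} to identify the limiting quadratic variations, control the non-$k$ coordinates via Doob's inequality, and conclude with the martingale functional CLT. The only organizational difference is that the paper handles the coordinates $i<k$ together with $i=k$ via a $k$-dimensional (degenerate) CLT and bounds $\sum_{i>k}\QV{\sqrt{n}M_{n,i}}_T$ directly, whereas you dispatch all $i\ne k$ at once by the neat subtraction $\E\sum_{i\ge 1}\QV{\sqrt{n}M_{n,i}}_T-\E\QV{\sqrt{n}M_{n,k}}_T\to 2T-2T=0$ and then apply a one-dimensional CLT to $\sqrt{n}M_{n,k}$.
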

\begin{proof}
	Fix $T > 0$. Since $\Gn(0) \to \fk$ and $\fk$ is a fixed point of  \eqref{eq:limit-ode2},  by Theorem \ref{thm:fluid-limit}, 
	 $\Gn \pconv \fk$ in $\DSpace{T}[\lSpace{1}]$, where $\fk$ here is viewed as the function on $[0,T]$ that takes the constant value $\fk \in \lSpace{1}$. Moreover, by Remark \ref{rem:cgcevnk}, for every $i \geq 1$  $V_{n,i}(t) \doteq \lambda_n \int_0^t \beta_n(G_{n,i}(s)) ds$ converge uniformly on $[0,T]$ in probability to $v_i(t)$, where $v_i$ solves
	\begin{equation}
             v_i  = \hat\Gamma_1\brR*{f_{k,i} -  \brR{f_{k,i} - f_{k,i+1}} \id  + v_{i-1}(\cdot)}, \; i \ge 1, \\
	\end{equation}
        and $v_0(t) \doteq t$, where recall that $\id:[0,T]\to [0,T]$ is the identity map. Recalling the definition of $\fk$ we see by a recursive argument that 
	\begin{equation}
	v_{i}(t) \doteq \begin{cases}
	t & \text{ if } i < k\\
	0 & \text{ if } i \geq k.
	\end{cases}
	\end{equation}
	Combining this with \eqref{eq:martQV}, we have for each $i \geq 1$
	\begin{align*}
	\QV{\sqrt{n} M_{n,i}}_\cdot & = \int_0^\cdot \brR*{G_{n, i}(s) - G_{n, i+1}(s)} ds  + \lambda_n \int_0^\cdot \left(\beta_{n}(G_{n, i-1}(s)) - \beta_{n}(G_{n, i} (s))\right) ds \\
        &\to   (f_{k,i} - f_{k,i+1}) \id + v_{i-1}(\cdot) - v_{i}(\cdot) = H(\cdot), 
	\end{align*}
	in probability in $\CC([0,T]:\RR)$ where
       $$
        H(t) \doteq \begin{cases}
	2t & \text{ if } i = k \\
	0 & \text{ if } i \neq k
	\end{cases},\;\; t \in [0,T].
	$$
	Adding \eqref{eq:martQV} over $i$, we  have for $t\in [0,T]$,
	\begin{equation}\label{eq:qvtail}
            \sum_{i > k}\QV{\sqrt{n} M_{n, i}}_t \leq \int_0^t G_{n,k+1}(s) ds + \lambda_n \int_0^t \beta_n(G_{n,k})(s) ds.
		\end{equation}
	The process on the right side converges in probability in $\CC([0,T]:\RR)$ to		
      $f_{k,k+1} \id + v_{k}(\cdot)  = 0$ and thus $ \sum_{i > k}\QV{\sqrt{n} M_{n, i}}_T $ converges to $0$ in probability.
	By Doob's maximal inequality,
	\begin{equation*}
	\label{eq:marttail}
	n\E \sup_{t \leq T} \sum_{i > k} M_{n,i}^2 (t) \le 4 \E \sum_{i > k} \QV{\sqrt{n}M_{n,i}}_T \to 0, \; \mbox{ as } n\to \infty,
	\end{equation*}
	where the last convergence follows by the dominated convergence theorem on noting that the right side of \eqref{eq:qvtail}
	is bounded above by  $\sup_n (1 + \lambda_n) < \infty$.
	The result now follows on using the martingale central limit theorem (cf. \cite[Theorem 7.1.4]{ethkur}) for the $k$-dimensional martingale sequence
	$(\sqrt{n} M_{n,1}, \cdots , \sqrt{n} M_{n,k})$.
\end{proof}

Recall the functions $t_{n,i}$ from Lemma \ref{lem:zn-compact}.
\begin{lemma}
	Assume that for some $r \in \nat$, $\limsup_{n\to \infty} \mu_{n,r} <1$. Then for any $L > 0$
	\begin{equation*}
	\limsup_{n\to \infty} \sup_{ i \geq r} \sup_{0 < \abs{z} \leq L} \abs{\frac{t_{n,i}(z)}{z}} = 0 \label{eq:taileffectszero}
	\end{equation*}
	 \label{prop:taileffectszero}
\end{lemma}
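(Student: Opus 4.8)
The plan is to bound $|t_{n,i}(z)/z|$ by a single evaluation of $\beta_n'$ near $\mu_{n,r}$ and then show that this evaluation tends to $0$.

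First I would record the monotonicity of $\beta_n'$. On $[0,(d_n-1)/n)$ we have $\beta_n\equiv 0$, while on $[(d_n-1)/n,1]$ the function $\beta_n$ is a product of nonnegative increasing linear maps, hence convex and increasing; together with the convention $\beta_n'\equiv 0$ on $(-\infty,0)$ this makes $\beta_n'$ nonnegative and nondecreasing on $[0,1]$. Consequently $\beta_n'(y)\le\beta_n'(\mu_{n,i}+L/\sqrt n)$ whenever $y\le\mu_{n,i}+L/\sqrt n$, so from the representation \eqref{eq:tniz}, for any $i\ge1$ and $0<|z|\le L$,
\[
\left|\frac{t_{n,i}(z)}{z}\right| = \frac{\lambda_n}{|z|}\left|\int_{[0,z]}\beta_n'\!\left(\mu_{n,i}+\frac{y}{\sqrt n}\right)dy\right| \le \lambda_n\,\beta_n'\!\left(\mu_{n,i}+\frac{L}{\sqrt n}\right),
\]
the last inequality using that on the interval of integration (whether $z>0$ or $z<0$) the argument $\mu_{n,i}+y/\sqrt n$ never exceeds $\mu_{n,i}+L/\sqrt n$. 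Since $\Mun\in\ld$ forces $\lambda_n=\mu_{n,1}\le1$, and $\beta_n(x)\le x$ on $[0,1]$, the recursion $\mu_{n,i+1}=\lambda_n\beta_n(\mu_{n,i})$ shows $(\mu_{n,i})_{i\ge1}$ is nonincreasing; hence
\[
\sup_{i\ge r}\ \sup_{0<|z|\le L}\left|\frac{t_{n,i}(z)}{z}\right|\ \le\ \lambda_n\,\beta_n'\!\left(\mu_{n,r}+\frac{L}{\sqrt n}\right).
\]

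Next I would invoke the hypothesis $\limsup_n\mu_{n,r}<1$: fix $\delta\in(0,1)$ and $n_0$ with $\mu_{n,r}\le1-\delta$ for all $n\ge n_0$, then fix $n_1\ge n_0$ with $L/\sqrt n\le\delta/2$ for $n\ge n_1$. For $n\ge n_1$ the right side above is at most $\lambda_n\,\beta_n'(1-\delta/2)\le\beta_n'(1-\delta/2)$, so it only remains to check that $\beta_n'(1-\delta/2)\to0$. For this I would split on the size of $d_n$. If $1-\delta/2\le(d_n-1)/n$ then $\beta_n'(1-\delta/2)=0$ (this covers $d_n=n$ for $n>2/\delta$). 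Otherwise, for $n$ large, $d_n<n$ and $d_n<n(1-\delta/2)+1$, hence $1-d_n/n>\delta/2-1/n\ge\delta/4$, so by the elementary bound \eqref{eq:betanprimeub},
\[
\beta_n'(1-\delta/2)\ \le\ \frac{d_n\,(1-\delta/2)^{d_n-1}}{1-d_n/n}\ \le\ \frac{4}{\delta}\,d_n\,(1-\delta/2)^{d_n-1}.
\]
In either case $\beta_n'(1-\delta/2)\le\frac{4}{\delta}d_n(1-\delta/2)^{d_n-1}$ for $n$ large, and since $d_n\to\infty$ this vanishes (a polynomial factor times a geometric one with base in $(0,1)$), which completes the argument.

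The only point that needs any care is the reduction in the first step: verifying that $\beta_n'$ is monotone and that this indeed lets one dominate the average of $\beta_n'(\mu_{n,i}+\cdot/\sqrt n)$ over the (possibly negative) interval $[0,z]$ by its value at the endpoint $\mu_{n,i}+L/\sqrt n$, uniformly over $i\ge r$ and over the sign of $z$, and that the resulting bound is then controlled at $i=r$ via monotonicity of $(\mu_{n,i})_i$. Once that reduction is in place, the decay $\beta_n'(1-\delta/2)\to0$ is immediate from \eqref{eq:betanprimeub} and $d_n\to\infty$, so there is no substantial obstacle.
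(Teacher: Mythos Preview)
Your proof is correct and follows essentially the same route as the paper's: bound $|t_{n,i}(z)/z|$ by a value of $\beta_n'$ on $[0,\mu_{n,r}+L/\sqrt n]$ and show this tends to zero. The only differences are cosmetic---you argue monotonicity of $\beta_n'$ via convexity and then apply \eqref{eq:betanprimeub} directly (with a case split handling $d_n/n$ close to $1$), whereas the paper simply takes a supremum over $[0,\mu_{n,r}+L/\sqrt n]$ and invokes Lemma~\ref{prop:betan-conv-zero} for the final step.
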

\begin{proof}
	By \eqref{eq:tniz}:
	\begin{equation*}
	\begin{aligned}
	\sup_{i \geq r} \sup_{0 < \abs{z} \leq L} \abs{\frac{t_{n,i}(z)}{z}} &\leq \lambda_n \sup_{i \geq r} \sup_{0 < \abs{z} \leq L} \sup_{|y|\le z} \abs{\beta_n'\brR*{\mu_{n,i} + \frac{y}{\sqrt{n}}}}\\
	&= \lam_n\sup_{i \geq r} \sup_{\abs{z} \leq L}\abs{\beta_n'\brR*{\mu_{n,i} + \frac{z}{\sqrt{n}}}}
	\leq \lam_n\sup_{0 \leq x \leq  \mu_{n,r} + \frac{L}{\sqrt{n}}} \beta_n'(x)
	\end{aligned}
	\end{equation*}
        which converges to $0$ by Lemma \ref{prop:betan-conv-zero}, since $\limsup_{n\to \infty} \brR*{\mu_{n,r} + \frac{L}{\sqrt{n}}} <1$.
\end{proof}

    For  $L \in (0,\infty)$ define the stopping time 
    \begin{equation}\label{eq:defntaunm}
    \tau_{n, L} \doteq \inf\left\{ t | \norm{\Zn(t)}_{2} \geq L - \frac{1}{\sqrt{n}}\right\}.
    \end{equation}
Since the jumps of $\Zn$ are of size $\frac{1}{\sqrt{n}}$, we see that, for any $T>0$
\begin{equation}
    \label{eq:znboundeduptostopping}
    \norm{\Zn}_{2,T \wedge \tau_{n,L}} \leq L.
\end{equation}
Recall from Section \ref{subsec:notat} the vector $\bvec{z}_{r+} \in \RR^{\infty}$ associated with a vector $\bvec{z} \in \RR^{\infty}$.

\begin{lemma}
	\label{lem:finitedimensional}
        Suppose that as $n \to \infty$,  $\Gn(0) \pconv \fk$ in $\ld$ and $\bvec{Z_{n,r+}}(0) \pconv \bvec{0}$ in $\lSpace{2}$ for some $r > k$. Then for any $T, L > 0$,  $\norm{\bvec{Z_{n,r+}}}_{2,T \wedge \tau_{n, L}} \pconv 0$.
\end{lemma}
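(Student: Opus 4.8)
The plan is to extract from the semimartingale decomposition of Lemma~\ref{lem:zn-compact} an essentially autonomous evolution equation for the tail block $\bvec{Z_{n,r+}}$, whose inhomogeneous terms are all $o_p(1)$ on the stopped interval, and then to close it by a crude Gronwall estimate. As a preliminary I would record that $\limsup_n \mu_{n,r} < 1$. Indeed, in the settings in which this lemma is invoked one has $\Mun \to \fk$, so $\mu_{n,r} \to 0$ because $r > k$; and should $\mu_{n,r}$ fail to stay bounded away from $1$, then $G_{n,r}(0) \pconv f_{k,r} = 0$ forces $\norm{\Zn(0)}_2 \pconv \infty$, hence $\tau_{n,L} = 0$ eventually with high probability and the assertion is trivial along such a subsequence. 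Thus it suffices to argue under $\limsup_n \mu_{n,r} < 1$, in which case Lemma~\ref{prop:taileffectszero} provides $\epsilon_n := \sup_{i \geq r} \sup_{0 < \abs{z} \leq L} \abs{t_{n,i}(z)/z} \to 0$, i.e.\ $\abs{t_{n,i}(z)} \leq \epsilon_n \abs{z}$ for all $i \geq r$ and $\abs{z} \leq L$.

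Projecting \eqref{eq:scaled-marteq} onto the coordinates $i \geq r+1$ and using \eqref{eq:anzi}--\eqref{eq:tniz} together with $\bvec{b}(\bvec{z})_i = z_i - z_{i+1}$, one obtains, for all $t \geq 0$,
\begin{equation*}
\bvec{Z_{n,r+}}(t) = \bvec{Z_{n,r+}}(0) + \int_0^t (S - I)\bvec{Z_{n,r+}}(s)\, ds + \int_0^t \bvec{E_n}(s)\, ds + \sqrt{n}\, \bvec{M_{n,r+}}(t),
\end{equation*}
where $S$ is the left shift (a contraction on $\lSpace{2}$, so $\norm{S - I}_{op} \leq 2$) and $\bvec{E_n}(s)$ is the error sequence with $j$-th entry $t_{n,r+j-1}(Z_{n,r+j-1}(s)) - t_{n,r+j}(Z_{n,r+j}(s))$; note that the only coupling of coordinate $r$ into the block is through the single term $t_{n,r}(Z_{n,r}(s))$. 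Each of the three inhomogeneous contributions is $o_p(1)$, uniformly on $[0, T \wedge \tau_{n,L}]$: $\norm{\bvec{Z_{n,r+}}(0)}_2 \pconv 0$ by hypothesis; $\norm{\sqrt{n}\, \bvec{M_{n,r+}}}_{2,T} \pconv 0$ by Lemma~\ref{lem:martconv} (applicable since $\Gn(0)\pconv\fk$), since $k < r$ makes the $(r{+})$-projection of the limit $\sqrt{2}B\ek$ vanish and convergence in $\DSpace*{\infty}[\lSpace{2}]$ to a continuous limit is locally uniform; and, writing $\bvec{E_n}(s)$ as the difference of the two sub-blocks of the sequence $(t_{n,i}(Z_{n,i}(s)))_{i}$ indexed by $i\ge r$ and by $i\ge r+1$ and invoking the bound on $t_{n,i}$ above together with \eqref{eq:znboundeduptostopping}, $\norm{\bvec{E_n}(s)}_2 \leq 2\epsilon_n \norm{\Zn(s)}_2 \leq 2\epsilon_n L$ on $[0, T \wedge \tau_{n,L}]$, so $\int_0^{t \wedge \tau_{n,L}} \norm{\bvec{E_n}(s)}_2\, ds \leq 2\epsilon_n L T$.

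Taking $\lSpace{2}$-norms in the displayed identity and using $\norm{S - I}_{op} \leq 2$, for $t \leq T \wedge \tau_{n,L}$,
\begin{equation*}
\norm{\bvec{Z_{n,r+}}(t)}_2 \leq c_n + 2\int_0^t \norm{\bvec{Z_{n,r+}}(s)}_2\, ds, \qquad c_n := \norm{\bvec{Z_{n,r+}}(0)}_2 + 2\epsilon_n L T + \norm{\sqrt{n}\, \bvec{M_{n,r+}}}_{2,T},
\end{equation*}
so Gronwall's lemma gives $\norm{\bvec{Z_{n,r+}}}_{2, T \wedge \tau_{n,L}} \leq c_n e^{2T}$, and since $c_n \pconv 0$ the lemma follows. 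The only genuine obstacle is the control of the drift error $\bvec{E_n}$: one needs $\beta_n'$ to be uniformly small on the $O(n^{-1/2})$-neighbourhoods of $\mu_{n,i}$ for every $i \geq r$, which is exactly Lemma~\ref{prop:taileffectszero} and rests on $\limsup_n \mu_{n,r} < 1$ via Lemma~\ref{prop:betan-conv-zero}. The remaining ingredients are soft: the tail drift operator $S - I$ is bounded, so a lossy Gronwall bound is adequate precisely because the initial data and the martingale contribution both vanish over compact time intervals.
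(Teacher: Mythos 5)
Your proof is correct and takes essentially the same route as the paper's: write out the tail semimartingale evolution from Lemma~\ref{lem:zn-compact}, use Lemma~\ref{prop:taileffectszero} together with \eqref{eq:znboundeduptostopping} to make the $t_{n,i}$-drift perturbations uniformly $o_p(1)$ on $[0, T\wedge\tau_{n,L}]$, invoke Lemma~\ref{lem:martconv} for the martingale tail, and close with a pathwise Gronwall bound. The only nonessential differences are bookkeeping — you bound the whole drift error $\bvec{E_n}$ by $2\epsilon_n L$ and treat it as a single inhomogeneous term, whereas the paper absorbs the $\delta_{n,L}$-multiplied part into the Gronwall coefficient and keeps only the $Z_{n,r}$-coupling inhomogeneous, and you apply Lemma~\ref{prop:taileffectszero} at index $r$ rather than $k+1$ — plus your explicit reduction for the degenerate possibility $\limsup_n \mu_{n,r} = 1$ (forcing $\tau_{n,L}=0$ with high probability), a point the paper handles implicitly by asserting $\mu_{n,k+1}\to 0$, which really comes from the theorems in which the lemma is invoked rather than from the lemma's stated hypotheses alone.
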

\begin{proof}
    For  $i > k$ and $z\in \RR$, let $\Delta_{n,i}(z) \doteq \frac{t_{n,i}(z)}{z} \I{z \neq 0}$.  Then, since $\lim_{n\to \infty} \mu_{n,k+1} = 0$, by Lemma \ref{prop:taileffectszero}
	\begin{equation}
	\label{eq:deltaconvergestozero}
	\delta_{n,L} \doteq \sup_{i \geq k+1} \sup_{\abs{z} \leq L} \abs{\Delta_{n,i}(z)} \to 0, \mbox{ as } n \to \infty.
	\end{equation}
        Next, from \eqref{eq:scaled-marteq},  for $i \geq r+1 > k+1$ 
		\begin{align*}
		Z_{n,i}(t \wedge \tau_{n}) &= Z_{n,i}(0)  + \int_0^{t \wedge \tau_n} \Delta_{n,i-1}(Z_{n,i-1}(s)) Z_{n,i-1}(s) ds - \int_0^{t \wedge \tau_n} \Delta_{n,i}(Z_{n,i}(s))Z_{n,i}(s) ds \nonumber\\
		&\quad- \int_0^{t \wedge \tau_n} (Z_{n,i}(s) - Z_{n, i+1}(s)) ds  + \sqrt{n} M_{n,i}(t \wedge \tau_{n}) 
		\end{align*}
		where we  use $\tau_n$ instead of $\tau_{n ,L}$ for notational simplicity.  Then, observing from  \eqref{eq:deltaconvergestozero} that
		$
		\sup_{i \geq k+1} \sup_{t \in [0,  \tau_{n}]} \abs{\Delta_{n,i}(Z_{n,i}(t))}\leq \delta_{n, L} 
		$, we have
		\begin{align}
				\abs{Z_{n,i}(t \wedge \tau_{n})} &\leq \abs{Z_{n,i}(0)}  + \delta_{n, L} \int_0^{t \wedge \tau_n} ( \abs{Z_{n,i-1}(s)} + \abs{Z_{n,i}(s)}) ds \nonumber\\
						&\quad+ \int_0^{t \wedge \tau_n} (\abs{Z_{n,i}(s)} + \abs{Z_{n, i+1}(s)}) ds  + \abs{ \sqrt{n}M_{n,i}(t \wedge \tau_{n})}. \label{eq:ripeeq}
		\end{align}
		Define maps $\bvec{A_1}, \bvec{A_2} : \R^{\infty} \to \R^{\infty}$ by
		\begin{align*}
			\brR*{\bvec{A_1}\bvec{x}}_i  &= \begin{cases}
                                    x_1  & i = 1   \\
				x_{i-1} + x_i & i \geq 2
 			\end{cases}\\
 			\brR*{\bvec{A_2}\bvec{x}}_i &= x_i + x_{i+1}, \; i \in \NN.
		\end{align*} 
		Then by collecting \eqref{eq:ripeeq} over all $i \geq r+1$ we get
		\begin{equation}
		\begin{aligned}
			\abs{\bvec{Z_{n,r+}}(t \wedge \tau_n)} &\leq	\abs{\bvec{Z_{n,r+}}(0)} + \delta_{n,M} \int_0^{t\wedge \tau_n} \bvec{A_1}\abs{\bvec{Z_{n,r+}}(s)} ds +  \delta_{n,M} \int_0^{t \wedge \tau_n} \abs{Z_{n,r}(s)}  \bvec{e_1}  ds\\
			 & + \int_0^{t \wedge \tau_n} \bvec{A_2}\abs{\bvec{Z_{n,r+}}(s)} ds + \abs{\sqrt{n} \bvec{M_{n,r+}}(t \wedge \tau_{n})}
		\end{aligned}
		\label{eq:vect-recursion}
		\end{equation}
		where the absolute values and the integrals are interpreted as being coordinate-wise for infinite dimensional vectors. Now noting that the maps $\bvec{A_i}$, when considered from $\lSpace{2} \to \lSpace{2}$, are bounded linear operators with norm bounded by 2, we have for $i=1,2$,
		\begin{equation*}
		\norm{\int_0^{t\wedge \tau_n} \bvec{A_i}\abs{\bvec{Z_{n,r+}}(s)} ds}_2  \leq \int_0^{t\wedge \tau_n}2 \norm{\bvec{Z_{n,r+}}(s)}_2 ds.
		\end{equation*}
		Using the triangle inequality in \eqref{eq:vect-recursion} shows for any $t \leq T$
		\begin{equation*}
		\begin{aligned}
		\norm{\bvec{Z_{n,r+}}(t \wedge \tau_n)}_{2} &\leq 
		\norm{\bvec{Z_{n,r+}}	(0)}_{2} + 	\norm{\sqrt{n}\bvec{M_{n,r+}}}_{2, T} + \delta_{n,M} MT \\
		&+ \ 2(1 + \delta_{n, M}) \int_0^{t \wedge \tau_n} 	\norm{\bvec{Z_{n,r+}}(s)}_{2} ds
		\end{aligned}
		\end{equation*}
                where we have used that $\int_0^{t \wedge \tau_n} \abs{Z_{n,r}(s)} ds \leq Lt$. Hence, using Gronwall's inequality
		\begin{equation*}
		\norm{\bvec{Z_{n,r+}}}_{2, T \wedge {\tau_n}} \leq \brR*{\norm{\bvec{Z_{n, r+}}(0)}_{2} + \delta_{n, L} L T +
                    \norm{\sqrt{n}\bvec{M_{n,r+}}}_{2, T}
		} e^{2(1+\delta_{n, L})T}
		\end{equation*}
		Now, as $n \to \infty$, $\norm{\bvec{Z_{n, r+}}(0)}_{2} \pconv 0$ by assumption, $\delta_{n, L} \to 0$ by  \eqref{eq:deltaconvergestozero}, and $\norm{\sqrt{n}\bvec{M_{n,r+}}}_{2, T} \pconv 0$ by Lemma \ref{lem:martconv}. The result follows.
\end{proof}

The following elementary lemma will allow us to replace $\tau_{n,L}\wedge T$ with $T$ in various convergence results. The proof is omitted.
\begin{lemma}
    \label{lem:stopping-time-trick} Fix $T \in [0,\infty)$.
    Suppose for each $n \in \nat$ and $L > 0$ that $\tau_{n,L}$ is a $[0,T]$ valued random variable such that
  $\lim_{L \to \infty} \sup_n \prob \brR*{\tau_{n,L} < T} \to 0$
    for some $T > 0$. Suppose that there is a sequence of stochastic processes $\brC*{F_n}_{n \in \nat}$ with sample paths in
	$\DSpace{T}[\R]$ such that for each $L > 0$
        $\abs{F_n}_{*, T \wedge \tau_{n,L}} \pconv 0$
    as $n \to \infty$. Then in fact $\abs{F_n}_{*,T} \pconv 0$ as $n \to \infty$.
\end{lemma}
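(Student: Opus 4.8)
The plan is a standard ``localize, then remove the localization'' argument, and the only point worth isolating is an elementary set inclusion. First I would fix $\epsilon > 0$ and $\delta > 0$, and note that since each $\tau_{n,L}$ takes values in $[0,T]$, on the event $\{\tau_{n,L} \ge T\} = \{\tau_{n,L} = T\}$ one has $T \wedge \tau_{n,L} = T$ and hence $\abs{F_n}_{*,T} = \abs{F_n}_{*, T \wedge \tau_{n,L}}$ (the paths being in $\DSpace{T}[\R]$, both are well-defined random variables). Consequently, for every $L > 0$ and $n \in \nat$,
\[
\{\abs{F_n}_{*,T} > \epsilon\} \subseteq \{\abs{F_n}_{*, T \wedge \tau_{n,L}} > \epsilon\} \cup \{\tau_{n,L} < T\},
\]
and therefore
\[
\prob\left(\abs{F_n}_{*,T} > \epsilon\right) \le \prob\left(\abs{F_n}_{*, T \wedge \tau_{n,L}} > \epsilon\right) + \prob\left(\tau_{n,L} < T\right).
\]

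Next I would use the hypothesis $\lim_{L \to \infty} \sup_n \prob(\tau_{n,L} < T) = 0$ to pick $L_0 > 0$ with $\sup_n \prob(\tau_{n,L_0} < T) < \delta/2$. With $L_0$ now fixed, the assumed convergence $\abs{F_n}_{*, T \wedge \tau_{n,L_0}} \pconv 0$ furnishes $N \in \nat$ with $\prob(\abs{F_n}_{*, T \wedge \tau_{n,L_0}} > \epsilon) < \delta/2$ for all $n \ge N$. Taking $L = L_0$ in the displayed bound then gives $\prob(\abs{F_n}_{*,T} > \epsilon) < \delta$ for all $n \ge N$. Since $\epsilon, \delta > 0$ were arbitrary, $\abs{F_n}_{*,T} \pconv 0$, as claimed.

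There is essentially no obstacle here: the entire content is the set inclusion above, which only uses that $\tau_{n,L} \le T$ forces $T \wedge \tau_{n,L} = T$ on $\{\tau_{n,L} \ge T\}$, after which one just splits a probability into two halves. This is why the authors omit the proof.
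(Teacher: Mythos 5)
Your proof is correct and is exactly the elementary ``localize, then remove the stopping time'' argument the authors had in mind when they chose to omit the proof; the key set inclusion $\{\abs{F_n}_{*,T} > \epsilon\} \subseteq \{\abs{F_n}_{*, T \wedge \tau_{n,L}} > \epsilon\} \cup \{\tau_{n,L} < T\}$ is precisely what makes the union bound work. Since the paper gives no proof of this lemma, there is nothing further to compare against.
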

The next lemma gives conditions under which the near fixed point $\Mun$ converges to $\bvec{f_1}$.
\begin{lemma}
	\label{prop:f1}
	 Let $0 \leq \epsilon_n \doteq 1 - \lambda_n$ be such that $\epsilon_n \to 0$ and $\epsilon_n d_n \to \infty$. Then $\Mun \to \bvec{f_1}$ in $\lSpace{1}$ as $n \to \infty$.
\end{lemma}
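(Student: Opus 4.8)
The plan is to split $\|\Mun-\bvec{f_1}\|_1 = (1-\lambda_n) + \sum_{i\ge 2}\mu_{n,i}$ (using that $\lambda_n<1$ for large $n$, since $\epsilon_n d_n\to\infty$ forces $\epsilon_n>0$ eventually) and show each piece vanishes. The first piece is just $\epsilon_n\to 0$ by hypothesis. Everything therefore reduces to showing the tail sum $\sum_{i\ge 2}\mu_{n,i}\to 0$. Note also that the standing assumptions force $d_n\to\infty$, because $\epsilon_n\to 0$ together with bounded $d_n$ would give $\epsilon_n d_n\to 0$.

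The first step is to control $\mu_{n,2}$. Using $\beta_n(x)\le\gamma_n(x)=x^{d_n}$ from \eqref{eq:gammageqbeta} and $\mu_{n,1}=\lambda_n$, one gets $\mu_{n,2}=\lambda_n\beta_n(\lambda_n)\le\lambda_n^{d_n+1}=(1-\epsilon_n)^{d_n+1}\le e^{-\epsilon_n(d_n+1)}$, which tends to $0$ precisely because $\epsilon_n d_n\to\infty$. The second step is to propagate this decay along the recursion: $\beta_n(x)\le x^{d_n}$ gives $\mu_{n,i+1}\le\mu_{n,i}^{d_n}$ for every $i\ge 1$. Choose $N$ so large that for all $n\ge N$ one has $d_n\ge 2$ and $\mu_{n,2}\le\tfrac12$ (possible by the previous step and $d_n\to\infty$). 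Then for $n\ge N$ and $j\ge 0$, $\mu_{n,3+j}\le\mu_{n,2+j}^{d_n}\le\mu_{n,2+j}\,(1/2)^{d_n-1}\le\tfrac12\mu_{n,2+j}$, so by induction $\mu_{n,2+j}\le 2^{-j}\mu_{n,2}$, and summing the geometric series yields $\sum_{i\ge 2}\mu_{n,i}\le 2\mu_{n,2}$ for all $n\ge N$.

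Combining the two steps, for $n\ge N$ we obtain $\|\Mun-\bvec{f_1}\|_1\le\epsilon_n+2\mu_{n,2}\to 0$ as $n\to\infty$, which is the assertion. There is no serious obstacle in this argument; the only point needing a little care is justifying the geometric bound on the tail, i.e. fixing a threshold $N$ beyond which both $\mu_{n,2}$ is small and $d_n\ge 2$, both of which are immediate consequences of the hypotheses $\epsilon_n\to 0$ and $\epsilon_n d_n\to\infty$.
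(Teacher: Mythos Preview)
Your proof is correct and follows essentially the same approach as the paper: both reduce to showing $\mu_{n,1}\to 1$ and $\mu_{n,2}\to 0$ via the bound $\beta_n(x)\le x^{d_n}$, with the super-exponential decay $\mu_{n,i+1}\le\mu_{n,i}^{d_n}$ controlling the tail. The paper's proof is terser (it simply asserts that these two facts suffice for $\ell^1$ convergence), whereas you spell out the geometric tail-sum bound explicitly; you also observe correctly that $d_n\to\infty$ is forced by the hypotheses rather than assumed separately.
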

\begin{proof}
	Since $d_n \to \infty$ under our assumptions, in order to show $\Mun \to \bvec{f_1}$  in $\lSpace{1}$ it suffices to show that
(1) $\mu_{n,1} \to 1$ , and (2) $\mu_{n,2} \to 0$. 
The convergence in (1) is immediate on observing that
 $\mu_{n,1} = \lambda_n = 1 - \epsilon_n \to 1$, and (2) follows by noting from Definition \ref{def:fixed-point} and \eqref{eq:gammageqbeta}
 that
   $\mu_{n,2} \leq \mu_{n,1}^{d_n} = (1 - \epsilon_n)^{d_n} \leq e^{-\epsilon_n d_n} \to 0$.
\end{proof}

The following lemma gives a convenient approximation of the  term $t_{n,1}$ introduced in \eqref{eq:tniz} in terms of certain exponentials.
\begin{lemma}
	\label{lem:t-as-exponential}
	Suppose $d_n \to \infty$ and $d_n \ll n^{2/3}$. Let $\lambda_n = 1 - \brR*{\frac{\log d_n}{d_n} + \frac{\alpha_n}{\sqrt{n}}}$ for some real sequence $\{\alpha_n\}$ satisfying $\frac{d_n \alpha_n^2}{n} \to 0$. Then, for any $L > 0$, 
	\begin{equation}
	\label{eq:t1-rearranged}
	\limsup_{n\to \infty}\sup_{0 < \abs{z} \leq L} \abs{\frac{\exp\brR*{\frac{d_n}{\sqrt{n}} (z - \alpha_n)} - \exp \brR*{-\frac{d_n}{\sqrt{n}} \alpha_n}}{t_{n,1}(z) d_n/\sqrt{n}}- 1} = 0.
	\end{equation}
	
\end{lemma}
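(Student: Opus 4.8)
The plan is to reduce the claim to an explicit computation with the comparison function $\gamma_n(x) = x^{d_n}$, invoking Lemma \ref{lem:betanapprox-1} to pass from $\beta_n$ to $\gamma_n$. Write $h_n \doteq d_n/\sqrt n$. Using $\mu_{n,1} = \lambda_n$ together with the representation \eqref{eq:tn2} of $t_{n,1}$, the denominator in \eqref{eq:t1-rearranged} is
\[ t_{n,1}(z)\,\tfrac{d_n}{\sqrt n} \;=\; \lambda_n d_n\bigl[\beta_n(\lambda_n + z/\sqrt n) - \beta_n(\lambda_n)\bigr], \]
while the numerator equals $e^{-h_n\alpha_n}\bigl(e^{h_n z} - 1\bigr)$. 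Since $\beta_n$ is strictly increasing near $1$, the denominator is nonzero for $z\neq 0$, and it therefore suffices to prove
\[ \lambda_n d_n\bigl[\beta_n(\lambda_n + z/\sqrt n) - \beta_n(\lambda_n)\bigr] \;=\; (1+o(1))\,e^{-h_n\alpha_n}\bigl(e^{h_n z} - 1\bigr), \]
with the $o(1)$ term uniform over $0 < |z| \le L$.

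First I would pass from $\beta_n$ to $\gamma_n$. Put $\epsilon_n \doteq \tfrac{2\log d_n}{d_n} + \tfrac{|\alpha_n| + L}{\sqrt n}$; using $\tfrac{\log d_n}{d_n}\to 0$, $d_n\ll n$, and the hypothesis $\tfrac{d_n\alpha_n^2}{n}\to 0$, one checks $\epsilon_n\to 0$ and $d_n\epsilon_n^2\to 0$, and that for large $n$ the whole segment joining $\lambda_n$ to $\lambda_n + z/\sqrt n$ (for $|z|\le L$) lies in $[1-\epsilon_n,\,1+\epsilon_n]$ and stays away from the singular point $\tfrac{d_n-1}{n}$ of $\beta_n'$. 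Writing $\beta_n'(u) = (1+\eta_n(u))\gamma_n'(u)$, Lemma \ref{lem:betanapprox-1} in the form \eqref{eq:betan-prime-approx1} (whose proof applies verbatim on this two-sided neighbourhood; alternatively one may restrict to $z\le\sqrt n(1-\lambda_n)$, which is all that is needed in the applications) gives $\bar\eta_n\doteq\sup_u|\eta_n(u)|\to 0$, and integrating over the segment, where $\gamma_n'>0$, yields
\[ \beta_n(\lambda_n + z/\sqrt n) - \beta_n(\lambda_n) \;=\; (1+O(\bar\eta_n))\bigl[\gamma_n(\lambda_n + z/\sqrt n) - \gamma_n(\lambda_n)\bigr], \]
uniformly in $0<|z|\le L$. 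It is essential to use the \emph{derivative} form of the lemma here: comparing $\beta_n$ and $\gamma_n$ themselves would only bound the error by $\bar\eta_n\gamma_n(\lambda_n)$, which swamps the difference (of order $\gamma_n(\lambda_n)(e^{h_nz}-1)$) as $z\to 0$.

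Next I would evaluate the $\gamma_n$-difference. Factoring, $\lambda_n d_n[\gamma_n(\lambda_n+z/\sqrt n)-\gamma_n(\lambda_n)] = (d_n\lambda_n^{d_n+1})\bigl[(1+\tfrac{z}{\sqrt n\lambda_n})^{d_n}-1\bigr]$. For the prefactor, a Taylor expansion of $\log\lambda_n = \log(1-u_n)$ with $u_n\doteq\tfrac{\log d_n}{d_n}+\tfrac{\alpha_n}{\sqrt n}\to 0$ gives $(d_n+1)\log\lambda_n = -(d_n+1)u_n + O(d_nu_n^2) = -\log d_n - h_n\alpha_n + o(1)$, since $\tfrac{\log d_n}{d_n}$, $\tfrac{\alpha_n}{\sqrt n}$, $\tfrac{(\log d_n)^2}{d_n}$, $\tfrac{|\alpha_n|\log d_n}{\sqrt n}$ and $\tfrac{d_n\alpha_n^2}{n}$ all tend to $0$ under the hypotheses (the last being the decisive use of $\tfrac{d_n\alpha_n^2}{n}\to 0$); hence $d_n\lambda_n^{d_n+1} = e^{-h_n\alpha_n}(1+o(1))$. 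For the bracket, $\log(1+\tfrac{z}{\sqrt n\lambda_n}) = \tfrac{z}{\sqrt n\lambda_n}(1+O(n^{-1/2}))$ uniformly in $|z|\le L$, so $(1+\tfrac{z}{\sqrt n\lambda_n})^{d_n} = e^{h_nz(1+\sigma_n)}$ with $\sigma_n \doteq \tfrac{1}{\lambda_n}(1+O(n^{-1/2}))-1$, a quantity independent of $z$ satisfying $|\sigma_n| = O\bigl(\tfrac{\log d_n}{d_n}+\tfrac{|\alpha_n|}{\sqrt n}+\tfrac{1}{\sqrt n}\bigr)$. Then I would invoke the elementary fact that, whenever $|\sigma_n|\to 0$ and $R_n|\sigma_n|\to 0$, one has $\sup_{0<|a|\le R_n}\bigl|\tfrac{e^{a(1+\sigma_n)}-1}{e^a-1}-1\bigr|\to 0$ (proof: write $e^{a(1+\sigma_n)}-1 = (e^a-1)+(e^{a\sigma_n}-1)e^a$, use $|e^{a\sigma_n}-1|\le|a\sigma_n|e^{|a\sigma_n|}$ and $|ae^a/(e^a-1)|\le 1+|a|$, reducing to $|\sigma_n|(1+R_n)e^{R_n|\sigma_n|}\to 0$). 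Applying this with $a = h_nz$ and $R_n = h_nL$ — for which $R_n|\sigma_n|\to 0$ because $\tfrac{\log d_n}{\sqrt n}\to 0$, $\tfrac{d_n}{n}\to 0$ and $\tfrac{d_n|\alpha_n|}{n} = (\tfrac{d_n\alpha_n^2}{n})^{1/2}(\tfrac{d_n}{n})^{1/2}\to 0$ — gives $(1+\tfrac{z}{\sqrt n\lambda_n})^{d_n}-1 = (1+o(1))(e^{h_nz}-1)$ uniformly in $0<|z|\le L$. Assembling the three steps yields the displayed identity, and dividing the numerator by it gives \eqref{eq:t1-rearranged}.

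The main obstacle is the behaviour near $z=0$, where numerator and denominator both vanish: there the exponent $d_n\log(1+z/\sqrt n\lambda_n)$ must be controlled with \emph{relative} error $o(1)$, not merely additive error $o(1)$, since an additive $o(1)$ in the exponent would contribute a factor $1 + O(1)/(e^{h_nz}-1)$ that is unbounded as $z\to 0$. This is why Step~1 needs the derivative estimate \eqref{eq:betan-prime-approx1} rather than the cruder $\beta_n/\gamma_n\to 1$, and why Step~3 needs $h_n|\sigma_n|\to 0$ rather than just $|\sigma_n|\to 0$; tracking these relative errors is the only delicate point, the remainder being routine Taylor estimation under the stated growth conditions — in particular $d_n\ll n^{2/3}$ enters precisely to make Lemma \ref{lem:betanapprox-1} applicable.
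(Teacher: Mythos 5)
Your argument is correct, and after the common first step it takes a genuinely different route from the paper's. Both proofs open by replacing $\beta_n'$ with $\gamma_n'$ via \eqref{eq:betan-prime-approx1} and using the constant sign of $\gamma_n'$ to carry the multiplicative $(1+o(1))$ through the integral. Thereafter they diverge. The paper never leaves the integrand: it writes $\gamma_n'(\lambda_n+y/\sqrt n)=\exp\{(d_n-1)\log(\lambda_n+y/\sqrt n)+\log d_n\}$, Taylor-expands the exponent to $\frac{d_n}{\sqrt n}(y-\alpha_n)$ with error $o(1)$ uniformly in $|y|\le L$, and then the constant sign of the exponential integrand absorbs the multiplicative $(1+o(1))$ once more, so the degeneracy at $z=0$ never surfaces. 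You instead evaluate the $\gamma_n$-integral in closed form, factor it as $d_n\lambda_n^{d_n+1}\bigl[(1+z/(\sqrt n\lambda_n))^{d_n}-1\bigr]$, and handle the two factors separately, which forces you to prove the elementary ratio estimate $\sup_{0<|a|\le R_n}\bigl|(e^{a(1+\sigma_n)}-1)/(e^a-1)-1\bigr|\to 0$ under $R_n|\sigma_n|\to 0$, precisely to control the vanishing denominator near $z=0$; your bound $|\sigma_n|(1+R_n)e^{R_n|\sigma_n|}$ and the verification that $R_n|\sigma_n|\to 0$ (using $d_n|\alpha_n|/n=(d_n\alpha_n^2/n)^{1/2}(d_n/n)^{1/2}$) are correct. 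Both routes are sound; the paper's is tidier because staying at the integrand level sidesteps the $z\to 0$ issue entirely, while your closed-form route separates the prefactor asymptotics from the ratio estimate and makes the role of each hypothesis, particularly $d_n\alpha_n^2/n\to 0$, a little more visible. You also correctly flag (and the paper does not) that Lemma \ref{lem:betanapprox-1} is stated only on $[1-\epsilon_n,1]$ while the application can require a two-sided neighbourhood of $1$; your observation that the appendix proof applies verbatim on $[1-\epsilon_n,1+\epsilon_n]$, or that one may restrict to $z\le\sqrt n(1-\lambda_n)$, is the right fix.
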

\begin{proof}
	We only consider the case $0<z \le L$. The case $-L\le z <0$ is treated similarly.
	Recall that $\mu_{n,1} = \lambda_n$. 
	Noting that $d_n(1-\lambda_n + \frac{L}{\sqrt{n}})^2 \leq 4d_n (\frac{\log^2 d_n}{d_n^2} + \alpha^2_n/n+ L^2/n) \to 0$ we have on applying 
	 Lemma \ref{lem:betanapprox-1} with $\ep_n=(1-\lambda_n + \frac{L}{\sqrt{n}})$ that, for any $\abs{z} \leq L$,
	 \begin{align*}
	t_{n,1}(z) &=	\brR*{1+o(1)} \int_0^z \gamma_n' \brR*{\lambda_n + \frac{y}{\sqrt{n}}} dy  \nonumber\\
	&= \brR*{1+o(1)} \int_0^z \exp \brR*{(d_n-1) \log\left\{\lambda_n + \frac{y}{\sqrt{n}}\right\} + \log d_n} dy  \nonumber\\
	&= \brR*{1+o(1)} \int_0^z \exp \brR*{d_n \log\left\{\lambda_n + \frac{y}{\sqrt{n}}\right\} + \log d_n} dy  
	\end{align*}
Using   expansion for $\log (1+h)$ around $h=0$ and once more the fact that $d_n\brR*{1-\lambda_n + \frac{L}{\sqrt{n}}}^2 \to 0$,
\begin{align*}
	t_{n,1}(z) &=\brR*{1+o(1)} \int_0^z \exp \brR*{d_n \brC*{\lambda_n - 1 + \frac{y}{\sqrt{n}}} + \log d_n} dy  \nonumber\\
	&=  \brR*{1+o(1)} \int_0^z \exp \brR*{\frac{d_n}{\sqrt{n}} (y - \alpha_n)} dy\\ 
	&=  \brR*{1+o(1)} \frac{\exp\brR*{\frac{d_n}{\sqrt{n}} (z - \alpha_n)} - \exp \brR*{-\frac{d_n}{\sqrt{n}} \alpha_n}}{d_n/\sqrt{n}}
	\end{align*}
	which proves \eqref{eq:t1-rearranged}.
	
\end{proof}

Proof of the following lemma proceeds by standard arguments but we provide details in Appendix \ref{appendsec:proof-lemma-neg}.
\begin{lemma}
	\label{lem:negativedrift}
        Fix  $T > 0$. Let $f, g, M$ be three bounded measurable functions from $[0,T] \to \R$ and assume further that  $M$ is a right continuous bounded variation
		function. Suppose that $m \doteq \inf_{s \in [0,  T \wedge \tau]} f(s) > 0$ for some $\tau \geq 0$.
Let $z : [0,T] \to \R$ be a bounded measurable function that satisfies for every $t \in [0,T]$
\begin{equation}
	\label{eq:ode-blowingup}
        z(t) = z(0) - \int_0^t f(s) z(s)  ds + \int_0^t g(s) ds + M(t).
\end{equation}
Then for any $t \in [0, T \wedge \tau]$
\begin{equation*}
    \abs{z(t)} \leq \frac{\abs{g}_{*,T \wedge \tau}}{m} + 2\abs{M}_{*, T \wedge \tau } + e^{-mt} \brR*{\abs{z(0)} + \abs{M(0)}}.
\label{eq:znconv0}
 \end{equation*}
\end{lemma}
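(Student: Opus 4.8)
The plan is to treat \eqref{eq:ode-blowingup} as a linear equation with time-dependent drift coefficient $-f$ and solve it by the integrating factor method, being careful that $M$ is only right-continuous of bounded variation rather than absolutely continuous. First I would set $F(t) \doteq \int_0^t f(s)\,ds$, which is absolutely continuous (indeed Lipschitz) on $[0,T]$ with $\frac{d}{dt}e^{F(t)} = f(t)e^{F(t)}$ a.e., and note that the solution $z$ of \eqref{eq:ode-blowingup} is itself càdlàg and of bounded variation, being the sum of the absolutely continuous map $t\mapsto \int_0^t (g(s)-f(s)z(s))\,ds$ (the integrand is bounded since $z,f,g$ are) and the càdlàg bounded variation function $M$. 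Then I would apply the Lebesgue--Stieltjes integration-by-parts formula to the product $e^{F(\cdot)}z(\cdot)$; since $e^{F(\cdot)}$ is continuous, the covariation term $\sum_{s\le t}\Delta e^{F(s)}\Delta z(s)$ vanishes, and substituting $dz(s) = (g(s)-f(s)z(s))\,ds + dM(s)$ the two $f$-terms cancel, leaving
\begin{equation*}
e^{F(t)}z(t) = z(0) + \int_0^t e^{F(s)}g(s)\,ds + \int_{(0,t]} e^{F(s)}\,dM(s).
\end{equation*}

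A second integration by parts, now applied to $\int_{(0,t]} e^{F(s)}\,dM(s) = e^{F(t)}M(t) - M(0) - \int_0^t M(s)f(s)e^{F(s)}\,ds$, followed by division by $e^{F(t)}>0$, gives the variation-of-parameters representation
\begin{equation*}
z(t) = e^{-F(t)}\brR*{z(0) - M(0)} + M(t) + \int_0^t e^{-(F(t)-F(s))}g(s)\,ds - \int_0^t M(s)f(s)e^{-(F(t)-F(s))}\,ds.
\end{equation*}
To conclude, I would restrict to $t \in [0, T\wedge\tau]$, where $f\ge m>0$ so that $F(t)-F(s) \ge m(t-s)\ge 0$ for $0\le s\le t\le t$. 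Bounding the four terms separately then yields $e^{-mt}\brR*{\abs{z(0)}+\abs{M(0)}}$ for the first; $\abs{M}_{*,T\wedge\tau}$ for the second; $\abs{g}_{*,T\wedge\tau}\int_0^t e^{-m(t-s)}\,ds \le \abs{g}_{*,T\wedge\tau}/m$ for the third; and $\abs{M}_{*,T\wedge\tau}\int_0^t f(s)e^{-(F(t)-F(s))}\,ds = \abs{M}_{*,T\wedge\tau}\brR*{1-e^{-F(t)}} \le \abs{M}_{*,T\wedge\tau}$ for the fourth. Summing these four estimates gives exactly the asserted inequality.

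The only genuinely delicate point is the careful use of the integration-by-parts (product) formula for right-continuous bounded variation integrators: one must keep track of left limits in the Stieltjes integrals, observe that $M(s-)=M(s)$ and $z(s-)=z(s)$ for all but countably many $s$ so that these left limits can be dropped inside the Lebesgue integrals against $ds$, and use continuity of $e^{F(\cdot)}$ to kill the covariation sums. (One also notes that evaluating \eqref{eq:ode-blowingup} at $t=0$ forces $M(0)=0$, so keeping $\abs{M(0)}$ in the bound is harmless, and the degenerate case $T\wedge\tau=0$ is trivial.) Everything else is a routine estimate.
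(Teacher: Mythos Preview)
Your proposal is correct and follows essentially the same route as the paper: both introduce the integrating factor $e^{F(t)}$ with $F(t)=\int_0^t f(s)\,ds$, apply Lebesgue--Stieltjes integration by parts (twice) to obtain the variation-of-parameters representation
\[
z(t) = e^{-F(t)}\brR*{z(0)-M(0)} + M(t) + \int_0^t e^{-(F(t)-F(s))}g(s)\,ds - e^{-F(t)}\int_0^t M(s)\,de^{F}(s),
\]
and then bound the four terms exactly as you do. Your treatment of the covariation and left-limit issues is slightly more explicit than the paper's, but the argument is otherwise identical.
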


\begin{lemma}
	\label{lem:drift-exceeds}
	Fix $T \in (0,\infty)$.
	For each $n$,  let $V_n$ be a martingale with respect to some filtration $\{\clg^n_t\}$ such that $V_n(0) = 0$.
	Let $(r_n)_{n=1}^\infty$ be a positive sequence so that $\lim_{n\to \infty} r_n = +\infty$. Suppose  that 
	there is a $C \in (0,\infty)$ such that for all $n\in \NN$ and $t\in [0,T]$, $\QV{V_n}_t \leq C t$. Then for any $\epsilon > 0$
	\begin{equation*}
	\prob \brR*{\sup_{t \leq T} \left(V_n(t)  - r_n t\right) > \epsilon} \to 0
	\end{equation*}
	as $n \to \infty$.
\end{lemma}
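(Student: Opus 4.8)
The plan is to compare $V_n$ against the linear barrier $t\mapsto r_n t+\epsilon$ by splitting $[0,T]$ into a short initial piece $[0,\delta]$, where the barrier is essentially useless but the quadratic variation of $V_n$ is small, and the remaining piece $[\delta,T]$, where the barrier height $r_n\delta+\epsilon$ blows up. The assumption $\QV{V_n}_t\le Ct$ is used on the first piece, and $r_n\to\infty$ on the second.

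First I would fix $\epsilon>0$ and an arbitrary $\eta>0$; it is enough to show $\limsup_n\prob(\sup_{t\le T}(V_n(t)-r_nt)>\epsilon)\le\eta$. For $\delta\in(0,T)$, if $V_n(t)-r_nt>\epsilon$ for some $t\le T$ then either $t\le\delta$, whence $\sup_{s\le\delta}V_n(s)>\epsilon$ (since $r_nt\ge0$), or $t>\delta$, whence $\sup_{s\le T}V_n(s)>\epsilon+r_n\delta$ (since $r_n>0$). This gives the union bound
\begin{equation*}
\prob\Bigl(\sup_{t\le T}(V_n(t)-r_nt)>\epsilon\Bigr)\le\prob\Bigl(\sup_{t\le\delta}V_n(t)>\epsilon\Bigr)+\prob\Bigl(\sup_{t\le T}V_n(t)>\epsilon+r_n\delta\Bigr).
\end{equation*}
Next I would estimate both terms with Doob's maximal inequality. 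Since $V_n(0)=0$ and $V_n^2-\QV{V_n}$ is a martingale, $\E V_n(t)^2=\E\QV{V_n}_t\le Ct<\infty$ for $t\in[0,T]$, so $V_n^2$ is a nonnegative submartingale and
\begin{equation*}
\prob\Bigl(\sup_{t\le\delta}V_n(t)>\epsilon\Bigr)\le\frac{\E V_n(\delta)^2}{\epsilon^2}\le\frac{C\delta}{\epsilon^2},\qquad\prob\Bigl(\sup_{t\le T}V_n(t)>\epsilon+r_n\delta\Bigr)\le\frac{\E V_n(T)^2}{(\epsilon+r_n\delta)^2}\le\frac{CT}{(r_n\delta)^2}.
\end{equation*}
Because $r_n\to\infty$, the second bound tends to $0$ as $n\to\infty$ for each fixed $\delta$; choosing $\delta$ small enough that $C\delta/\epsilon^2<\eta$ and then letting $n\to\infty$ gives $\limsup_n\prob(\sup_{t\le T}(V_n(t)-r_nt)>\epsilon)\le\eta$, and since $\eta$ was arbitrary the lemma follows.

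The only delicate point is the initial segment $[0,\delta]$: there the barrier $r_nt$ provides no control however large $r_n$ may be, so one is forced to bound $\sup_{t\le\delta}V_n(t)$ directly — and this is precisely where the hypothesis $\QV{V_n}_t\le Ct$ enters, since it makes the $L^2$ size of $\sup_{t\le\delta}V_n$ uniformly (in $n$) small for small $\delta$. Everything else is a routine union bound together with Doob's inequality, and no further input is needed.
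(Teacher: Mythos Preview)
Your proof is correct and follows the same strategy as the paper's: split $[0,T]$ at a short initial segment, bound each piece by Doob's maximal inequality using $\E V_n(t)^2\le Ct$, and exploit $r_n\to\infty$ on the tail. The only cosmetic difference is that the paper chooses the split point $\delta_n=1/\sqrt{r_n}$ depending on $n$, so both terms $4C\delta_n/\epsilon^2$ and $4CT/(r_n\delta_n)^2$ vanish directly without the auxiliary $\eta$--$\limsup$ argument; your fixed-$\delta$ version is equally valid.
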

\begin{proof}
	Let $\delta_n \doteq \frac{1}{\sqrt{r_n}}$. Then
	\begin{align*}
	\prob \brR*{\sup_{0 \leq t \leq T} \brS*{V_n(t)  - r_n t} > \epsilon} \leq& \prob \brR*{\sup_{0 \leq t \leq \delta_n} \abs{V_n(t)} > \epsilon} + \prob \brR*{\sup_{\delta_n < t \leq T} \abs{V_n(t)} > r_n \delta_n } \nonumber \\
	 \quad \leq&  \frac{4\E V_n(\delta_n)^2}{\epsilon^2} + \frac{4\E V_n(T)^2}{(r_n\delta_n)^2} \nonumber \\
	\quad =&  \frac{4\E \QV{V_n}_{\delta_n}}{\epsilon^2} + \frac{4\E \QV{V_n}_T}{(r_n\delta_n)^2} 
	\leq  \frac{4C\delta_n }{\epsilon^2} + \frac{4CT}{(r_n\delta_n)^2} \to 0
	\end{align*} 
	where the inequality on the second line is from Doob's maximal inequality.
\end{proof}

\section{Proof of Theorem \ref{thm:diffusion-limit}}
\label{sec:proof-diffusion}
Now we start with some preliminary lemmas. Recall from Remark \ref{rem:onthm1}(\ref{item:mun-conv-fk}) that under the hypothesis of Theorem \ref{thm:diffusion-limit} we have $\Mun \to \fk \in \ld$ as $n \to \infty$. Along with the tightness of $\brC*{\norm{\Zn(0)}_1}_{n \in \nat}$ this also shows that $\Gn(0) \to \fk \in \ld$ as $n \to \infty$.

\begin{lemma}
	\label{lem:tapprox}
	Let $d_n \to \infty$, $\frac{d_n}{\sqrt{n}} \to 0$, and $\lambda_n \nearrow 1$. Assume that for some $k \in \nat$, $\Mun \to \fk$  in $\R^{\infty}$. Then for any $M > 0$ and $1 \leq i \leq k$, as $n \to \infty$
	\begin{equation}
		\label{eq:tbyalpha}
		\sup_{0 < \abs{z} \leq M} \abs{(\beta_n'(\mu_{n,i}) z)^{-1}\lam_n\int_0^{z} \beta_n'\brR*{\mu_{n,i} + y/\sqrt{n}}dy  - 1}  \to 0
	\end{equation}
\end{lemma}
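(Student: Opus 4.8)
The plan is to reduce \eqref{eq:tbyalpha} to the statement that $\beta_n'$ is essentially constant on an $O(n^{-1/2})$–neighborhood of $\mu_{n,i}$. Since $\Mun \to \fk$ in $\R^{\infty}$ and $1 \le i \le k$, we have $\mu_{n,i} \to 1$; as $d_n \ll \sqrt n$ and $d_n \ll n$, for all large $n$ the interval $I_n \doteq [\mu_{n,i} - M/\sqrt n,\, \mu_{n,i} + M/\sqrt n]$ satisfies $I_n \subseteq [1/2,\, 1 + M/\sqrt n]$ and $\inf I_n > (d_n-1)/n$, so on $I_n$ both $\beta_n$ and $\beta_n'$ are given by the polynomial expressions in \eqref{eq:betan}, \eqref{eq:betanprime} and $\beta_n' > 0$ there. (If $\mu_{n,i} + M/\sqrt n > 1$ one must evaluate these polynomials slightly beyond $1$; the auxiliary ratio estimates used below remain valid on $[1/2, 1+M/\sqrt n]$, since each factor in $\beta_n(x)/\gamma_n(x) = \prod_{j=0}^{d_n-1}\frac{1-j/(nx)}{1-j/n}$ lies within $O(d_n/n^{3/2})$ of $1$ there and $d_n^2 \ll n$, and similarly for $\beta_n'/\gamma_n'$.)

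The key step is the uniform bound
\[
	\delta_n \doteq \sup_{x \in I_n} \Bigl| \frac{\beta_n'(x)}{\beta_n'(\mu_{n,i})} - 1 \Bigr| \to 0 .
\]
I would prove this by writing $\dfrac{\beta_n'(x)}{\beta_n'(\mu_{n,i})} = \dfrac{\beta_n'(x)/\beta_n(x)}{\beta_n'(\mu_{n,i})/\beta_n(\mu_{n,i})}\cdot \dfrac{\beta_n(x)}{\beta_n(\mu_{n,i})}$. By Lemma \ref{lem:betan-approx} (with $\epsilon = 1/2$, together with the extension above), $\beta_n'(y)/\beta_n(y) = (1+o(1))\,\gamma_n'(y)/\gamma_n(y) = (1+o(1))\,d_n/y$ uniformly for $y \in I_n$, so the first factor equals $(1+o(1))\,\mu_{n,i}/x$, which tends to $1$ uniformly on $I_n$ since $|x - \mu_{n,i}| \le M/\sqrt n \to 0$ and $\mu_{n,i}\to 1$. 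For the second factor, $\log\bigl(\beta_n(x)/\beta_n(\mu_{n,i})\bigr) = \int_{\mu_{n,i}}^{x} \bigl(\beta_n'(u)/\beta_n(u)\bigr)\,du$, and since $|\beta_n'(u)/\beta_n(u)| = O(d_n)$ uniformly on $I_n$ while $|x - \mu_{n,i}| \le M/\sqrt n$, this integral is $O(d_n/\sqrt n) = o(1)$ because $d_n \ll \sqrt n$; hence the second factor is $e^{o(1)} \to 1$ uniformly. Multiplying the two estimates gives $\delta_n \to 0$.

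Finally, I would conclude as follows. For $0 < |z| \le M$, every point $\mu_{n,i} + y/\sqrt n$ with $y$ between $0$ and $z$ lies in $I_n$, so
\[
	\bigl(\beta_n'(\mu_{n,i})\,z\bigr)^{-1}\,\lambda_n \int_0^{z} \beta_n'\!\Bigl(\mu_{n,i} + \frac{y}{\sqrt n}\Bigr)\,dy
	= \lambda_n \cdot \frac{1}{z}\int_0^{z} \frac{\beta_n'(\mu_{n,i} + y/\sqrt n)}{\beta_n'(\mu_{n,i})}\,dy ,
\]
and the integral average on the right (over $[0,z]$ if $z>0$, over $[z,0]$ if $z<0$) differs from $1$ by at most $\delta_n$, uniformly in $z$. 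Since $\lambda_n \to 1$, the whole expression converges to $1$ uniformly over $0 < |z| \le M$, which is \eqref{eq:tbyalpha}. The only mildly delicate point is the bookkeeping when $\mu_{n,i} + M/\sqrt n > 1$, i.e.\ applying (or re-deriving) the ratio estimates on an interval that pokes slightly above $1$; this is dispatched by $d_n^2/n^{3/2} \to 0$, which I expect to be the one spot requiring care.
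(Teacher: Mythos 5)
Your proof is correct and takes essentially the same route as the paper: both reduce the claim to showing that $\beta_n'$ is nearly constant on the $O(n^{-1/2})$-neighborhood of $\mu_{n,i}$, and both extract the key estimate from Lemma \ref{lem:betan-approx} together with the fact that the log-derivative of $\gamma_n$ is $O(d_n)$ while the window has width $O(n^{-1/2})$, so the controlling quantity is $d_n/\sqrt n \to 0$. The only cosmetic difference is that the paper swaps $\beta_n'$ for $\gamma_n'$ once and computes $\gamma_n'(\mu_{n,i}+y/\sqrt n)/\gamma_n'(\mu_{n,i}) = (1+y/(\sqrt n\,\mu_{n,i}))^{d_n-1} = \exp(O(d_nM/\sqrt n))$ directly, whereas you split the ratio into a log-derivative factor (using only \eqref{eq:betan-ratio-approx}) and a $\beta_n$-ratio factor controlled by integrating the log-derivative; the underlying calculation is the same, and your extra care about the endpoint $\mu_{n,i}+M/\sqrt n$ possibly exceeding $1$ is a reasonable cleanup of a point the paper leaves implicit.
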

\begin{proof}

	To prove \eqref{eq:tbyalpha}, we will approximate $\beta_n'(x)$ by $\gamma_n'(x)$. Using Lemma \ref{lem:betan-approx}
	 \begin{equation*}
	 	\epsilon_n \doteq \sup_{x \in [1/2, 1]}\abs{\frac{\beta_n'(x)}{\gamma_n'(x)} - 1} \to 0.
	 \end{equation*} 
	Since $\mu_{n,k} \to 1$, there is an $N_0$ so that for $n \geq N_0$, $\mu_{n,i} + \frac{y}{\sqrt{n}} \geq \frac{1}{2},$ for any $i \leq k$ and $y \in \R$ with $\abs{y} \leq L$. Hence uniformly in  $0 < \abs{z} \leq L$ and $i \leq k$:
	\begin{align*}
	\frac{\lam_n}{z} \int_0^{z} \frac{\beta'_n(\mu_{n,i} + \frac{y}{\sqrt{n}})}{\beta_n'(\mu_{n,i})} dy &=   \frac{1 + o(1)}{z}\int_0^z \frac{\gamma_n'(\mu_{n,i} + \frac{y}{\sqrt{n}})}{\gamma_n'(\mu_{n,i})} dy  \nonumber\\
	&= \frac{1 + o(1)}{z}\int_0^z \brR*{1 + \frac{y}{\sqrt{n}\mu_{n,i}}}^{d_n-1} dy \nonumber\\
	&= \frac{1 + o(1)}{z}\int_0^z \exp \brC*{ (d_n - 1)\log \brR*{1 + \frac{y}{\sqrt{n}\mu_{n,i}}}} dy \nonumber\\
	&= \frac{1 + o(1)}{z}\int_0^z \exp \brC*{ O\brR*{\frac{d_nL}{\sqrt{n}\mu_{n,k}}}} dy  \to 1
	\end{align*}
	This shows \eqref{eq:tbyalpha}. 
\end{proof}
\begin{remark}
	\label{rem:tapprox-final}
	Suppose that the hypothesis of Lemma \ref{lem:tapprox} hold. Recall the definition of  $\Delta_{n,i}$ for $i>k$ from the proof of Lemma \ref{lem:finitedimensional}.
	We extend this definition by setting 
	\begin{equation}
		\label{eq:Delta}
		\Delta_{n,i}(z) \doteq 
	 t_{n,i}(z)/(\beta_n'(\mu_{n,i})z)\I{z \neq 0}  - 1 \text{ if } 1 \leq i  \leq k
	\end{equation}
        where $t_{n,i}$ is defined by \eqref{eq:tniz}.  With this extension
	\begin{equation}
		\label{eq:tapprox-final}
		t_{n,i}(z) = \begin{cases}
			\beta_n'(\mu_{n,i}) (1 + \Delta_{n,i}(z)) z & \text{ if }  1 \leq i \leq k\\
			\Delta_{n,i}(z) z & \text{ if } i > k 
		\end{cases}.
	\end{equation}
        Using this notation, Lemma \ref{lem:tapprox} and Lemma \ref{prop:taileffectszero} show that, for any $L > 0$
	\begin{equation}
	\label{eq:delta-is-small}
	\gamma_{n, L} \doteq \sup_{i \in \nat} \sup_{0 < \abs{z} \leq L}\abs{\Delta_{n,i}(z)} \to 0  \mbox{ as } n \to \infty.
	\end{equation}
    \end{remark}
The following corollary is an immediate consequence of Remark \ref{rem:tapprox-final} and Lemma \ref{lem:zn-compact}.
\begin{corollary} 
	\label{cor:finalzneq}
	Under the hypothesis of Lemma \ref{lem:tapprox}, $\Zn$ satisfies the following integral equations.\\
       For $i=1$
	\begin{align*}
		Z_{n,1}(t) &= Z_{n,1} (0) - \int_0^t \beta_n'(\mu_{n,1})(1 + \Delta_{n,1}(Z_{n,1}(s))) Z_{n,1}(s) ds
		  - \int_0^t (Z_{n,1}(s) - Z_{n,2}(s)) ds + \sqrt{n}M_{n,1}(t) 
	\end{align*}
	For $i \in \brC*{2, \ldots k}$
	\begin{align*}
		Z_{n,i}(t) &=  Z_{n,i}(0) + \int_0^t \beta_n'(\mu_{n,i-1})(1 + \Delta_{n,i-1}(Z_{n,i-1}(s))) Z_{n,i-1}(s) ds\nonumber \\
		& \quad - \int_0^t \beta_n'(\mu_{n,i})(1 + \Delta_{n,i}(Z_{n,i}(s))) Z_{n,i}(s) ds 
		- \int_0^t (Z_{n,i}(s)-Z_{n,i+1}(s)) ds + \sqrt{n}M_{n,i}(t) .
	\end{align*}
	For $i=k+1$
         \begin{align*}
		Z_{n,k+1}(t) &= Z_{n,k+1}(0) + \int_0^t \beta_n'(\mu_{n,k})(1 + \Delta_{n,k}(Z_{n,k}(s))) Z_{n,k}(s) ds\nonumber\\
		& \quad -\int_0^t \Delta_{n,k+1}(Z_{n,k+1}(s)) Z_{n,k+1}(s)
		 - \int_0^t (Z_{n,k+1}(s) - Z_{n,k+2}(s)) ds + \sqrt{n}M_{n,k+1}(t),
		\end{align*}
		For $i > k+1$
		\begin{align*}
		Z_{n,i}(t) &=  Z_{n,i}(0) + \int_0^t \Delta_{n,i-1}(Z_{n,i-1}(s)) Z_{n,i-1}(s) ds - \int_0^t \Delta_{n,i}(Z_{n,i}(s)) Z_{n,i}(s) ds \nonumber\\
		&\qquad - \int_0^t (Z_{n,i}(s) - Z_{n,i+1}(s)) ds + \sqrt{n}M_{n,i}(t),
	\end{align*}
	where $\Delta_{n,i}$ is as  in Remark \ref{rem:tapprox-final}. 

        Finally, if $Y_{n,1} \doteq \sum_{i=1}^k Z_{n,i}$, then 
        \begin{equation}
            \label{eq:yn1a}
            \begin{aligned}
                Y_{n,1}(t) &= Y_{n,1}(0) - \int_0^t \beta_n'(\mu_{n,k}) \brR*{1 + \Delta_{n,k}(Z_{n,k}(s)} Z_{n,k}(s) ds\\
				 &\quad- \int_0^t (Z_{n,1}(s) - Z_{n,k + 1}(s)) ds + \sum_{i=1}^k \sqrt{n} M_{n,i}(t)
        \end{aligned}
        \end{equation}
\end{corollary}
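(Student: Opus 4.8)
The plan is to simply read off the infinite-dimensional semimartingale equation \eqref{eq:scaled-marteq} from Lemma \ref{lem:zn-compact} one coordinate at a time, substituting the explicit representation of the drift. Recall that \eqref{eq:scaled-marteq} states
\[
\Zn(t) = \Zn(0) + \int_0^t \bvec{A_n}(\Zn(s))\, ds - \int_0^t \bvec{b}(\Zn(s))\, ds + \sqrt{n}\,\Mn(t),
\]
where by \eqref{eq:anzi} the $i$-th coordinate of $\bvec{A_n}(\bvec{z})$ is $t_{n,i-1}(z_{i-1}) - t_{n,i}(z_i)$ (with $t_{n,0}\equiv 0$), and by the definition of $\bvec{b}$ in Remark \ref{rem:vect-equation}, $\bvec{b}(\bvec{z})_i = z_i - z_{i+1}$. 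Since the Bochner integral in $\lSpace{\infty}$ commutes with coordinate projections, the identity above holds coordinatewise: for every $i\ge 1$ and $t\ge 0$,
\[
Z_{n,i}(t) = Z_{n,i}(0) + \int_0^t \brR*{t_{n,i-1}(Z_{n,i-1}(s)) - t_{n,i}(Z_{n,i}(s))}\, ds - \int_0^t \brR*{Z_{n,i}(s) - Z_{n,i+1}(s)}\, ds + \sqrt{n}\, M_{n,i}(t).
\]

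Next I would invoke the representation \eqref{eq:tapprox-final} from Remark \ref{rem:tapprox-final}, which is available since the hypothesis of Lemma \ref{lem:tapprox} is in force: $t_{n,i}(z) = \beta_n'(\mu_{n,i})\brR*{1 + \Delta_{n,i}(z)}\, z$ for $1\le i\le k$, $t_{n,i}(z) = \Delta_{n,i}(z)\, z$ for $i>k$, and $t_{n,0}(z)=0$. Plugging these into the coordinatewise identity and treating the boundary cases separately — $i=1$ (where the $t_{n,0}$ term vanishes), $2\le i\le k$, $i=k+1$ (where the incoming term $t_{n,k}$ is of the ``$\beta_n'$'' type while the outgoing term $t_{n,k+1}$ is of the ``$\Delta_{n,k+1}$'' type), and $i>k+1$ — yields precisely the four displayed families of equations for $Z_{n,i}$.

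Finally, to obtain \eqref{eq:yn1a}, I would sum the coordinatewise identity over $i=1,\ldots,k$. The drift contributions from $\bvec{A_n}$ telescope, leaving $t_{n,0}(Z_{n,0}(s)) - t_{n,k}(Z_{n,k}(s)) = -\beta_n'(\mu_{n,k})\brR*{1 + \Delta_{n,k}(Z_{n,k}(s))} Z_{n,k}(s)$; the contributions from $\bvec{b}$ telescope to $Z_{n,1}(s) - Z_{n,k+1}(s)$; the martingale terms add to $\sum_{i=1}^k \sqrt{n}\, M_{n,i}(t)$; and the initial data sum to $Y_{n,1}(0)$. This is exactly \eqref{eq:yn1a}. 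There is no genuine obstacle here: the statement is essentially an algebraic rewriting of \eqref{eq:scaled-marteq}, and the only points requiring (minor) care are the bookkeeping of the boundary indices $i=1$ and $i=k+1$ in the substitution step and the trivial observation that the $\lSpace{\infty}$-valued equation may be read coordinatewise.
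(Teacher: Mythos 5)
Your proposal is correct and matches exactly the intended argument: the paper states that Corollary \ref{cor:finalzneq} is an immediate consequence of Lemma \ref{lem:zn-compact} and Remark \ref{rem:tapprox-final}, which is precisely the coordinatewise reading of \eqref{eq:scaled-marteq} followed by the substitution \eqref{eq:tapprox-final} (with boundary cases $i=1$, $2\le i\le k$, $i=k+1$, $i>k+1$) and telescoping over $i=1,\dots,k$ to get \eqref{eq:yn1a}, as you describe.
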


\begin{lemma}
    \label{lem:large-neg-eigenval}
   Suppose \cg{$\lambda_n \nearrow 1$}, $d_n \to \infty$ and $d_n \ll n$. Assume that for some $k \geq 2$  $\mu_{n,k} \to 1$ and $\beta_n'(\mu_{n,k}) \to \alpha \in [0, \infty)$ as $n \to \infty$. Define the $k-1 \times k-1$ tridiagonal matrix 
   $A_n(s)$ as
   \begin{equation} \label{eq:An}
   	\begin{aligned}
   		A_n(s)[j,j] &= a_{n,j}(s) +1, \; 1 \le j \le k-1,\\
		A_n(s)[j,j+1] &= -1, \; 1 \le j \le k-2, \\
		A_n(s)[j,j-1] &= -a_{n,j-1}(s), \; 2 \le j \le k-1,
   	\end{aligned}
   \end{equation}
   and for all other $j,k$, $A_n(s)[j,k]=0$, where  $a_{n,i}(s) \doteq \beta_n'(\mu_{n,i})(1 + \Delta_{n,i}(Z_{n,i}(s)))$. 
 Then for any  $T, L \in (0,\infty)$
    \begin{equation*}
       \lim_{n\to \infty} \inf_{s \in [0, T \wedge \tau_{n,L}]} \inf_{\vec{x} \in \R^{k-1} \setminus \brC{0}} \frac{\vec{x}^t A_n(s) \vec{x}}{\norm{\vec{x}}^2} = +\infty.
    \end{equation*}
\end{lemma}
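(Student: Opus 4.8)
The plan is to write out the quadratic form of $A_n(s)$ explicitly, then exploit the fact that the diagonal coefficients are all asymptotically equal to reduce the form to a large multiple of a fixed, $n$-independent positive-definite matrix plus a small perturbation. First, set $c_{n,j}(s) \doteq 1 + a_{n,j}(s)$ for $1 \le j \le k-1$ and expand $\vec{x}^t A_n(s)\vec{x} = \sum_{i,j} A_n(s)[i,j] x_i x_j$ directly from \eqref{eq:An}: the super-diagonal entry $A_n(s)[j,j+1] = -1$ and the sub-diagonal entry $A_n(s)[j+1,j] = -a_{n,j}(s)$ together contribute $-(1 + a_{n,j}(s))\,x_j x_{j+1} = -c_{n,j}(s)\,x_j x_{j+1}$ to the pair $(j,j+1)$, whence
\[
   \vec{x}^t A_n(s)\,\vec{x} \;=\; \sum_{j=1}^{k-1} c_{n,j}(s)\, x_j^2 \;-\; \sum_{j=1}^{k-2} c_{n,j}(s)\, x_j x_{j+1}, \qquad \vec{x} \in \R^{k-1}.
\]

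The second, and main, step would be to show that the $c_{n,j}(s)$ are, uniformly in $j \in [k-1]$ and in $s \in [0, T \wedge \tau_{n,L}]$, asymptotically equal to one common divergent scale. By \eqref{eq:znboundeduptostopping} every coordinate of $\Zn$ is bounded by $L$ on $[0, T \wedge \tau_{n,L}]$, so $\sup_{j}\sup_{s \le T \wedge \tau_{n,L}} |\Delta_{n,j}(Z_{n,j}(s))| \le \gamma_{n,L} \to 0$ by \eqref{eq:delta-is-small}. Combining this with Lemma~\ref{prop:ratiosto1}, which gives $\beta_n'(\mu_{n,1}) \to \infty$ and $\beta_n'(\mu_{n,j})/\beta_n'(\mu_{n,1}) \to 1$ for $j \in [k-1]$, and recalling $a_{n,j}(s) = \beta_n'(\mu_{n,j})(1 + \Delta_{n,j}(Z_{n,j}(s)))$, one gets a deterministic sequence $\eta_n \to 0$ such that, for all large $n$,
\[
   (1 - \eta_n)\,\beta_n'(\mu_{n,1}) \;\le\; c_{n,j}(s) \;\le\; (1 + \eta_n)\,\beta_n'(\mu_{n,1}) \qquad \text{for all } j \in [k-1],\; s \in [0, T \wedge \tau_{n,L}].
\]
Then, writing $c^n_\ast(s) \doteq \min_{1 \le j \le k-1} c_{n,j}(s)$ and $\delta_n \doteq 2\eta_n/(1-\eta_n) \to 0$, one has $\inf_{s \le T \wedge \tau_{n,L}} c^n_\ast(s) \to \infty$ and $c_{n,j}(s) \le (1 + \delta_n)\,c^n_\ast(s)$ for all such $j,s$.

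The third step would be to write $c_{n,j}(s) = c^n_\ast(s)(1 + \varepsilon_{n,j}(s))$ with $0 \le \varepsilon_{n,j}(s) \le \delta_n$ and decompose
\[
   \vec{x}^t A_n(s)\, \vec{x} \;=\; c^n_\ast(s)\, q(\vec{x}) \;+\; c^n_\ast(s)\sum_{j} \varepsilon_{n,j}(s)\, x_j^2 \;-\; c^n_\ast(s)\sum_{j} \varepsilon_{n,j}(s)\, x_j x_{j+1},
\]
where $q(\vec{x}) \doteq \sum_{j=1}^{k-1} x_j^2 - \sum_{j=1}^{k-2} x_j x_{j+1}$. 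The completion-of-squares identity $q(\vec{x}) = \tfrac12\bigl(x_1^2 + x_{k-1}^2 + \sum_{j=1}^{k-2}(x_j - x_{j+1})^2\bigr)$ shows that $q$ is positive definite on $\R^{k-1}$, with a smallest eigenvalue $\kappa_k > 0$ independent of $n$ and $s$, so $q(\vec{x}) \ge \kappa_k \|\vec{x}\|^2$; the second term is nonnegative; and the third is at least $-\delta_n c^n_\ast(s)\|\vec{x}\|^2$ by $|x_j x_{j+1}| \le \tfrac12(x_j^2 + x_{j+1}^2)$. Hence, for all large $n$, all $s \in [0, T \wedge \tau_{n,L}]$ and all $\vec{x} \ne 0$,
\[
   \frac{\vec{x}^t A_n(s)\, \vec{x}}{\|\vec{x}\|^2} \;\ge\; (\kappa_k - \delta_n)\, c^n_\ast(s) \;\ge\; (\kappa_k - \delta_n)(1 - \eta_n)\, \beta_n'(\mu_{n,1}),
\]
and the last expression is deterministic and tends to $+\infty$; taking the infimum over $\vec{x}$ and then over $s$ gives the claim.

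I expect the only real difficulty to be precisely the $j$-dependence of the coefficients $a_{n,j}(s)$: if they were constant in $j$, the conclusion would be immediate from positive-definiteness of $q$, but with varying coefficients a crude completion-of-squares/AM--GM bound on the cross terms only yields nonnegativity, not the required blow-up. The way around this is the asymptotic homogenization supplied by Lemma~\ref{prop:ratiosto1} (the ratios $\beta_n'(\mu_{n,i})/\beta_n'(\mu_{n,1}) \to 1$ for $i \le k-1$) together with the uniform smallness of the $\Delta_{n,i}$ from \eqref{eq:delta-is-small}; these let one factor out the single divergent scale $\beta_n'(\mu_{n,1})$ and treat $A_n(s)$ as a fixed positive-definite matrix up to a perturbation that is $o(1)$ relative to that scale. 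A secondary point to keep track of is that all the estimates must be uniform over the random interval $[0, T \wedge \tau_{n,L}]$, which is exactly what the a priori bound \eqref{eq:znboundeduptostopping} secures.
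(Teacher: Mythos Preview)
Your proof is correct and follows essentially the same approach as the paper's: both factor out the divergent scale $\beta_n'(\mu_{n,1})$ using Lemma~\ref{prop:ratiosto1} and \eqref{eq:delta-is-small}, reduce to a fixed positive-definite tridiagonal form (your $q$ is exactly half the paper's matrix $H$), and bound the residual perturbation. The paper passes through the symmetrization $B_n = A_n + A_n^t$ and Frobenius-norm convergence to $H$, whereas you expand the quadratic form directly, but this is a cosmetic difference.
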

\begin{proof}
    Let $b_{n,i}(s) \doteq a_{n,i}(s) + 1$.
and $B_n(s) \doteq A_n(s)  + A_n(s)^t$. Then $B_n(s)$ is a symmetric tridiagonal matrix with entries
   \begin{equation} \label{eq:Bn}
   	\begin{aligned}
   		B_n(s)[j,j] &= 2b_{n,j}(s) , \; 1 \le j \le k-1,\\
		B_n(s)[j,j+1] &= -b_{n,j}, \; 1 \le j \le k-2, \\
		B_n(s)[j,j-1] &= -b_{n,j-1}(s) , \; 2 \le j \le k-1,
   	\end{aligned}
   \end{equation}
   Let $b_n \doteq \beta_n'(\mu_{n,1})$. By Lemma \ref{prop:ratiosto1}, $b_n \to \infty$ and by the uniform convergence in \eqref{eq:delta-is-small} and
    Lemma \ref{prop:ratiosto1} once more
    \begin{equation*}
        \max_{i\le k-1}\sup_{s \in [0, T \wedge \tau_{n,L}]} \abs{\frac{b_{n,i}(s)}{b_n} - 1} \to 0 \mbox{ as } n \to \infty.
    \end{equation*}
   This in particular  shows that
    \begin{equation}
        \label{eq:convtoH}
        \sup_{s \in [0,T \wedge \tau_{n,L}]} \norm{\frac{1}{b_n} B_n(s) - H}_F \to 0,
    \end{equation}
    where $\norm{\cdot}_F$ is the Frobenius norm and $H$ is the  $k-1 \times k-1$ tridiagonal matrix given as
    \begin{equation*} \label{eq:H}
    	\begin{aligned}
    		H[j,j] &= 2 , \; 1 \le j \le k-1,\\
 		H[j,j+1] &= -1, \; 1 \le j \le k-2, \\
 		H[j,j-1] &= -1 , \; 2 \le j \le k-1,
    	\end{aligned}
    \end{equation*}
  Note for any $\vec{x} = (x_1, x_2, \ldots, x_{k-1}) \in \R^{k-1}$ by completing squares 
    \begin{equation*}
        \vec{x}^t H \vec{x} = x_1^2 + (x_2 - x_1)^2 + (x_3 - x_2)^2 + \cdots + (x_{k-2} - x_{k-1})^2 + x_{k-1}^2,
    \end{equation*}
    which is always non-zero if $\vec{x} \neq 0$. Let $c \doteq \inf_{\norm{\vec{x}}=1} \vec{x}^t H \vec{x}$. This shows that $H$ is a positive definite matrix.  
	Since the unit sphere is compact, the minimum is attained and hence $c > 0$. 

   Finally,
    \begin{equation*}
        \begin{aligned}
            \vec{x}^t \frac{1}{b_n} B_n(s) \vec{x} &=  \vec{x}^t H \vec{x} + \vec{x}^t \brR*{\frac{1}{b_n}B_n - H}\vec{x} \\
                                                   &\geq   \vec{x}^t H \vec{x} - \norm{b_n^{-1}B_n - H}_F \norm{\vec{x}}^2 
                                                   \geq \brR*{c - \norm{b_n^{-1} B_n - H}_F} \norm{\vec{x}}^2.
        \end{aligned}
    \end{equation*}
    By \eqref{eq:convtoH}, there is an $N_0 \in \nat$ so that for each $n \geq N_0$ and each $s \in [0, T \wedge \tau_{n,L}]$, $\norm{b_n^{-1} B_n - H}_F \leq c/2$. Hence for each $\vec{x} \in \R^{k-1}$ 
    \begin{equation*}
        2\vec{x}^t A_n(s) \vec{x} = \vec{x}^2 B_n(s) \vec{x} \geq \brR*{c/2}b_n \norm{\vec{x}}^2.
    \end{equation*}
    Since $b_n \to \infty$, this completes the proof.
\end{proof}
    
\begin{lemma} 
    \label{lem:headconv0}
   Suppose that the hypothesis of Theorem \ref{thm:diffusion-limit} holds with $k \geq 2$ and let $\vec{X}_n \doteq (Z_{n,1}, Z_{n,2}, \ldots, Z_{n,k-1})$. Then for  $L, T, \epsilon \in (0,\infty)$  
    \begin{equation}
        \label{eq:xnbounded}
         \prob \brR*{ \sup_{s \in [0,T \wedge \tau_{n,L}]} \norm{\vec{X}_n(s)} > \norm{\vec{X}_n(0)} + \epsilon} \to 0,
    \end{equation}
    and 
    \begin{equation}
        \sup_{s \in [\epsilon, T \wedge \tau_{n,L}]} \norm{\vec{X}_n(s)} \pconv 0, \label{eq:xnconv0}
    \end{equation}
    as $n \to \infty$.
\end{lemma}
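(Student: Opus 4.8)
The plan is to package the equations for $Z_{n,1},\dots,Z_{n,k-1}$ into a single finite-dimensional system, form the Lyapunov function $v_n:=\norm{\vec X_n}^2$, and exploit the diverging negative drift supplied by Lemma~\ref{lem:large-neg-eigenval}. First I would note that, since $\Mun\to\fk$ and $\Gn(0)\to\fk$ in $\ld$ (recorded at the start of this section), the hypotheses of Lemma~\ref{lem:tapprox} hold, so Corollary~\ref{cor:finalzneq} applies; summing its equations for $i=1,\dots,k-1$ yields
\begin{equation*}
	\vec X_n(t)=\vec X_n(0)-\int_0^t A_n(s)\vec X_n(s)\,ds+\int_0^t Z_{n,k}(s)\,\vec e_{k-1}\,ds+\sqrt{n}\,\vec M_n(t),
\end{equation*}
where $A_n(s)$ is the tridiagonal matrix of Lemma~\ref{lem:large-neg-eigenval}, $\vec M_n=(M_{n,1},\dots,M_{n,k-1})$, and $Z_{n,k}\vec e_{k-1}$ records the one-sided coupling of the $(k-1)$-st equation to $Z_{n,k}$; by \eqref{eq:znboundeduptostopping}, on $[0,T\wedge\tau_{n,L}]$ one has $\abs{Z_{n,k}(s)}\le L$ and $\norm{\vec X_n(s)}\le L$. (When $k=2$ this is a scalar equation and \eqref{eq:xnbounded}--\eqref{eq:xnconv0} follow directly from Lemma~\ref{lem:negativedrift} with $f(s)=a_{n,1}(s)+1$, noting $\inf_{s\le T\wedge\tau_{n,L}}f(s)\to\infty$ by Lemma~\ref{prop:ratiosto1} and \eqref{eq:delta-is-small}; the argument below works uniformly in $k\ge 2$.)

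Next I would apply the change-of-variables formula for finite-variation processes (each $\sqrt{n}M_{n,i}$ has finite variation by the Poisson construction) to $v_n=\norm{\vec X_n}^2$, obtaining
\begin{equation*}
	v_n(t)=v_n(0)-2\int_0^t \vec X_n(s)^t A_n(s)\vec X_n(s)\,ds+2\int_0^t Z_{n,k}(s)X_{n,k-1}(s)\,ds+N_n(t)+[\vec X_n]_t,
\end{equation*}
with $N_n(t):=2\int_0^t \vec X_n(s-)^t\,\sqrt{n}\,d\vec M_n(s)$ a martingale, $N_n(0)=0$, and $[\vec X_n]_t:=\sum_{s\le t}\norm{\Delta\vec X_n(s)}^2$. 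On $[0,T\wedge\tau_{n,L}]$, Lemma~\ref{lem:large-neg-eigenval} gives $\vec X_n(s)^t A_n(s)\vec X_n(s)\ge c_n v_n(s)$ with $c_n:=\inf_{s\le T\wedge\tau_{n,L}}\inf_{\vec x\ne 0}\vec x^t A_n(s)\vec x/\norm{\vec x}^2\to\infty$, while $2\abs{Z_{n,k}(s)X_{n,k-1}(s)}\le 2L\sqrt{v_n(s)}\le \tfrac{1}{2}c_n v_n(s)+2L^2/c_n$; hence $dv_n(t)\le(-\tfrac{3}{2}c_n v_n(t)+2L^2/c_n)\,dt+dN_n(t)+d[\vec X_n]_t$ on that interval (in differential-inequality shorthand). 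Multiplying by the integrating factor $e^{3c_n t/2}$ and integrating, I get, for $t\le T\wedge\tau_{n,L}$,
\begin{equation*}
	v_n(t)\le e^{-3c_n t/2}\,v_n(0)+\frac{4L^2}{3c_n^2}+[\vec X_n]_t+P_n(t),\qquad P_n(t):=e^{-3c_n t/2}\!\int_0^t e^{3c_n s/2}\,dN_n(s),
\end{equation*}
using $e^{-3c_n t/2}\int_0^t e^{3c_n s/2}\,d[\vec X_n]_s\le[\vec X_n]_t$.

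It then remains to show the error terms vanish in probability. From the quadratic-variation computations in the proof of Lemma~\ref{lem:martconv}, for each $i\le k-1<k$ the bracket $\QV{\sqrt{n}M_{n,i}}_T$ is bounded by $\sup_n(1+\lambda_n)$ and tends to $0$ in probability; since $[\vec X_n]_T=\sum_{i=1}^{k-1}[\sqrt{n}M_{n,i}]_T$ with $\E[\sqrt{n}M_{n,i}]_T=\E\QV{\sqrt{n}M_{n,i}}_T$, this gives $[\vec X_n]_T\pconv 0$; moreover $\abs{X_{n,i}(s-)}\le L$ for $s\le T\wedge\tau_{n,L}$, so $\QV{N_n}_{T\wedge\tau_{n,L}}\le 4L^2\sum_{i=1}^{k-1}\QV{\sqrt{n}M_{n,i}}_{T\wedge\tau_{n,L}}$ is bounded and $\pconv 0$, whence $\sup_{t\le T\wedge\tau_{n,L}}\abs{N_n(t)}\pconv 0$ by Doob's maximal inequality applied to the stopped martingale. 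For $P_n$, the point is that it solves $P_n(t)+\tfrac{3}{2}c_n\int_0^t P_n(s)\,ds=N_n(t)$, hence $P_n(t)=N_n(t)-\tfrac{3}{2}c_n\int_0^t e^{-3c_n(t-s)/2}N_n(s)\,ds$, so that $\sup_{t\le T\wedge\tau_{n,L}}\abs{P_n(t)}\le 2\sup_{t\le T\wedge\tau_{n,L}}\abs{N_n(t)}\pconv 0$. With $\tilde\rho_n:=4L^2/(3c_n^2)+[\vec X_n]_{T\wedge\tau_{n,L}}+\sup_{t\le T\wedge\tau_{n,L}}\abs{P_n(t)}\pconv 0$, the last display yields $v_n(t)\le v_n(0)+\tilde\rho_n$ for all $t\le T\wedge\tau_{n,L}$; since $\{\norm{\Zn(0)}_1\}$, and hence $\{v_n(0)\}$, is tight and $\sqrt{a^2+b}\le a+\sqrt{b}$, this is \eqref{eq:xnbounded}. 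Finally, for $t\ge\epsilon$ one has $e^{-3c_n t/2}v_n(0)\le e^{-3c_n\epsilon/2}v_n(0)\pconv 0$, so $\sup_{t\in[\epsilon,T\wedge\tau_{n,L}]}v_n(t)\le e^{-3c_n\epsilon/2}v_n(0)+\tilde\rho_n\pconv 0$, which is \eqref{eq:xnconv0}.

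The main obstacle I expect is the control of the exponentially time-weighted stochastic integral $P_n$ up to the random horizon $\tau_{n,L}$: a direct Doob estimate is useless because the weight $e^{3c_n t/2}$ blows up at a rate ($\sim\beta_n'(\mu_{n,1})$) that cannot be matched against the decay of $\QV{\sqrt{n}M_{n,i}}_T$, and the remedy is the Volterra identity above, which trades the explosive weight for the bound $\sup\abs{P_n}\le 2\sup\abs{N_n}$. A secondary subtlety is that the leakage term $Z_{n,k}\vec e_{k-1}$ is only $O(1)$, not $o(1)$, so it must be absorbed into the diverging drift via Cauchy--Schwarz rather than treated as a perturbation.
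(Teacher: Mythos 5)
Your proposal is correct and follows essentially the same route as the paper: form $v_n=\norm{\vec X_n}^2$, invoke Lemma~\ref{lem:large-neg-eigenval} for a diverging coercivity constant, and then control the time-weighted stochastic integral by moving the explosive weight off the martingale. The only cosmetic differences are that the paper packages the integrating-factor estimate into Lemma~\ref{lem:negativedrift} (whose proof does the same weight-shifting via integration by parts with the positive measure $de^F$) and keeps the cross term $2Z_{n,k}Z_{n,k-1}$ as a $g_n/m_n$ remainder rather than absorbing it via Young's inequality, so the two arguments are interchangeable.
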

\begin{proof}
    Let $\vec{W}_n= (\sqrt{n}M_{n,1}, \ldots, \sqrt{n}M_{n, k-1})$. Then by Corollary \ref{cor:finalzneq}
    \begin{equation}
        \vec{X}_n(t) = \vec{X}_n(0) - \int_0^t A_n(s) \vec{X}_n(s) ds + \vec{e}_{k-1} \int_0^t Z_{n,k}(s) ds + \vec{W_n}(t). \label{eq:Xn} 
    \end{equation}
    where $\vec{e}_{k-1}$ is the vector $(0,0, \ldots, 0,1)' \in \R^{k-1}$ and $A_n(s)$ is $k-1 \times k-1$ matrix defined in \eqref{eq:An}.
    Using Ito's formula to the function  $f(\vec{x}) = \norm{\vec{x}}^2$ along with the semimartingale representation from \eqref{eq:Xn} 
    \begin{align}
        \norm{\vec{X}_n(t)}^2 &= \norm{\vec{X}_n(0)}^2 + 2\int_{0^+}^t \inp{\vec{X}_n(s-)}{d\vec{X}_{n}(s)} + [\vec{W}_n]_t \nonumber\\
    &= \norm{\vec{X}_n(0)}^2 - 2\int_{0}^t \inp{\vec{X}_n(s)}{A_n(s) \vec{X}_n(s)} ds + 2\int_{0}^t Z_{n,k}(s) \inp{\vec{X}_n(s)}{\vec{e}_{k-1}} ds \nonumber\\
        & \quad + 2\int_{0}^t \inp{\vec{X}_n(s-)}{d\vec{W_n}(s)} + [\vec{W_n}]_t,  \label{eq:normxn}
    \end{align}
	 where $[\vec{W_n}]_t \doteq \sum_{i=1}^{k-1} [\sqrt{n}M_{n,i}]_t$.
   Define 
    \begin{align*}
        f_n(s) &\doteq \frac{\vec{X}_n(s)^t A_n(s) \vec{X}_n(s)}{\norm{\vec{X}_n(s)}^2} \I{ \vec{X}_n(s) \neq 0} + n \I{\vec{X}_n(s) = 0} \\
        g_n(s) &\doteq 2Z_{n,k}(s) Z_{n,k-1}(s)\\
        B_n(s) &\doteq 2\int_{0}^t \inp{\vec{X}_n(s-)}{d\vec{W_n}(s)} + [\vec{W_n}]_t \label{eq:Bn}
    \end{align*}
    then \eqref{eq:normxn} becomes
    \begin{equation}
            \label{eq:normnegdrift}
            \norm{\vec{X}_n(t)}^2  = \norm{\vec{X}_n(0)}^2 - 2\int_{0}^t f_n(s) \norm{\vec{X}_n(s)}^2 ds + \int_0^t g_n(s) ds  + B_n(t).
    \end{equation}
    By Lemma \ref{lem:large-neg-eigenval} 
    \begin{equation}
        \label{eq:mninf}
        m_n \doteq \inf_{s \in [0, T \wedge \tau_{n,L}]} f_n(s) \to + \infty \mbox{ as } n \to \infty.
    \end{equation}
 By Ito's isometry, for $i\le k-1$
    \begin{equation*}
    \begin{aligned}
        \E \sup_{s \in [0, T \wedge \tau_{n,L}]} \abs{\int_{0}^t Z_{n,i}(s-) d\brR*{\sqrt{n} M_{n,i}}(s)}^2 &\leq 4 \E \int_0^{T \wedge \tau_{n,L}} Z_{n,i}^2(s-) d[\sqrt{n} M_{n,i}]_s \\
                                                                                                            &\leq 4 L^2 \E [\sqrt{n} M_{n,i}]_T = 4 L^2 \E \QV{\sqrt{n} M_{n,i}}_T.
 \end{aligned}
    \end{equation*}
   where the second to last inequality is obtained by using $\norm{\Zn}_{2, T \wedge \tau_{n,L}} \leq L$.
    From the proof of Lemma \ref{lem:martconv} we see that for any $i \le k-1$, 
    $\E \QV{\sqrt{n} M_{n,i}}_T  \to 0$
    as $n \to \infty$. Hence from the definition of $B_n$ and $\vec{W_n}$
    \begin{equation}
        \label{eq:bnzero}
        \abs{B_n}_{*,T \wedge \tau_{n,L}} \pconv 0  \mbox{ as } n \to \infty.
    \end{equation}
   Applying Lemma \ref{lem:negativedrift} to \eqref{eq:normnegdrift} with $z(t) = \norm{X_n(t)}^2$, $f=2f_n$, $g = g_n$, $M = B_n$, and $\tau = \tau_{n,L}$ 
  shows for any $t \in [0, T \wedge \tau_{n,L}]$ 
    \begin{equation*}
       \norm{\vec{X}_n(t)}^2 \leq \frac{\abs{g_n}_{*,T \wedge \tau_{n,L}}}{2m_n} + 2 \abs{B_n}_{*, T \wedge \tau_{n,L}} + e^{-2 m_n t} \brR*{\norm{\vec{X}_n(0)}^2 + \abs{B_n(0)}}.
    \end{equation*}
    Taking $t = \epsilon_n \doteq 1/\sqrt{m_n}$ and using \eqref{eq:mninf}, \eqref{eq:bnzero}, $\abs{g_n}_{*, T\wedge \tau_{n,L}} \leq 2L^2$ and $\vec{X}_n(0) \pconv (z_1, \ldots z_{k-1})^t$, we see that
    \begin{equation*}
        \label{eq:nonzerotime}
        \sup_{t \in [\epsilon_n, T \wedge \tau_{n,L}]} \norm{\vec{X}_n(t)} \pconv 0.
    \end{equation*}
  Since $\epsilon_n \to 0$, this shows  \eqref{eq:xnconv0}
    for any fixed $\epsilon > 0$. Finally, from \eqref{eq:normnegdrift}, we see that
    \begin{equation*}
        \label{eq:zerotime}
        \sup_{t \in [0, \epsilon_n \wedge \tau_{n,L} \wedge T]} \norm{\vec{X}_n(t)}^2 \leq \norm{\vec{X}_n(0)}^2 + \abs{g_n}_{*,T\wedge \tau_{n,L}} \epsilon_n + \abs{B_n}_{*,T\wedge \tau_{n,L}}.
    \end{equation*}
    The convergence in \eqref{eq:xnbounded} is now immediate on using that
	 $\epsilon_n \to 0$, $\abs{g_n}_{*, T \wedge \tau_{n,L}} \leq 2L^2$ and \eqref{eq:bnzero} holds.
\end{proof}
\begin{corollary}
    \label{cor:znintconv0}
    Under the assumptions of Lemma \ref{lem:headconv0}, for each $i < k$,
$\int_0^{T \wedge \tau_{n,L}} \abs{Z_{n,i}(s)} ds \pconv 0$,  as  $n \to \infty$.
 \end{corollary}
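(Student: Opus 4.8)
The plan is to deduce the claim directly from the two conclusions \eqref{eq:xnbounded}--\eqref{eq:xnconv0} of Lemma~\ref{lem:headconv0}, by splitting the time integral at an arbitrarily small fixed time $\epsilon>0$. First I would note that for $i<k$ the coordinate $Z_{n,i}$ is an entry of $\vec X_n=(Z_{n,1},\ldots,Z_{n,k-1})$, so $\abs{Z_{n,i}(s)}\le\norm{\vec X_n(s)}$ for every $s$, and that by the a priori bound \eqref{eq:znboundeduptostopping} one has $\norm{\vec X_n(s)}\le\norm{\Zn(s)}_2\le L$ for all $s\in[0,T\wedge\tau_{n,L}]$ (the first inequality because $\vec X_n(s)$ is a finite subvector of $\Zn(s)$).

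Next, I would fix $\delta>0$, choose $\epsilon\in(0,T)$ with $L\epsilon<\delta/2$, and write
\begin{equation*}
\int_0^{T\wedge\tau_{n,L}}\abs{Z_{n,i}(s)}\,ds=\int_0^{\epsilon\wedge\tau_{n,L}}\abs{Z_{n,i}(s)}\,ds+\int_{\epsilon\wedge\tau_{n,L}}^{T\wedge\tau_{n,L}}\abs{Z_{n,i}(s)}\,ds.
\end{equation*}
By the previous step the first integral is at most $L\epsilon<\delta/2$, deterministically. The second integral is bounded above by $T\sup_{s\in[\epsilon,\,T\wedge\tau_{n,L}]}\norm{\vec X_n(s)}$, where the supremum over an empty interval is read as $0$ (which covers the case $\tau_{n,L}\le\epsilon$, in which the second integral itself vanishes). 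Consequently
\begin{equation*}
\prob\!\left(\int_0^{T\wedge\tau_{n,L}}\abs{Z_{n,i}(s)}\,ds>\delta\right)\le\prob\!\left(\sup_{s\in[\epsilon,\,T\wedge\tau_{n,L}]}\norm{\vec X_n(s)}>\frac{\delta}{2T}\right),
\end{equation*}
and the right-hand side tends to $0$ as $n\to\infty$ by \eqref{eq:xnconv0}. Since $\delta>0$ is arbitrary, $\int_0^{T\wedge\tau_{n,L}}\abs{Z_{n,i}(s)}\,ds\pconv 0$, which is the assertion.

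I do not expect a real obstacle here: the estimate is a straightforward bookkeeping consequence of the quantitative decay of $\vec X_n$ away from time $0$ proved in Lemma~\ref{lem:headconv0}, together with the uniform $L^2$ control on $\Zn$ up to $\tau_{n,L}$ from \eqref{eq:znboundeduptostopping}. The only mild subtlety is the treatment of small values of $\tau_{n,L}$, which is handled by the observation that the post-$\epsilon$ integral is then empty (equivalently, by using the convention on the supremum over an empty interval).
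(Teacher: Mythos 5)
Your proof is correct and is essentially the same argument the paper gives: split the time integral at a small fixed $\epsilon$, bound the early part deterministically by $L\epsilon$ using the a priori bound on $\norm{\Zn}_2$ up to $\tau_{n,L}$, bound the tail by $T$ times the sup, and then invoke \eqref{eq:xnconv0}. The only cosmetic differences are that you estimate through $\norm{\vec X_n(s)}$ rather than $\abs{Z_{n,i}(s)}$ directly, and you spell out the empty-interval convention when $\tau_{n,L}\le\epsilon$; neither changes the argument.
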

\begin{proof}
    For any $\epsilon > 0$
    \begin{align*}
        \int_0^{T \wedge \tau_{n,L}} \abs{Z_{n,i}(s)} ds &\leq \int_{[0, \epsilon \wedge \tau_{n,L}]} \abs{Z_{n,i}(s)} ds + \int_{[\epsilon, T \wedge \tau_{n,L}]} \abs{Z_{n,i}(s)} ds \\
                                                         &\leq L \epsilon +  \sup_{s \in [\epsilon, T \wedge \tau_{n,L}]} \abs{Z_{n,i}(s)}T.
    \end{align*}
    Now fix  $\delta > 0$ and let $\epsilon=\frac{\delta}{2L}$. Then for any $i<k$
    \begin{align}
        \prob\brR*{\int_0^{T \wedge \tau_{n,L}} \abs{Z_{n,i}(s)} ds > \delta} \leq \prob\brR*{\sup_{s \in [\epsilon, T \wedge \tau_{n,L}]} \abs{Z_{n,i}(s)}  > \frac{\delta}{2\cg{T}}}, 
    \end{align}
    which from \eqref{eq:xnconv0} converges to 0  as $n \to \infty$. Since $\delta > 0$ was arbitrary, this completes the proof.
\end{proof}

\begin{proof}[Proof of Theorem \ref{thm:diffusion-limit}] 
	Recall the conditions in the theorem.
        By Remark \ref{rem:onthm1}(\ref{item:mun-conv-fk}) and the tightness of $\brC*{\norm{\Zn(0)}_1}_{n \in \nat}$, the hypothesis of Lemma \ref{lem:martconv} holds. Hence by Skorokhod's embedding theorem, we can assume that $\{(\Zn(0), \Mn)\}_{n \in \nat}$ and a standard Brownian motion $B$ are defined on a common probability space such that for any $T > 0$ 
 \begin{equation} 
                \label{eq:martconv-as} \sup_{t \leq T} \norm{\sqrt{n}\Mn(t) - \sqrt{2} B(t) \ek}_2 \to 0 
            \end{equation} and 
            \begin{equation}
\label{eq:initialconv-as}
	\norm{\Zn(0) - \bvec{z}}_{2}  \to 0
\end{equation}
almost surely, as $n \to \infty$. Let $\Y$ and $\Y_n$ be as in the statement of the theorem.
 Taking $m \doteq r - k + 1$, $\vec{Y}_n \doteq (\sum_{i=1}^k Z_{n,i}, Z_{n,k+1}, \ldots, Z_{n, r})$
 be the stochastic process with sample paths in $\DSpace{T}[\R^m]$ corresponding to the first $m$ coordinates of $\Y_n$.
  Note $\bvec{Y_{n, m+}} = \bvec{Z_{n,r+}}$, $Z_{n,k} = Y_{n,1} - \sum_{i=1}^{k-1} Z_i$, and for $k=1$, $Y_{n,1} = Z_{n,1}$. Hence by Corollary \ref{cor:finalzneq}, $\vec{Y}_n$ satisfy 
\begin{align}
    Y_{n,1}(t) &= Y_{n,1}(0) -\int_0^t  a_{n,k}(s) Y_{n,1}(s) ds - \I{k=1} \int_0^t Y_{n,1}(s) ds + \int_0^t Y_{n,2}(s) ds + \sqrt{n} M_{n,k}(t) \nonumber\\
               &\quad +\sum_{i=1}^{k-1} \int_0^t a_{n,k}(s) Z_{n,i}(s) ds - \I{k>1}\int_0^t Z_{n,1}(s) ds + \sum_{i=1}^{k-1} \sqrt{n} M_{n,i}(t), \label{eq:yn1}\\
    Y_{n,2}(t) &= Y_{n,2}(0) +\int_0^t  a_{n,k}(s) Y_{n,1}(s) ds -\int_0^t Y_{n,2}(s) ds +\int_0^t Y_{n,3}(s) ds  \nonumber\\
               &\quad - \sum_{i=1}^{k-1} \int_0^t a_{n,k}(s) Z_{n,i}(s)ds - \int_0^t \delta_{n,k+1}(s) Y_{n,2}(s) ds + \sqrt{n} M_{n,k+1}(t), \label{eq:yn2}\\
                                 \shortintertext{and for $i \in \brC*{3,4 \ldots m}$}
    Y_{n,i}(t) &= Y_{n,i}(0) - \int_0^t Y_{n,i}(s) ds + \int_0^t Y_{n,i+1}(s) ds \nonumber\\
               &\quad + \int_0^t \delta_{n,k+i-2}(s) Y_{n, i-1}(s) ds - \int_0^t \delta_{n,k+i-1}(s)Y_{n,i}(s) ds + \sqrt{n} M_{n,k+i-1}(t). \label{eq:yni}
\end{align}
where $a_{n,k}(s)$ is as in Lemma \ref{lem:large-neg-eigenval}  and $\delta_{n,i}(s) \doteq \Delta_{n,i}(Z_{n,i}(s))$ for $i \in \nat$. 

Since
    $\norm{\Zn}_{2,T \wedge \tau_{n,L}} \leq L$,
we have by \eqref{eq:delta-is-small} that, for any $i \in \nat$, 
\begin{equation}
    \label{eq:deltaniconv}
\abs{\delta_{n,i}}_{*, T \wedge \tau_{n,L}} \leq  \gamma_{n,L} \to 0 \mbox{ as } n \to \infty.
\end{equation}
 Moreover since $\beta_n'(\mu_{n,k}) \to \alpha \in [0, \infty)$, this also shows that the term
\begin{equation}
    \label{eq:ankconv}
    \sup_{s \in [0,T \wedge \tau_{n,L}]} \abs{a_{n,k}(s) - \alpha} \to 0 \mbox{ as } n \to \infty.
\end{equation}
We now show that
    \begin{equation}
        \label{eq:ynconv}
        \norm{\Yn - \Y}_{2,T \wedge \tau_{n,L}} \pconv 0 \mbox{ as } n \to \infty.
    \end{equation}
To see this, note that, by Remark \ref{rem:onthm1}(\ref{item:mun-conv-fk}) , the hypothesis of  Lemma \ref{lem:finitedimensional} is satisfied, and hence $\norm{\bvec{Z_{n,r+}}}_{2,T\wedge \tau_{n,L}} \pconv 0$. Since $\bvec{Y_{n,m+}} = \bvec{Z_{n,r+}}$ and $\bvec{Y_{m+}}=0$, this shows that 
    \begin{equation}
        \label{eq:yntailconv}
        \norm{\bvec{Y_{n,m+}} - \bvec{Y_{m+}}}_{2,T \wedge \tau_{n,L}} \pconv 0.
    \end{equation}
   Thus in order to prove \eqref{eq:ynconv} it suffices to show that $\sum_{i=1}^m |Y_{n,i} - Y_i|_{*, T \wedge \tau_{n,L}} \pconv 0$ as $n \to \infty$. To show this we consider
 $U_{n,i} \doteq  Y_{n,i} - Y_i$. 
 Subtracting \eqref{eq:diffusion-limit1} from \eqref{eq:yn1}, \eqref{eq:yn2} and \eqref{eq:yni}, we see
     \begin{equation}
         \label{eq:un}
         \begin{aligned}
         U_{n,1}(t) &= U_{n,1}(0) -(\alpha + \I{k=1}) \int_0^t  U_{n,1}(s) ds + \int_0^t U_{n,2}(s) ds + \sqrt{n} M_{n,k}(t) - B(t) + W_{n,1}(t)\\
          U_{n,2}(t) &= U_{n,2}(0) +\alpha \int_0^t  U_{n,1}(s) -\int_0^t U_{n,2}(s) ds +\int_0^t U_{n,3}(s) ds  + W_{n,2}(t)\\
          U_{n,i}(t) &= U_{n,i}(0) - \int_0^t U_{n,i}(s) ds + \int_0^t U_{n,i+1}(s) ds  + W_{n,i}(t) \quad \text{for $i \in \brC*{3,4 \ldots m}$}
      \end{aligned}
     \end{equation}
    where
    \begin{align*}
        W_{n,1}(t) \doteq&\int_0^t (\alpha- a_{n,k}(s)) Y_{n,1}(s)ds + \sum_{i=1}^{k-1} \int_0^t a_{n,k}(s) Z_{n,i}(s) ds - \I{k>1}\int_0^t Z_{n,1}(s) ds + \sum_{i=1}^{k-1} \sqrt{n} M_{n,i}(t) \\
        W_{n,2}(t) \doteq& \int_0^t  (a_{n,k}(s)-\alpha) Y_{n,1}(s)ds - \sum_{i=1}^{k-1} \int_0^t a_{n,k}(s) Z_{n,i}(s)ds - \int_0^t \delta_{n,k+1}(s) Y_{n,2}(s) ds + \sqrt{n} M_{n,k+1}(t),\\
        W_{n,i}(t) \doteq& \int_0^t \delta_{n,k+i-2}(s) Y_{n, i-1}(s) ds - \int_0^t \delta_{n,k+i-1}(s)Y_{n,i}(s) ds + \sqrt{n} M_{n,k+i-1}(t) \quad \text{for $i \in \brC*{3, \ldots m}$}.  
    \end{align*} 
    Note that, for each $n$,  $\norm{\Yn}_{2, T \wedge \tau_{n,L}} \leq k \norm{\Zn}_{2,T \wedge \tau_{n,L}}$, which by \eqref{eq:znboundeduptostopping} is bounded above by $kL$. Hence by \eqref{eq:ankconv}, \eqref{eq:deltaniconv}, \eqref{eq:martconv-as} and Corollary \ref{cor:znintconv0},
    \begin{equation}
        \label{eq:wnconv0}
        \abs{W_{n,i}}_{*, T \wedge \tau_{n,L}} \pconv 0 \text{ as } n \to \infty
    \end{equation}
    for each $i \in [m]$. Let $\norm{U_n}_{1,t} \doteq \sup_{s \in [0,t]} \sum_{i=1}^m \abs{U_{n,i}(t)}$. 
	Then, from \eqref{eq:un},  for any $t \in [0,T \wedge \tau_{n,L}]$ 
    \begin{equation*}
        \norm{U_n}_{1,t} \leq \sum_{i=1}^m \brR*{\abs{U_{n,i}(0)} + \abs{W_{n,i}}_{*,T \wedge \tau_{n,L}}} + \abs{\sqrt{n}M_{n,k}-B}_{*,T} + R \int_0^t \norm{U_n}_{1,s} ds
    \end{equation*}
with $R \doteq \max(2 \alpha + \I{k=1}, 2)$. Hence by Gronwall's inequality
    \begin{equation*}
        \norm{U_n}_{1,T\wedge \tau_{n,L}} \leq \brR*{\abs{\sqrt{n}M_{n,k}-B}_{*,T} + \sum_{i=1}^m \brR*{\abs{U_{n,i}(0)} + \abs{W_{n,i}}_{*,T \wedge \tau_{n,L}}}} e^{RT}.
    \end{equation*}
    By our hypothesis, as $n \to \infty$, $\abs{U_{n,i}(0)} =\abs{Z_{n,k+i-1}(0) - z_{n,k+i-1}} \pconv 0$ for each $i \in [m]$. Hence by
    \eqref{eq:wnconv0} and \eqref{eq:martconv-as}, $\norm{U_n}_{1,T\wedge \tau_{n,L}} = \sum_{i=1}^m |Y_{n,i} - Y_i|_{*, T \wedge \tau_{n,L}} \pconv 0$ as $n \to \infty$. Combined with \eqref{eq:yntailconv}, this completes the proof of \eqref{eq:ynconv}.

Next we prove \eqref{eq:tightness}. Fix  $\delta > 0$.
Since $\Y$ has sample paths in $\CC([0,T]:\lSpace{2})$, we can find
 $L_1 \in (0,\infty)$ so that 
\begin{equation}
    \label{eq:boundy}
    \prob\brR*{\norm{\Y}_{2, T} > L_1} \leq \frac{\delta}{2}.
\end{equation}
Also, since $\Zn(0) \pconv \bvec{z}$, we can find a  $L_2 \in (0,\infty)$ so that
\begin{equation}
    \label{eq:boundznzero}
    \sup_n \prob\brR*{ \norm{\Zn(0)}_2 > L_2} \leq \frac{\delta}{2}.
\end{equation}
Let $L \doteq (L_1+1) + k (L_2 + 1) + 1$. Also, let $\vec{X}_n$ be as in Lemma \ref{lem:headconv0} when $k>1$. For $k=1$, we set
$\vec{X}_n \doteq 0$. Then,
\begin{equation*}
    \begin{aligned}
        \norm{\Zn}_{2,T\wedge \tau_{n,L}} &\leq  \norm{\vec{X}_n}_{2,T \wedge \tau_{n,L}} + \norm{\Yn - \bvec{e_1}\sum_{i=1}^{k-1} Z_{n,i}}_{2,T\wedge \tau_{n,L}}  \\
                                          &\leq \norm{\Yn}_{2,T\wedge \tau_{n,L}}  + k\I{k > 1} \norm{\vec{X}_n}_{2,T \wedge \tau_{n,L}}. 
    \end{aligned}
\end{equation*}
Hence for each $n \in \nat$
\begin{align*}
    \prob\brR*{\tau_{n,L} \leq T} &\leq \prob\brR*{\norm{\Zn}_{2,T\wedge \tau_{n,L}} > L - 1}\nonumber\\
                                  &\leq \prob\brR*{\norm{\Yn}_{2,T\wedge \tau_{n,L}} > L_1 + 1} + \prob\brR*{\norm{\vec{X}_n}_{2,T\wedge \tau_{n,L}} > L_2 + 1}, \nonumber\\
       &\leq \delta + \prob\brR*{\norm{\Yn-\Y}_{2,T\wedge \tau_{n,L}} > 1} + \prob\brR*{\norm{\vec{X}_n}_{2,T\wedge \tau_{n,L}} > \norm{\vec{X}_n(0)} + 1},
\end{align*}
where the last inequality uses   \eqref{eq:boundy} and \eqref{eq:boundznzero}.
From Lemma \ref{lem:headconv0} and \eqref{eq:ynconv} we see
\begin{equation*}
    \limsup_{n\to \infty} \prob \brR*{\norm{\Zn}_{2,T} \geq L} \leq \limsup_{n\to \infty} \prob\brR*{\tau_{n,L} \leq T} \leq \delta.
\end{equation*}
Since $\delta > 0$ is arbitrary, the convergence in  \eqref{eq:tightness} is now immediate.

This convergence in particular says that
$
    \lim_{L \to \infty} \sup_n \prob \brR*{\tau_{n,L} \leq T} = 0.
$
Using Lemma \ref{lem:stopping-time-trick} with $F_n(t) = \norm{\Yn-\Y}_{2,t}$ we now see from \eqref{eq:ynconv} that
$
    \norm{\Yn - \Y}_{2,T} \pconv 0$
as $n \to \infty$.  Similarly, if $k > 1$, then taking $F_n(t) =  \sup_{s \in [\epsilon, t]} \abs{Z_{n,i}(s)}$ in Lemma \ref{lem:stopping-time-trick} we conclude from Lemma \ref{lem:headconv0} that for each  $i \in [k-1]$ and $\epsilon > 0$
$\sup_{s \in [\epsilon, T]} \abs{Z_{n,i}(s)} \pconv 0$
as $n \to \infty$ . This completes the proof of Theorem \ref{thm:diffusion-limit}.
\end{proof}

\section{Proof of Theorem \ref{thm:exponential}}
\label{sec:proof-thm-exponential}
In this section we give the proof of Theorem \ref{thm:exponential}. We begin by giving a convenient representation for $\Z_n$ under the assumptions of Theorem \ref{thm:exponential} and establishing some apriori convergence properties.
\begin{lemma}
	\label{lem:exp-setup}
        Suppose $c_n = \frac{d_n}{\sqrt{n}} \to c \in (0, \infty)$ and $\lambda_n = 1 - \brR*{\frac{\log d_n}{d_n} + \frac{\alpha_n}{\sqrt{n}}}$ where $\alpha_n \in \RR$, $\liminf_{n\to \infty}\alpha_n > -\infty$ and 
        $\frac{\alpha_n}{n^{1/4}} \to 0$. Suppose also that $\brC*{\norm{\Zn(0)}_1}_{n \in \nat}$ is a tight sequence of random variables and $\bvec{Z_{n,r+}}(0) \pconv \bvec{0}$ in $\lSpace{2}$ for some $r\chmd{\geq}2$. Then there are real stochastic processes $\delta_n, W_{n,i}$ with sample paths in $\DSpace*{\infty}[\R]$ such that for any $t \ge 0$
        \begin{equation}
	\begin{aligned}
            Z_{n,1}(t) = Z_{n,1}(0)- &\int_0^{t} Z_{n,1}(s) ds + \int_0^{t}  Z_{n,2}(s) ds + \sqrt{n} M_{n,1}(t) \\
                                     &- (c_ne^{c_n \alpha_n})^{-1} \int_0^{t} (1+\delta_n(s)) \brR*{e^{c_nZ_{n,1}(s)} - 1}  ds\\
            Z_{n,2}(t) = Z_{n,2}(0)   &- \int_0^{t} Z_{n,2}(s) ds + \int_0^t Z_{n,3}(s) ds + W_{n,2}(t)\\
	&+ (c_ne^{c_n \alpha_n})^{-1} \int_0^{t} (1+\delta_n(s)) \brR*{e^{c_n Z_{n,1}(s)} - 1}   ds   \\
        Z_{n,i}(t) = Z_{n,i}(0)  - &\int_0^{t} Z_{n,i}(s) ds + \int_0^t Z_{n,i+1}(s) ds + W_{n,i}(t) \quad \text{for } i \in \brC*{3, \ldots, r} 
	\end{aligned}
        \label{eq:znexpeq}
        \end{equation}
        and for any fixed  $L, T \in (0,\infty)$,  
        \begin{enumerate}
            \item $\sqrt{n} M_{n,1} \dconv \sqrt{2} B$ in $\DSpace*{\infty}[\R]$ where $B$ is a standard Brownian motion,
            \item $\supabs{\delta_n}{T_{n}} \to 0$ a.s. 
            \item $\supabs{W_{n,i}}{T_n} \pconv 0$ for $i \in \brC*{2, \ldots, r}$,
            \item $\norm{\bvec{Z_{n,r+}}}_{2, T_{n}} \pconv 0$,
        \end{enumerate}
        where $T_{n} \doteq T \wedge \tau_{n, L}$ and $\tau_{n, L}$ is defined as in \eqref{eq:defntaunm}.
\end{lemma}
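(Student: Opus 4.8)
The plan is to start from the semimartingale decomposition for $\Zn$ provided by Lemma \ref{lem:zn-compact}: under the present hypotheses \eqref{eq:scaled-marteq} reads, for $i\ge 1$,
$Z_{n,i}(t) = Z_{n,i}(0) + \int_0^t \big(t_{n,i-1}(Z_{n,i-1}(s)) - t_{n,i}(Z_{n,i}(s))\big)\,ds - \int_0^t \big(Z_{n,i}(s)-Z_{n,i+1}(s)\big)\,ds + \sqrt n M_{n,i}(t)$, with $t_{n,0}\equiv 0$ and $t_{n,i}$ as in \eqref{eq:tniz}. The idea is to rewrite each drift $t_{n,i}$ in the form demanded by \eqref{eq:znexpeq} and to bundle the resulting errors into $\delta_n$ and the $W_{n,i}$'s. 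First I would record the relevant properties of the near fixed point: since $1-\lambda_n\to 0$ and $(1-\lambda_n)d_n = \log d_n + \alpha_n c_n \to \infty$ (using $\liminf_n\alpha_n>-\infty$ and $c_n\to c$), Lemma \ref{prop:f1} gives $\Mun\to\bvec{f_1}$ in $\lSpace{1}$, and hence, by tightness of $\{\norm{\Zn(0)}_1\}$, $\Gn(0) = \Mun + n^{-1/2}\Zn(0) \pconv \bvec{f_1}$ in $\lSpace{1}$, equivalently in $\ld$. This lets me apply Lemma \ref{lem:martconv} with $k=1$ to produce a standard Brownian motion $B$ with $\sqrt n\Mn \dconv \sqrt 2 B\bvec{e_1}$ in $\DSpace*{\infty}[\lSpace{2}]$; extracting the first coordinate gives conclusion (1), and the remaining coordinates give $\sup_{t\le T}\abs{\sqrt n M_{n,i}(t)}\pconv 0$ for every $i\ge 2$, which I will use in verifying (3).

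The main step is the analysis of $t_{n,1}$. Since $d_n = c_n\sqrt n \ll n^{2/3}$ and $d_n\alpha_n^2/n = c_n\alpha_n^2/\sqrt n\to 0$ (as $\alpha_n = o(n^{1/4})$), Lemma \ref{lem:t-as-exponential} applies; rearranging its conclusion yields $t_{n,1}(z) = (1+\delta_n'(z))\,(c_ne^{c_n\alpha_n})^{-1}\big(e^{c_n z}-1\big)$ for all $z$, where $\delta_n'$ extends to a continuous function on $\R$ (the apparent singularity at $z=0$ is removable since $t_{n,1}$ and $z\mapsto e^{c_nz}-1$ vanish to exactly first order there) and $\sup_{\abs z\le L}\abs{\delta_n'(z)}\to 0$ for each $L$. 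I would then set $\delta_n(s)\doteq\delta_n'(Z_{n,1}(s))$, $W_{n,2}(t)\doteq \sqrt n M_{n,2}(t) - \int_0^t t_{n,2}(Z_{n,2}(s))\,ds$, and, for $3\le i\le r$, $W_{n,i}(t)\doteq \sqrt n M_{n,i}(t) + \int_0^t \big(t_{n,i-1}(Z_{n,i-1}(s)) - t_{n,i}(Z_{n,i}(s))\big)\,ds$. Substituting back into the decomposition above (and using $t_{n,0}\equiv 0$ for the $i=1$ line) produces \eqref{eq:znexpeq}; all these processes are defined for every $t\ge 0$, so \eqref{eq:znexpeq} holds globally, while the approximations are only quantitatively good on $[0,T_n]$, where $\norm{\Zn}_{2,T_n}\le L$ by \eqref{eq:znboundeduptostopping}.

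It then remains to verify (2)--(4). For (2): on $[0,T_n]$ one has $\abs{Z_{n,1}(s)}\le L$, so $\sup_{s\le T_n}\abs{\delta_n(s)}\le\sup_{\abs z\le L}\abs{\delta_n'(z)}\to 0$, a deterministic bound, hence the asserted a.s.\ convergence. For (3): $\Mun\to\bvec{f_1}$ forces $\mu_{n,2}\to 0$, so for $j\ge 2$ and $n$ large $\mu_{n,j}+L/\sqrt n\le\mu_{n,2}+L/\sqrt n\le 1-2\log d_n/d_n$; since $d_n\to\infty$ and $\limsup_n d_n/n = 0<1$, Lemma \ref{prop:betan-conv-zero} gives $\sup_{0\le x\le\mu_{n,2}+L/\sqrt n}\beta_n'(x)\to 0$, and then \eqref{eq:tniz} yields $\int_0^{T_n}\abs{t_{n,j}(Z_{n,j}(s))}\,ds\le\lambda_n LT\sup_{0\le x\le\mu_{n,2}+L/\sqrt n}\beta_n'(x)\to 0$ for all $j\ge 2$; combining with $\sup_{t\le T}\abs{\sqrt n M_{n,i}(t)}\pconv 0$ for $i\ge 2$ gives $\sup_{s\le T_n}\abs{W_{n,i}(s)}\pconv 0$ for $i\in\{2,\ldots,r\}$. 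For (4): apply Lemma \ref{lem:finitedimensional} with $k=1$, which is legitimate since $r\ge 2>1$, $\Gn(0)\pconv\bvec{f_1}$ in $\ld$ (shown above), and $\bvec{Z_{n,r+}}(0)\pconv\bvec{0}$ in $\lSpace{2}$ by hypothesis; this gives $\norm{\bvec{Z_{n,r+}}}_{2,T_n}\pconv 0$.

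The one genuinely delicate point I anticipate is the treatment of $t_{n,1}$ near the origin — establishing that the singularity in $\delta_n'$ at $z=0$ is removable (so that $\delta_n$ has right-continuous sample paths with left limits and \eqref{eq:znexpeq} holds identically, including on the set $\{Z_{n,1}(s)=0\}$, where both sides contribute $0$), together with the bookkeeping verifying that the exponential form of the drift is internally consistent with $\mu_{n,1}=\lambda_n$. The rest is routine matching of terms and invocation of the estimates of Sections \ref{sec:technical-estimates} and \ref{sec:general-lemmas}; in particular, the substantive analytic content is already packaged in Lemmas \ref{lem:t-as-exponential}, \ref{prop:betan-conv-zero}, \ref{lem:martconv} and \ref{lem:finitedimensional}.
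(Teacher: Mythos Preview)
Your proposal is correct and follows essentially the same approach as the paper: define $\delta_n$ via Lemma \ref{lem:t-as-exponential}, define the $W_{n,i}$ as the martingale plus the residual $t_{n,j}$ terms for $j\ge 2$, and invoke Lemmas \ref{prop:f1}, \ref{lem:martconv}, \ref{lem:finitedimensional} for the convergence statements. The only cosmetic difference is that for item (3) the paper cites Lemma \ref{prop:taileffectszero} (which packages the $\beta_n'$ estimate you extract directly from Lemma \ref{prop:betan-conv-zero}), and the paper does not explicitly address the removable singularity at $z=0$ that you flag---your treatment there is in fact slightly more careful.
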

\begin{proof}
	Recall the definition of $t_{n,i}$ from Lemma \ref{lem:zn-compact}. Define
	\begin{equation*}
		\delta_n(s) \doteq t_{n,1}(Z_{n,1}(s))c_n \brR*{ e^{c_n\brS*{Z_{n,1}(s) - \alpha_n}} - e^{-c_n \alpha_n}}^{-1} - 1
	\end{equation*}
        so that 
        \begin{equation*}
            \label{eq:t1exp}
            t_{n,1}(Z_{n,1}(s)) = (1 + \delta_n(s))c_n^{-1} \brR*{ e^{c_n\brS*{Z_{n,1}(s) - \alpha_n}} - e^{-c_n \alpha_n}}.
        \end{equation*}
	Since $\sup_{s \leq T\wedge \tau_{n, L}} \abs{Z_{n,1}(s)} \leq L$, Lemma \ref{lem:t-as-exponential} shows that $\supabs{\delta_n}{T_n} \to 0$ a.s. 
        Define
 \begin{equation*}
    \begin{aligned}
        W_{n,2}(t) &\doteq  - \int_0^t t_{n,2}(Z_{n,2}(s)) ds + \sqrt{n} M_{n,2}(t) \\
        W_{n,i}(t) &\doteq   \int_0^t t_{n,i-1}(Z_{n,i-1}(s)) ds - \int_0^t t_{n,i}(Z_{n,i}(s)) ds + \sqrt{n} M_{n,i}(t) \quad \text{ for } i \in \{3, \ldots,r \}.
    \end{aligned} 
 \end{equation*}
 From Lemma \ref{lem:zn-compact} it follows that \eqref{eq:znexpeq} is satisfied.  
Lemma \ref{prop:f1} shows that $\Mun \to \bvec{f_1} \in \ld$. Along with the assumed tightness of $\brC*{\norm{\Zn(0)}_1}_{n \in \nat}$, this shows $\Gn(0) = \Mun + \frac{\Zn(0)}{\sqrt{n}}\pconv \bvec{f_1}$ in $\ld$. Hence by Lemma \ref{lem:martconv} and Lemma \ref{lem:finitedimensional}, 
\begin{equation}\label{eq:cgcenmn}
\sqrt{n}\Mn \dconv \sqrt{2}B \bvec{e_1} \mbox{ in } \DSpace*{\infty}[\lSpace{2}]
\end{equation} and  $\norm{\bvec{Z_{n,r+}}}_{2, T \wedge \tau_{n, L}} \pconv 0$ as $n \to \infty$. 
Since $\supabs{Z_{n,i}}{T \wedge \tau_{n,L}} \leq L$ and $\mu_{n,2}\to 0$, Lemma \ref{prop:taileffectszero}, together with
\eqref{eq:cgcenmn}, shows that $\supabs{W_{n,i}}{T_n} \pconv 0 $ for each $i \in \brC*{2, \ldots, r}$, as $n \to \infty$.
\end{proof}
The next lemma gives pathwise existence and uniqueness of solutions to a system of stochastic differential equations
in which the drift fails to satisfy a linear growth condition.
\begin{lemma}
	\label{lem:exp-uniqueness}
        Suppose $c \in (0, \infty)$, $\alpha \in (0,\infty]$ and $B$ is a standard Brownian motion. Then \chmd{for any $r \geq 2$}  the system of equations 
        \begin{equation}
	\begin{aligned}
            Z_{1}(t) &= z_1- \int_0^{t} Z_{1}(s) ds + \int_0^{t}  Z_{2}(s) ds + \sqrt{2} B(t) - \brR*{ce^{c\alpha}}^{-1} \int_0^t \brR*{e^{cZ_1(s)} - 1} ds  	 \\
            Z_{2}(t) &= z_{2}   - \int_0^{t} Z_{2}(s) ds + \int_0^t Z_{3}(s) ds +  \brR*{ce^{c\alpha}}^{-1} \int_0^t \brR*{e^{cZ_1(s)} - 1} ds \\
            Z_{i}(t) &= z_{i}  - \int_0^{t} Z_{i}(s) ds + \int_0^t Z_{i+1}(s) ds \quad \text{for } i \in \brC*{3, \ldots, r} \\
            Z_{i}(t) &= 0 \qquad \text{for } i > r 
        \end{aligned} \label{eq:zexp}
        \end{equation}
        has a unique pathwise solution $\Z$ with sample paths  in $\CSpace*{\infty}[\lSpace{2}]$ for any $(z_1, \ldots, z_r) \in \R^r$.
\end{lemma}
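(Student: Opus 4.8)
The plan is to reduce \eqref{eq:zexp} to a two–dimensional stochastic differential equation whose only nonlinearity, $e^{cZ_1}$, enters with a \emph{negative} sign in the drift of $Z_1$ and is therefore dissipative precisely because $c>0$. First dispose of the case $\alpha=\infty$: then $(ce^{c\alpha})^{-1}=0$, the exponential terms vanish, and \eqref{eq:zexp} is a finite–dimensional linear SDE with constant (hence Lipschitz) coefficients, so classical theory gives a unique strong solution. Assume henceforth $\alpha\in(0,\infty)$ and set $c'\doteq(ce^{c\alpha})^{-1}\in(0,\infty)$. Observe that $(Z_3,\dots,Z_r)$ solves the autonomous linear ODE system $\dot Z_i=-Z_i+Z_{i+1}$, $3\le i\le r$, with $Z_{r+1}\equiv0$; this has a unique deterministic solution, so in particular $Z_3(\cdot)$ is a fixed continuous function, bounded on each interval $[0,T]$. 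It therefore suffices to establish pathwise existence and uniqueness for the coupled pair $(Z_1,Z_2)$, treating $Z_3(\cdot)$ as a known bounded forcing term.

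Since the drift of $(Z_1,Z_2)$ is of class $C^1$ in the state variables (smoothness of the exponential) and continuous in $t$, while the diffusion coefficient is the constant vector $(\sqrt2,0)$, standard results for SDEs with locally Lipschitz coefficients (obtained by localizing the Lipschitz case) yield a unique strong solution up to an explosion time $\zeta=\lim_{M\to\infty}\tau_M$, with $\tau_M\doteq\inf\{t:\ |(Z_1(t),Z_2(t))|\ge M\}$. The main task is then to prove $\zeta=\infty$ almost surely, which I would do by confining $(Z_1,Z_2)$ to an a.s.\ finite random box on each $[0,T\wedge\zeta)$, deducing the bounds in the following order.

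\emph{Lower bound on $Z_2$.} Because $Z_2$ has no martingale part and $e^{cZ_1}\ge0$, its absolutely continuous path satisfies $\dot Z_2(t)\ge -Z_2(t)-K_T$ with $K_T\doteq\sup_{[0,T]}|Z_3|+c'$ deterministic, so $Z_2(t)\ge -C_T$ on $[0,T]$ with $C_T$ deterministic. \emph{Lower bound on $Z_1$.} Using $Z_2\ge -C_T$, the drift of $Z_1$ dominates $\tilde b(z_1)\doteq -z_1-C_T+c'(1-e^{cz_1})$, which is locally Lipschitz, tends to $+\infty$ linearly as $z_1\to-\infty$, and tends to $-\infty$ (exponentially fast, using $c>0$) as $z_1\to+\infty$; the one–dimensional SDE $d\tilde Z_1=\tilde b(\tilde Z_1)\,dt+\sqrt2\,dB$ driven by the same $B$ is non‑explosive (the Lyapunov function $z_1^2$ works, since $\sup_{w\in\RR}(-c'we^{cw})<\infty$). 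A comparison argument for one–dimensional diffusions with common diffusion coefficient then gives $Z_1(t)\ge\tilde Z_1(t)$ for $t<\zeta$, whence $\inf_{[0,T\wedge\zeta)}Z_1\ge\inf_{[0,T]}\tilde Z_1=:-\rho_T>-\infty$ a.s.; equivalently one can reach this conclusion by applying It\^o's formula to $e^{-cZ_1}$ and noting that the resulting dynamics are dominated by a linear SDE. \emph{Upper bounds.} Adding the first two equations, $S\doteq Z_1+Z_2$ solves $dS=(-Z_1+Z_3)\,dt+\sqrt2\,dB$; by the previous step $-Z_1\le\rho_T$, so $S(t)\le S(0)+(\rho_T+\sup_{[0,T]}|Z_3|)\,T+\sqrt2\sup_{[0,T]}B(t)=:\sigma_T<\infty$ a.s., and then $Z_1=S-Z_2\le\sigma_T+C_T$ and $Z_2=S-Z_1\le\sigma_T+\rho_T$. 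Thus $(Z_1,Z_2)$ remains in a bounded random box on $[0,T\wedge\zeta)$, which forces $\zeta>T$; as $T$ is arbitrary, $\zeta=\infty$. Finally, since $Z_i\equiv0$ for $i>r$ and each of the finitely many nonzero coordinates is continuous, the solution has continuous paths in $\lSpace{2}$, and uniqueness in $\CSpace*{\infty}[\lSpace{2}]$ is inherited from pathwise uniqueness of the finite–dimensional system.

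The step I expect to require the most care is the lower bound on $Z_1$: matching the hypotheses of a comparison theorem when the drift of $Z_1$ is only adapted in $t$ and merely continuous in $z_1$, and checking non‑explosion of the auxiliary diffusion $\tilde Z_1$. This is exactly where positivity of $c$ is indispensable — it is what turns the superexponential term into a restoring rather than destabilizing force; without it the drift $-c'e^{cZ_1}$ would instead accelerate $Z_1\to-\infty$ and the system would genuinely explode.
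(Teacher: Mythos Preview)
Your argument is correct. Both your proof and the paper's reduce to the two–dimensional system for $(Z_1,Z_2)$ after observing that $(Z_3,\ldots,Z_r)$ is deterministic, and both exploit the same cancellation: the sum $S=Z_1+Z_2$ evolves as $dS=(-Z_1+Z_3)\,dt+\sqrt2\,dB$, free of the exponential term. The difference is in how non-explosion is then established. The paper makes the change of variables $(Y_1,Y_2)=(Z_1,Z_1+Z_2)$, truncates the nonlinearity with a cutoff $\eta_L$, and applies It\^o's formula to $\|Y^L\|^2$; the cross term $-Y_1 e^{cY_1}$ is bounded above (exactly your observation $\sup_w(-w e^{cw})<\infty$), yielding a Gr\"onwall bound on $\E\|Y^L\|_{*,T}^2$ uniform in $L$, whence $\mathbb P(\tau_L<T)\to0$. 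Your route is instead a chain of pathwise, almost-sure one–dimensional bounds (lower bound on $Z_2$ by an ODE inequality, lower bound on $Z_1$ by SDE comparison, then upper bounds via $S$). Your approach trades the single two–dimensional Lyapunov computation for several one–dimensional estimates and a comparison theorem; it is arguably more elementary in that it avoids computing second moments, but it relies on an SDE comparison result whose hypotheses (as you note) need localization to accommodate the merely locally Lipschitz drifts. Both arguments use $c>0$ at the same place: in the paper it makes $-xe^{cx}$ bounded above, in yours it makes the auxiliary diffusion $\tilde Z_1$ non-explosive.
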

\begin{proof}
    The case when $\alpha = \infty$ is standard and is thus omitted. Consider now the case $\alpha < \infty$. It is straightforward to see that there is a unique   $\bvec{Z_{2+}} \doteq (Z_3, Z_4, \ldots)$ in $\CSpace*{\infty}[\lSpace{2}]$ that solves the last two equations in \eqref{eq:zexp}. Hence it suffices to show that, the system of equations
\begin{equation}\label{eq:z12}
	\begin{aligned}
            Z_1(t) &= z_1 - \brR*{ce^{c\alpha}}^{-1} \int_0^t \brR*{e^{cZ_1(s)} - 1} ds + \int_0^t (Z_2(s)-Z_1(s)) ds + \sqrt{2}B(t)\\
            Z_2(t) &= z_2 + \brR*{ce^{c\alpha}}^{-1} \int_0^t \brR*{e^{cZ_1(s)} - 1}ds - \int_0^t Z_2(s) ds + \int_0^t f(s) ds
	\end{aligned}
\end{equation}
has a unique pathwise solution $(Z_1, Z_2)$ with sample paths  in $\CSpace*{\infty}[\R^2]$ where  $f \doteq Z_3 \in \CSpace*{\infty}[\R]$ is a given (non-random) continuous trajectory and $(z_1, z_2)\in \RR^2$.

Define 
$y_1=z_1+z_2$, $y_2 = z_2$ and 
consider the equation:
\begin{equation}\label{eq:y12}
	\begin{aligned}
            Y_1(t) &= y_1 - \brR*{ce^{c\alpha}}^{-1} \int_0^t \brR*{e^{cY_1(s)} - 1} ds + \int_0^t (Y_2(s)-2Y_1(s)) ds + \sqrt{2}B(t)\\
            Y_2(t) &= y_2  - \int_0^t Y_1(s) ds + \int_0^t f(s) ds + \sqrt{2}B(t).
	\end{aligned}
\end{equation}
Note that $(Z_1, Z_2)$ solve \eqref{eq:z12} if and only if $(Y_1, Y_2)$, with $Y_1=Z_1$ and $Y_2=Z_1+Z_2$ solve \eqref{eq:y12}.
Thus it suffices to prove
existence and uniqueness of solutions for
\eqref{eq:y12}.

For $L\in (0,\infty)$, let $\eta_L: \RR \to [0,1]$ be such that $\eta_L$ is smooth, $\eta_L(x)=1$ for $|x|\le L$ and $\eta_L(x)=0$
for $|x|\ge L+1$. Consider the equation
\begin{equation}\label{eq:y12m}
	\begin{aligned}
            Y_1^L(t) &= y_1 - \brR*{ce^{c\alpha}}^{-1} \int_0^t e^{cY_1^L(s)}\eta_L(Y_1^L(s)) ds  + (ce^{c\alpha})^{-1} t\\ 
                     &\quad  + \int_0^t (Y_2^L(s)-2Y_1^L(s)) ds + \sqrt{2}B(t)\\
            Y_2^L(t) &= y_2  - \int_0^t Y_1^L(s) ds + \int_0^t f(s) ds + \sqrt{2}B(t).
	\end{aligned}
\end{equation}
Since for each $L$ \eqref{eq:y12m} is an equation with (globally) Lipschitz coefficients, by standard results,  it has a unique pathwise \chmd{continuous} solution.

Fix  $T \in (0,\infty)$ and let $\tau_L = \inf\{t\ge 0: |Y_1^L(t)| \ge L\} \wedge T$ for any $L > 0$. Then by pathwise uniqueness of \eqref{eq:y12m}, for
$0 \le t \le \tau_L \wedge \tau_{L+1}$, 

\begin{equation*}
Y^L(t) = Y^{L+1}(t).
\end{equation*}
This in particular shows that, $\tau_L \le \tau_{L+1}$ a.s.

We now estimate the second moment of $|Y_1^L(t)|$.
By It\^{o}'s formula
\begin{equation*}
	\begin{aligned}
            (Y_1^L(t))^2 &= (y_1)^2 - 2(ce^{c\alpha})^{-1}\int_0^t Y_1^L(s)e^{cY_1^L(s)}\eta_L(Y_1^L(s)) ds + 2(ce^{c\alpha})^{-1}\int_0^t Y_1^L(s) ds \\
                         &\quad + 2\int_0^t Y_1^L(s)(Y_2^L(s)-2Y_1^L(s)) ds + 2\sqrt{2}\int_0^t Y_1^L(s) dB(s)+ 2t\\
            (Y_2^L(t))^2 &= (y_2)^2  - 2\int_0^t Y_1^L(s) Y_2^L(s)ds + 2\int_0^t Y_2^L(s) f(s) ds + 2\sqrt{2} \int_0^t Y_2^L(s) dB(s)+  2t.
	\end{aligned}
\end{equation*}
Thus 
\begin{equation*}
	\begin{aligned}
	(Y_1^L(t))^2 + (Y_2^L(t))^2 &= (y_1)^2 + (y_2)^2 - 2(ce^{c\alpha})^{-1}\int_0^t Y_1^L(s)e^{cY_1^L(s)}\eta_L(Y_1^L(s)) ds \\
                                    &\quad + 2(ce^{c\alpha})^{-1}\int_0^t Y^L_1(s) ds  + 2 \int_0^t Y_2^L(s) f(s) ds\\
                                    &\quad - 4\int_0^t (Y_1^L(s))^2 ds + 2\sqrt{2} \int_0^t (Y_1^L(s)+Y_2^L(s))  dB(s)+ 4t.\\
	\end{aligned}
\end{equation*}
Since $c>0$, we have on using the inequality  $\abs{x} \leq 1 + \abs{x}^2$ that $-xe^{cx}\eta_L(x) \le (1+|x|^2)$ for all $x \in \RR$. Thus \chmd{with $\| Y^L \|_{*,t} \doteq \sup_{s \in [0,t]} \norm{Y^L(s)}$:}
\begin{equation*}
	\begin{aligned}
            \|Y^L\|_{*,t}^2 &\le \|y\|^2 + 4(ce^{c\alpha})^{-1}\int_0^t (1+\|Y^L\|_{*,s}^2) ds  \\
                            &\quad+ 2 \int_0^t (1 + \|Y^L\|_{*,s}^2) \abs{f(s)} ds \\
                            &\quad+  2 \sqrt{2} \brR*{1 + \sup_{0\le s \le t}\left|\int_0^s (Y_1^L(u)+Y_2^L(u))  dB(u)\right|^2}+ 4t.
	\end{aligned}
\end{equation*}
Taking expectations, for any $t \in [0,T]$
\begin{equation*}
	\begin{aligned}
            \E\|Y^L\|_{*,t}^2 &\le \|y\|^2 + (4(ce^{c\alpha})^{-1}+2\supabs{f}{T})\int_0^t (1+\E\|Y^L\|_{*,s}^2) ds\\
			   &\quad + 2\sqrt{2} \brR*{1 + 4\E\int_0^t |Y_1^L(u)+Y_2^L(u)|^2  du} +4t\\
	&\le (\|y\|^2+K(T+1)) + K\int_0^t \E\|Y^L\|_{*,s}^2 ds.
	\end{aligned}
\end{equation*}
with $K \doteq 4(ce^{c\alpha})^{-1} + 2\supabs{f}{T} + 16\sqrt{2}$. By Gronwall lemma, for every $L \in \NN$
\begin{equation*}
    E\|Y^L\|_{*,T}^2 \le (\|y\|^2 + K(T+1)) e^{KT} \doteq c_1.
\end{equation*}
Thus, as $L\to \infty$
$$P(\tau_L < T) \le P(\|Y^L\|_{*,T}\ge L) \le c_1/L^2 \to 0$$
and consequently $\tau_L \uparrow T$ as $L\to \infty$.
Now define 
$Y(t) \doteq Y^L(t)$  for  $0 \le t \le \tau_L.$
Then $Y$ is a solution of \eqref{eq:y12} on $[0, T)$.
The same argument as before shows that this is the unique pathwise solution on $[0,T)$.
Since $T$ is arbitrary we get a unique pathwise solution of \eqref{eq:y12} on $[0,\infty)$.
This completes the proof of the lemma.
\end{proof}

\begin{lemma}
	\label{lem:exp-conv}
        Suppose the assumptions of Theorem \ref{thm:exponential} hold. 
		Suppose further that $\Zn(0), \Mn$ and a standard Brownian motion $B$ are given on a common probability space such that
		$\Zn(0) \to \bvec{z}$ in $\ld$ and $\Mn \to \sqrt{2} B \bvec{e_1}$ in $\DSpace*{\infty}[\lSpace{2}]$ almost surely. Let $\Z$ be as defined in Lemma \ref{lem:exp-uniqueness}. Then for any  $T, L \in (0,\infty)$
        \begin{equation}\label{eq:zntozcg}
            \norm{\Zn - \Z}_{2, T \wedge \tau_{n,L} \wedge \tau_L} \pconv 0 \quad \text{  as } n \to \infty.
        \end{equation}
        where $\tau_L \doteq \inf\brC*{t \mid \norm{\Zn(t)}_{2,t} > L}$.
\end{lemma}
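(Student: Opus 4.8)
The plan is to compare the prelimit system \eqref{eq:znexpeq} with the limiting system \eqref{eq:zexp} coordinate by coordinate, localize at the stopping times so that the exponential drift becomes Lipschitz on the relevant range, and then close a Gronwall estimate whose error term collects every discrepancy between the two systems and tends to $0$. Write $T_n\doteq T\wedge\tau_{n,L}\wedge\tau_L$. First I would record the inputs: by \eqref{eq:znboundeduptostopping}, $\norm{\Zn}_{2,T\wedge\tau_{n,L}}\le L$, while $\Z$ exists with continuous $\lSpace2$-valued paths by Lemma \ref{lem:exp-uniqueness} and stays bounded by some $C_L$ on $[0,T_n]$ (for $\Zn$ this is \eqref{eq:znboundeduptostopping}; for $\Z$ it is the continuity of its paths together with the stopping at $\tau_L$); hence on $[0,T_n]$ both $\abs{Z_{n,1}}$ and $\abs{Z_1}$ lie in a fixed compact set on which $x\mapsto e^{cx}$ is Lipschitz with constant $ce^{cC_L}$ — this is the substitute for the missing global Lipschitz property. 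I would also invoke Lemma \ref{lem:exp-setup} for the representation \eqref{eq:znexpeq} together with its four convergences: $\supabs{\delta_n}{T_n}\to0$ a.s., $\supabs{W_{n,i}}{T_n}\pconv0$ for $2\le i\le r$, $\norm{\bvec{Z_{n,r+}}}_{2,T_n}\pconv0$, and, by the coupling hypothesis of the present lemma, $\supabs{\sqrt{n}M_{n,1}-\sqrt{2}B}{T}\to0$ a.s.; and that $c_n=d_n/\sqrt{n}\to c$ and $\alpha_n\to\alpha\in(-\infty,\infty]$.

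Next I would set $U_{n,i}\doteq Z_{n,i}-Z_i$ for $1\le i\le r$; the coordinates $i>r$ are already handled since $Z_i=0$ and $\norm{\bvec{Z_{n,r+}}}_{2,T_n}\pconv0$. Subtracting \eqref{eq:zexp} from \eqref{eq:znexpeq}, every term is linear and routine except the exponential drift in the first two coordinates, which I would split as
\begin{align*}
&(c_ne^{c_n\alpha_n})^{-1}(1+\delta_n(s))\bigl(e^{c_nZ_{n,1}(s)}-1\bigr)-(ce^{c\alpha})^{-1}\bigl(e^{cZ_1(s)}-1\bigr)\\
&\qquad=\rho_n(s)\bigl(e^{c_nZ_{n,1}(s)}-1\bigr)+(ce^{c\alpha})^{-1}\bigl(e^{c_nZ_{n,1}(s)}-e^{cZ_1(s)}\bigr),
\end{align*}
with $\rho_n(s)\doteq(c_ne^{c_n\alpha_n})^{-1}(1+\delta_n(s))-(ce^{c\alpha})^{-1}$. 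When $\alpha<\infty$ we have $c_ne^{c_n\alpha_n}\to ce^{c\alpha}>0$ and $\supabs{\delta_n}{T_n}\to0$, so $\supabs{\rho_n}{T_n}\to0$; the last bracket is then bounded on $[0,T_n]$ by $e^{cC_L}\bigl(e^{\abs{c_n-c}C_L}-1\bigr)+ce^{cC_L}\abs{U_{n,1}(s)}$, the first part going to $0$ uniformly and the second being Lipschitz in $U_{n,1}$. When $\alpha=\infty$, $(ce^{c\alpha})^{-1}=0$ so the limit carries no such term, and the prelimit term is bounded uniformly on $[0,T_n]$ by a constant times $(c_ne^{c_n\alpha_n})^{-1}\to0$ (using $\abs{Z_{n,1}}\le C_L$ and $c_n$ bounded).

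Gathering all remaining discrepancies — the $\abs{U_{n,i}(0)}$ (which $\to0$ a.s.\ since $\Zn(0)\to\bvec z$), $\supabs{\sqrt{n}M_{n,1}-\sqrt{2}B}{T}$, $\supabs{W_{n,i}}{T_n}$, the quantity $T\norm{\bvec{Z_{n,r+}}}_{2,T_n}$ (which enters the $U_{n,r}$ equation through $\int Z_{n,r+1}$), and the uniform errors identified above — into a single random variable $\varepsilon_n$ with $\varepsilon_n\pconv0$, the subtracted equations give, for every $t\le T_n$,
\[
\sum_{i=1}^r\abs{U_{n,i}}_{*,t}\ \le\ \varepsilon_n+C_L'\int_0^t\sum_{i=1}^r\abs{U_{n,i}}_{*,s}\,ds
\]
for a constant $C_L'$ depending only on $L$, $c$ and $r$. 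Gronwall's inequality then yields $\sum_{i=1}^r\abs{U_{n,i}}_{*,T_n}\le\varepsilon_n e^{C_L'T}\pconv0$, and combining with $\norm{\bvec{Z_{n,r+}}}_{2,T_n}\pconv0$ (and $Z_i=0$ for $i>r$) gives \eqref{eq:zntozcg}.

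The main obstacle is precisely the exponential drift $e^{c_nZ_{n,1}}$: it is only locally Lipschitz, so the comparison has no chance of working globally, and the Gronwall argument above is available only after the stopping at $\tau_{n,L}\wedge\tau_L$ confines $Z_{n,1}$ and $Z_1$ to a compact set — which is exactly why $\tau_L$ is built from the genuinely continuous process $\Z$ produced by Lemma \ref{lem:exp-uniqueness}. A secondary point to watch is the degenerate case $\alpha=\infty$, where one must check that the prelimit exponential term vanishes uniformly rather than converging to a nondegenerate limit, together with the bookkeeping that $W_{n,i}$ and the tail converge only in probability, so that $\varepsilon_n\pconv0$ rather than a.s.\ — which suffices, since the asserted conclusion is convergence in probability.
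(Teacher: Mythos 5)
Your proof is correct and follows essentially the same route as the paper's: localize at $\tau_{n,L}\wedge\tau_L$ so that $Z_{n,1}$ and $Z_1$ take values in a fixed compact set on which the exponential is Lipschitz, subtract the limiting system from the prelimit representation of Lemma \ref{lem:exp-setup}, collect every vanishing discrepancy into a single error term, and close with Gronwall. The only cosmetic differences are your two-step split $\rho_n(s)\bigl(e^{c_nZ_{n,1}}-1\bigr)+(ce^{c\alpha})^{-1}\bigl(e^{c_nZ_{n,1}}-e^{cZ_1}\bigr)$ versus the paper's three-term split of $a_n(s)e^{c_nZ_{n,1}}-ae^{cZ_1}$ (varying $Z_{n,1}\to Z_1$, then $c_n\to c$, then $a_n\to a$, plus a separate $\int(a_n-a)$ term), and your explicit case distinction for $\alpha=\infty$, which the paper treats implicitly by simply allowing $a=(ce^{c\alpha})^{-1}=0$ with $a_n\to0$ — both yield the same uniform bounds feeding the Gronwall estimate.
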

\begin{proof}
    Fix $L, T \in (0,\infty)$ and let $T_n \doteq T \wedge \tau_{n,L} \wedge \tau_{L}$ and $U_{n,i} \doteq Z_{n,i} - Z_{i}$ for $i \in \nat$. Using the estimate
        $\abs{e^{ax} - e^{ay}} \leq ae^{a(x \vee y)} \abs{x-y}$ for $x, y \in \R$, $a \ge 0$  and since $\abs{Z_{n,1}(s)}, \abs{Z_{1}(s)} \leq L$ for any $s \in [0,T_n]$,
    \begin{equation*}
        \begin{aligned}
            &\abs{a_n(s) e^{ c_n Z_{n,1}(s)} - a e^{cZ_{1}(s)}} \\
            &\leq \abs{a_n(s) e^{c_n Z_{n,1}(s)} - a_n(s) e^{c_nZ_{1}(s)}} + \abs{a_n(s) e^{c_n Z_{1}(s)} - a_n(s) e^{cZ_{1}(s)}} + \abs{e^{cZ_1(s)}}\abs{a_n(s) - a}\\
            &\leq \abs{a_n(s)} c_ne^{c_n L} \abs{U_{n,1}(s)} + \abs{a_n(s)}Le^{L(c_n \vee c)}\abs{c_n - c} + e^{cL}\abs{a_n(s) - a}
        \end{aligned}
    \end{equation*}
    where $a_n(s) \doteq (c_n e^{c_n \alpha_n})^{-1}(1 + \delta_n(s))$, $c_n = d_n/\sqrt{n}$, $\delta_n$ is as in Lemma \ref{lem:exp-setup}, and $a \doteq (ce^{c\alpha})^{-1}$. Since $c_n \to c$ and $\supabs{\delta_n}{T_n} \to 0$ by Lemma \ref{lem:exp-setup}, $\supabs{a_n - a}{T_n} \to 0$. Hence  for any $s \in [0,T_n]$
    \begin{equation}
        \supabs{a_ne^{c_nZ_{n,1}} - ae^{cZ_1}}{s} \leq K \supabs{U_{n,1}}{s} + r_n  \label{eq:exp-term-bound}
    \end{equation}
    where $K \doteq \sup_n  \brR*{c_ne^{c_nL}\supabs{a_n}{T_n}} < \infty$ and 
    \begin{equation*}\label{eq:eqrn}
        r_n \doteq \supabs{a_n}{T_n}Le^{L (c_n \vee c)}\abs{c_n-c} + e^{cL} \supabs{a_n-a}{T_n} \to 0
    \end{equation*}
    almost surely.
    Subtracting \eqref{eq:zexp} from \eqref{eq:znexpeq},  for any $t > 0$,
    \begin{equation}
        \begin{aligned}
            U_{n,1}(t) &= U_{n,1}(0) - \int_0^t \brR*{U_{n,1}(s) - U_{n,2}(s)} ds + \sqrt{n}M_{n,1}(t) - \sqrt{2}B(t)  \\
                       &\quad  - \int_0^t \brR*{a_{n,1}(s) e^{c_n Z_{n,1}(s)} - a e^{cZ_1(s)}} ds + \int_0^t \brR*{a_n(s) - a} ds \\
            U_{n,2}(t) &= U_{n,2}(0) - \int_0^t \brR*{U_{n,2}(s) - U_{n,3}(s)} ds + W_{n,2}(t)  \\
                       &\quad  + \int_0^t \brR*{a_{n,1}(s) e^{c_n Z_{n,1}(s)} - a e^{cZ_1(s)}} ds - \int_0^t \brR*{a_n(s) - a} ds \\
            U_{n,i}(t) &= U_{n,i}(0) - \int_0^t \brR*{U_{n,i}(s) - U_{n,i+1}(s)} ds + W_{n,i}(t)  \quad \text{for } i \in \{ 3, \ldots, r\}.
        \end{aligned}
        \label{eq:unexp}
    \end{equation}
    Let $H_t \doteq \sup_{s \in [0,t]} \sum_{i=1}^r \abs{U_{n,i}(s)}$. Then from \eqref{eq:exp-term-bound} and \eqref{eq:unexp},
 for any $t \in [0,T_n]$,
    \begin{equation*}
        H_t \leq H_0 + \supabs{\sqrt{n}M_{n,1}-\sqrt{2}B}{T} + 2T(\supabs{a_n - a}{T_n} + r_n) + \sum_{i=2}^r \supabs{W_{n,i}}{T_n} + \supabs{U_{n,r+1}}{T_n} + 2(1+K)\int_0^t H_s ds.
    \end{equation*}
Hence by Gronwall's lemma
    \begin{equation*}
        H_{T_n} \leq \brR*{H_0 + \supabs{\sqrt{n}M_{n,1}-\sqrt{2}B}{T} + 2T(\supabs{a_n - a}{T_n} + r_n) + \sum_{i=2}^r \supabs{W_{n,i}}{T_n} + \supabs{U_{n,r+1}}{T_n}} e^{2(1+K)T}.
    \end{equation*}
    Note $\bvec{U_{n,r+}} = \bvec{Z_{n,r+}}$ and $U_{n,i}(0) = Z_{n,i}(0) - z_i$. Then by Lemma \ref{lem:exp-setup}, $H_{T_n} \pconv 0$ and $\norm{\bvec{U_{n,r+}}}_{2,T_n} \pconv  0$. Together these show $\norm{\bvec{U}}_{2,T_n}=\norm{\Zn-\Z}_{2,T_n} \pconv 0$ as $n \to \infty$.
\end{proof}

\begin{corollary}
	\label{cor:stoptime}
    Under assumptions of Theorem \ref{thm:exponential}, $\brC*{\norm{\Zn}_{2,T}}_{n \in \nat}$ is a tight sequence of random variables and 
    \begin{equation}
        \label{eq:strong-stop-div}
        \lim_{L \to \infty} \sup_n \prob \brR*{\tau_{n,L} \leq T} = 0.
    \end{equation}
\end{corollary}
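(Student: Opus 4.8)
The plan is to prove the divergence statement \eqref{eq:strong-stop-div} first, after which tightness of $\brC*{\norm{\Zn}_{2,T}}_{n\in\nat}$ is immediate from the inclusion $\set{\norm{\Zn}_{2,T}>L}\subseteq\set{\tau_{n,L}\le T}$; the whole argument will closely follow the derivation of \eqref{eq:tightness} inside the proof of Theorem~\ref{thm:diffusion-limit}. Since $\prob\brR*{\tau_{n,L}\le T}$ depends only on the law of $\Zn$, I would first pass to a convenient probability space: by Lemma~\ref{prop:f1} together with the tightness hypothesis on $\brC*{\norm{\Zn(0)}_1}$ one has $\Gn(0)\pconv\bvec{f_1}$ in $\ld$, so Lemma~\ref{lem:martconv} gives $\sqrt n\Mn\dconv\sqrt2 B\bvec{e_1}$; since moreover $\Zn(0)\pconv\bvec{z}$ with a deterministic limit, Skorokhod's representation theorem lets me realize $\Zn(0)$, $\sqrt n\Mn$ and a standard Brownian motion $B$ on one space with $\Zn(0)\to\bvec{z}$ and $\sqrt n\Mn\to\sqrt2 B\bvec{e_1}$ a.s. On this space Lemma~\ref{lem:exp-conv} applies for every $T,L\in(0,\infty)$, and since \eqref{eq:znboundeduptostopping} forces $\norm{\Zn}_{2,t}\le L$ for $t\le T\wedge\tau_{n,L}$, the auxiliary exit time $\tau_L$ there satisfies $T\wedge\tau_{n,L}\wedge\tau_L=T\wedge\tau_{n,L}$, so
\begin{equation}
 \norm{\Zn-\bvec{Z}}_{2, T\wedge\tau_{n,L}}\pconv 0\qquad\text{as }n\to\infty,
 \label{eq:corznzapprox}
\end{equation}
where $\bvec{Z}\in\CSpace*{\infty}[\lSpace{2}]$ is the limit from Lemma~\ref{lem:exp-uniqueness}.

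Next I would fix $L$ and bound $\prob\brR*{\tau_{n,L}\le T}$. On $\set{\tau_{n,L}\le T}$, the definition \eqref{eq:defntaunm} and right-continuity of $t\mapsto\norm{\Zn(t)}_2$ give $\norm{\Zn}_{2,T\wedge\tau_{n,L}}\ge L-1/\sqrt n\ge L-1$, while the triangle inequality gives $\norm{\Zn}_{2,T\wedge\tau_{n,L}}\le\norm{\Zn-\bvec{Z}}_{2,T\wedge\tau_{n,L}}+\norm{\bvec{Z}}_{2,T}$; combining,
\begin{equation*}
 \prob\brR*{\tau_{n,L}\le T}\le\prob\brR*{\norm{\Zn-\bvec{Z}}_{2,T\wedge\tau_{n,L}}\ge 1}+\prob\brR*{\norm{\bvec{Z}}_{2,T}\ge L-2}.
\end{equation*}
Letting $n\to\infty$ and invoking \eqref{eq:corznzapprox} gives $\limsup_n\prob\brR*{\tau_{n,L}\le T}\le\prob\brR*{\norm{\bvec{Z}}_{2,T}\ge L-2}$, and since $\bvec{Z}$ has continuous, hence $\prob$-a.s.\ $\norm{\cdot}_2$-bounded on $[0,T]$, sample paths, the right side tends to $0$ as $L\to\infty$.

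To turn the $\limsup_n$ into $\sup_n$ I would use two elementary facts: $L\mapsto\tau_{n,L}$ is nondecreasing, so $\prob\brR*{\tau_{n,L}\le T}$ is nonincreasing in $L$; and, since $\Gn(t)\in\ld$ and $\Mun\in\ld$, each $\Zn$ has $\DSpace*{\infty}[\lSpace{1}]$-valued (hence $\DSpace*{\infty}[\lSpace{2}]$-valued) sample paths, so $\tau_{n,L}\uparrow\infty$ a.s.\ and $\prob\brR*{\tau_{n,L}\le T}\to0$ as $L\to\infty$ for each fixed $n$. Given $\epsilon>0$: pick $L_0$ with $\prob\brR*{\norm{\bvec{Z}}_{2,T}\ge L_0-2}\le\epsilon/2$, then $N$ with $\prob\brR*{\tau_{n,L_0}\le T}\le\epsilon$ for all $n\ge N$, then $L_1\ge L_0$ large enough that $\prob\brR*{\tau_{n,L_1}\le T}\le\epsilon$ also for the finitely many $n<N$; by monotonicity $\sup_n\prob\brR*{\tau_{n,L}\le T}\le\epsilon$ for all $L\ge L_1$, which is \eqref{eq:strong-stop-div}. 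Tightness of $\brC*{\norm{\Zn}_{2,T}}_{n\in\nat}$ then follows, since $\prob\brR*{\norm{\Zn}_{2,T}>L}\le\prob\brR*{\tau_{n,L}\le T}$.

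I expect the main obstacle to be the same one that appears in the proof of Theorem~\ref{thm:diffusion-limit}: the estimate \eqref{eq:corznzapprox} is only available \emph{up to the exit time} $\tau_{n,L}$, whereas the goal is precisely a bound on $\prob\brR*{\tau_{n,L}\le T}$. The way around it is to compare the stopped process with the globally defined, path-continuous limit $\bvec{Z}$: on $\set{\tau_{n,L}\le T}$ the stopped process has $\norm{\cdot}_2$-norm essentially equal to $L$, which forces either $\norm{\bvec{Z}}_{2,T}$ to be as large as $L-2$ (improbable for $L$ large, uniformly in $n$) or the approximation error to exceed $1$ (improbable for $n$ large). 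The only other point needing care is the bookkeeping with the finitely many small indices $n$ when upgrading $\limsup_n$ to $\sup_n$.
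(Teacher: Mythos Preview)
Your overall strategy is the same as the paper's, and most of the argument is fine. There is, however, one genuine gap.

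You claim that ``since \eqref{eq:znboundeduptostopping} forces $\norm{\Zn}_{2,t}\le L$ for $t\le T\wedge\tau_{n,L}$, the auxiliary exit time $\tau_L$ there satisfies $T\wedge\tau_{n,L}\wedge\tau_L=T\wedge\tau_{n,L}$.'' This is not correct: the stopping time $\tau_L$ in Lemma~\ref{lem:exp-conv} is the exit time for the \emph{limit} process $\Z$, not for $\Zn$ (the statement in the paper contains a typo, but inspection of the proof of Lemma~\ref{lem:exp-conv} --- specifically the line ``since $\abs{Z_{n,1}(s)}, \abs{Z_{1}(s)} \leq L$ for any $s\in[0,T_n]$'' --- makes clear that $\tau_L$ is needed to control $\abs{Z_1}$, and $\tau_{n,L}$ alone provides no such control). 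So the bound on $\norm{\Zn}$ gives you no information about $\tau_L$, and you cannot simply drop it to obtain \eqref{eq:corznzapprox}.

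The fix is exactly what the paper does: split according to whether $\norm{\Z}_{2,T}$ is large. On the event $\{\norm{\Z}_{2,T}\le L\}$ one does have $\tau_{L+2}>T$, and hence $T\wedge\tau_{n,L+2}\wedge\tau_{L+2}=T\wedge\tau_{n,L+2}$; this is the event on which your triangle-inequality comparison goes through. Concretely, your displayed bound should be replaced by
\[
\prob\brR*{\tau_{n,L+2}\le T}\le\prob\brR*{\norm{\Zn-\Z}_{2,T\wedge\tau_{n,L+2}\wedge\tau_{L+2}}\ge 1}+\prob\brR*{\norm{\Z}_{2,T}> L},
\]
after which Lemma~\ref{lem:exp-conv} applies directly to the first term. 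The rest of your argument (the $\limsup$-to-$\sup$ upgrade via monotonicity in $L$ and finiteness of the exceptional set of indices) is correct and more explicit than the paper's.
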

\begin{proof}
Fix  $\delta > 0$. Since $\Z$ has sample paths in $\CC([0,T]:\lSpace{2})$, we can find
 $L \in (0,\infty)$ so that 
$
    \prob\brR*{\norm{\Z}_{2, T} > L} \leq \delta.
$
Note that the events $\brC*{\norm{\Z}_{2,T} \leq L} \subseteq \brC*{\tau_{L+2} > T}$, where $\tau_{L+2} \doteq \inf\brC*{t \mid \norm{\Z(t)}_{2} > L+2}$.
Hence for each $n \in \nat$, by right continuity of $\Zn$
\begin{align*}
    \prob\brR*{\tau_{n,L+2} \leq T} &\leq \prob\brR*{\norm{\Zn}_{2,T\wedge \tau_{n,L+2}} > L + 1} \leq \prob\brR*{\norm{\Zn - \Z}_{2,T\wedge \tau_{n,L+2}} > 1 \text{ or } \norm{\Z}_{2,T} > L}\nonumber\\
                                    &\leq \prob\brR*{\norm{\Zn - \Z}_{2,T\wedge \tau_{n,L+2} \wedge \tau_{L+2}} > 1 } + \prob\brR*{ \norm{\Z}_{2,T} > L }\\
                                    &\leq \delta + \prob\brR*{\norm{\Zn-\Z}_{2,T\wedge \tau_{n,L+2} \wedge \tau_{L+2}} > 1}. 
\end{align*}
Sending $n\to \infty$ and using  Lemma \ref{lem:exp-conv} we see that $\limsup_n \prob\brR*{\tau_{n,L+2} \leq T} \leq \delta$.
Finally,
\begin{equation*}
    \limsup_n \prob\brR*{\norm{\Zn}_{2,T} > L+2} \leq \limsup_n \prob\brR*{\tau_{n,L+2} \leq T} \leq  \delta.
\end{equation*}
Since $\delta > 0$ is arbitrary, this shows that $\brC*{\norm{\Zn}_{2,T}}_{n \in \nat}$ is tight.
The convergence in \eqref{eq:strong-stop-div} is now immediate.
\end{proof} 

\begin{proof}[Proof of Theorem \ref{thm:exponential}] 
Using Lemma \ref{lem:exp-setup}	and Skorohod embedding theorem we can assume without los of generality that
$\Zn(0), \Mn$ and  $B$ are given on a common probability space,
		$\Zn(0) \to \bvec{z}$ in $\ld$, and $\Mn \to \sqrt{2} B \bvec{e_1}$ in $\DSpace*{\infty}[\lSpace{2}]$ almost surely.
		From Lemma \ref{lem:exp-conv} we now have that for every $T,L \in (0,\infty)$ \eqref{eq:zntozcg} holds.
		Finally using from Corollary \ref{cor:stoptime} the fact that
		$\sup_n \prob \brR*{\tau_{n,L} \leq T} + \prob \brR*{\tau_{L} \leq T} \to 0$
		as $L\to \infty$, we have that $\norm{\Zn - \Z}_{2,T} \pconv 0$ as $n \to \infty$.
		The result follows.
\end{proof}

\section{Proof of Theorem \ref{thm:reflection}}

\label{sec:proof-reflection}
In this section we give the proof of Theorem \ref{thm:reflection}. As for the proof of Theorem \ref{thm:exponential} we begin with a convenient representation for $\Zn$ and by establishing some useful convergence properties.
\begin{lemma}
	\label{lem:reflection-setup}
        Let $\lam_n, \alpha_n, d_n$ be as in the statement of Theorem \ref{thm:reflection}. Suppose that $\brC*{\norm{\Zn(0)}_1}_{n \in \nat}$ is a tight sequence of random variables and $\bvec{Z_{n,r+}}(0) \pconv \bvec{0}$ in $\lSpace{2}$ for some $r\chmd{\geq}2$. Then there are real stochastic processes $U_n, V_n, W_{n,i}, \eta_n$ with sample paths in $\DSpace*{\infty}[\R]$ so that, $U_n,\eta_n$ have absolutely continuous paths a.s., $U_n(0)=\eta_n(0)=0$, and for any $t \ge 0$
	\begin{align}
	Z_{n,1}(t) = Z_{n,1}(0)- &\int_0^{t} Z_{n,1}(s) ds + \int_0^{t}  Z_{n,2}(s) ds + \sqrt{n} M_{n,1}(t) + U_n(t) - \eta_n(t) 	 \label{eq:zreflection1} \\
        Z_{n,2}(t) = Z_{n,2}(0)  - &\int_0^{t} Z_{n,2}(s) ds + \int_0^t Z_{n,3}(s) ds + V_n(t)  + \eta_n(t) \label{eq:zreflection2} \\
        Z_{n,i}(t) = Z_{n,i}(0)  - &\int_0^{t} Z_{n,i}(s) ds + \int_0^t Z_{n,i+1}(s) ds + W_{n,i}(t) \quad \text{for } i \in \brC*{3, \ldots, r} \label{eq:zreflectioni}.
	\end{align}
        Furthermore, $\eta_n$ is non-decreasing process with $\eta_n(0) = 0$ that satisfies 
		\begin{equation}\label{eq:etan}
			\eta_n(t) = \int_0^t \I{Z_{n,1}(s) \geq \theta_n} d\eta_n(s) \mbox{ a.s. }
			\end{equation}
			 for some constants $\theta_n = \alpha_n + O(\sqrt{n}/d_n)\cgg{\ge 0}$ as $n\to \infty$. Also for any $L, T \in (0, \infty)$, as $n \to \infty$
	\begin{enumerate}
            \item $\sqrt{n} M_{n,1} \dconv \sqrt{2} B$ in $\DSpace{T}[\R]$
		\item $\TVar{U_n}{[0,T_{n}]} \doteq \int_0^{T_n}|\dot{U}_n(s)| ds \pconv 0$ 
                \item $\supabs{V_n}{T_{n}} \pconv 0$ and $\supabs{W_{n,i}}{T_n} \pconv 0$ for $i \in \brC*{3, \ldots, r}$
		\item $\norm{\bvec{Z_{n,r+}}}_{2, T_{n}} \pconv 0$.
	\end{enumerate}
        Here $B$ is a standard Brownian motion and $T_{n} \doteq T \wedge \tau_{n, L}$. 
\end{lemma}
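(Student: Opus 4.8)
The plan is to read off the representation \eqref{eq:zreflection1}--\eqref{eq:zreflectioni} by splitting the first--coordinate drift in the semimartingale decomposition \eqref{eq:scaled-marteq} of Lemma \ref{lem:zn-compact} at a suitable level. Since here $\Mun\to\bvec{f_1}$ (i.e.\ $k=1$), we have $t_{n,0}\equiv 0$, so the $i=1$ line of \eqref{eq:scaled-marteq} reads $Z_{n,1}(t)=Z_{n,1}(0)-\int_0^t t_{n,1}(Z_{n,1}(s))\,ds-\int_0^t(Z_{n,1}(s)-Z_{n,2}(s))\,ds+\sqrt n M_{n,1}(t)$. Set $\theta_n\doteq\alpha_n\vee 0$, which is nonnegative and, since $\alpha_n^-=O(\sqrt n/d_n)$, satisfies $\theta_n=\alpha_n+O(\sqrt n/d_n)$. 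Decompose $-\int_0^t t_{n,1}(Z_{n,1}(s))\,ds=U_n(t)-\eta_n(t)$ with
\[
U_n(t)\doteq-\int_0^t t_{n,1}(Z_{n,1}(s))\I{Z_{n,1}(s)<\theta_n}\,ds,\qquad \eta_n(t)\doteq\int_0^t t_{n,1}(Z_{n,1}(s))\I{Z_{n,1}(s)\ge\theta_n}\,ds.
\]
As an integral of $\beta_n'\ge0$ with $t_{n,1}(0)=0$, the function $t_{n,1}$ is nondecreasing, so the integrand of $\eta_n$ is nonnegative; hence $\eta_n$ is nondecreasing, $\eta_n(0)=0$, and $\I{Z_{n,1}(s)\ge\theta_n}\,d\eta_n(s)=d\eta_n(s)$, which is \eqref{eq:etan}, while $U_n$ is absolutely continuous with $U_n(0)=0$. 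For the remaining coordinates put, again using \eqref{eq:scaled-marteq}, $V_n(t)\doteq-U_n(t)-\int_0^t t_{n,2}(Z_{n,2}(s))\,ds+\sqrt n M_{n,2}(t)$ and, for $3\le i\le r$, $W_{n,i}(t)\doteq\int_0^t t_{n,i-1}(Z_{n,i-1}(s))\,ds-\int_0^t t_{n,i}(Z_{n,i}(s))\,ds+\sqrt n M_{n,i}(t)$; since $\int_0^t t_{n,1}(Z_{n,1}(s))\,ds=\eta_n(t)-U_n(t)$, the equations \eqref{eq:zreflection1}--\eqref{eq:zreflectioni} hold.

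The four convergence statements then follow from results already available. Writing $\epsilon_n\doteq 1-\lambda_n=\tfrac{\log d_n}{d_n}+\tfrac{\alpha_n}{\sqrt n}$, we have $\epsilon_n\to0$ (as $d_n\to\infty$ and $\alpha_n=O(n^{1/6})=o(\sqrt n)$) and $\epsilon_n d_n=\log d_n+\tfrac{\alpha_n d_n}{\sqrt n}\to\infty$ (the second term is bounded below because $\alpha_n^-=O(\sqrt n/d_n)$), so Lemma \ref{prop:f1} gives $\Mun\to\bvec{f_1}$ in $\lSpace{1}$; together with tightness of $\brC*{\norm{\Zn(0)}_1}$ this yields $\Gn(0)\pconv\bvec{f_1}$ in $\ld$. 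Hence Lemma \ref{lem:martconv} (with $k=1$) gives $\sqrt n\Mn\dconv\sqrt2\,B\bvec{e_1}$ in $\DSpace*{\infty}[\lSpace{2}]$, which supplies statement (1) by projecting onto the first coordinate and also $\supabs{\sqrt n M_{n,i}}{T}\pconv0$ for each $i\ge2$; and Lemma \ref{lem:finitedimensional} (with $1=k<r$) gives statement (4), $\norm{\bvec{Z_{n,r+}}}_{2,T_n}\pconv0$. Since $\mu_{n,2}\to0<1$, Lemma \ref{prop:taileffectszero} applied with $r=2$ gives $\sup_{i\ge2}\sup_{0<\abs{z}\le L}\abs{t_{n,i}(z)/z}\to0$, hence $\sup_{\abs{z}\le L}\abs{t_{n,i}(z)}\to0$ for every $i\ge2$; combining this with $\supabs{U_n}{T_n}\le\TVar{U_n}{[0,T_n]}\to0$ (established below) and the martingale convergence just noted gives $\supabs{V_n}{T_n}\pconv0$ and $\supabs{W_{n,i}}{T_n}\pconv0$, i.e.\ statement (3).

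It remains to prove statement (2), which by monotonicity of $t_{n,1}$ and $\abs{Z_{n,1}(s)}\le L$ for $s\le T_n$ (by \eqref{eq:znboundeduptostopping}) amounts to $\sup_{-L\le z\le\theta_n}\abs{t_{n,1}(z)}\to0$; this is the only genuinely new estimate and the place where the choice of $\theta_n$ matters. Changing variables in \eqref{eq:tniz} gives $t_{n,1}(z)=\lambda_n\sqrt n\bigl[\beta_n(\lambda_n+z/\sqrt n)-\beta_n(\lambda_n)\bigr]$ (with $\beta_n$ extended by $0$ to the left of $0$), so by \eqref{eq:gammageqbeta}, $\abs{t_{n,1}(z)}\le\sqrt n\max\bigl(\beta_n(\lambda_n+z^+/\sqrt n),\beta_n(\lambda_n)\bigr)\le\sqrt n\max\bigl((\lambda_n+z^+/\sqrt n)^{d_n},\lambda_n^{d_n}\bigr)$. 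When $\alpha_n\ge0$ we have $\theta_n=\alpha_n$ and $\lambda_n+\theta_n/\sqrt n=1-\tfrac{\log d_n}{d_n}$, so for $-L\le z\le\theta_n$, $(\lambda_n+z^+/\sqrt n)^{d_n}\le(1-\tfrac{\log d_n}{d_n})^{d_n}\le e^{-\log d_n}=1/d_n$; when $\alpha_n<0$ we have $\theta_n=0$ and, using $\alpha_n^-\le C\sqrt n/d_n$, $\lambda_n=1-\tfrac{\log d_n}{d_n}+\tfrac{\alpha_n^-}{\sqrt n}\le1-\tfrac{\log d_n-C}{d_n}$, whence $\lambda_n^{d_n}\le e^{C-\log d_n}=e^C/d_n$. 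In either case $\abs{t_{n,1}(z)}\le e^C\sqrt n/d_n\to0$ uniformly over $-L\le z\le\theta_n$, because $d_n\gg\sqrt n$; therefore $\TVar{U_n}{[0,T_n]}\le T\sup_{-L\le z\le\theta_n}\abs{t_{n,1}(z)}\to0$ deterministically. The main obstacle is precisely this last step: recognizing that $\theta_n=\alpha_n\vee0$ is the correct transition level — it is exactly where $t_{n,1}$ passes from asymptotically negligible (because $\lambda_n+\theta_n/\sqrt n=1-\log d_n/d_n$ forces $\beta_n$ of the relevant argument to be $\le1/d_n$) to diverging on the scale $\sqrt n/d_n$ — and checking the bound uniformly for negative $z$ and in the case $\alpha_n<0$; everything else is assembly of Lemmas \ref{lem:zn-compact}, \ref{lem:martconv}, \ref{lem:finitedimensional}, \ref{prop:taileffectszero} and \ref{prop:f1}.
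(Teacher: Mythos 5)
Your proposal is correct and follows the same route as the paper: split $-\int_0^t t_{n,1}(Z_{n,1}(s))\,ds$ into $U_n - \eta_n$ at a level $\theta_n \geq 0$, verify $\eta_n$ is nondecreasing because $t_{n,1}\geq 0$ on $[0,\infty)$, bound $\sup_{-L\le z\le \theta_n}|t_{n,1}(z)|$ via $\beta_n \le \gamma_n$ and the explicit form of $\lambda_n$, and then assemble statements (1), (3), (4) from Lemmas \ref{prop:f1}, \ref{lem:martconv}, \ref{lem:finitedimensional}, \ref{prop:taileffectszero}. The only genuine difference is cosmetic: you take $\theta_n=\alpha_n\vee 0$ and split the estimate into the cases $\alpha_n\ge 0$ and $\alpha_n<0$, whereas the paper fixes a single constant $\kappa$ with $\alpha_n^-\le \kappa\sqrt n/d_n$ and sets $\theta_n=\alpha_n+\kappa\sqrt n/d_n$, which collapses both cases into one computation, $\lambda_n+\theta_n/\sqrt n = 1-(\log d_n-\kappa)/d_n$. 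Both choices satisfy $\theta_n\ge 0$ and $\theta_n=\alpha_n+O(\sqrt n/d_n)$ and yield the same bound $O(\sqrt n/d_n)\to 0$; neither is preferable on substance.
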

\begin{proof}
    By our assumptions on $\alpha_n$, we can find $\kappa \in (0,\infty)$ be such that $\theta_n \doteq \alpha_n + \frac{\kappa \sqrt{n}}{d_n} \geq 0$ for every $n$. Note $\theta_n \to \alpha$ as $n \to \infty$. Define
    \begin{equation*}
        \begin{aligned}
            U_n(t) &\doteq -\int_0^t t_{n,1}(Z_{n,1}(s))  \I{ Z_{n,1}(s) < \theta_n}ds \\
            \eta_n(t) &\doteq \int_0^t t_{n,1}(Z_{n,1}(s))  \I{ Z_{n,1}(s) \geq \theta_n}ds
        \end{aligned}
    \end{equation*}
    so that $\eta_n(t) = \int_0^t \I{Z_{n,1}(s) \geq \theta_n} d\eta_n(s)$, and 
    \begin{equation}
     \label{eq:tn1asreflection}
     \int_0^t t_{n,1}(Z_{n,1}(s)) ds = \eta_n(t) - U_n(t).
    \end{equation}
    From Lemma \ref{lem:zn-compact} it then follows  that \eqref{eq:zreflection1} is satisfied.  Recall the expression for $t_{n,1}(z)$
	from 
	\eqref{eq:tn2}.
    Then, by monotonicity of $\beta_n$, $t_{n,1}(z) \geq 0$ whenever $z \geq 0$. Since $\theta_n \geq 0$, $\eta_n$ is non-decreasing and
    \begin{equation*}
        \begin{aligned}
            \sup_{z \leq \theta_n}\abs{t_{n,1}(z)} &\leq 2\sqrt{n} \beta_n\brR*{\lambda_n + \theta_n/\sqrt{n}} \leq 2\sqrt{n} (\lam_n + \theta_n/\sqrt{n})^{d_n} \\
                                                   &= 2 \sqrt{n} \brR*{1 - \brR*{ (\log d_n)/d_n +  (\alpha_n - \theta_n)/\sqrt{n}}}^{d_n}  = 2 \sqrt{n} \brR*{1 - (\log d_n - \kappa)/d_n}^{d_n}\\
                                                   &\leq 2 \exp \brR*{- \log \frac{d_n}{\sqrt{n}} + \kappa} \to 0 \text{ as }  n \to \infty.
        \end{aligned}
    \end{equation*}
    This shows that $\TVar{U_n}{[0,T]} \to 0$ almost surely. 

    Next, since $d_n(1-\lam_n) \to \infty$, Lemma \ref{prop:f1} shows that
 \begin{equation}
     \label{eq:munconvto1}
     \Mun \to \bvec{f_1} \in \ld \text{ as } n \to \infty.
 \end{equation}
 Therefore $\Gn(0) = \Mun + \frac{\Zn(0)}{\sqrt{n}} \to \bvec{f_1}$ in $\ld$.  Then by Lemma \ref{lem:martconv}, 
 \begin{equation}
     \label{eq:martconv1}
     \sqrt{n} \Mn \dconv \sqrt{2} B \bvec{e_1} \mbox{ in } \DSpace*{\infty}[\lSpace{2}],
 \end{equation}
  and by Lemma \ref{lem:finitedimensional},  $\norm{\bvec{Z_{n,r+}}}_{2, T \wedge \tau_{n, L}} \pconv 0$ as $n \to \infty$. Define
 \begin{equation*}
 	V_n(t) \doteq  - \int_0^t t_{n,2}(Z_{n,2}(s)) ds + \sqrt{n} M_{n,2}(t) - U_n(t).
 \end{equation*}
 Using \eqref{eq:tn1asreflection} and Lemma \ref{lem:zn-compact} once more, we see that \eqref{eq:zreflection2} is satisfied. Finally, for $i \in \brC*{3, \ldots, r}$, define
 \begin{equation*} 
     W_{n,i}(t) \doteq \int_0^t t_{n,i-1}(Z_{n,i-1}(s)) ds - \int_0^t t_{n,i}(Z_{n,i}(s)) + \sqrt{n} M_{n,i}(t).
 \end{equation*}
 Then, from Lemma \ref{lem:zn-compact} again, it follows that  \eqref{eq:zreflectioni} is satisfied with the above choice of $W_{n,i}$.  Lemma \ref{prop:taileffectszero} along with \eqref{eq:munconvto1}, \eqref{eq:martconv1}, and $\supabs{Z_{n,i}}{T \wedge \tau_{n,L}} \leq L$ show that, as $n\to \infty$, $\supabs{V_n}{T_n}\pconv 0$ and $\supabs{W_{n,i}}{T_n} \pconv 0 $ for each $i \in \brC*{3, \ldots, r}$.
\end{proof}
\begin{cor}
    \label{cor:explicit-reflection}
	Suppose that the assumptions in Lemma \ref{lem:reflection-setup} are satisfied. Assume further that $d_n \ll n^{2/3}$. Then, the conclusions of Lemma
	\ref{lem:reflection-setup} hold with $\theta_n = \alpha_n$ and
    \begin{equation}
        \label{eq:eta-exponential-form}
        \eta_n(t) \doteq \int_0^{t} \gamma_n^{-1} (1+\delta_n(s))^+ e^{\gamma_n(Z_{n,1}(s) - \alpha_n)} \I{Z_{n,1}(s) \geq \alpha_n} ds,
    \end{equation}
  where $\gamma_n \doteq \frac{d_n}{\sqrt{n}}$ and $\delta_n$ is a process with sample paths in $\DD([0,\infty), \R)$ such that $\supabs{\delta_n}{T \wedge \tau_{n,L}} \to 0$ a.s. for each $L > 0$.
\end{cor}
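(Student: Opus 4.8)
The plan is to re-run the construction from the proof of Lemma~\ref{lem:reflection-setup}, but with the crude monotonicity bound on $t_{n,1}$ replaced by the exponential approximation of Lemma~\ref{lem:t-as-exponential}. Write $\gamma_n \doteq d_n/\sqrt n$. First I would check that the hypotheses of Lemma~\ref{lem:t-as-exponential} hold in the present setting: $d_n\to\infty$ and $d_n\ll n^{2/3}$ are assumed, $\lambda_n$ has the required form by \eqref{eq:cond402}, and $\frac{d_n\alpha_n^2}{n}\to 0$ because $\alpha_n=O(n^{1/6})$ gives $d_n\alpha_n^2 = o(n^{2/3})\,O(n^{1/3}) = o(n)$. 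Hence for each fixed $L>0$ there is a (continuous) function $\delta_n(\cdot)$ with $\bar\delta_{n,L}\doteq\sup_{|z|\le L}|\delta_n(z)|\to 0$ and
\[
 t_{n,1}(z) = \brR*{1+\delta_n(z)}\,\gamma_n^{-1}\brR*{e^{\gamma_n(z-\alpha_n)} - e^{-\gamma_n\alpha_n}},\qquad |z|\le L,
\]
for all large $n$; at $z=0$ both sides vanish, so the identity is trivially true there (this is the only place the $0/0$ degeneracy of the ratio defining $\delta_n$ needs a word).

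Next I would set $\delta_n(s)\doteq\delta_n(Z_{n,1}(s))$, define $\eta_n$ by \eqref{eq:eta-exponential-form}, $U_n(t)\doteq \eta_n(t) - \int_0^t t_{n,1}(Z_{n,1}(s))\,ds$, and $V_n, W_{n,i}$ exactly as in the proof of Lemma~\ref{lem:reflection-setup}. Then $\eta_n$ is nondecreasing with $\eta_n(0)=0$ because the integrand in \eqref{eq:eta-exponential-form} is nonnegative (note that, unlike in Lemma~\ref{lem:reflection-setup}, we no longer need $\theta_n\ge0$: nonnegativity of $\eta_n$ is now built into the $(\cdot)^+$ truncation); \eqref{eq:etan} holds with $\theta_n=\alpha_n$ because of the indicator in \eqref{eq:eta-exponential-form}; $U_n$ is absolutely continuous with $U_n(0)=0$; and \eqref{eq:zreflection1} holds by construction together with the semimartingale representation of $Z_{n,1}$ from Lemma~\ref{lem:zn-compact} (which gives $\dot Z_{n,1}=-t_{n,1}(Z_{n,1})-(Z_{n,1}-Z_{n,2})$ plus the martingale term).

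The crux is the bound $\TVar{U_n}{[0,T_n]}\pconv 0$, where $T_n\doteq T\wedge\tau_{n,L}$. On $[0,T_n]$ one has $|Z_{n,1}(s)|\le L$ by \eqref{eq:znboundeduptostopping}, so $|\delta_n(s)|\le\bar\delta_{n,L}<1$ for large $n$, hence $(1+\delta_n(s))^+=1+\delta_n(s)$ there. On $\{Z_{n,1}(s)\ge\alpha_n\}$ the $e^{\gamma_n(\cdot-\alpha_n)}$ terms cancel and $\dot U_n(s)=(1+\delta_n(s))\gamma_n^{-1}e^{-\gamma_n\alpha_n}$; on $\{Z_{n,1}(s)<\alpha_n\}$ one has $\dot\eta_n(s)=0$ and $\dot U_n(s)=-t_{n,1}(Z_{n,1}(s))$ with $e^{\gamma_n(Z_{n,1}(s)-\alpha_n)}\in(0,1)$. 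Either way $|\dot U_n(s)|\le 2\gamma_n^{-1}\brR*{1+e^{-\gamma_n\alpha_n}}$ on $[0,T_n]$. Since $-\alpha_n\le\alpha_n^-$ and $\alpha_n^-=O(\sqrt n/d_n)=O(\gamma_n^{-1})$, we get $e^{-\gamma_n\alpha_n}\le e^{\gamma_n\alpha_n^-}=O(1)$, so $\sup_{s\le T_n}|\dot U_n(s)|=O(\gamma_n^{-1})=O(\sqrt n/d_n)\to0$ and therefore $\TVar{U_n}{[0,T_n]}\le T\sup_{s\le T_n}|\dot U_n(s)|\to0$. This also yields $\supabs{U_n}{T_n}\to0$ and $\supabs{\delta_n}{T_n}\le\bar\delta_{n,L}\to0$; with these in hand, assertions (1), (3), (4) of Lemma~\ref{lem:reflection-setup} follow by repeating verbatim the arguments in its proof (via Lemma~\ref{lem:martconv}, Lemma~\ref{lem:finitedimensional}, and Lemma~\ref{prop:taileffectszero}). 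The main obstacle is the bookkeeping around the sign of $\alpha_n$: $\alpha_n$ may be slightly negative, and the total-variation estimate closes only because the hypothesis $\alpha_n^-=O(\sqrt n/d_n)$ is tuned exactly to the scale $\gamma_n^{-1}$, so that $e^{\gamma_n\alpha_n^-}$ stays bounded while $\gamma_n^{-1}\to0$ supplies the decay.
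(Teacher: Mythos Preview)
Your proposal is correct and follows essentially the same route as the paper: both invoke Lemma~\ref{lem:t-as-exponential} to define $\delta_n$, set $\eta_n$ as in \eqref{eq:eta-exponential-form}, and obtain $\TVar{U_n}{[0,T_n]}=O(\gamma_n^{-1})$ via the bound $e^{-\gamma_n\alpha_n}\le e^{\gamma_n\alpha_n^-}=O(1)$ (the paper's constant $\kappa=\sup_n\gamma_n\alpha_n^-$). The only cosmetic difference is that the paper writes out $U_n$ as an explicit sum of two integrals and bounds the total variation directly on the event $\{\supabs{\delta_n}{T_n}<1\}$, whereas you define $U_n=\eta_n-\int t_{n,1}$ and compute $\dot U_n$ by cases; expanding your $U_n$ recovers the paper's formula verbatim.
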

\begin{proof}
        Since $d_n \ll n^{2/3}$ and $\alpha_n = O(n^{1/6})$, the hypothesis of Lemma \ref{lem:t-as-exponential} is satisfied.  Define
	\begin{equation*}
		\delta_n(s) \doteq t_{n,1}(Z_{n,1}(s))\gamma_n \brR*{ e^{\gamma_n\brS*{Z_{n,1}(s) - \alpha_n}} - e^{-\gamma_n \alpha_n}}^{-1} - 1.
        \end{equation*}
            Since $\sup_{s \leq T\wedge \tau_{n, L}} \abs{Z_{n,1}(s)} \leq L$, Lemma \ref{lem:t-as-exponential} shows that $\supabs{\delta_n}{T_n} \to 0$ a.s. as $n \to \infty$. Next define
	\begin{equation*}
            \begin{aligned}
                U_n(s) &\doteq \gamma_n^{- 1}  \int_0^t (1+\delta_n(s)) \brR*{e^{-\gamma_n \alpha_n} -  e^{\gamma_n(Z_{n,1}(s) - \alpha_n )}\I{Z_{n,1}(s) < \alpha_n}} ds \\
                       &\quad +\int_0^t \gamma_n^{-1} (1+\delta_n(s))^- e^{\gamma_n (Z_{n,1}(s) - \alpha_n)}\I{Z_{n,1}(s) \geq \alpha_n} ds
            \end{aligned}
	\end{equation*}
        Then $U_{n}(0) = 0$, $U_n$ is absolutely continuous and, with $\kappa = \sup_n \frac{d_n}{\sqrt{n}}\alpha_n^- < \infty$ ,
	\begin{align*}
            \TVar{U_n}{[0,T_n]}\I{\supabs{\delta_n}{T_n} < 1} &= \gamma_n^{-1} \int_0^{T_n} \abs{1+\delta_n(s)}\abs{e^{-\gamma_n \alpha_n} -  e^{\gamma_n(Z_{n,1}(s) - \alpha_n )}\I{Z_{n,1}(s) < \alpha_n}} ds \nonumber\\
		&\leq \frac{2(1+e^\kappa)T}{\gamma_n} \to 0 \quad \text { as } n \to \infty.
	\end{align*}
        Hence, \cg{since $\supabs{\delta_n}{T_n} \to 0$, we have that} $\TVar{U_n}{[0,T_n]} \pconv 0$ as $n \to \infty$. \cg{By rearranging terms we see that, with the above definitions of $U_n$ and $\eta_n$, \eqref{eq:tn1asreflection} is satisfied. The result follows.} 
\end{proof}

Since $\gamma_n \to \infty$ and $\theta_n \to \alpha$ as $n \to \infty$, the previous lemma suggests a connection to the Skorokhod map $\Gamma_{\alpha}$ defined in \eqref{eq:SMdef}. In order to make this   connection precise, we begin with the following lemma.

\begin{lemma}
	\label{lem:never-exceeds}
        Under the assumptions of Theorem \ref{thm:reflection}, for any $L \in (0,\infty)$
		\begin{equation}
		\label{eq:never-exceeds}
                \sup_{t \in [0, T \wedge \tau_{n,L}] } (Z_{n,1}(t) - \alpha_n)^+ \pconv 0 \text{ as  } n \to \infty.
		\end{equation}
\end{lemma}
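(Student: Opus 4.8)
The plan is to combine the semimartingale representation of Lemma~\ref{lem:reflection-setup} with an excursion estimate that exploits the divergence of the drift rate $t_{n,1}$ just above the threshold $\theta_n$. By Lemma~\ref{lem:reflection-setup} (whose hypotheses hold under those of Theorem~\ref{thm:reflection}),
\begin{equation*}
Z_{n,1}(t) = Z_{n,1}(0) + \int_0^t \bigl(Z_{n,2}(s)-Z_{n,1}(s)\bigr)\,ds + \sqrt{n}\,M_{n,1}(t) + U_n(t) - \eta_n(t),
\end{equation*}
where $\eta_n(t)=\int_0^t t_{n,1}(Z_{n,1}(s))\I{Z_{n,1}(s)\ge\theta_n}\,ds$ is nondecreasing, $\theta_n=\alpha_n+O(\sqrt n/d_n)\ge 0$ (so $\theta_n\to\alpha$), $\TVar{U_n}{[0,T\wedge\tau_{n,L}]}\pconv 0$, and $\sqrt n\,M_{n,1}\dconv\sqrt2\,B$ in $\DSpace{T}[\R]$; by Skorohod's representation theorem I would assume $\sqrt n\,M_{n,1}\to\sqrt2\,B$ uniformly on $[0,T]$ and $Z_{n,1}(0)\to z_1\le\alpha$ almost surely. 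Since $\norm{\Zn}_{2,T\wedge\tau_{n,L}}\le L$ by \eqref{eq:znboundeduptostopping}, if $\alpha=\infty$ then $\alpha_n\to\infty$ and $(Z_{n,1}(t)-\alpha_n)^+=0$ on $[0,T\wedge\tau_{n,L}]$ for all large $n$, so the claim is trivial; hence assume $\alpha<\infty$. Fixing $\epsilon>0$, it suffices to show $\prob(A_n)\to0$ where $A_n\doteq\{\sup_{t\le T\wedge\tau_{n,L}}(Z_{n,1}(t)-\alpha_n)^+>\epsilon\}$.

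First I would dispose of the ``large $d_n$'' indices: since $G_{n,1}(t)\le 1$ one always has $Z_{n,1}(t)\le\sqrt n(1-\lambda_n)=\alpha_n+(\sqrt n/d_n)\log d_n$, so $A_n=\emptyset$ whenever $(\sqrt n/d_n)\log d_n\le\epsilon$. It then remains to bound $\prob(A_n)$ along the set of $n$ with $(\sqrt n/d_n)\log d_n>\epsilon$; along such a subsequence $d_n\le\epsilon^{-1}\sqrt n\log n=o(n^{2/3})$ and $d_n\alpha_n^2/n\to0$ (using $\alpha_n=O(n^{1/6})$), so Lemma~\ref{lem:t-as-exponential} applies, and with $\gamma_n\doteq d_n/\sqrt n\to\infty$, $\theta_n\ge\alpha_n$ and $\alpha_n^-=O(\gamma_n^{-1})$ it gives
\begin{equation*}
c_n \;\doteq\; t_{n,1}\!\bigl(\theta_n+\tfrac\epsilon8\bigr) \;\ge\; \tfrac12\,\gamma_n^{-1}\bigl(e^{\gamma_n\epsilon/8}-O(1)\bigr)\;\longrightarrow\;\infty ,
\end{equation*}
and, by monotonicity of $t_{n,1}$, $t_{n,1}(z)\ge c_n$ for all $z\ge\theta_n+\epsilon/8$.

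The heart of the argument is the excursion estimate. On $A_n\cap\{Z_{n,1}(0)\le\theta_n+\epsilon/8\}$ I would pick $t^*\le T\wedge\tau_{n,L}$ with $Z_{n,1}(t^*)>\alpha_n+\epsilon$ and set $s^*\doteq\sup\{u\le t^*:Z_{n,1}(u)\le\theta_n+\epsilon/8\}$; for $n$ large (so $\theta_n+\epsilon/8<\alpha_n+\epsilon$) this set contains $0$, $s^*<t^*$, $Z_{n,1}(s^*)\le\theta_n+\epsilon/8+n^{-1/2}$ (jumps of $Z_{n,1}$ have size $n^{-1/2}$), and $Z_{n,1}(u)>\theta_n+\epsilon/8\ge\theta_n$ for $u\in(s^*,t^*]$, whence $\eta_n(t^*)-\eta_n(s^*)\ge c_n(t^*-s^*)$. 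Subtracting the representation at $t^*$ and $s^*$, using $|Z_{n,i}|\le L$ on $[0,T\wedge\tau_{n,L}]$ and writing $\varrho_n\doteq\TVar{U_n}{[0,T\wedge\tau_{n,L}]}$, I obtain for $n$ large (so $c_n>4L$, $\theta_n-\alpha_n<\epsilon/4$)
\begin{equation*}
\tfrac\epsilon2 \;<\; Z_{n,1}(t^*)-Z_{n,1}(s^*) \;\le\; \bigl(\sqrt n\,M_{n,1}(t^*)-\sqrt n\,M_{n,1}(s^*)\bigr) + \varrho_n - \tfrac{c_n}2(t^*-s^*),
\end{equation*}
so that on $\{\varrho_n\le\epsilon/4\}$ (probability $\to1$) one has $\sqrt n\,M_{n,1}(t^*)-\sqrt n\,M_{n,1}(s^*)>\tfrac\epsilon4+\tfrac{c_n}2 h$ with $h\doteq t^*-s^*>0$. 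If $h\ge 2c_n^{-1/2}$ this forces $\sup_{0\le a\le b\le T}|\sqrt n\,M_{n,1}(b)-\sqrt n\,M_{n,1}(a)|>c_n^{1/2}$; if $h<2c_n^{-1/2}$ it forces the oscillation of $\sqrt n\,M_{n,1}$ over a time window of length $<2c_n^{-1/2}$ to exceed $\epsilon/4$. Since $c_n\to\infty$ (along the subsequence) and $\sqrt n\,M_{n,1}\to\sqrt2\,B$ uniformly on $[0,T]$ with $B$ uniformly continuous there, both events have probability $\to0$; combining with $\prob(Z_{n,1}(0)>\theta_n+\epsilon/8)\to0$ and $\prob(\varrho_n>\epsilon/4)\to0$ yields $\prob(A_n)\to0$ along the subsequence, and hence overall (since $A_n=\emptyset$ off the subsequence).

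The step I expect to be the main obstacle is the excursion estimate itself: the martingale term $\sqrt n\,M_{n,1}$ is only $O_p(1)$, not $o_p(1)$, so excursions above $\theta_n$ cannot be controlled merely by $\eta_n\ge0$, as in a Skorohod-map comparison. One must use that the \emph{rate} $c_n$ of the restoring drift diverges, so that sustaining an excursion of fixed height above $\theta_n$ would force $\sqrt n\,M_{n,1}$ to move by $\ge\epsilon/4$ within a time $O(c_n^{-1/2})\to0$, which is precluded by its modulus of continuity in the limit; matching the two length scales $h$ and $c_n^{-1/2}$ and securing $c_n\to\infty$ over the whole range $\sqrt n\ll d_n\le n$ — which is why Lemma~\ref{lem:t-as-exponential} is invoked only along the subsequence on which the hard constraint $G_{n,1}\le1$ does not already suffice — is the delicate point; the remainder is routine bookkeeping with estimates already in hand.
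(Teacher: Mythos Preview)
Your proof is correct and shares with the paper both the case split (hard constraint $G_{n,1}\le 1$ when $(\sqrt n/d_n)\log d_n$ is small, versus an excursion argument when it is not) and the key insight that the restoring drift $t_{n,1}$ diverges exponentially fast just above the threshold. The implementations of the excursion step, however, are genuinely different. The paper introduces a countable family of stopping times $\sigma_{n,2k-1},\sigma_{n,2k}$ marking successive up-crossings and down-crossings of levels $\alpha_n+3\epsilon$ and $\alpha_n+2\epsilon$, and then proves two separate facts: for each fixed $k$ the probability that the overshoot occurs during the $k$-th excursion vanishes (via Lemma~\ref{lem:drift-exceeds}, a Doob-type estimate showing a martingale with bounded quadratic variation cannot beat a linearly diverging barrier), and the number of excursions before time $T$ is tight. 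Your argument instead picks a single random time $t^*$ where the overshoot occurs and the last prior crossing $s^*$ of the lower threshold, and reduces the event to a statement about the modulus of continuity of $\sqrt n\,M_{n,1}$ over shrinking windows, which is handled directly from its weak convergence to Brownian motion. Your route is shorter and avoids both the auxiliary Lemma~\ref{lem:drift-exceeds} and the excursion-counting step; the paper's route is more explicit about the stopping-time structure and does not invoke Skorohod embedding or path-continuity of the limit. Two minor points: you use the explicit identity $\eta_n(t)=\int_0^t t_{n,1}(Z_{n,1}(s))\I{Z_{n,1}(s)\ge\theta_n}\,ds$ and the inequality $\theta_n\ge\alpha_n$, both of which appear in the \emph{proof} of Lemma~\ref{lem:reflection-setup} rather than its statement; and your displayed bound for $c_n$ tacitly uses $\theta_n\ge\alpha_n$, though the weaker fact $\gamma_n(\theta_n-\alpha_n)=O(1)$ (which follows from the stated $\theta_n=\alpha_n+O(\sqrt n/d_n)$) already suffices for $c_n\to\infty$.
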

\begin{proof}
    \cg{Consider first the case $d_n \gg \sqrt{n} \log n$. For this case $\epsilon_n \doteq \frac{\sqrt{n} \log d_n}{d_n} \to 0$, and since   
        \begin{equation*}
            Z_{n,1}(t) = \sqrt{n} \brR*{G_{n,1}(t) - \lam_n} \leq \sqrt{n} (1 - \lam_n) = \frac{\sqrt{n} \log d_n}{d_n} + \alpha_n,
        \end{equation*}
		we have that \eqref{eq:never-exceeds} holds. Now consider the complementary case, namely
        $d_n = O(\sqrt{n} \log n)$. We will use
		Corollary \ref{cor:explicit-reflection}.} Since $Z_{n,1}(0) \pconv z_1 \in \R$ with $z_1 \leq \alpha$, we  have  $\brR*{Z_{n,1}(0) - \alpha_n}^+ \pconv 0$
                as $n \to \infty$.
	It now suffices to show that for any $\epsilon > 0$ 
	\begin{equation*}
            \prob\brR*{\sup_{t \in [0, T \wedge \tau_{n,L}] }Z_{n,1}(t) > \alpha_n + 6\epsilon} \to 0 \text{ as } n \to \infty.
	\end{equation*}
    Let $\tau_n \doteq \inf\set{ t\ge 0 \mid Z_{n,1}(t) > \alpha_n + 6\epsilon}$ and, as before, $T_n \doteq T \wedge \tau_{n,L}$. It is  then enough to show that $\prob\brR*{\tau_n \leq T_n} \to 0$ as $n \to \infty$. For this inductively define stopping times,
	 $\sigma_{n,0}  = 0$,
	 \begin{equation*}
	\begin{aligned}
		\sigma_{n, 2k-1} &= \inf\set{t  > \sigma_{n, 2k-2} \mid Z_{n,1}(t) > \alpha_n + 3\epsilon} \\
		\sigma_{n, 2k} &= \inf\set{t  > \sigma_{n, 2k-1} \mid Z_{n,1}(t) < \alpha_n + 2\epsilon} 
	\end{aligned},\;\;\; k \in \NN. 
	\end{equation*}
        Note that for each $n\in \NN$,  $\sigma_{n, r} \to \infty$ as $r \to \infty$, amost surely. 
		Also, henceforth, without loss of generality we consider only $n$ that are large enough so that $1/\sqrt{n} < \ep$.
		Hence on the ser $\tau_n < \infty$,  $\tau_n \in [\sigma_{n,2k-1}, \sigma_{n,2k})$ for some $k \in \nat$. Then for every $K \in \nat$
	\begin{equation*}
		\prob \brR{\tau_n \leq T_n} \leq \sum_{k=1}^{K} \prob \brR{\tau_n \in [\sigma_{n,2k-1}, \sigma_{n,2k}\wedge T_n]} + \prob\brR{\sigma_{n,2K+1} \leq T_n}.
	\end{equation*}
	Hence to complete the proof it is enough to show that
	\begin{enumerate}
		\item For each $k \in \nat$, $\lim_{n\to \infty} \prob\brR*{\tau_n \in [\sigma_{n,2k-1}, \sigma_{n, 2k} \wedge T_n]} = 0 $, 
		\item $\lim_{K \to \infty} \limsup_{n\to \infty} \prob(\sigma_{n,2K+1} \leq T_n) = 0$.
	\end{enumerate}
	Consider (1) first. Note that on the set
    $C_{n,1} \doteq \set{Z_{n,1}(0) \leq \alpha_n + 3 \epsilon}$, for any $k \in \NN$,
 	\begin{align}
 		\alpha_n + 3\epsilon \leq Z_{n,1}(\sigma_{n, 2k-1}) &= Z_{n,1}(\sigma_{n, 2k-1}-) + Z_{n,1}(\sigma_{n, 2k-1}) - Z_{n,1}(\sigma_{n, 2k-1}-) \nonumber\\
 												&\leq Z_{n,1}(\sigma_{n, 2k-1}-) + \epsilon \leq \alpha_n + 4\epsilon. \label{eq:start-at-stopped}
 	\end{align}
	Similarly, 
	\begin{equation}
		Z_{n,1}(t) \geq \alpha_n + \epsilon \quad \text{for each } t \in [\sigma_{n,2k-1}, \sigma_{n, 2k}]. \label{eq:lowerbound-stops}
	\end{equation}
	Let  $M'_n(t) \doteq \sqrt{n} M_{n,1}(t + \sigma_{n,2k-1}) - \sqrt{n} M_{n,1}(\sigma_{n,2k-1})$
	for  $t \geq 0$
	and consider  the sets
	$$C_{n,2} \doteq \set{\tau_n, \sigma_{n, 2k-1} \leq T_n}, \;\; C_{n,3} \doteq \brC*{\supabs{U_n}{T_n} \leq \epsilon/2, \abs{\delta_n}_{*,T_n} \leq \frac{1}{2}}.$$
	Then on the set $C_n = \cap_{i=1}^3 C_{n,i}$, 
        using 
		Corollary \ref{cor:explicit-reflection}, for any $t \in [0, (T_n \wedge \sigma_{n,2k}) - \sigma_{n,2k-1}]$, 
	\begin{align*}
		&Z_{n,1}(t + \sigma_{n, 2k-1}) - Z	_{n,1}(\sigma_{n, 2k-1}) \nonumber\\
		&= - \int_{\sigma_{n, 2k-1}}^{\sigma_{n, 2k-1}+t} (Z_{n,1}(s) - Z_{n,2}(s)) ds + M_n'(t) + U_n(t + \sigma_{n, 2k-1}) - U_n(\sigma_{n, 2k-1})\nonumber\\
		 &\qquad - \int_{\sigma_{n, 2k-1}}^{\sigma_{n, 2k-1}+t} \gamma_n^{-1}(1+\delta_n(s))^+ e^{\gamma_n (Z_{n,1}(s) - \alpha_n)} \I{Z_{n,1}(s) \geq \alpha_n} ds 
	\end{align*}
		Since for $t$ in the above interval $\sigma_{n,2k-1} + t \leq T_n \leq \tau_{n ,L}$, $\abs{Z_{n,1}(s)} + \abs{Z_{n,2}(s)} \leq 2L$ for any $s \leq \sigma_{n,2k-1} + t$. Also, since $\sigma_{n,2k-1} + t \leq \sigma_{n, 2k}$, by \eqref{eq:lowerbound-stops}, $Z_{n,1}(s) - \alpha_n \geq \epsilon$ for any $s \in [\sigma_{n, 2k-1}, \sigma_{n, 2k-1}+t]$. Thus on $C_n$ we have
		\begin{equation}\label{eq:znincrem}
			Z_{n,1}(t + \sigma_{n, 2k-1}) - Z_{n,1}(\sigma_{n, 2k-1}) \leq 2Lt  + M'_n(t) + \epsilon - \frac{t}{2\gamma_n} \exp\brR{\gamma_n \epsilon} \doteq Y_n(t).
\end{equation}
	Using \eqref{eq:start-at-stopped}, on $C_n$,  $Z_n(\tau_n) - Z_n(\sigma_{n, 2k-1}) \geq \alpha_n + 6\epsilon - \alpha_n - 4\epsilon = 2\epsilon$. Hence 
	\begin{align}
		\prob \brR*{\tau_n \in [\sigma_{n, 2k-1}, \sigma_{n, 2k} \wedge T_n)}
		&\leq \prob\brR*{\tau_n \in [\sigma_{n, 2k-1}, \sigma_{n, 2k} \wedge T_n), C_n }
		 + \prob\brR{C_{n,1}^c} +\prob\brR{C_{n,3}^c} \nonumber\\
		&\leq \prob\brR*{\sup_{t \in [0, T]} Y_n(t) \geq 2\epsilon } +\prob\brR{C_{n,1}^c} + \prob\brR{ C_{n,3}^c}, \label{eq:ub-tau-middle}
	\end{align}
	where the second inequality is on observing that on the set $\{\tau_n \in [\sigma_{n, 2k-1}, \sigma_{n, 2k} \wedge T_n)\}$,
	\eqref{eq:znincrem} holds with $t$ replaced by $\tau_n$.
	Next note that $M_{n}'$ is a $\{\clg^n_t\}$ martingale, where $\clg^n_t = \clf^n_{t+\sigma_{n,2k-1}}$ and
	\begin{align*}
		\QV{M_{n}'}_t &=  \QV{\sqrt{n} M_{n,1}}_{t+\sigma_{n,2k-1}} -  \QV{\sqrt{n} M_{n,1}}_{\sigma_{n,2k-1}} \\
		&= \int_{\sigma_{n,2k-1}}^{\sigma_{n,2k-1} + t} \left[G_{n,1}(s) - G_{n,2}(s) ds + \lambda_n (1 - \beta_n(G_{n,1}(s)))\right] ds \\
		&\leq 2t,
	\end{align*}
	where the second equality is from \eqref{eq:martQV}.
	
	Since $\gamma_n \to \infty$ we can apply Lemma \ref{lem:drift-exceeds} to conclude
	\begin{equation*}
		\prob \brR*{\sup_{t \in [0,T]} Y_n(t) \geq 2\epsilon} = \prob \brR*{\sup_{t \in [0,T]} \brS*{M'_n(t) - \brR*{\frac{\exp(\gamma_n \epsilon)}{2\gamma_n} - 2L}t} \geq \epsilon} \to 0
	\end{equation*}
        as $n \to \infty$. We also have $\lim_n \prob \brR*{C^c_{n,i}} = 0$ for $i = 1,3$ since, as noted earlier $(Z_{n,1}(0) - \alpha_n)^+ \pconv 0$, and by \cg{Corollary \ref{cor:explicit-reflection}}, respectively. From these observations it follows that the right side of \eqref{eq:ub-tau-middle} converges to 0 as $n \to \infty$, which completes the proof of (1)

Now we prove (2). Let $\rho_i \doteq \sigma_{n,i} \wedge \tau_{n,L}$
	and define
	\begin{align*}
		Y_{n, K} (t)  \doteq \sum_{i=0}^{K} \left(Z_{n,1}(t \wedge \rho_{n, 2i+1} ) - Z_{n,1}(t \wedge \rho_{n,2i}) \right).
	\end{align*}
	Note that
		$\brC{\sigma_{n, 2K+1} \leq T_n} \subseteq \brC{Y_{n,K}(T) \geq K\epsilon}$ and hence to prove (2) 
	it is sufficient to show that 
	\begin{equation}\label{eq:ynkttoz}
		\limsup_{n\to \infty} \prob\brR{Y_{n,K}(T) \geq K\epsilon} \to 0
		 \mbox{ as }K \to \infty.
		\end{equation} 
		From \cg{Corollary \ref{cor:explicit-reflection}}, we have that on the set $C_{n,4}\doteq \{ \TVar{U_n}{[0,T_n]} \le 1\}$,
	\begin{align}
			&\Scale[0.95]{Y_{n, K}(T)}\nonumber\\
			 &  \Scale[0.95]{=\sum_{i=0}^{K} \int_{T \wedge \rho_{n,2i} }^{T \wedge \rho_{n, 2i+1} } (Z_{n,2}(s) - Z_{n,1}(s)) ds   +\sum_{i=0}^{K} \sqrt{n}M_{n,1}(T \wedge \rho_{n,2i+1} ) - \sqrt{n}M_{n,1}(T \wedge \rho_{n,2i} )} \nonumber\\
			&\Scale[0.95]{ + \sum_{i=0}^{K}  U_{n}(T \wedge \rho_{n,2i+1} ) - U_n(T \wedge \rho_{n,2i} ) - \sum_{i=0}^K \int_{T \wedge \rho_{n,2i} }^{T \wedge \rho_{n, 2i+1} } \gamma_n^{-1}(1+\delta_n(s))^+ e^{\gamma_n (Z_{n,1}(s) - \alpha_n)} \I{Z_{n,1}(s) > \alpha_n} ds} \nonumber\\
                         &\Scale[0.95]{\leq 2LT  + \sum_{i=0}^{K} \left(\sqrt{n}M_{n,1}(T \wedge \rho_{n,2i+1} ) - \sqrt{n}M_{n,1}(T \wedge \rho_{n,2i} )\right) + \TVar{U_n}{[0,T]}}\nonumber\\
			 &\Scale[0.95]{\leq 2LT +1 + M_{n,K}'(T)}\nonumber
	\end{align}
	where we have used the facts that $\sup_{s \leq \tau_{n,L}} \abs{Z_{n,1}(s)} \leq L$, and that the rightmost term in the third line is non-positive. Also, here
	\begin{equation*}
		M'_{n, K}(t) \doteq \sum_{i=0}^{K} \left(\sqrt{n}M_{n,1}(t \wedge \rho_{n,2i+1} ) - \sqrt{n}M_{n,1}(t \wedge \rho_{n,2i} )\right).
	\end{equation*}
        Using \eqref{eq:martQV}, we see that $M'_{n,K}$ is a $\clf^n_t$-martingale with quadratic variation given by
	\begin{equation*}
	\begin{aligned}
		\QV{M_{n,K}'}_t  &= \sum_{i=0}^{K} \left(\QV{\sqrt{n}M_{n,1}}_{t \wedge \rho_{n,2i+1}} - \QV{\sqrt{n}M_{n,1}}_{t \wedge \rho_{n,2i} }\right)  \\
		&= \sum_{i=0}^K  \int_{t \wedge \rho_{n,2i} }^{t \wedge \rho_{n, 2i+1} } \left(G_{n,1}(s) - G_{n,2}(s) + \lambda_n  - \lambda_n \beta_n(G_{n,1}(s))\right) ds \\
		&\leq 2t.
	\end{aligned} 
	\end{equation*}
	Hence 
	\begin{align*}
	\prob \brR*{Y_{n,K}(T) \geq K\epsilon} 
	&\le \prob \brR*{Y_{n,K}(T) \geq K\epsilon,  C_{n,4}} +  \prob \brR*{C_{n,4}^c}\\
        &\leq \prob \brR*{M_{n,K}'(T) > K\epsilon - (2LT+1)} +  \prob \brR*{C_{n,4}^c} \nonumber\\
	&\le  \frac{\E M_{n,K}^{\prime2}(T)}{(K\epsilon - (2LT+1))^2} +  \prob \brR*{C_{n,4}^c} \nonumber\\
	&\leq  \frac{2T}{(K\epsilon - (2LT+1))^2} +  \prob \brR*{C_{n,4}^c}.
	\end{align*}
	From \cg{Corollary \ref{cor:explicit-reflection}}, $\prob \brR*{C_{n,4}^c} \to 0$ as $n\to \infty$.
	This together with the above display shows $\lim_{K \to \infty} \limsup_{n\to \infty} 	\prob \brR*{Y_{n,K}(T) \geq K\epsilon}  = 0$. Thus we have shown \eqref{eq:ynkttoz} and the proof of (2) is complete. The result follows.
\end{proof}

\begin{lemma}
	\label{lem:as-reflection}
        Suppose the hypothesis of Theorem \ref{thm:reflection} holds, then for each $n \in \nat$, there is a real constant $\theta_n = \alpha_n + O(\sqrt{n}/d_n) \geq 0$ and processes $U'_n, V_n$ with sample paths in $\DD([0, \infty):\R)$ such that with $\tilde{Z}_{n,1} \doteq Z_{n,1} \wedge \theta_n$ 
    \begin{align}
        \tilde{Z}_{n,1}(t) &= \Gamma_{\theta_n}\brR*{\tilde{Z}_{n,1}(0) - \int_0^{\cdot} \brR*{\tilde{Z}_{n,1}(s) - Z_{n,2}(s)} + \sqrt{n}M_{n,1}(\cdot) + U_n'(\cdot)}(t), \text{ and } \label{eq:zn1tilde}\\
    Z_{n,2}(t) &= Z_{n,2}(0) - \int_0^t \brR*{Z_{n,2}(s) - Z_{n,3}(s)} ds + V_n(t) + \eta_n(t) \qquad \mbox{ for all } t > 0\nonumber,
    \end{align}
    where
	\begin{equation}
\label{eq:etadef}
		\eta_n = \hat \Gamma_{\theta_n}\brR*{\tilde{Z}_{n,1}(0) - \int_0^{\cdot} \brR*{\tilde{Z}_{n,1}(s) - Z_{n,2}(s)} + \sqrt{n}M_{n,1}(\cdot) + U_n'(\cdot)}.
		\end{equation}
               Furthermore, for any $L, T \in (0,\infty)$, $\supabs{U_n'}{T \wedge \tau_{n,L}}$, $\supabs{V_n}{T \wedge \tau_{n,L}}$ and $\supabs{(Z_{n,1}-\theta_n)^+}{T \wedge \tau_{n,L}} \pconv 0$ as $n \to \infty$.
\end{lemma}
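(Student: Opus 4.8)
The plan is to show that the truncated process $\tilde Z_{n,1}\doteq Z_{n,1}\wedge\theta_n$, together with the reflection term $\eta_n$ produced in Lemma \ref{lem:reflection-setup}, solves a one-dimensional Skorokhod problem with upper barrier $\theta_n$, and then to read off \eqref{eq:zn1tilde}--\eqref{eq:etadef} from uniqueness of that problem.

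First I would invoke Lemma \ref{lem:reflection-setup}: under the hypothesis of Theorem \ref{thm:reflection} there are a constant $\theta_n=\alpha_n+O(\sqrt{n}/d_n)\ge 0$, an absolutely continuous $U_n$ with $U_n(0)=0$ and (for fixed $L,T$, writing $T_n\doteq T\wedge\tau_{n,L}$) $\TVar{U_n}{[0,T_n]}\pconv 0$, a process $V_n$ with $\supabs{V_n}{T_n}\pconv 0$, and a continuous nondecreasing $\eta_n$ with $\eta_n(0)=0$ and $\eta_n(t)=\int_0^t\I{Z_{n,1}(s)\ge\theta_n}\,d\eta_n(s)$, such that \eqref{eq:zreflection1}--\eqref{eq:zreflectioni} hold. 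Set $R_n\doteq (Z_{n,1}-\theta_n)^+=Z_{n,1}-\tilde Z_{n,1}\ge 0$. Substituting $Z_{n,1}=\tilde Z_{n,1}+R_n$ and $Z_{n,1}(0)=\tilde Z_{n,1}(0)+R_n(0)$ into \eqref{eq:zreflection1} gives
\[
  \tilde Z_{n,1}(t)=\tilde Z_{n,1}(0)-\int_0^t\brR*{\tilde Z_{n,1}(s)-Z_{n,2}(s)}\,ds+\sqrt{n}M_{n,1}(t)+U_n'(t)-\eta_n(t),
\]
with $U_n'\doteq U_n-R_n+R_n(0)-\int_0^{\cdot}R_n(s)\,ds$. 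Since $U_n(0)=0$ and $M_{n,1}(0)=0$ we have $U_n'(0)=0$, so $\tilde Z_{n,1}=\tilde X_n-\eta_n$ where $\tilde X_n$ is exactly the path appearing inside $\Gamma_{\theta_n}$ in \eqref{eq:zn1tilde}, and $\tilde X_n(0)=\tilde Z_{n,1}(0)\le\theta_n$.

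Next I would verify the Skorokhod conditions for $(\tilde Z_{n,1},\eta_n)$ relative to $\tilde X_n$ and barrier $\theta_n$: $\tilde Z_{n,1}=\tilde X_n-\eta_n$ and $\tilde Z_{n,1}\le\theta_n$ by construction, $\eta_n$ is continuous, nondecreasing with $\eta_n(0)=0$, and — the one point needing a word — the complementarity $\int_{[0,\infty)}\I{\tilde Z_{n,1}(s)<\theta_n}\,d\eta_n(s)=0$ holds because $\{\tilde Z_{n,1}(s)<\theta_n\}=\{Z_{n,1}(s)<\theta_n\}$ while $\eta_n(t)=\int_0^t\I{Z_{n,1}(s)\ge\theta_n}\,d\eta_n(s)$ forces $\eta_n$ to be flat on that set. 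By the well-known uniqueness and Lipschitz property of the one-dimensional Skorokhod map (cf. \cite[Section 3.6.C]{karshr}; see also the usage in Remark \ref{rem:reflection}), $\tilde Z_{n,1}=\Gamma_{\theta_n}(\tilde X_n)$ and $\eta_n=\hat\Gamma_{\theta_n}(\tilde X_n)$, which are exactly \eqref{eq:zn1tilde} and \eqref{eq:etadef}; the $Z_{n,2}$ equation is identical to \eqref{eq:zreflection2} with the same $V_n$.

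Finally, for the convergence statements: $\supabs{V_n}{T_n}\pconv 0$ is from Lemma \ref{lem:reflection-setup}; $\supabs{(Z_{n,1}-\theta_n)^+}{T_n}=\supabs{R_n}{T_n}\pconv 0$ is Lemma \ref{lem:never-exceeds}, absorbing the harmless shift $\theta_n-\alpha_n=O(\sqrt{n}/d_n)\to 0$ into the estimate; and then $\supabs{U_n'}{T_n}\le\supabs{U_n}{T_n}+R_n(0)+(1+T)\supabs{R_n}{T_n}$, a sum whose three pieces tend to $0$ in probability using $\supabs{U_n}{T_n}\le\TVar{U_n}{[0,T_n]}\pconv 0$, the previous bound, and $R_n(0)=(Z_{n,1}(0)-\theta_n)^+\pconv 0$ (since $Z_{n,1}(0)\pconv z_1\le\alpha=\lim_n\theta_n$). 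I expect the only genuinely subtle point to be the mismatch between $Z_{n,1}$ and its truncation: because $Z_{n,1}$ jumps by $1/\sqrt{n}$ it can overshoot $\theta_n$, so it is $\tilde Z_{n,1}$, not $Z_{n,1}$, that satisfies a clean reflected equation, and establishing that the overshoot $R_n$ — hence the correction $U_n'-U_n$ — is asymptotically negligible rests on Lemma \ref{lem:never-exceeds}, whose excursion-based proof is the hard input already in hand. Everything else is bookkeeping.
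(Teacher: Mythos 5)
Your proposal is correct and follows essentially the same route as the paper's proof: decompose $Z_{n,1}=\tilde Z_{n,1}+R_n$ in the equation from Lemma \ref{lem:reflection-setup}, absorb the overshoot into $U_n'$ (your formula $U_n'=U_n-R_n+R_n(0)-\int_0^\cdot R_n\,ds$ matches the paper's \eqref{eq:auxfunctions} exactly), identify the Skorokhod characterization via the complementarity $\{\tilde Z_{n,1}<\theta_n\}=\{Z_{n,1}<\theta_n\}$, and then use Lemma \ref{lem:never-exceeds} together with $\TVar{U_n}{[0,T_n]}\pconv 0$ to control $U_n'$. The paper verifies $\supabs{U_n'}{T_n}\pconv 0$ slightly more tersely but via the same ingredients.
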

\begin{proof}
    Let $\theta_n$ be as in Lemma \ref{lem:reflection-setup}.
    Since $d_n \gg \sqrt{n}$, \chmd{$\theta_n = \alpha_n + o(1)$ and}  Lemma \ref{lem:never-exceeds} shows 
\begin{equation}\supabs{(Z_{n,1} - \theta_n)^+}{T \wedge \tau_{n,L}} \to 0.\label{eq:eqznith}
	\end{equation}  Note that $\tilde{Z}_{n,1} = Z_{n,1} - (Z_{n,1} - \theta_n)^+$. Hence we can rewrite \eqref{eq:zreflection1} and \eqref{eq:zreflection2}  as
	        \begin{align}
	            \tilde{Z}_{n,1}(t) &= \tilde{Z}_{n,1}(0) - \int_0^t \tilde{Z}_{n,1}(s) ds + \int_0^t Z_{n,2}(s) ds + \sqrt{n}M_{n,1}(t) + U'_n(t) - \eta_n(t) \label{eq:zntildeform}\\
	            Z_{n,2}(t) &= Z_{n,2}(0) - \int_0^t (Z_{n,2}(s) - Z_{n,3}(s)) ds + V_n(t) + \eta_n(t) \nonumber,
	\end{align}
	       where
	        \begin{equation}
	            \begin{aligned}
	            U_n'(t) &\doteq U_n(t) - \int_0^t (Z_{n,1}(s) - \theta_n)^+ ds - (Z_{n,1}(t)-\theta_n)^+ + (Z_{n,1}(0) - \theta_n)^+\nonumber\\
	        \end{aligned} \label{eq:auxfunctions}.
	        \end{equation}
                The properties of $\eta_n$ from Lemma \ref{lem:reflection-setup} (and \cg{Corollary \ref{cor:explicit-reflection}}) say that  $\eta_n$ is a non-decreasing process, with $\eta_n(0) = 0$ and $\eta_n(t) = \int_0^t \I{\tilde{Z}_{n,1}(s) = \theta_n} d\eta_n(s)$. Since $\tilde{Z}_{n,1} \leq \theta_n$, \eqref{eq:zntildeform} and the characterizing properties of the Skorokhod map  show \eqref{eq:zn1tilde} and \eqref{eq:etadef}.
                Finally, by Lemma \ref{lem:reflection-setup}, Corollary \ref{cor:explicit-reflection},  and Lemma \ref{lem:never-exceeds}
                \begin{equation*}
        \supabs{U_n}{T \wedge \tau_{n,L}}  \pconv 0, \mbox{ and } \supabs{V_n}{T \wedge \tau_{n,L}}  \pconv 0
                \end{equation*}
                as $n \to \infty$.  Hence, \cg{using \eqref{eq:eqznith}}, $\supabs{U'_n}{T \wedge \tau_{n,L}} \pconv 0$ as $n \to \infty$, and the result follows.
\end{proof}
\cg{The following lemma will be needed in order to prove the tightness of $\Zn$.}
\begin{lemma}
	Under the hypothesis of Theorem \ref{thm:reflection}, the collection of random variables $\set{\norm{\Zn}_{2,T}}_{n \in \nat}$ is tight for any $T \in (0,\infty)$.
	\label{lem:process-bounded}
\end{lemma}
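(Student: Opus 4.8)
The plan is to establish the equivalent statement $\lim_{L\to\infty}\sup_{n}\prob(\tau_{n,L}\le T)=0$ for the stopping times $\tau_{n,L}$ of \eqref{eq:defntaunm}; since $\{\norm{\Zn}_{2,T}\ge L\}\subseteq\{\tau_{n,L}\le T\}$, this yields the asserted tightness. Fix $L>0$ and write $T_n\doteq T\wedge\tau_{n,L}$, so that by \eqref{eq:znboundeduptostopping} one has $\abs{Z_{n,i}(t)}\le\norm{\Zn(t)}_2\le L$ for all $i\in\NN$ and $t\le T_n$. The decisive step is an a priori bound on $\norm{\Zn}_{2,T_n}$ by a quantity that is tight \emph{uniformly in $L$}, so that the usual bootstrap closes.

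To obtain it I would invoke Lemma~\ref{lem:as-reflection} (and Lemma~\ref{lem:reflection-setup}): with $\theta_n=\alpha_n+O(\sqrt n/d_n)\ge 0$, $\tilde Z_{n,1}\doteq Z_{n,1}\wedge\theta_n$, and $f_n\doteq\tilde Z_{n,1}(0)-\int_0^{\cdot}(\tilde Z_{n,1}(s)-Z_{n,2}(s))\,ds+\sqrt n M_{n,1}(\cdot)+U_n'(\cdot)$, one has $\tilde Z_{n,1}=\Gamma_{\theta_n}(f_n)$, $\eta_n=\hat\Gamma_{\theta_n}(f_n)$, while $Z_{n,2},\dots,Z_{n,r}$ obey \eqref{eq:zreflection2}--\eqref{eq:zreflectioni}. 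Since $\theta_n\ge0$, the explicit form \eqref{eq:SMdef} of the Skorohod map gives $\eta_n(t)\le\supabs{f_n}{t}$ and $\supabs{\tilde Z_{n,1}}{t}\le 2\supabs{f_n}{t}$, where
\[
\supabs{f_n}{t}\le\abs{\tilde Z_{n,1}(0)}+\int_0^t\brR*{\abs{\tilde Z_{n,1}(s)}+\abs{Z_{n,2}(s)}}\,ds+\supabs{\sqrt n M_{n,1}}{t}+\supabs{U_n'}{t};
\]
crucially the right side carries only \emph{integrals} of $\abs{\tilde Z_{n,1}},\abs{Z_{n,2}}$ together with quantities not involving $\norm{\Zn}_2$ on $[0,T_n]$. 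Feeding these into the equations for $\tilde Z_{n,1},Z_{n,2},\dots,Z_{n,r}$, using $\int_0^t\abs{Z_{n,r+1}(s)}\,ds\le T\norm{\bvec{Z_{n,r+}}}_{2,T_n}$, and setting $\Phi_n(t)\doteq\sup_{s\le t}\brR*{\abs{\tilde Z_{n,1}(s)}+\sum_{i=2}^r\abs{Z_{n,i}(s)}}$, I expect to reach $\Phi_n(t)\le D_n+C\int_0^t\Phi_n(s)\,ds$ for $t\le T_n$, with $C=C(r,T)$ and
\[
D_n\doteq 3\abs{\tilde Z_{n,1}(0)}+\sum_{i=2}^r\abs{Z_{n,i}(0)}+3\supabs{\sqrt n M_{n,1}}{T}+\supabs{U_n'}{T_n}+\supabs{V_n}{T_n}+\sum_{i=3}^r\supabs{W_{n,i}}{T_n}+T\norm{\bvec{Z_{n,r+}}}_{2,T_n}.
\]
Gronwall's lemma then gives $\Phi_n(T_n)\le D_n e^{CT}$, and combining with $\abs{\tilde Z_{n,1}}=\abs{Z_{n,1}\wedge\theta_n}\le\abs{Z_{n,1}}$ and $\abs{Z_{n,1}}\le\abs{\tilde Z_{n,1}}+(Z_{n,1}-\theta_n)^+$ yields
\[
\norm{\Zn}_{2,T_n}\le\Phi_n(T_n)+\supabs{(Z_{n,1}-\theta_n)^+}{T_n}+\norm{\bvec{Z_{n,r+}}}_{2,T_n}\le\bar C\,(S_n+E_n),
\]
where $\bar C=\bar C(r,T)$, $S_n\doteq\abs{\tilde Z_{n,1}(0)}+\sum_{i=2}^r\abs{Z_{n,i}(0)}+\supabs{\sqrt n M_{n,1}}{T}$ is tight and independent of $L$ (by the hypotheses on $\Zn(0)$ and Lemma~\ref{lem:reflection-setup}), and $E_n\doteq\supabs{U_n'}{T_n}+\supabs{V_n}{T_n}+\sum_{i=3}^r\supabs{W_{n,i}}{T_n}+\norm{\bvec{Z_{n,r+}}}_{2,T_n}+\supabs{(Z_{n,1}-\theta_n)^+}{T_n}\pconv0$ for each fixed $L$ by Lemmas~\ref{lem:reflection-setup}, \ref{lem:never-exceeds}, and \ref{lem:as-reflection} (using $\theta_n=\alpha_n+o(1)$ for the last term).

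The bootstrap then runs as usual. Given $\delta>0$, tightness of $\{S_n\}$ supplies $L_0$ with $\sup_n\prob(S_n>L_0)\le\delta/2$; set $L\doteq 2\bar C L_0+2$. On $\{\tau_{n,L}\le T\}$ one has $\norm{\Zn}_{2,T_n}\ge L-1/\sqrt n\ge L-1$, hence
\[
\prob(\tau_{n,L}\le T)\le\prob(S_n>L_0)+\prob\brR*{\bar C E_n\ge (L-1)/2}\le\frac{\delta}{2}+\prob\brR*{\bar C E_n\ge (L-1)/2},
\]
and the last term vanishes as $n\to\infty$ since $L$ is now a fixed constant and $E_n\pconv0$. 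Thus $\limsup_n\prob(\tau_{n,L}\le T)\le\delta/2$; enlarging $L$ once more to dominate the finitely many remaining terms (using that $\Zn$ has a.s.\ right-continuous $\lSpace{2}$-valued paths that are bounded in $\norm{\cdot}_2$ over $[0,T]$, so $\prob(\tau_{n,L}\le T)\to0$ as $L\to\infty$ for each fixed $n$, and $L\mapsto\tau_{n,L}$ is nondecreasing) gives $\sup_n\prob(\tau_{n,L}\le T)\le\delta$. Since $\delta>0$ was arbitrary, the lemma follows.

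The hard part will be arranging the estimate so that the bound on $\norm{\Zn}_{2,T_n}$ does not itself carry a factor of $L$: the reflection term $\eta_n$ must be controlled through its $\hat\Gamma_{\theta_n}$-representation from Lemma~\ref{lem:as-reflection}, since reading $\eta_n$ off directly from \eqref{eq:zreflection1} would introduce $\sup_{[0,T_n]}\abs{Z_{n,1}}\le L$ and wreck the bootstrap. A secondary subtlety is that $\theta_n\to\infty$ when $\alpha=\infty$, which forces all bounds to be kept in terms of $\abs{Z_{n,1}}$ (via $\abs{\tilde Z_{n,1}}\le\abs{Z_{n,1}}$, valid because $\theta_n\ge0$) rather than $\theta_n$.
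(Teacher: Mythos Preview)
Your proposal is correct and follows essentially the same route as the paper: both use the Skorohod-map representation from Lemma~\ref{lem:as-reflection} to bound $\eta_n$ (exploiting $\theta_n\ge 0$), set up a Gronwall inequality for $\supabs{\tilde Z_{n,1}}{t}+\sum_{i=2}^r\supabs{Z_{n,i}}{t}$, split the resulting bound into an $L$-independent tight part and an $L$-dependent part that vanishes in probability, and close the bootstrap via $\norm{\Zn}_{2,T_n}\ge L-1/\sqrt n$ on $\{\tau_{n,L}\le T\}$. Your identification of the key subtleties (controlling $\eta_n$ via $\hat\Gamma_{\theta_n}$ rather than by subtraction, and using $|\tilde Z_{n,1}|\le|Z_{n,1}|$ to avoid $\theta_n$ appearing in the bound when $\alpha=\infty$) matches the paper's implicit reasoning.
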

\begin{proof}
	Fix $T\in (0,\infty)$. 
    \chmd{In Lemma \ref{lem:as-reflection} using the definition of the Skorokhod map $\Gamma_{\theta_n}$ for $\theta_n \geq 0$}  (see \eqref{eq:SMdef}), we have,  for any $t > 0$ that
    \begin{equation*}
        \eta_n(t) \leq  \abs{\tilde{Z}_{n,1}(0)} + \int_0^t \abs{\tilde{Z}_{n,1}(s)} ds + \int_0^t \abs{Z_{n,2}(s)} ds + \supabs{\sqrt{n}M_{n,1}}{t} + \supabs{U'_n}{t}.
        \label{eq:etabound}
    \end{equation*}
    This shows that for any $t \geq 0$ 
    \begin{align*}
        \supabs{\tilde{Z}_{n,1}}{t} &\leq  2\brR*{\abs{\tilde{Z}_{n,1}(0)} + \int_0^t \supabs{\tilde{Z}_{n,1}}{s} ds + \int_0^t \supabs{Z_{n,2}}{s} ds + \supabs{\sqrt{n}M_{n,1}}{t} + \supabs{U'_n}{t}}\\
        \supabs{Z_{n,2}}{t} &\leq  \abs{\tilde{Z}_{n,1}(0)} + \abs{Z_{n,2}(0)}+ \int_0^t \supabs{\tilde{Z}_{n,1}}{s} ds + 
		\int_0^t  (2\supabs{Z_{n,2}}{s} + \supabs{Z_{n,3}}{s}) ds\nonumber\\ 
                            &\quad + \supabs{\sqrt{n}M_{n,1}}{t}  + \supabs{U'_n}{t} + \supabs{V_n}{t}, 
                            \shortintertext{and}
        \supabs{Z_{n,i}}{t} &\leq \abs{Z_{n,i}(0)} + \int_0^t \supabs{Z_{n,i}}{s} ds + \int_0^t \supabs{Z_{n,i+1}}{s} ds + \supabs{W_{n,i}}{t} \quad \text{for }  i \in \{3, \ldots, r\} 
    \end{align*}
	\cg{where the last line is from Lemma \ref{lem:reflection-setup}.}
    Let $H_t \doteq \supabs{\tilde{Z}_{n,1}}{t} + \supabs{Z_{n,2}}{t} + \ldots + \supabs{Z_{n,r}}{t}$. By adding over equations in the above display, we have for  $t \in [0, \tau]$ and $\tau \in [0,T]$ that
    \begin{equation*}
        0 \leq H_t \leq 4\brR*{H_0 +  \supabs{\sqrt{n} M_{n,1}}{\tau} + \supabs{U_n'}{\tau} + \supabs{V_n}{\tau} + \sum_{i=3}^r  \supabs{W_{n,i}}{\tau} + \int_0^t H_s ds}.
    \end{equation*}
   By Gronwall's inequality, for all $\tau \in [0,T]$,
    \begin{equation}
        \label{eq:gronwallbound}
        H_\tau \leq 4\brR*{H_0 +  \supabs{\sqrt{n} M_{n,1}}{\tau} + \supabs{U_n'}{\tau} + \supabs{V_n}{\tau} + \sum_{i=3}^r  \supabs{W_{n,i}}{\tau}}e^{4 \tau}.
    \end{equation}
    Let $\vec{Z}_n \doteq (\tilde{Z}_{n,1}, Z_{n,2}, \ldots, Z_{n,r})$. Since $\vec{Z}_n(0) \pconv (z_1, \ldots, z_r)$, and $\sqrt{n}\Mn \dconv B\bvec{e_1}$, for every $\ep>0$ there is a $L_1 \in (0,\infty)$ such that for 
	every $n\in \NN$
    \begin{equation*}
        \prob \brR*{C_{n,1}} \leq \frac{\epsilon}{2}, \text{ where } C_{n,1} \doteq \brC*{H_0 +  \supabs{\sqrt{n}M_{n,1}}{T} \geq L_1}.
    \end{equation*}
    Applying Lemmas \ref{lem:reflection-setup} and Lemma \ref{lem:as-reflection} with  $L = 4(L_1+1)e^{4T} + 2$ we can find an $n_0 \in \nat$ so that $\prob \brR*{C_{n,2}} \leq \frac{\epsilon}{2}$ for  $n \geq n_0$, where
    \begin{equation*}
        C_{n,2} \doteq \brC*{\supabs{U_n'}{T_n} + \supabs{V_n}{T_n} + \sum_{i=3}^r  \supabs{W_{n,i}}{T_n} + \supabs{(Z_{n,1}-\alpha_n)^+}{T_n} + \norm{\bvec{Z_{n,r+}}}_{2, T_n}\geq 1}
    \end{equation*}
    and $T_n \doteq T \wedge \tau_{n,L}$.
On the event $\brR*{C_{n,1} \cup C_{n,2}}^c$
\begin{equation*}
    \norm{\vec{Z}_n}_{1,T_n} = H_{T_n} < 4(L_1 + 1)e^{4T}
\end{equation*}
by \eqref{eq:gronwallbound}, and hence by triangle inequality 
\begin{equation}
    \begin{aligned}
         \norm{\Zn}_{2,T_n} &\leq \norm{\vec{Z}_n}_{1,T_n} + \supabs{\brR*{Z_{n,1}-\alpha}^+}{T_n}  + \norm{\bvec{Z_{n,r+}}}_{2,T_n} \\
                            &< 4(L_1+1)e^{4T} + 1 = L - 1.
    \end{aligned}
    \label{eq:recursivebound}
\end{equation}
Also, by the definition of $\tau_{n,L}$, $\norm{\Zn(\tau_{n,L})}_2 \geq L - \frac{1}{\sqrt{n}}$ on the set $\tau_{n,L} < T$. Hence we must have that $\tau_{n,L} > T$ whenever \eqref{eq:recursivebound} holds, and hence
\begin{equation*}
    \norm{\Zn}_{2,T} < L - 1
 \mbox{ on the event } \brR*{C_{n,1} \cup C_{n,2}}^c.
\end{equation*}
This shows that
\begin{equation*}
    \prob\brR*{\norm{\Zn}_{2,T} \geq L} \leq \prob\brR*{C_{n,1} \cup C_{n,2}} \leq \epsilon \quad \forall n \geq n_0
\end{equation*}
Since $\epsilon > 0$ is arbitrary, the result follows.
\end{proof}
The following result is immediate from \cg{Lemmas \ref{lem:stopping-time-trick}, \ref{lem:reflection-setup}, \ref{lem:as-reflection}, and \ref{lem:process-bounded}}.
\begin{corollary}
    \label{cor:uniform-conv}
    Under the hypothesis of Theorem \ref{thm:reflection}, for any $T > 0$,
 $\lim_{L \to \infty} \sup_n \prob\brR*{\tau_{n,L} \leq T} = 0$.
    In particular the processes $W_{n,i}, U'_n, V_n, \norm{\bvec{Z_{n,r+}}}_2, (Z_{n,1}-\alpha_n)^+, (Z_{n,1}-\theta_n)^+$  converge in probability to  zero  in $\DSpace*{\infty}[\R]$.
\end{corollary}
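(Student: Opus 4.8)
The plan is to deduce the corollary entirely from the building blocks already in place, so that the argument is really just bookkeeping with no new estimate required. First I would prove the uniform non-explosion statement $\lim_{L\to\infty}\sup_n \prob\brR*{\tau_{n,L}\le T}=0$. The mechanism: by Lemma \ref{lem:process-bounded} the family $\{\norm{\Zn}_{2,T}\}_{n\in\nat}$ is tight, and since the jumps of $\Zn$ have size $1/\sqrt n$, on the event $\{\tau_{n,L}\le T\}$ one has $\norm{\Zn}_{2,T}\ge L-1/\sqrt n\ge L-1$ for $n$ large; hence $\prob\brR*{\tau_{n,L}\le T}\le \prob\brR*{\norm{\Zn}_{2,T}>L-1}$, and tightness makes the right-hand side small uniformly in $n$ once $L$ is large (the finitely many remaining small $n$ are handled trivially, since $\tau_{n,L}\uparrow\infty$ a.s.\ as $L\uparrow\infty$ for each fixed $n$).

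Second, for each of the listed processes $F_n\in\{W_{n,i}\ (3\le i\le r),\,U_n',\,V_n,\,\norm{\bvec{Z_{n,r+}}}_2,\,(Z_{n,1}-\alpha_n)^+,\,(Z_{n,1}-\theta_n)^+\}$, each of which has sample paths in $\DSpace*{\infty}[\R]$, I would invoke the localized convergences already proved: Lemma \ref{lem:reflection-setup} (together with Corollary \ref{cor:explicit-reflection}) gives $\supabs{W_{n,i}}{T\wedge\tau_{n,L}}\pconv0$, $\supabs{V_n}{T\wedge\tau_{n,L}}\pconv0$ and $\norm{\bvec{Z_{n,r+}}}_{2,T\wedge\tau_{n,L}}\pconv0$; Lemma \ref{lem:as-reflection} gives $\supabs{U_n'}{T\wedge\tau_{n,L}}\pconv0$ and $\supabs{(Z_{n,1}-\theta_n)^+}{T\wedge\tau_{n,L}}\pconv0$; and since $\theta_n=\alpha_n+O(\sqrt n/d_n)$ with $\sqrt n/d_n\to0$, Lemma \ref{lem:never-exceeds} plus the triangle inequality gives $\supabs{(Z_{n,1}-\alpha_n)^+}{T\wedge\tau_{n,L}}\pconv0$ as well. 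Thus in every case $\abs{F_n}_{*,T\wedge\tau_{n,L}}\pconv0$ for each fixed $L>0$.

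Finally, combining the first step with these localized convergences, I would apply Lemma \ref{lem:stopping-time-trick} with the $[0,T]$-valued random times $\tau_{n,L}$ to upgrade $\abs{F_n}_{*,T\wedge\tau_{n,L}}\pconv0$ to $\abs{F_n}_{*,T}\pconv0$, i.e.\ $F_n\to0$ in probability in $\DSpace{T}[\R]$; letting $T\to\infty$ then gives convergence in $\DSpace*{\infty}[\R]$. I do not expect a genuine obstacle here; the only points needing a moment of care are matching the two barriers $\alpha_n$ and $\theta_n$ through Lemma \ref{lem:never-exceeds} (using $\theta_n-\alpha_n=O(\sqrt n/d_n)\to0$), and checking that each $F_n$ indeed has right continuous paths with left limits so that the hypotheses of Lemma \ref{lem:stopping-time-trick} are met.
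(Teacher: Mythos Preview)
Your proposal is correct and follows essentially the same route as the paper, which simply declares the result immediate from Lemmas \ref{lem:stopping-time-trick}, \ref{lem:reflection-setup}, \ref{lem:as-reflection}, and \ref{lem:process-bounded}. Your write-up merely makes explicit the two steps (tightness of $\norm{\Zn}_{2,T}$ gives the uniform bound on $\prob(\tau_{n,L}\le T)$, then Lemma \ref{lem:stopping-time-trick} upgrades the localized convergences), and adds the observation that $(Z_{n,1}-\alpha_n)^+$ is handled directly by Lemma \ref{lem:never-exceeds}.
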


\begin{corollary}
    Under the hypothesis of Theorem \ref{thm:reflection}, the sequence of processes $\brC*{\Zn}_{n \in \nat}$ is tight in $\DSpace*{\infty}[\lSpace{2}]$.
	\label{cor:tightness}
\end{corollary}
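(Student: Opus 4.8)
The plan is to deduce relative compactness of $\{\Zn\}$ in $\DSpace*{\infty}[\lSpace{2}]$ by separating the finite-dimensional truncation from the (uniformly negligible) tail. Write $\vec{Z}_n \doteq (Z_{n,1},\dots,Z_{n,r})$. I would first prove that $\{\vec{Z}_n\}$ is tight in $\DSpace*{\infty}[\R^r]$; granting this, tightness of $\{\Zn\}$ follows quickly. Indeed, along any subsequence pass to a further subsequence along which $\vec{Z}_n \dconv \vec{\zeta}$ in $\DSpace*{\infty}[\R^r]$; by Skorokhod's representation theorem we may assume $\vec{Z}_n \to \vec{\zeta}$ a.s. and (by Corollary \ref{cor:uniform-conv}, after a further extraction) $\supabs{\norm{\bvec{Z_{n,r+}}}_2}{T} \to 0$ a.s.\ for every $T$. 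Since the jumps of $\Zn$, hence of $\vec{Z}_n$, are at most $1/\sqrt{n}$ in $\lSpace{1}\hookrightarrow\lSpace{2}$ (because $\norm{\Gn(t)-\Gn(t-)}_1\le 1/n$), the limit $\vec{\zeta}$ has continuous paths, so $\vec{Z}_n\to\vec{\zeta}$ a.s.\ uniformly on compacts; reassembling coordinates gives $\Zn \to \vec{\zeta}\oplus\bvec{0}$ a.s.\ uniformly on compacts in $\lSpace{2}$, hence $\Zn \dconv \vec{\zeta}\oplus\bvec{0}$ in $\DSpace*{\infty}[\lSpace{2}]$ along this sub-subsequence. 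As the subsequence was arbitrary, $\{\Zn\}$ is tight.

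The bulk of the work is the tightness of $\{\vec{Z}_n\}$, for which I would verify the Aldous criterion (cf.\ \cite{ethkur}). Compact containment at each fixed time is immediate from the tightness of $\{\norm{\Zn}_{2,T}\}$ proved in Lemma \ref{lem:process-bounded}. For the modulus condition, fix $L$ and work on the event $\{\tau_{n,L}>T+1\}$, on which every coordinate of $\Zn$ is bounded by $L$ on $[0,T]$. Using the Skorokhod-map representation of Lemma \ref{lem:as-reflection}, with $\Psi_n$ the argument of $\Gamma_{\theta_n},\hat\Gamma_{\theta_n}$ in \eqref{eq:zn1tilde}--\eqref{eq:etadef}, one has, for a $\{\clf^n_t\}$-stopping time $\tau\le T$ and $\gamma\le\delta$, that $\sup_{u\in[\tau,\tau+\gamma]}|\Psi_n(u)-\Psi_n(\tau)| \le 2L\gamma + S_n(\tau,\gamma) + 2\supabs{U'_n}{T\wedge\tau_{n,L}}$, where $S_n(\tau,\gamma)\doteq\sup_{u\in[\tau,\tau+\gamma]}|\sqrt{n}M_{n,1}(u)-\sqrt{n}M_{n,1}(\tau)|$. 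The elementary pathwise bounds $|\Gamma_{\theta_n}(f)(t)-\Gamma_{\theta_n}(f)(s)|\le 2\sup_{u\in[s,t]}|f(u)-f(s)|$ and $|\hat\Gamma_{\theta_n}(f)(t)-\hat\Gamma_{\theta_n}(f)(s)|\le\sup_{u\in[s,t]}|f(u)-f(s)|$ for the one-dimensional Skorokhod map then control the increments of $\tilde{Z}_{n,1}=Z_{n,1}\wedge\theta_n$ and of $\eta_n$; feeding this, together with $Z_{n,1}=\tilde{Z}_{n,1}+(Z_{n,1}-\theta_n)^+$, into \eqref{eq:zreflection1}--\eqref{eq:zreflectioni} yields $\sum_{i=1}^r|Z_{n,i}(\tau+\gamma)-Z_{n,i}(\tau)| \le C_r\brR{L\gamma + S_n(\tau,\gamma) + R_{n,L}}$ on this event, where $R_{n,L}\doteq\supabs{U'_n}{T\wedge\tau_{n,L}}+\supabs{(Z_{n,1}-\theta_n)^+}{T\wedge\tau_{n,L}}+\supabs{V_n}{T\wedge\tau_{n,L}}+\sum_{i=3}^r\supabs{W_{n,i}}{T\wedge\tau_{n,L}}\pconv 0$ by Lemmas \ref{lem:reflection-setup}, \ref{lem:as-reflection} and \ref{lem:never-exceeds}.

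It remains to control $S_n(\tau,\gamma)$ uniformly over stopping times $\tau\le T$ and $\gamma\le\delta$. Since $\QV{\sqrt{n}M_{n,1}}_t\le 2t$ (exactly the estimate used in the proof of Lemma \ref{lem:never-exceeds}), Doob's inequality applied to the martingale $v\mapsto\sqrt{n}M_{n,1}(\tau+v)-\sqrt{n}M_{n,1}(\tau)$ gives $\E\, S_n(\tau,\gamma)^2\le 8\gamma\le 8\delta$. Combining with the previous display, for $\delta$ small and uniformly in such $\tau,\gamma$, $\E\brS{1\wedge\sum_{i=1}^r|Z_{n,i}(\tau+\gamma)-Z_{n,i}(\tau)|}\le \prob(\tau_{n,L}\le T+1) + C_rL\delta + C_r\sqrt{8\delta} + \E\brS{1\wedge C_rR_{n,L}}$; taking $\limsup_n$ (using $R_{n,L}\pconv 0$ and dominated convergence), then $\delta\to 0$, then $L\to\infty$ together with $\lim_{L\to\infty}\sup_n\prob(\tau_{n,L}\le T+1)=0$ from Corollary \ref{cor:uniform-conv}, verifies the modulus condition. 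Hence $\{\vec{Z}_n\}$ is tight, which by the first paragraph completes the proof.

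The step I expect to be the main obstacle is estimating the increments of the first two coordinates: $Z_{n,1}$ and $Z_{n,2}$ are defined only implicitly, through a one-dimensional Skorokhod map whose argument $\Psi_n$ itself contains $\tilde{Z}_{n,1}$ and $Z_{n,2}$, so a naive modulus estimate would be circular. The resolution is to localize at $\tau_{n,L}$ first: on $\{\tau_{n,L}>T+1\}$ the drift term $\int(\tilde{Z}_{n,1}-Z_{n,2})\,ds$ inside $\Psi_n$ is bounded crudely by $2L\gamma$, so the oscillation of $\Psi_n$ over $[\tau,\tau+\gamma]$ is governed entirely by $L\gamma$, the stopping-time-robust modulus of the martingale $\sqrt{n}M_{n,1}$, and the uniformly negligible processes $U'_n, V_n, W_{n,i}, (Z_{n,1}-\theta_n)^+$; the Lipschitz and oscillation bounds for $\Gamma_{\theta_n},\hat\Gamma_{\theta_n}$ then transfer this control to $\tilde{Z}_{n,1}$ and $\eta_n$ with no feedback loop.
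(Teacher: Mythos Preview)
Your proposal is correct and follows essentially the same route as the paper: reduce to the finite-dimensional block via $\norm{\bvec{Z_{n,r+}}}_{2,T}\pconv 0$ and $(Z_{n,1}-\theta_n)^+\pconv 0$, then control the modulus of the first two coordinates through the oscillation bound for the one-dimensional Skorokhod map applied to the input process, after localizing at $\tau_{n,L}$ so the drift term is crudely bounded. The paper phrases the second step slightly more compactly---it first argues tightness of the input $R_n$ in \eqref{eq:wn} from Lemma \ref{lem:process-bounded} and the convergence of $\sqrt{n}M_{n,1}$ and $U'_n$, then transfers this to $\tilde Z_{n,1}=\Gamma_{\theta_n}(R_n)$ and $\eta_n=\hat\Gamma_{\theta_n}(R_n)$ via exactly your oscillation inequality---whereas you verify the Aldous condition directly with an explicit martingale-increment bound; but the content is the same.
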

\begin{proof}
    Let $\theta_n$ be as in Lemma \ref{lem:as-reflection}. Then by Corollary \ref{cor:uniform-conv}, \cg{for each fixed $T<\infty$,} $\norm{\bvec{Z_{n,r+}}}_{2,T}  \pconv 0$ and $\supabs{\brR*{Z_{n,1} - \theta_n}^+}{T} \pconv 0$. Hence it is sufficient to show that the sequence $\brC*{\vec{Z}_n}_{n \in \nat}$ introduced in the proof of
	Lemma \ref{lem:process-bounded} is tight in $\DSpace{T}[\R^r]$.
	From Lemma \ref{lem:process-bounded}, the convergence of $W_{n,i}$ in Corollary \ref{cor:uniform-conv}, and equations for
	$Z_{n,j}$, $j= 3, \ldots r$ in Lemma \ref{lem:reflection-setup}, it is immediate that $(Z_{n,3}, \ldots Z_{n,r})$ is tight in $\DSpace*{\infty}[\R^{r-2}]$.  Finally consider the pair $(\tilde Z_{n,1}, Z_{n,2})$. Once again using Lemma  \ref{lem:process-bounded}, the convergence of 
	$\sqrt{n}M_{n,1}$ in Lemma \ref{lem:reflection-setup}, and the convergence of
	$U'_n$ in Corollary \ref{cor:uniform-conv}, it follows that
	\begin{equation}\label{eq:wn}
		R_n \doteq \tilde{Z}_{n,1}(0) - \int_0^{\cdot} \brR*{\tilde{Z}_{n,1}(s) - Z_{n,2}(s)} + \sqrt{n}M_{n,1}(\cdot) + U_n'(\cdot)
		\end{equation}
	is tight in $\DSpace*{\infty}[\R]$.
	Using the identity
	$$\Gamma_{\theta_n}(R_n)(t) = \Gamma_{\theta_n}\left(\Gamma_{\theta_n}(R_n)(s) + R_n(\cdot+s) - R_n(s)\right)(t-s)$$
	for $0\le s \le t \le T$, we see from the definition of the Skorohod map that
	$$|\Gamma_{\theta_n}(R_n)(t)-\Gamma_{\theta_n}(R_n)(s)| \le 2 \sup_{s\le u \le t} |R_n(u)-R_n(s)|.$$
	Together with the tightness of
	$R_n$ this immediately implies the tightness of $\tilde Z_{n,1} = \Gamma_{\theta_n}(R_n)$ and of $\hat \Gamma_{\theta_n}(R_n)$. Finally the tightness of $Z_{n,2}$ is now immediate from Lemma \ref{lem:process-bounded}, the convergence of $V_n$ in Corollary \ref{cor:uniform-conv} and the  tightness of $\hat \Gamma_{\theta_n}(R_n)$ noted above.
	The result follows.
\end{proof}

\begin{proof}[Proof of Theorem \ref{thm:reflection}]
	From Lemma \ref{prop:f1} and from the tightness of $\brC*{\norm{\Zn(0)}_1}_{n \in \nat}$, it follows under the conditions of the theorem that $\Mu_n \pconv \bvec{f_1}$
	and $\Gn(0) \pconv \bvec{f_1}$ in $\ld$. This proves the first statement in the theorem. Now consider the second statement. 
	Fix $T<\infty$.
	From Corollary \ref{cor:tightness}, $\brC*{\Zn}_{n \in \nat}$ is tight in $\DSpace*{\infty}[\lSpace{2}]$. Also from Lemma \ref{lem:reflection-setup}, $\sqrt{n}M_{n,1}$ converges in distribution to $\sqrt{2}B$ where $B$ is a standard Brownian motion
	and from Corollary \ref{cor:uniform-conv}
	$$(\{W_{n,i}\}_{i=3}^r, U'_n, V_n,  (Z_{n,1}-\theta_n)^+) \pconv \mathbf{0} \mbox{ in } \DSpace{T}[\RR^r]$$
	Suppose that along a subsequence
	$$(\Zn , \sqrt{n}M_{n,1}, \{W_{n,i}\}_{i=3}^r, U'_n, V_n,  (Z_{n,1}-\theta_n)^+) \dconv (\Z , \sqrt{2}B, \mathbf{0})$$
	 in $\DSpace*{\infty}[\lSpace{2}\times \RR^{r+2}]$ and for notational simplicity label the subsequence once more as $\{n\}$. Also by appealing to Skorohod embedding theorem we assume that all the  processes in the above display 
	are given on a common probability space and the above convergence holds a.s.
Since $J(\Zn) \leq \frac{1}{\sqrt{n}}$ and $\Zn(0) \pconv \bvec{z}$, we have $J(\Z) = 0$ and  $\Z(0) = \bvec{z}$ almost surely. In particular $\Z$ has sample paths in  $\CSpace*{\infty}[\lSpace{2}]$ and $(\Zn , \sqrt{n}M_{n,1}) \to (\Z , \sqrt{2}B)$ uniformly over compact time intervals
in $\lSpace{2}\times \RR$.
Since by  Corollary \ref{cor:uniform-conv}, for every $T<\infty$, $\norm{\bvec{Z_{n,r+}}}_{2,T}  \pconv 0$, it suffices to show that
$(Z_1, \ldots , Z_r)$ along with $B$ satisfy \eqref{eq:z1zi}.

From the  equations of $(Z_{n,3}, \ldots Z_{n,r})$ in Lemma \ref{lem:reflection-setup}, uniform convergence of $\Zn$ to $\Z$, and the uniform convergence of $W_{n,i}$ to $0$, it is immediate that 
$(Z_3, \ldots , Z_r)$ satisfy \eqref{eq:z1zi}. Finally consider the equations for $(Z_1, Z_2)$.
From \eqref{eq:wn} and uniform convergence properties observed above it is immediate that $R_n$ converges uniformly, a.s., to
$R$ given as
$$
R(\cdot) =  Z_{1}(0) - \int_0^{\cdot} \brR*{Z_{1}(s) - Z_{2}(s)} + \sqrt{2}B(\cdot). $$
Since $\theta_n = \alpha_n + O(\sqrt{n}/d_n) \to \alpha$, this shows that, for every $T<\infty$,
\begin{align*}
	\Gamma_{\theta_n}(R_n)(t) &=  R_n(t) - \sup_{s \in [0,t]} (R_n(t)-\theta_n)^+\\
	& \to R(t) - \sup_{s \in [0,t]} (R(t)-\alpha)^+ = \Gamma_{\alpha}(R)(t)
\end{align*}
uniformly for $t \in [0,T]$, a.s., where $(R(t)-\alpha)^+$ is taken to be $0$ when $\alpha = \infty$.
Similarly,
$$\hat \Gamma_{\theta_n}(R_n)(t) \to \hat \Gamma_{\alpha}(R)(t)$$
uniformly for $t \in [0,T]$, a.s. Here, when $\alpha = \infty$, $\Gamma_{\alpha}$ and $\hat \Gamma_{\alpha}$ are as introduced in \eqref{eq:skorinf}.
The fact that $(Z_1, Z_2)$ solve the first two equations in \eqref{eq:z1zi} is now immediate from Lemma \ref{lem:as-reflection}, \cg{the convergence
$\tilde Z_{n,1}- Z_{n,1}\pconv 0$,}
and the uniform convergence of $V_n$ to $0$ noted previously.
The result follows.
\end{proof}

\section*{Acknowledgements}
Research of SB is supported in part by NSF grants DMS-1613072, DMS-1606839 and ARO grant W911NF-17-1-0010. 
Research  of AB is supported in part by the National Science Foundation (DMS-1814894 and DMS-1853968).
Research of MD is supported by the NSF grant DMS-1613072 and NIH R01 grant HG009125-01.

\bibliographystyle{plain}
\bibliography{refs}

 \appendix

\section{Proofs of results in Section \ref{sec:technical-estimates}}
\label{app:proof-tech-est}
\subsection{Proof of Lemma \ref{lem:betan-approx}:}

\begin{proof}
	Fix $\ep \in (0,1)$. 
	First suppose $\frac{d_n}{n} \to 0$. Consider $x \in (\eps, 1]$.
	Let $\Delta_n(x) \doteq \log \beta_n(x) - \log \gamma_n(x)$.
	Let $n_0 \in \NN$ be such that for all $n \ge n_0$, $d_n/n < \ep/2$. Then, for $n\ge n_0$,
	\begin{align}
	\Delta_n(x) &\doteq \sum_{i=0}^{d_n - 1} \log\brR*{\frac{x - i/n}{1 - i/n}} - \log x^{d_n} = \sum_{i=0}^{d_n - 1} \brC*{\log\brR*{\frac{x - i/n}{1 - i/n}} -  \log x}, \nonumber\\
	&= \sum_{i=0}^{d_n - 1} \log \brR*{\frac{1 - i/(nx)}{1 - i/n}} = \sum_{i=0}^{d_n - 1} \log \brR*{ 1 - (i/n)\frac{1/x - 1}{1-i/n}}. \label{eq:diff}
	\end{align}
	Differentiating $\Delta_n$ gives, 
	\begin{align*}
	\Delta_n'(x) &= \sum_{i=0}^{d_n - 1}\brR*{\frac{1}{x - i/n} - \frac{1}{x}} =  \sum_{i=0}^{d_n-1} \frac{i/n}{x(x - i/n)}.
	\end{align*}
	Since $n\ge n_0$  and $x \in [\epsilon, 1]$ we have $x(x-\frac{i}{n}) \geq \epsilon^2/2$ for $i \le d_n-1$. Hence,
	$$\abs{\Delta'_n(x)} \leq \frac{2}{\epsilon^2} \sum_{i=0}^{d_n - 1} (i/n) \leq \frac{1}{\epsilon^2} \frac{d_n^2}{n}.$$ 
	From the definition of $\Delta_n$, we also have,
	\begin{equation}
	\Delta'_n(x) = \frac{\beta_n'(x)}{\beta_n(x)} - \frac{\gamma_n'(x)}{\gamma_n(x)} = \frac{\gamma'_n(x)}{\gamma_n(x)} \brR*{\frac{\beta'_n(x)}{\gamma'_n(x)} \frac{\gamma_n(x)}{\beta_n(x)} - 1}. \label{eq:delta-prime-1}
	\end{equation}
	Since $\frac{\gamma_n'(x)}{\gamma_n(x)} = \frac{d_n}{x} \geq d_n$ for $x \in [\epsilon,1]$, from \eqref{eq:delta-prime-1} we have,
	\begin{equation*}
	\sup_{x \in [\epsilon, 1]} \abs{\frac{\beta'_n(x)}{\gamma'_n(x)} \frac{\gamma_n(x)}{\beta_n(x)} - 1} \leq \frac{1}{d_n} \sup_{x \in [\epsilon, 1]}\abs{\Delta_n'(x)} \leq \frac{1}{\epsilon} \frac{d_n}{n} \to 0.
	\end{equation*}
	This proves \eqref{eq:betan-ratio-approx}.
	
	 Now assume $\frac{d_n}{\sqrt{n}} \to 0$. Once more consider $x \in (\eps, 1]$ and $n\ge n_0$.
	 Let $C \doteq \sup_{n\ge n_0} \frac{1/\epsilon - 1}{1 - d_n/n} < \infty$ and let $n_1>n_0$ be such that
	 $d_nC/n < 1/2$ for all $n\ge n_1$.
	  Then for $n \ge n_1$ and $x \in [\epsilon, 1]$:
	\begin{align}
	\abs{\Delta_n(x)} &\leq  \sum_{i=0}^{d_n - 1} 2\abs{(i/n)\frac{1/x - 1}{1 - i/n}} \leq 2C  \sum_{i=0}^{d_n - 1} i/n \leq C \frac{d_n^2}{n},
        \label{eq:deltabound}
	\end{align} 
	where the first inequality is from \eqref{eq:diff} and the inequality $\abs{\log (1+h)} \leq 2\abs{h}$ for $\abs{h} \leq 1/2$.
	This shows $\sup_{x \in [\epsilon,1]} \abs{\Delta_n(x)} \to 0$, hence showing the first convergence in  \eqref{eq:betan-prime-approx}. Finally the second convergence \eqref{eq:betan-prime-approx} is immediate on combining the first convergence with \eqref{eq:betan-ratio-approx}.
\end{proof}

\subsection{Proof of Corollary \ref{cor:betanbounds}:}

This is an immediate consequence of the estimate in \eqref{eq:deltabound}.

\subsection{Proof of Corollary \ref{cor:betaprimemun}:}

\begin{proof}
    Let $\ep>0$ and $n_0 \in \NN$ be such that
	 $\mu_{n,i} > \epsilon$ for all $n \ge n_0$. By Lemma \ref{lem:betan-approx}, as $n \to \infty$
    \begin{align}
        \frac{\beta'_n(\mu_{n,i})}{\beta_n(\mu_{n,i})}  = (1 + o(1)) \frac{\gamma'_n(\mu_{n,i})}{\gamma_n(\mu_{n,i})}  \label{eq:ratiobeta}.
    \end{align}
    Recall that $\mu_{n,i+1} \doteq \lambda_n \beta_n(\mu_{n,i})$ and $\gamma_n(x) \doteq x^{d_n}$. Hence \eqref{eq:ratiobeta} gives 
    \begin{equation}
        \frac{\beta'_n(\mu_{n,i})}{\mu_{n,i+1}/\lambda_n} = (1 + o(1)) \frac{d_n}{\mu_{n,i}}
    \end{equation}
    completing the proof.
\end{proof}

\subsection{Proof of Lemma \ref{lem:fixedpointapprox}:} 

\begin{proof}
From  Corollary \ref{cor:betanbounds}, there is a $n_0 \in \NN$ and $C \in (0,\infty)$ such that for all $n\ge n_0$
$$\sup_{x\in [\epsilon, 1]} \abs{\log \beta_n(x) - \log \gamma_n(x)} \le \frac{Cd_n^2}{ n}.$$
Thus, if for $n\ge n_0$ and $i \in \NN$,  $\mu_{n,i} \geq \epsilon$, then 
    \begin{align}
        \log \mu_{n,i+1} &= \log \lambda_n + \log \beta_n(\mu_{n,i}) = \log \lambda_n + \log \gamma_n(\mu_{n,i}) + \gamma_{n,i}\nonumber\\
                         &= \log \lambda_n + d_n \log \mu_{n,i} +\gamma_{n,i}, \label{eq:mulogrec}
    \end{align}
	where $|\gamma_{n,i}| \le \frac{Cd_n^2}{n}$.
	Now let $k \in \NN$ and $n_1 \in \NN$ be such that for all $n \ge n_1$, $\mu_{n,k} \ge \ep$.
    We will show that for $n \ge n_0\vee n_1$ and $j \in \set{1, \ldots, k}$ that
    \begin{equation}
        \log \mu_{n,j+1} = \brR*{\log \lambda_n } \brR*{\sum_{i=0}^j d_n^i} + \beta_{n,j},
        \label{eq:mu-ind}
    \end{equation}
where $|\beta_{n,j}|\le \frac{C}{n}\sum_{i=1}^{j} d_n^{i+1}$.
Note the the lemma is immediate from \eqref{eq:mu-ind} on taking $j=k$. To prove \eqref{eq:mu-ind} we argue inductively.
    First note that since $\Mun \in \ld$, $\mu_{n,i} \geq \mu_{n,k} \geq \epsilon$ for each $i \leq k$ and $n \ge n_1$. Hence \eqref{eq:mulogrec} holds for each $i \leq k$ and $n \ge n_0\vee n_1$.
    Taking $i=1$ in \eqref{eq:mulogrec} and noting that $\mu_{n,1} = \lambda_n$ proves \eqref{eq:mu-ind} for the  case  $j=1$. 

    Suppose now \eqref{eq:mu-ind} holds for some $j \leq k-1$. Then, using $i = j+1$, in \eqref{eq:mulogrec}
    $$
        \log \mu_{n,j+2} = \log \lambda_n + d_n \log \mu_{n,j+1} + \gamma_{n,j+1},$$
		where $|\gamma_{n,j+1}| \le \frac{Cd_n^2}{ n}$.
       By the induction hypothesis, \eqref{eq:mu-ind} holds for $j$. Hence
        \begin{align*}
		\log \mu_{n,j+2} &= \log \lambda_n + d_n \brC*{(\log \lambda_n) \brR*{\sum_{i=0}^{j} d_n^i} + \beta_{n,j}} = (\log \lambda_n) \brR*{\sum_{i=0}^{j+1} d_n^i} + d_n \beta_{n,j} + \gamma_{n,j} \nonumber
    \end{align*}
	and hence $\beta_{n,j+1} = d_n \beta_{n,j} + \gamma_{n,j}$. This shows
	$$|\beta_{n,j+1}| = |d_n \beta_{n,j} + \gamma_{n,j}| \le d_n \frac{C}{n}\sum_{i=1}^{j} d_n^{i+1} + \frac{Cd_n^2}{n}
	= \frac{C}{n}\sum_{i=1}^{j+1} d_n^{i+1}$$
    which shows that \eqref{eq:mu-ind} holds for $j+1$. This completes the proof.
\end{proof}

\subsection{Proof of Corollary \ref{cor:diffusionconditions}:}

\begin{proof}
    Since $d_n \to \infty$, the assumption $\frac{\xi_n^2}{d_n} \to 0$ shows that $\frac{\abs{\xi_n}}{d_n} \leq \frac{1 + \xi_n^2}{d_n} \to 0$. This shows that $\epsilon_n \doteq 1 -  \lambda_n = \frac{\xi_n + \log d_n}{d_n^k}$ also converges to $0$. 

    We first show that  $\mu_{n,i} \to 1$ for each $i \in \set{1, \ldots, k}$. We will argue inductively.
	Since $\mu_{n,1} \doteq \lambda_n = 1 - \epsilon_n$, we have  $\mu_{n,1} \to 1$. Suppose now that $\mu_{n,i} \to 1$ for some $i \leq k-1$. Hence eventually $\mu_{n,i} \geq \frac{1}{2}$.  Applying Lemma \ref{lem:fixedpointapprox} with $k = i$ and $\epsilon=\frac{1}{2}$
	and simplifying the resulting expression,  we get
    \begin{align}
        \log \mu_{n, i+1} &= (\log \lambda_n) \frac{d_n^{i+1} - 1}{d_n - 1} + O\brR*{\frac{d^2_n(d_n^i - 1)}{n(d_n - 1)}}\label{eq:muasymptoticspr}\\        
          &= O(\epsilon_n) \frac{d_n^{i+1} - 1}{d_n - 1} + O\brR*{\frac{d^2_n(d_n^i - 1)}{n(d_n - 1)}} = O\brR*{\frac{\xi_n + \log d_n}{d_n^{k-i}}} + O\brR*{\frac{d_n^{i+1}}{n}}, \label{eq:muasymptotics}
    \end{align}
	where the second equality uses
	 $\log \lambda_n = \log (1 - \epsilon_n) = O(\epsilon_n)$ and the third follows on recalling that  $d_n \to \infty$.
    Since $i \leq k-1$,  $\frac{\abs{\xi_n}}{d_n^{k-i}} \leq \frac{1+\xi_n^2}{d_n} \to 0$. Using this along with   $d_n^{k+1} \ll n$ in \eqref{eq:muasymptotics} shows that $\mu_{n,i+1} \to 1$. Hence, by induction,  $\mu_{n,i} \to 1$ for  $i \leq k$.

    Next we argue that $\beta_n'(\mu_{n,k}) \to \alpha$. Since $\lam_n\to 1$ and $\mu_{n,k}\to 1$,  from Corollary \ref{cor:betaprimemun} we have that
    \begin{align*}
        \lim_{n\to \infty} \frac{\beta'_n(\mu_{n,k})}{d_n \mu_{n,k+1}} = 1
    \end{align*}
      Hence it suffices to show that $d_n \mu_{n,k+1} \to \alpha$. For this note that
   \begin{align*}
       \log (d_n \mu_{n,k+1}) &= \log \mu_{n,k+1} + \log d_n \\
        &= \log (1-\epsilon_n) \brR*{\frac{d_n^{k+1} - 1}{d_n - 1}} + O\brR*{\frac{d_n^2}{n} \frac{d_n^k - 1}{d_n - 1}} + \log d_n \\
                    &= (-\epsilon_n + O(\epsilon_n^2)) d_n^k (1 + O(1/d_n)) + \log d_n + O\brR*{\frac{d_n^{k+1}}{n}}, 
	\end{align*}
							 where the second equality is from \eqref{eq:muasymptoticspr}
							 and last equality is by using Taylor's expansion for $\log(1-\ep_n)$.
Using  $d_n^{k+1} \ll n$ and $\abs{\epsilon_n^2d_n^k}\leq \frac{2(\xi_n^2 + (\log d_n)^2)}{d_n^k} \to 0$, we now have
\begin{align*}
  \log (d_n \mu_{n,k+1}) &= (-\epsilon_n d_n^k + o(1)) (1 + O(1/d_n)) + \log d_n + o\brR*{1} \nonumber\\
                           &= (-\xi_n - \log d_n) (1 + O(1/d_n)) + \log d_n + o(1) \nonumber\\
                           &= -\xi_n - \log d_n + \log d_n +  O\brR*{\frac{\xi_n + \log d_n}{d_n}} + o(1)  = -\xi_n + o(1) \to \log(\alpha) \nonumber
\end{align*}					 
	where the last equality once more uses the observation that						 
   $\frac{\abs{\xi_n}}{d_n}  \to 0$.
   Thus we have $d_n \mu_{n,k+1} \to \alpha$ as $n\to \infty$ which completes the proof.
\end{proof}

\subsection{Proof of Lemma \ref{prop:ratiosto1}:}

\begin{proof}
    Since $\mu_{n,k} \to 1$ and $j\mapsto \mu_{n,j}$ is nonincreasing,
	we have $\mu_{n,i} \to 1$ for each $i \leq k$. Additionally, since $\lambda_n \to 1$, Corollary \ref{cor:betaprimemun} shows that
	for any $i \in [k]$ 
$
        \lim_{n\to \infty} \frac{\beta_n'(\mu_{n,i})}{d_n \mu_{n,i+1}} = 1.
 $
    As a consequence,  $\beta_n'(\mu_{n,k-1}) \to \infty$ as $n\to \infty$, and for any $j \in [k-2]$
    \begin{equation*}
        \lim_{n\to \infty} \frac{\beta_n'(\mu_{n,j})}{\beta_n'(\mu_{n,j+1})} = \lim_{n\to \infty}\frac{d_n \mu_{n,j+1}}{d_n \mu_{n,j+2}} = \lim_{n\to \infty}\frac{ \mu_{n,j+1}}{ \mu_{n,j+2}} = 1.
    \end{equation*}
    This completes the proof of the lemma.
\end{proof}

\subsection{Proof of Lemma \ref{lem:betanapprox-1}:}

\begin{proof}
	By the first part of Lemma \ref{lem:betan-approx}, \eqref{eq:betan-prime-approx1} is immediate from \eqref{eq:betan-approx1}.
	Now consider \eqref{eq:betan-approx1}.
	 Taking logarithms in \eqref{eq:betan}, for $x> d_n/n$,
	\begin{align*}
		\log \beta_n(x) = \sum_{i=0}^{d_n-1} \left(\log\brR*{x - \frac{i}{n}} - \log\brR*{1 - \frac{i}{n}}\right) = \sum_{i=0}^{d_n-1} \left(\log\brR*{1 - \frac{i}{n} - (1-x)} - \log\brR*{1 - \frac{i}{n}}\right). \nonumber
	\end{align*}
	 Let $\delta_n = \epsilon_n + \frac{d_n}{n}$. For large $n$, $\delta_n \leq \frac{1}{2}$, and hence, 
	 using the expansion $\log(1-h) = -h + O(h^2)$ for $\abs{h} \leq \frac{1}{2}$, for any $x \in [1- \epsilon_n, 1]$:
							\begin{align*}
						\log \beta_n(x)	
							 &= \sum_{i=0}^{d_n-1} 	\brC*{- \frac{i}{n} - (1-x) + \frac{i}{n} + O(\delta_n^2)}
							= -d_n(1-x) + O( d_n \delta_n^2)\\
							& = d_n \log(1 - (1-x)) + O( d_n \delta_n^2)
							 = \log \gamma_n(x) + O(d_n \delta_n^2).
	\end{align*}
	Note that $\delta_n^2  = \brR*{\epsilon_n + d_n/n}^2 \leq 2\brR*{\epsilon_n^2 + \frac{d_n^2}{n^2}}$. Hence by our assumptions  $d_n \delta_n^2 \to 0$. This proves \eqref{eq:betan-approx1} and completes the proof of the lemma.
\end{proof}



\subsection{Proof of Lemma \ref{prop:betan-conv-zero}:}

\begin{proof}
	By \eqref{eq:gammageqbeta}
	\begin{equation*}
		\sup_{x \in [0, 1-\epsilon_n]} \abs{\beta_n(x)} \leq (1 - \epsilon_n)^{d_n} = e^{-d_n \epsilon_n + o(1)} \to 0.
	\end{equation*}
	Similarly, by \eqref{eq:betanprimeub}, under the assumption $\limsup_n \frac{d_n}{n} < 1$, for large $n$,
	\begin{equation*}
		\sup_{x \in [0, 1-\epsilon_n]} \abs{\beta'_n(x)} \leq (1 - d_n/n)^{-1} d_n (1 - \epsilon_n)^{d_n-1} = e^{-d_n \epsilon_n + \log d_n +  O(1)} \to 0.
	\end{equation*}
\end{proof}

\section{Proof of Lemma \ref{lem:negativedrift}}
\label{appendsec:proof-lemma-neg}
	 For a right continuous bounded variation function $F: [0,T] \to \R$, let $dF$ denote the signed measure on $(0,T]$ given by $dF(a,b] = F(b) - F(a)$
	 for $0\le a<b\le T$, and $d\lambda$ denote the Lebesgue measure on $(0,T]$. 
	 Bounded measurable functions $h : [0,T] \to \R$ act on signed measure $d\mu$ on $(0,T]$ on the left as follows: $h d\mu$ denotes the signed measure $A \mapsto \int_A h(x) d\mu(x)$, $A \in \clb(0,T]$. 

         Let $F(t) \doteq \int_0^t f(s) d\lambda(s)$ for $t \in [0,T]$.  Note that $z$ defined in \eqref{eq:ode-blowingup}  is a right continuous function with bounded variations. The corresponding measure $dz$ on $(0,T]$ satisfies the identity
	\begin{align*}
	dz = -f z d\lambda &+ g d\lambda + dM, \nonumber\\
	\shortintertext{namely}
	dz + f z d\lambda &= g d\lambda + dM.
	\end{align*}
	Acting on the left in the above identity by the bounded continuous function $e^{F}(x) \doteq e^{F(x)}$ we get
	$$e^{F} dz + e^{F}f z d\lambda = e^{F} g d\lambda + e^{F} dM. $$
Since $dF = f d\lambda$, by the change of variable formula (cf. \cite[Theorem VI.8.3]{mcshane})
$de^{F} = f e^{F} d\lambda$. Hence
	$$e^{F} dz + z de^{F} = e^{F} g d\lambda + e^{F} dM .$$
       Two applications of the integration by parts formula
	    (cf. \cite{billingsley1995probability}*{Theorem 18.4}) show that
	\begin{align*}
		d\brR*{e^{F}z}  = e^{F} g d\lambda &+ d\brR*{e^{F} M} - M de^{F}. \label{eq:final-form-z1}
	\end{align*}
	Computing the total measure on $(0,t]$ for  $t \leq T$:
	\begin{equation*}
	\begin{aligned}
	e^{F(t)} z(t) - z(0)  = &\int_0^{t} e^{F(s)} g(s) d\lambda(s) 
	 +e^{F(t)}M(t) - M(0) - \int_0^t M(s) de^{F}(s)
	\end{aligned}
	\end{equation*}
        Rearranging terms and multiplying by $e^{-F(t)}$ on both sides:
        \begin{equation}
            \begin{aligned}
                z(t) &= \int_0^{t} e^{F(s) - F(t)} g(s) d\lambda(s) + M(t) 
                      - e^{-F(t)} \int_0^t M(s) de^{F}(s) + e^{-F(t)}(z(0) - M(0)).
            \end{aligned}
            \label{eq:znif}
        \end{equation}
 We  now estimate the various terms on the right hand side of  \eqref{eq:znif}. The first term on the right hand side of \eqref{eq:znif} satisfies for $t \in [0, T \wedge \tau]$
        \begin{align}
            \abs{\int_0^{t} e^{F(s) - F(t)} g(s) d\lambda(s)} &\leq \abs{g}_{*, T \wedge \tau} \int_0^{t} e^{F(s) - F(t)} d\lambda(s)  \nonumber\\
                                                                    &\leq \abs{g}_{*, T \wedge \tau} \int_0^t e^{-\int_s^t f(u) du} d\lambda(s)\nonumber\\
                                                                    &\leq \abs{g}_{*, T \wedge \tau} \int_0^t e^{- m (t-s)} d\lambda(s) =  \abs{g}_{*, T \wedge \tau}\frac{1 - e^{-t m}}{m} \leq \frac{\abs{g}_{*, T \wedge \tau}}{m} \label{eq:boundfirst}.
        \end{align}
        Next we estimate the third term in the right hand side of \eqref{eq:znif}. Since $f$ is non-negative on  $[0, T \wedge \tau]$,  $de^F$ in a positive measure on $(0, T \wedge \tau]$. Hence for $t \in [0, T \wedge \tau]$
        \begin{align}
            \abs{e^{-F(t)} \int_0^t M(s) de^{F}(s)} &\leq \abs{M}_{*, T \wedge \tau} e^{-F(t)} \int_0^{t} de^{F}(s) \leq \abs{M}_{*, T \wedge \tau}.
                                                          \label{eq:boundthird}
        \end{align}
        Finally,  the last term in the right hand side of \eqref{eq:znif} for any $t \in [0, \tau \wedge T]$ can be bounded as
        \begin{equation}
            \abs{e^{-F(t)} \brR*{z(0) - M(0)}} \leq \brR*{\abs{z(0)} + \abs{M(0)}} e^{-F(t)} \leq \brR*{\abs{z(0)} + \abs{M(0)}}e^{-m t}.
            \label{eq:boundlast}
        \end{equation}
         Using \eqref{eq:boundfirst}, \eqref{eq:boundthird} and \eqref{eq:boundlast}  in \eqref{eq:znif} completes the proof of the lemma.
    \hfill \qed
\end{document}